\author{Mark Weber}
\thanks{}
\address{Department of Mathematics, Macquarie University}
\title{Internal algebra classifiers as codescent objects of crossed internal categories}
\keywords{internal algebras, codescent objects, crossed internal categories}
\newtheorem{thm}{\bf Theorem}
\newtheorem{prop}{\bf Proposition}
\newtheorem{lem}{\bf Lemma}
\newtheorem{cor}{\bf Corollary}
\newcommand{\tn}[1]{\textnormal{#1}}
\newcommand{\tnb}[1]{\textnormal{\bf #1}}
\newcommand{\tensor}{\otimes}
\newcommand{\C}{\mathbb{C}}
\newcommand{\R}{\mathbb{R}}
\newcommand{\N}{\mathbb{N}}
\newcommand{\comp}{\circ}
\newcommand{\id}{\tn{id}}
\newcommand{\ca}[1]{\mathcal{#1}}
\newcommand{\ladj}{\dashv}
\newcommand{\iso}{\cong}
\newcommand{\catequiv}{\simeq}
\newcommand{\Set}{\tnb{Set}}
\newcommand{\Cat}{\tnb{Cat}}
\newcommand{\CAT}{\tnb{CAT}}
\newcommand{\op}{\tn{op}}
\newcommand{\TwoCAT}{\mathbf{2}{\tnb{-CAT}}}
\newcommand{\Span}[1]{{\tnb{Span}_{#1}}}
\newcommand{\Polyc}[1]{{\tnb{Poly}_{#1}}}
\newcommand{\PFun}[1]{{\tnb{P}_{#1}}}
\newcommand{\PsAlg}[1]{\tn{Ps-}{#1}\tn{-Alg}}
\newcommand{\PsAlgs}[1]{\tn{Ps-}{#1}\tn{-Alg}_{\tn{s}}}
\newcommand{\PsAlgl}[1]{\tn{Ps-}{#1}\tn{-Alg}_{\tn{l}}}
\newcommand{\Algl}[1]{{#1}\tn{-Alg}_{\tn{l}}}
\newcommand{\Alg}[1]{{#1}\tn{-Alg}}
\newcommand{\Algs}[1]{{#1}\tn{-Alg}_{\tn{s}}}
\newcommand{\Cart}{\tnb{CAT}_{\tn{pb}}}
\newcommand{\CoDesc}{\tn{CoDesc}}
\newcommand{\CrIntCat}[1]{\tn{CrIntCat}(#1)}
\newcommand{\Cnr}{\tn{Cnr}}
\newcommand{\PolyMnd}[1]{\tnb{PolyMnd}_{#1}}
\newcommand{\PolyEnd}[1]{\tnb{PolyEnd}_{#1}}
\renewcommand{\P}{\mathbb{P}}
\newcommand{\B}{\mathbb{B}}
\renewcommand{\S}{\mathbb{S}}
\newcommand{\CatAsOp}{\underline{\Cat}}
\newcommand{\Com}{\mathsf{Com}}
\newcommand{\Ass}{\mathsf{Ass}}
\newcommand{\BrCom}{\mathsf{BCom}}
\begin{document}

\maketitle

\begin{abstract}
Inspired by recent work of Batanin and Berger on the homotopy theory of operads, a general monad-theoretic context for speaking about structures within structures is presented, and the problem of constructing the universal ambient structure containing the prescribed internal structure is studied. Following the work of Lack, these universal objects must be constructed from simplicial objects arising from our monad-theoretic framework, as certain 2-categorical colimits called codescent objects. We isolate the extra structure present on these simplicial objects which enable their codescent objects to be computed. These are the crossed internal categories of the title, and generalise the crossed simplicial groups of Loday and Fiedorowicz. The most general results of this article are concerned with how to compute such codescent objects in 2-categories of internal categories, and on isolating conditions on the monad-theoretic situation which enable these results to apply. Combined with earlier work of the author in which operads are seen as polynomial 2-monads, our results are then applied to the theory of non-symmetric, symmetric and braided operads. In particular, the well-known construction of a PROP from an operad is recovered, as an illustration of our techniques.
\end{abstract}


\section{Introduction}
\label{sec:intro}

A major theme of category theory is that of studying structures within an ambient structure. Then for a given ambient structure, and a given internal structure expressable therein, one is interested in understanding the universal ambient structure containing the prescribed internal structure. This theme manifests itself throughout mathematics, from basic category theory, to topos theory, to the study of Topological Quantum Field Theories (TQFT's), and most recently, thanks to the work of Batanin and Berger \cite{BataninBerger-HtyThyOfAlgOfPolyMnd}, to the homotopy theory of operads. The formalism that we pursue here, traces its roots to the seminal work of Batanin \cite{Batanin-SymmetrisationNOperads-Compactification, Batanin-EckmannHilton, Batanin-LocConstNOpsAsHigherBraidedOps} in which the ambient-internal theme is expressed in an operadic context, and brought to bear on the problem of recognising $n$-fold loop spaces.

Our setting can be applied to clarify certain basic issues coming out of Costello's work on Topological Conformal Field Theories (TCFT's). In \cite{Costello-AInftyOpAndModuliSpaceCurves} Costello sketched a characterisation of TCFT's via a derived analogue of the characterisation of 2-dimensional TQFT's as commutative Frobenius algebras \cite{Abrams-2dTQFT-Frob, KockJ-FrobAlg-2dTQFT}. Various algebraic, homotopical and geometric issues from \cite{Costello-AInftyOpAndModuliSpaceCurves} remain to be clarified, and the compelling vision it presents has, at least partially, motivated a number of works to that end  \cite{BorisovManin-GenOpsInnerCohom, Getzler-OperadsRevisited, KaufmannWard-FeynmanCats}. On the algebraic side one interested in the interplay between two types of operadic structures -- cyclic operads and modular operads, and in particular, in the \emph{modular envelope} construction, in which a modular operad is freely generated from a cyclic operad.

Preliminary to understanding these algebraic issues, is the question of how one defines modular (resp. cyclic) operads within symmetric monoidal categories in the first place. Modular operads were originally defined by Getzler-Kapranov in \cite{GetzlerKapranov-ModularOperads}. Intuitively, if the basic operations of a classical operad are described graphically in terms of corollas, and iterated operations as trees, then modular operads are a more elaborate notion in which more general graphs may appear as iterated operations. In \cite{GetzlerKapranov-ModularOperads} this intuition was formalised directly. On the other hand in \cite{Costello-AInftyOpAndModuliSpaceCurves}, Costello directly defined a symmetric strict monoidal category here denoted as $\ca M$, which is to be regarded as the universal symmetric monoidal category containing a modular operad. Modular operads within a general symmetric monoidal category $\ca V$ are then \emph{defined} to be symmetric strong monoidal functors $\ca M \to \ca V$. Costello does the same for cyclic operads defining directly the universal ambient $\ca C$, there is an inclusion $i : \ca C \to \ca M$, and the modular envelope construction is then given by left Kan extension along $i$.

Though to some extent it is intuitively clear that the definitions of modular operad of \cite{Costello-AInftyOpAndModuliSpaceCurves} and \cite{GetzlerKapranov-ModularOperads} ``more or less'' agree, there is no proof of this given in \cite{Costello-AInftyOpAndModuliSpaceCurves}. The direct definition of the morphisms of $\ca M$, which are graphs of a certain kind, is a little combinatorially involved. Modifying the definition of ``graph'' used, one can define subtly different variants of the category $\ca M$, and so it is desirable to have a framework within which these basic issues are clarified.

There are many operadic notions in contemporary mathematics \cite{LodayVallette-AlgebraicOperads, MarklShniderStasheff-OperadBook, Markl-OperadsPROPs}. In this work, as in its predecessor \cite{Weber-OpPoly2Mnd}, we use the unadorned name \emph{operad} to refer to coloured symmetric operads, which are also known as symmetric multicategories. An important discovery of \cite{BataninBerger-HtyThyOfAlgOfPolyMnd} is that many contemporary operadic notions, such as modular and cyclic operads, are themselves algebras of operads, which are in fact $\Sigma$-free. This fact is established by exhibiting directly, in each case, the finitary polynomial monad defined over $\Set$ whose algebras are the contemporary operadic notion in question. One then appeals to the correspondence \cite{KockJ-PolyFunTrees, SzawielZawadowski-TheoriesOfAnalyticMonads} between finitary polynomial monads and $\Sigma$-free operads. In particular if $\mathsf{MdOp}$ is the operad whose algebras are modular operads, then Costello's $\ca M$ should be the corresponding (coloured) PROP.

In \cite{Weber-AlgKan} a general 2-categorical framework is presented, which in particular explains why the modular envelope is correctly described via left Kan extension along $i : \ca C \to \ca M$. For the sake of these subsequent developments, it is desirable that $\ca M$ be described as an \emph{internal algebra classifier}, in the sense of Section 5.5 of \cite{BataninBerger-HtyThyOfAlgOfPolyMnd}, involving $\mathsf{MdOp}$. Since the ambient structure in this case is that of ``symmetric monoidal category'', and the 2-monad $\tnb{S}$ on $\Cat$ for symmetric monoidal categories is outside the scope of \cite{BataninBerger-HtyThyOfAlgOfPolyMnd}, this is not possible within the framework of \cite{BataninBerger-HtyThyOfAlgOfPolyMnd} as it currently stands.

There is, however, a different way to regard operads as polynomial monads, and this was described in \cite{Weber-OpPoly2Mnd}. In this viewpoint one uses polynomial monads defined over $\Cat$, and all operads, not just $\Sigma$-free ones can be viewed in this way. In particular the terminal operad $\Com$ is identified with the polynomial monad $\tnb{S}$. Moreover, as we will see in Examples \ref{exams:algebras-of-operad-in-V}, the unique operad morphism $\mathsf{MdOp} \to \Com$ gives rise to the appropriate monad theoretic data enabling one to recover the notion of a ``modular operad within a symmetric monoidal category'' from the formalism.

In Section \ref{sec:IntAlg}, we generalise the developments of Section 7 of \cite{Batanin-EckmannHilton}, and provide a monad-theoretic context for the general situation, called an \emph{adjunction of 2-monads} $F : (\ca L,S) \to (\ca K,T)$. This consists of a 2-monad $S$ on a 2-category $\ca L$ meant to parametrise the type of internal structure, a 2-monad $T$ on a 2-category $\ca K$ meant to parametrise the type of ambient structure, an adjunction $F_! \ladj F^* : \ca L \to \ca K$ between the 2-categories on which these 2-monads act, and data exhibiting $F_!$ and $F^*$ as the appropriate types of morphism of 2-monads in the sense of \cite{Street-FTM}. Given such a context it is then meaningful to consider $S$-algebras internal to a $T$-algebra as we do in Definition \ref{defn:internal-algebra}. Having established this framework we define the universal strict $T$-algebra containing an internal $S$-algebra, here called the \emph{internal $S$-algebra classifier}, in Definition \ref{def:int-alg-classifier}. This is exactly the universal ambient structure containing the prescribed internal structure, in our setting.

The most basic example of a universal ambient structure, computable from monad-theoretic data, is the algebraists' simplicial category, here denoted $\Delta_+$. It was exhibited as the free strict monoidal category containing a monoid in \cite{MacLane-CWM}. As is well-known, one can recover $\Delta_+$ from the monad $M$ on $\Set$ whose algebras are monoids, since the simplicial set
\begin{equation}\label{eq:nerve-Delta-plus}
\xygraph{!{0;(2,0):(0,1)::} {...} [r(.3)] {M^31}="p0" [r] {M^21}="p1" [r] {M1}="p2"
"p2":"p1"|-{M\eta_1} "p1":@<1.5ex>"p2"^-{\mu_1} "p1":@<-1.5ex>"p2"_-{M(!)} "p0":@<1.5ex>"p1"^-{\mu_{M1}} "p0":"p1"|-{M\mu_1} "p0":@<-1.5ex>"p1"_-{M^2(!)}}
\end{equation}
in which $\eta$ is the unit and $\mu$ is the multiplication of the monad $M$, turns out to be the nerve of $\Delta_+$.

This situation is clarified by 2-dimensional monad theory. It is almost a tautology that a monoid in a monoidal category $\ca V$ is the same thing as a lax monoidal functor $1 \to \ca V$. One has the 2-monad $\tnb{M}$ on $\Cat$ whose strict algebras are strict monoidal categories, strict morphisms are strict monoidal functors and lax morphisms are lax monoidal functors. Denoting the inclusion of the strict algebras and strict maps, amongst the strict algebras and lax maps by
\[ J_{\tnb{M}} : \Algs {\tnb{M}} \longrightarrow \Algl {\tnb{M}}, \]
the universal property of $\Delta_+$ recalled above says exactly that it is the value at $1$ of a left adjoint to $J_{\tnb{M}}$. Moreover, there is a general monad-theoretic explanation for why $\Delta_+$ also enjoys a universal property with respect to all monoidal categories, not just strict ones. This universal property says that the category of monoids in a monoidal category $\ca V$ is equivalent to the category of strong monoidal functors $\Delta_+ \to \ca V$.

Such considerations lead to the desire to understand how to compute the left adjoint $(-)^{\dagger}_T$ to the analogous inclusion
\[ J_T : \Algs T \longrightarrow \Algl T \]
for a general 2-monad $T$ on a 2-category $\ca K$. In \cite{Lack-Codescent} Lack understood that $(-)^{\dagger}_T$ is computed as a certain weighted colimit, called a \emph{codescent object}, of the corresponding simplicial object in $\Algs T$ obtained by repeatedly applying $T$. The computation of $\Delta_+ = (1)^{\dagger}_{\tnb{M}}$ as a codescent object is straight forward for two reasons. The first, is that $\tnb{M}$ preserves all codescent objects, and so codescent objects in $\Algs {\tnb{M}}$ are computed as in $\Cat$. Secondly, in this case the simplicial object in question is the componentwise discrete category object (\ref{eq:nerve-Delta-plus}), and every category is the codescent object of its nerve.

The codescent objects arising from similar contexts in applications \cite{Batanin-EckmannHilton, BataninBerger-HtyThyOfAlgOfPolyMnd} are of componentwise discrete category objects, and thus are easy to compute. More generally, Bourke \cite{Bourke-Thesis} understood the computation of codescent objects of cateads in 2-categories of the form $\tn{Cat}(\ca E)$. For $\ca E$ a category with pullbacks, $\tn{Cat}(\ca E)$ is the 2-category of categories internal to $\ca E$. A \emph{catead} in a 2-category is a category object of a certain special form. However in the case where $T$ is the 2-monad $\tnb{S}$ on $\Cat$ for symmetric monoidal categories, while $(-)^{\dagger}_{\tnb S}$ is computed by taking codescent objects of category objects in $\Cat$, these category objects are not cateads.

In this article we identify extra structure present on the category objects in such examples, which enable us to compute their codescent objects. This is the structure of a \emph{crossed internal category} so named because they generalise the crossed simplicial groups of Loday-Fiedorowicz \cite{FieLoday-CrossedSimplicial}. Given a crossed internal category $X$ in $\tn{Cat}(\ca E)$ where $\ca E$ is locally cartesian closed, the computation of its codescent object then proceeds in two steps. The first is to compute an associated 2-category $\tn{Cnr}(X)$ internal to $\ca E$, and then one takes connected components of the hom categories of this internal 2-category. The general results expressing this method are given as Theorem \ref{thm:codesc-cr-int-cat} and Corollary \ref{cor:codesc-gen-cicat}.

Thus for a larger class of 2-monads $T$, one has a complete understanding of how to compute $(1)^{\dagger}_T$, which analogously to the case $T = \tnb{M}$, is the free strict $T$-algebra containing an internal $T$-algebra. We illustrate this when $T$ is the 2-monad $\tnb{S}$ on $\Cat$ for symmetric monoidal categories, in which case $(1)^{\dagger}_T$ is a skeleton of the category of finite sets, in Section \ref{ssec:functions}. In Section \ref{ssec:Vines} we perform similar calculations for the braided monoidal category 2-monad $\tnb{B}$, to exhibit the category of vines, in the sense of Lavers \cite{Lavers-Vines}, as the free braided strict monoidal category containing a commutative monoid.

By contrast with these examples, in the general situation of an adjunction of 2-monads, the type of internal structure and the type of ambient structure are not necessarily the same. Thus from the data of an adjunction of 2-monads $F : (\ca L,S) \to (\ca K,T)$, we give an associated simplicial $T$-algebra in Construction \ref{const:simplicial-objects-from-monad-adjunction}, whose codescent object is the internal $S$-algebra classifier by Proposition \ref{prop:dagger-F}, generalising \cite{Batanin-EckmannHilton} Theorem 7.3. The abstract conditions on an adjunction of 2-monads which enable our methods of codescent calculation to apply to the corresponding internal algebra classifiers are given in Proposition \ref{prop:crossed-int-cats-from-classifiers}. To summarise, our understanding of how to compute internal algebra classifiers is embodied by the three general results: Proposition \ref{prop:crossed-int-cats-from-classifiers}, Theorem \ref{thm:codesc-cr-int-cat} and Corollary \ref{cor:codesc-gen-cicat}.

Morphisms of operads provide a basic source of examples of adjunctions of 2-monads to which our results can be applied. As explained in \cite{Weber-OpPoly2Mnd} and recalled here in Section \ref{sec:background}, operads can be regarded as polynomial 2-monads over the polynomial 2-monad $\tnb{S}$ for symmetric monoidal categories. Moreover given a morphism of operads, one obtains an adjunction between the corresponding 2-monads, and this is described explicitly in Section \ref{ssec:explicit-adj2mnd-op-morphism}. We apply our general results in this situation to give, for a morphism $F : S \to T$ of operads, an explicit description of the free strict $T$-algebra containing an internal $S$-algebra, in Theorem \ref{thm:intalg-classifier-from-operad-morphism}. Taking $F$ to be the unique morphism $T \to \Com$ into the terminal operad, the corresponding internal algebra classifier describes the effect on objects of the left adjoint of the fundamental biadjunction between symmetric monoidal categories and operads. This is described in Corollary \ref{cor:smoncat-opd-adjunctions}. Other internal algebra classifiers arising naturally from an operad morphism are discussed in Section \ref{ssec:Sigma-free-operads}. The relationship between these various alternatives is understood in Theorem \ref{thm:T^S-vs-T/Sigma^S/Sigma}, and this is then used in Examples \ref{exams:Batanin-Berger-classifiers} to reconcile the general method of Theorem \ref{thm:intalg-classifier-from-operad-morphism} with the calculations of Batanin and Berger in \cite{BataninBerger-HtyThyOfAlgOfPolyMnd}.

Such applications are readily generalised in our framework. As explained in Section \ref{ssec:polynomials}, non-symmetric operads can be regarded as polynomial 2-monads over the 2-monad $\tnb{M}$ for monoidal categories, and braided operads can be regarded as polynomial 2-monads over the polynomial 2-monads $\tnb{B}$ for braided monoidal categories. Thus one obtains non-symmetric and braided analogues of our results, and these are indicated in Section \ref{ssec:braided-props}. However there are many other situations to which our methods could be applied to calculate internal algebra classifiers. These include from adjunctions of 2-monads arising from morphisms of non-symmetric, symmetric or braided $\Cat$-operads, following Remark 3.5 of \cite{Weber-OpPoly2Mnd}.

\emph{Simplicial notation}.
We use two alternative notations for finite ordinals. For each $n \in \N$ one has the ordered set $[n] = \{0 < ... < n\}$. The category whose objects are these ordered sets and order-preserving functions between them is the topologists' simplicial category $\Delta$. Alternatively, for each $n \in \N$ one has the ordered set $\underline{n} = \{1 < ... < n\}$. The category whose objects are natural numbers, and morphisms $m \to n$ are order preserving functions $\underline{m} \to \underline{n}$, is the algebraists' simplicial category $\Delta_+$.

We use the standard notation for the usual generating morphisms of $\Delta$; with the standard coface map $\delta_i : [n] \to [n+1]$ being the injection whose image doesn't include $i$, and the standard codegeneracy map $\sigma_i : [n+1] \to [n]$ being the surjection which identifies $i$ and $i+1$. A \emph{simplicial object} in a category $\ca E$ is a functor $X : \Delta^{\op} \to \ca E$, whose effect on objects and generating maps we denote as
\[ \begin{array}{rcl} {\delta_i : [n] \to [n+1]} & {\mapsto} & {d_i : X_{n+1} \to X_n} \\
{\sigma_i : [n+1] \to [n]} & {\mapsto} & {s_i : X_n \to X_{n+1}} \end{array} \]
in the usual way, the $d_i$ being the face maps and the $s_i$ the degeneracy maps of $X$. Similarly for a \emph{cosimplicial object} $X : \Delta \to \ca E$ one has coface $\delta_i : X_n \to X_{n+1}$ and codegeneracy maps $\sigma_i : X_{n+1} \to X_n$. The \emph{standard cosimplicial object} regards each ordinal $[n]$ as a category, and each order preserving map as a functor, and is denoted as $\delta : \Delta \to \Cat$.

\section{Operads and polynomial 2-monads}
\label{sec:background}

This section is devoted to background. In Section \ref{ssec:monads} we describe 2-dimensional monad theory. In Section \ref{ssec:operads} we recall the notion of operad and describe our conventions and notations regarding them. In Section \ref{ssec:polynomials} we recall the basics on polynomial 2-monads and how operads can be regarded as such.

\subsection{Monad theory.}
\label{ssec:monads}

We adopt the standard practise of referring to a monad in a bicategory $\ca B$ as a pair $(A,t)$, where $A$ is the underlying object in $\ca B$, $t:A \to A$ is the underlying endoarrow, and the unit $\eta^t : 1_A \to t$ and multiplication $\mu^t : t^2 \to t$ are left implicit. In particular we use this convention in the 2-category of 2-categories and speak of a 2-monad $(\ca K,T)$.

The underlying object and arrow of the Eilenberg-Moore object of $(A,t)$, when it exists, is denoted as $u^t : A^t \to A$. In the case $\ca K = \Cat$ this is the forgetful functor out of the category of algebras of $t$, and in general $u^t$ is a right adjoint. The underlying object and arrow of the Kleisli object of $(A,t)$, when it exists, is denoted as $f_t : A_t \to A$, and is in general a left adjoint. In the case of a 2-monad $(\ca K,T)$, $u^t$ and $f_t$ are denoted as
\[ \begin{array}{lccr} {U^T : \Algs T \longrightarrow \ca K} &&&
{F_T : \ca K \longrightarrow \tn{Kl}(T)} \end{array} \]
respectively.

In this article the monad functors and opfunctors of \cite{Street-FTM} are called \emph{lax morphisms} and \emph{colax morphisms} of monads respectively. So for us a lax morphism $f:(A,t) \to (B,s)$ in $\ca B$ consists of an arrow $f:A \to B$ in $\ca B$, and a coherence 2-cell $f^l:sf \to ft$ which satisfies $f^l(\eta^sf) = f\eta^t$ and $f^l(\mu^sf) = (f\mu^t)(f^lt)(sf^l)$. For a colax morphism $f:(A,t) \to (B,s)$ one has the coherence $f^c:ft \to sf$, which satisfies $f^c(f\eta^t) = \eta^sf$ and $f^c(f\mu^t) = (\mu^sf)(sf^c)(f^ct)$.

A 2-monad $(\ca K,T)$ has various notions of algebra and algebra morphism. Recall that for $A \in \ca K$, a \emph{pseudo $T$-algebra structure} on $A$ consists of an arrow $a:TA \to A$, invertible coherence 2-cells $\overline{a}_0:1_A \to a\eta_A$ and $\overline{a}_2:aT(a) \to a\mu_A$, satisfying the following axioms:
\[ \xygraph{*=(0,1.5){\xybox{\xygraph{!{0;(4,0):(0,.1)}
{\xybox{\xygraph{!{0;(2,0):(0,.5)::} {a}="l" [r] {a\eta_Aa}="m" [d] {a}="r" "l":"m"^-{\overline{a}_0a}:"r"^-{\overline{a}_2\eta_{TA}}:@{<-}"l"^-{\id} "l" [d(.35)r(.7)] {\scriptsize{=}}}}}
[d(.02)r]
{\xybox{\xygraph{!{0;(2.5,0):(0,.4)::} {aT(a)T^(a)}="tl" [r] {a\mu_AT^2(a)}="tr" [d] {a\mu_A\mu_{TA}}="br" [l] {aT(a)T(\mu_A)}="bl" "tl":"tr"^-{\overline{a}_2T^2(a)}:"br"^-{\overline{a}_2\mu_{TA}}:@{<-}"bl"^-{\overline{a}_2T(\mu_A)}:@{<-}"tl"^-{aT(\overline{a}_2)} "tl" [d(.5)r(.5)] {\scriptsize{=}}}}}
[u(.02)r]
{\xybox{\xygraph{!{0;(2,0):(0,.5)::} {a}="l" [l] {aT(a)T(\eta_A)}="m" [d] {a}="r" "l":"m"_-{aT(\overline{a}_0)}:"r"_-{\overline{a}_2T(\eta_A)}:@{<-}"l"_-{\id} "l" [d(.35)l(.7)] {\scriptsize{=}}}}}}}}} \]
We denote a pseudo $T$-algebra as a pair $(A,a)$ leaving the data $\overline{a}_0$ and $\overline{a}_2$ implicit. When these coherence isomorphisms are identities, $(A,a)$ is said to be a \emph{strict} $T$-algebra.

Recall that a \emph{lax morphism} $(A,a) \to (B,b)$ between pseudo $T$-algebras is a pair $(f,\overline{f})$, where $f:A \to B$ and $\overline{f}:bT(f) \to fa$, satisfying the following axioms:
\[ \xygraph{{\xybox{\xygraph{{f}="l" [dl] {bT(f)\eta_A}="m" [r(2)] {fa\eta_A}="r" "l":"m"_-{\overline{b}_0f}:"r"_-{\overline{f}\eta_A}:@{<-}"l"_-{f\overline{a}_0} "l" [d(.6)] {\scriptsize{=}}}}}
[r(5)]
{\xybox{\xygraph{!{0;(1.5,0):(0,.6667)::} {bT(b)T^2(f)}="p1" [r(2)] {b\mu_BT^2(f)}="p2" [d] {fa\mu_A}="p3" [dl] {faT(a)}="p4" [ul] {bT(fa)}="p5" "p1":"p2"^-{\overline{b}_2T^2(f)}:"p3"^-{\overline{f}\mu_A}:@{<-}"p4"^-{f\overline{a}_2}:@{<-}"p5"^-{\overline{f}T(a)}:@{<-}"p1"^-{bT(\overline{f})} "p1" [d(.8)r] {\scriptsize{=}}}}}} \]
When $\overline{f}$ is an isomorphism, $f$ is said to be a \emph{pseudomorphism}, and when $\overline{f}$ is an identity, $f$ is said to be a \emph{strict morphism} of algebras. Given lax $T$-algebra morphisms $f$ and $g:(A,a) \to (B,b)$, a $T$-algebra 2-cell $f \to g$ is a 2-cell $\phi:f \to g$ in $\ca K$ such that $\overline{g}(bT(\phi))=(\phi a)\overline{f}$. The various notions of algebra and algebra morphism form the various 2-categories of algebras of $T$, the standard notation for which is recalled in the table. In each case, the 2-cells are just the $T$-algebra 2-cells between the appropriate $T$-algebra morphisms. We denote by $J_T : \Algs T \to \Algl T$ the inclusion.
\begin{table}
\label{table:algebras-of-2-monads}
\caption{2-categories of algebras of a 2-monad $T$}
\centering
\begin{tabular}{|l|l|l|}
\hline
Name & Objects & Arrows \\ \hline \hline
$\PsAlgl{T}$ & pseudo $T$-algebras & lax morphisms \\ \hline
$\PsAlg{T}$ & pseudo $T$-algebras & pseudomorphisms \\ \hline
$\PsAlgs{T}$ & pseudo $T$-algebras & strict morphisms \\ \hline
$\Algl{T}$ & strict $T$-algebras & lax morphisms \\ \hline
$\Alg{T}$ & strict $T$-algebras & pseudomorphisms \\ \hline
$\Algs{T}$ & strict $T$-algebras & strict morphisms \\ \hline
\end{tabular}
\end{table}

\subsection{Operads.}
\label{ssec:operads}

At various stages in this article we shall be manipulating sequences of elements, and so as in \cite{Weber-Funny} we use the following notation. Most generally given a sequence of sets $(X_1, ... , X_n)$, a typical element of its cartesian product will be denoted as either $(x_1,...,x_n)$, or by the abbreviated notation $(x_k)_{1{\leq}k{\leq}n}$ or even $(x_k)_k$ when no confusion would result. Given a subset $s \subseteq \underline{n}$, the corresponding subsequence is denoted $(x_k)_{k \in s}$. In the situations when $s$ itself is identified by some logical condition, this condition may replace $k \in s$ in this notation. For instance given a function $h : \underline{n} \to \underline{m}$ and $l \in \underline{m}$, one may write $(x_k)_{hk=l}$ in the case where $s = h^{-1}\{l\}$. By an abuse of notation, a singleton sequence $(x)$ is often written as an element $x$.

One place where this notation is useful is in defining and then discussing operads. In this article as in \cite{Weber-OpPoly2Mnd}, we use the term \emph{operad} for what is commonly called either a symmetric multicategory, or a coloured (symmetric) operad. Our operadic notation and terminology is consistent with that of \cite{Weber-OpPoly2Mnd}.

As such, an \emph{operad} $T$ has an underlying \emph{collection} which consists of a set $I$ of \emph{colours} or \emph{objects}, and sets $T((i_k)_{1{\leq}k{\leq}n},i)$ of \emph{operations} or \emph{arrows}, defined for all sequences $(i_k)_{1{\leq}k{\leq}n}$ and elements $i$ from $I$. A typical element of $T((i_k)_{1{\leq}k{\leq}n},i)$ may be denoted as $\alpha : (i_k)_{1{\leq}k{\leq}n} \to i$. Denoting the $n$-th symmetric group as $\Sigma_n$, given $\rho \in \Sigma_n$ and $\alpha : (i_k)_{1{\leq}k{\leq}n} \to i$ in $T$, one has another operation $\alpha\rho : (i_{\rho k})_k \to i$ in which the source sequence has been permuted by $\rho$. The assignations $(\alpha,\rho) \mapsto \alpha\rho$ are functorial with respect to the composition of permutations.
 
In addition to this underlying collection, one also has \emph{identity} arrows $1_i : i \to i$, and an operation of \emph{composition} or \emph{substitution}, which takes the data of an arrow $\alpha : (i_k)_{1{\leq}k{\leq}m} \to i$ and a sequence $(\beta_k : (i_{kl})_{1{\leq}l{\leq}m_k} \to i_k)_k$ of arrows from $T$, and returns their composite $\alpha(\beta_k)_k : (i_{kl})_{kl} \to i$, in which the source sequence is written less tersely as $(i_{11},...,i_{1m_1}, ...,i_{k1},...,i_{km_k})$. The unit, associativity and composition-equivariance axioms of an operad can be written as
\[ \begin{array}{lccr} {1_i \comp (\alpha) = \alpha = \alpha \comp (1_{i_j})_j} &&& {\alpha \comp (\beta_j \comp (\gamma_{jk})_k)_j = (\alpha \comp (\beta_j)_j) \comp (\gamma_{jk})_{jk}} \end{array} \]
\[  (\alpha \comp (\beta_j)_j)(\rho(\rho_j)_j) = (\alpha\rho) \comp (\beta_j\rho_j)_j. \]
See section 3 of \cite{Weber-OpPoly2Mnd} for more details.

While this notation is efficient and precise, it is not intuitive, and so it is also worth remembering the standard way of depicting operations of an operad $T$ as trees. For example an operation $\alpha : (i_1,i_2,i_3,i_4) \to i$ is denoted as on the left in
\[ \xygraph{{\xybox{\xygraph{!{0;(.5,0):(0,1.5)::}
{\scriptstyle{\alpha}} *\xycircle<5pt,5pt>{-}="p0"
(-[d] {\scriptstyle{i}},-[l(1.5)u] {\scriptstyle{i_1}},-[l(.5)u] {\scriptstyle{i_2}},-[r(.5)u] {\scriptstyle{i_3}},-[r(1.5)u] {\scriptstyle{i_4}})}}}
[r(2.5)]
{\xybox{\xygraph{!{0;(.5,0):(0,1.5)::}
{\scriptstyle{\alpha\rho}} *\xycircle<6pt,5pt>{-}="p0"
(-[d] {\scriptstyle{i}},-@`{"p0"+(-1,0.2)}[l(.5)u] {\scriptstyle{i_1}},-@`{"p0"+(-1,0.25)}[r(1.5)u] {\scriptstyle{i_2}},-[r(.5)u] {\scriptstyle{i_3}},-@`{"p0"+(1.5,0.1),"p0"+(-.5,.5)}[l(1.5)u] {\scriptstyle{i_4}})}}}
[r(1.75)]
{\xybox{\xygraph{!{0;(.5,0):(0,1.5)::}
{\scriptstyle{i}} -[d(2)] {\scriptstyle{i}}}}}
[r(2)]
{\xybox{\xygraph{!{0;(.5,0):(0,1.5)::}
{\scriptstyle{\alpha}} *\xycircle<5pt,5pt>{-}="p0"
(-[d(.75)],-[l(1.5)u] {\scriptstyle{\beta_1}} *\xycircle<6pt,5pt>{-} (-[u(.75)l(.25)],-[u(.75)r(.25)]),-[l(.5)u] {\scriptstyle{\beta_2}} *\xycircle<6pt,5pt>{-} -[u(.75)],-[r(.5)u] {\scriptstyle{\beta_3}} *\xycircle<6pt,5pt>{-},-[r(1.5)u] {\scriptstyle{\beta_4}} *\xycircle<6pt,5pt>{-} (-[u(.75)l(.5)],-[u(.75)],-[u(.75)r(.5)]))}}}} \]
and acting on it by the permutation $\rho = (142)$ is drawn as above by crossing the ``input wires'' according to $\rho$ as shown. The identity on $i$ is depicted second on the right, and the right-most diagram in the previous display is the result of forming the composite $\alpha(\beta_1,\beta_2,\beta_3,\beta_4)$.

We denote by $\tn{Br}_n$ the $n$-th braid group. A \emph{braided operad} is defined as above, except that one acts on arrows $\alpha : (i_k)_{1{\leq}k{\leq}n} \to i$ by braids $\rho \in \tn{Br}_n$ (instead of permutations) to form $\alpha\rho$. In terms of tree diagrams, the result of acting on an operation $\alpha : (i_1,i_2,i_3,i_4) \to i$ by the braid $\rho \in \tn{Br}_4$ as depicted on the left is denoted as on the right in
\[ \xygraph{{\xybox{\xygraph{!{0;(.5,0):(0,3)::} 
{}="t1" [r] {}="t2" [r] {}="t3" [r] {}="t4" "t1" [d] {}="b1" [r] {}="b2" [r] {}="b3" [r] {}="b4"
"t1" -@`{"t4"+(0,-.5)} "b4"|(.32)*=<4pt>{}
"t2" -@`{"t1"+(0,-.25),"t2"+(0,-.5),"b3"+(.5,.35),"b3"+(.5,.25),"b2"+(0,.2)} "b1"|(.04)*=<4pt>{}|(.298)*=<4pt>{}|(.776)*=<4pt>{}
"t3" - "b3"|(.42)*=<4pt>{}|(.6)*=<4pt>{}
"t4" -@`{"t2"+(0,-.25)} "b2"|(.288)*=<4pt>{}|(.89)*=<4pt>{}}}}
[r(4)d(.2)]
{\xybox{\xygraph{!{0;(.85,0):(0,1.5)::}
{\scriptstyle{\alpha\rho}} *\xycircle<6pt,5pt>{-}="p0"
(-[d(.5)] {\scriptstyle{i}},-@`{"p0"+(-1.25,0),"p0"+(-1.25,0.1),"p0"+(.75,.2),"p0"+(.75,.3),"p0"+(-1.5,.75)}[l(.5)u] {\scriptstyle{i_1}}|(.448)*=<4pt>{}|(.79)*=<4pt>{}|(.952)*=<4pt>{},-@`{"p0"+(-1,0.25),"p0"+(.75,1)}[r(1.5)u] {\scriptstyle{i_2}}|(.15)*=<4pt>{}|(.635)*=<4pt>{},-@`{"p0"+(.5,.5)}[r(.5)u] {\scriptstyle{i_3}}|(.38)*=<4pt>{}|(.59)*=<4pt>{},-@`{"p0"+(1.5,0),"p0"+(1.5,.4),"p0"+(-.5,.8)}[l(1.5)u] {\scriptstyle{i_4}}|(.7)*=<4pt>{})}}}} \]

\subsection{Polynomial 2-monads.}
\label{ssec:polynomials}

Our main examples of 2-monads arise by regarding operads as polynomial 2-monads as in \cite{Weber-OpPoly2Mnd}. Recall that a \emph{polynomial from $I$ to $J$} in $\Cat$ \cite{Weber-PolynomialFunctors} consists of categories and functors as on the left
\[ \xygraph{{\xybox{\xygraph{{I}="p0" [r] {E}="p1" [r] {B}="p2" [r] {J}="p3" "p0":@{<-}"p1"^-{s}:"p2"^-{p}:"p3"^-{t}}}}
[r(5)]
{\xybox{\xygraph{!{0;(1.5,0):} {\Cat/I}="p0" [r] {\Cat/E}="p1" [r] {\Cat/B}="p2" [r] {\Cat/J}="p3" "p0":"p1"^-{\Delta_s}:"p2"^-{\Pi_p}:"p3"^-{\Sigma_t}}}}} \]
in which $p$ is an exponentiable functor, and the \emph{polynomial 2-functor} it determines is the composite functor on the right, in which $\Sigma_t$ is the process of composition with $t$, $\Delta_s$ is the process of pulling back along $s$, and $\Pi_p$ is right adjoint to pulling back along $p$. Such polynomials form a 2-bicategory{\footnotemark{\footnotetext{Recall from \cite{Weber-PolynomialFunctors} that a \emph{2-bicategory} is a $\Cat$-enriched bicategory, so that homs of a 2-bicategory form 2-categories. For example strict 3-categories such as $\TwoCAT$ are examples. Ignoring the 2-cells in the homs of a 2-bicategory leaves one with an ordinary bicategory.}}} $\Polyc {\Cat}$, in which the objects are categories and a morphism $I \to J$ is a polynomial as above. The assignment of a polynomial to its associated polynomial 2-functor is the effect on arrows of a homomorphism
\[ \PFun {\Cat} : \Polyc{\Cat} \longrightarrow \TwoCAT. \]
In particular $\PFun{\Cat}$ sends a monad in $\Polyc{\Cat}$ on a category $I$, to a 2-monad on $\Cat/I$.

There are three examples of polynomial 2-monads on $\Cat$ which for us are fundamental. The first of these is the 2-monad $\tnb{M}$ for strict monoidal categories. For $A \in \Cat$, $\tnb{M}(A)$ is the category whose objects are finite sequences of objects of $A$ and morphisms are levelwise maps. In other words a morphism of $\tnb{M}(A)$ is of the form $(f_k)_{1{\leq}k{\leq}m} : (a_k)_k \to (b_k)_k$ where for each $k$, $f_k : a_k \to b_k$ is a morphism of $A$. The unit and multiplication of $\tnb{M}$ are given by the inclusion of singleton sequences and by concatenation of sequences respectively. Strict and pseudo $\tnb{M}$-algebras are strict monoidal categories and monoidal categories respectively. Strict, pseudo, lax and colax morphisms of $\tnb{M}$-algebras are strict monoidal, strong monoidal, lax and colax monoidal functors respectively. 

The polynomial underlying $\tnb{M}$ is the componentwise-discrete
\[ \xygraph{{1}="p0" [r] *!(0,.025){\xybox{\xygraph{{\N_*}}}}="p1" [r] {\N}="p2" [r] {1}="p3" "p0":@{<-}"p1"^-{}:"p2"^-{p}:"p3"^-{}} \]
in which $\N$ is the set of natural numbers (including $0$), and $p$ is the function whose fibre over $n$ has cardinality $n$. We regard  the elements of $\N_*$ as pairs $(i,n)$ where $n \in \N$ and $1 \leq i \leq n$, and $p(i,n) = n$. This polynomial is really a polynomial in $\Set$, and this polynomial monad was first considered by B\'{e}nabou in \cite{Benabou-FreeMonoids} at the generality where $\Set$ is replaced by an elementary topos with a natural numbers object.

Our second fundamental example is the 2-monad $\tnb{S}$ whose strict algebras are symmetric strict monoidal categories. General symmetric monoidal categories are the pseudo algebras of $\tnb{S}$, and as with $\tnb{M}$, the various types of symmetric monoidal functor match up as expected with the various types of $\tnb{S}$-algebra morphism. The polynomial underlying $\tnb{S}$ is
\[ \xygraph{{1}="p0" [r] *!(0,.025){\xybox{\xygraph{{\P_*}}}}="p1" [r] {\P}="p2" [r] {1}="p3" "p0":@{<-}"p1"^-{}:"p2"^-{}:"p3"^-{}} \]
in which $\P$ is the permutation category, whose objects are natural numbers, and morphisms are permutations $\rho : n \to n$ (there are no morphisms $n \to m$ for $n \neq m$), so that the endomorphism monoid $\P(n,n)$ is the $n$-th symmetric group $\Sigma_n$. The objects of $\P_*$ are the elements of $\N_*$, and a morphism $(i,n) \to (j,n)$ is a permutation $\rho \in \Sigma_n$ such that $\rho i = j$ (i.e the morphisms are ``base-point preserving permutations''). An explicit description of $\tnb{S}$ is the same as for $\tnb{M}$ except that $\tnb{S}(A)$ has more morphisms, a morphism $(a_k)_{1{\leq}k{\leq}n} \to (b_k)_{1{\leq}k{\leq}n}$ being a pair $(\rho,(f_{k})_k)$, where for all $k$, $f_k : a_k \to b_{\rho k}$. More intuitively, such a morphism is a permutation labelled by the arrows of $A$ as in the diagram
\[ \xygraph{!{0;(1.25,0):(0,1.25)::} {a_1}="t1" [r] {a_2}="t2" [r] {a_3}="t3" [r] {a_4}="t4"
"t1" [d] {b_1}="b1" [r] {b_2}="b2" [r] {b_3}="b3" [r] {b_4.}="b4"
"t1"-"b4"|(.43)@{>}^(.38){f_1} "t2"-@/^{.5pc}/"b1"|(.6)@{>}_(.6){f_2} "t3"-"b3"|(.57)@{>}^(.5){f_3} "t4"-@/_{.5pc}/"b2"|(.57)@{>}_(.54){f_4}} \]

Replacing permutations by braids in the previous paragraph brings us to our third example, the 2-monad $\tnb{B}$ on $\Cat$ whose strict algebras are braided strict monoidal categories. Its underlying polynomial is
\[ \xygraph{{1}="p0" [r] *!(0,.025){\xybox{\xygraph{{\B_*}}}}="p1" [r] {\B}="p2" [r] {1}="p3" "p0":@{<-}"p1"^-{}:"p2"^-{}:"p3"^-{}} \]
where $\B$ is the braid category, with natural numbers as objects, all morphisms are endomorphisms, and the endomorphism monoid $\B(n,n)$ is the $n$-th braid group $\tn{Br}_n$. A morphism of $\tnb{B}(A)$ is a braid labelled by the arrows of $A$ as in
\[ \xygraph{!{0;(1.25,0):(0,1.25)::} 
{a_1}="t1" [r] {a_2}="t2" [r] {a_3}="t3" [r] {a_4}="t4"
"t1" [d] {b_1}="b1" [r] {b_2}="b2" [r] {b_3}="b3" [r] {b_4}="b4"
"t1" -@`{"t4"+(0,-.5)} "b4"|(.25)@{>}^(.25){f_1}|(.32)*=<4pt>{}
"t2" -@`{"t1"+(0,-.25),"t2"+(0,-.5),"b3"+(.5,.35),"b3"+(.5,.3),"b2"+(0,.28)} "b1"|(.04)*=<4pt>{}|(.22)@{>}_(.2){f_2}|(.298)*=<4pt>{}|(.776)*=<4pt>{}
"t3" - "b3"|(.35)@{>}^(.31){f_3}|(.42)*=<4pt>{}|(.6)*=<4pt>{}
"t4" -@`{"t2"+(0,-.25)} "b2"|(.288)*=<4pt>{}|(.55)@{>}_(.59){f_4}|(.85)*=<3pt>{}} \]
That the 2-monad for braided strict monoidal categories really is described in this way was established in \cite{JoyalStreet-GeometryTensorCalculus}, where it was denoted as $\mathbb{B} \wr (-)$.

Monads in $\Polyc{\Cat}$ are the objects of a category $\PolyMnd {\Cat}$, in which the data of a morphism $(I,s,p,t) \to (J,s',p',t')$ is a commutative diagram
\[ \xygraph{!{0;(1.5,0):(0,.6667)::} {I}="p0" [r] {E}="p1" [r] {B}="p2" [r] {I}="p3" [d] {J}="p4" [l] {B'}="p5" [l] {E'}="p6" [l] {J}="p7" "p0":@{<-}"p1"^-{s}:"p2"^-{p}:"p3"^-{t}:"p4"^-{f}:@{<-}"p5"^-{t'}:@{<-}"p6"^-{p'}:"p7"^-{s'}:@{<-}"p0"^-{f} "p1":"p6"_-{f_2} "p2":"p5"^-{f_1} "p1":@{}"p5"|-{\tn{pb}}} \]
compatible in the appropriate sense with the monad structures on $(s,p,t)$ and $(s',p',t')$. As was explained in \cite{Weber-OpPoly2Mnd}, an operad $T$ with set of colours $I$ can be identified as a morphism
\[ \xygraph{!{0;(1.5,0):(0,.6667)::} {I}="p0" [r] {E}="p1" [r] {B}="p2" [r] {I}="p3" [d] {1}="p4" [l] {\P}="p5" [l] *!(0,.025){\xybox{\xygraph{{\P_*}}}}="p6" [l] {1}="p7" "p0":@{<-}"p1"^-{}:"p2"^-{}:"p3"^-{}:"p4"^-{}:@{<-}"p5"^-{}:@{<-}"p6"^-{}:"p7"^-{}:@{<-}"p0"^-{} "p1":"p6"_-{} "p2":"p5"^-{b} "p1":@{}"p5"|-{\tn{pb}}} \]
of polynomial monads in which $b$ is a discrete fibration, and the objects of $B$ are the operations of $T$. Moreover, one can recover $\Cat$-operads as such polynomial monad morphisms in which $b$ has the structure of a split fibration.

The corresponding polynomial 2-monad on $\Cat/I$ is also denoted by $T$. Given a category $X \to I$ over $I$, which we also regard as an $I$-indexed family $(X_i)_{i{\in}I}$ of categories, $(TX)_i$ is the following category by Lemma 3.10 of \cite{Weber-OpPoly2Mnd}. Its objects are pairs $(\alpha,(x_j)_j)$, where $\alpha : (i_j)_j \to i$ is an arrow of $T$, and $x_j \in X_{i_j}$, and this data can be pictured more intuitively as a labelled operation
\[ \xygraph{!{0;(.8,0):(0,1)::} 
{\scriptstyle{\alpha}} *\xycircle<6pt,6pt>{-}="p0" [ul]
{\scriptstyle{x_1}} *\xycircle<6pt,6pt>{-}="p1" [r(2)]
{\scriptstyle{x_n}} *\xycircle<6pt,6pt>{-}="p2"
"p0" (-"p1",-"p2",-[d],[u(.75)] {...},[u(.5)l(.7)] {\scriptstyle{i_1}},[u(.5)r(.75)] {\scriptstyle{i_n}},[d(.7)r(.15)] {\scriptstyle{i}})} \]
A morphism $(\alpha,(x_j)_j) \to (\beta,(y_j)_j)$ is a pair $(\rho,(\gamma_j)_j)$ where $\rho$ is a permutation such that $\alpha = \beta\rho$, and $\gamma_j : x_j \to y_{\rho j}$ is a morphism of $X_{\rho j}$ for each $j$.

There are variations on this theme for non-symmetric and for braided operads. For non-symmetric operads one repeats the development of \cite{Weber-OpPoly2Mnd} but omits any mention of permutations, to exhibit non-symmetric operads as morphisms of polynomial monads
\[ \xygraph{!{0;(1.5,0):(0,.6667)::} {I}="p0" [r] {E}="p1" [r] {B}="p2" [r] {I}="p3" [d] {1}="p4" [l] {\N}="p5" [l] *!(0,.025){\xybox{\xygraph{{\N_*}}}}="p6" [l] {1}="p7" "p0":@{<-}"p1"^-{}:"p2"^-{}:"p3"^-{}:"p4"^-{}:@{<-}"p5"^-{}:@{<-}"p6"^-{}:"p7"^-{}:@{<-}"p0"^-{} "p1":"p6"_-{} "p2":"p5"^-{b} "p1":@{}"p5"|-{\tn{pb}}} \]
in which $b$ is a discrete fibration. A general such polynomial monad morphism, that is under no conditions on $b$, is a non-symmetric $\Cat$-operad, since by the discreteness of $\N$, $b$ is automatically a split fibration. In the braided case one imitates the development of \cite{Weber-OpPoly2Mnd} again, but this time replacing permutations by braids, to exhibit braided operads as morphisms of polynomial monads
\[ \xygraph{!{0;(1.5,0):(0,.6667)::} {I}="p0" [r] {E}="p1" [r] {B}="p2" [r] {I}="p3" [d] {1}="p4" [l] {\B}="p5" [l] *!(0,.025){\xybox{\xygraph{{\B_*}}}}="p6" [l] {1}="p7" "p0":@{<-}"p1"^-{}:"p2"^-{}:"p3"^-{}:"p4"^-{}:@{<-}"p5"^-{}:@{<-}"p6"^-{}:"p7"^-{}:@{<-}"p0"^-{} "p1":"p6"_-{} "p2":"p5"^-{b} "p1":@{}"p5"|-{\tn{pb}}} \]
in which $b$ is a discrete fibration and braided $\Cat$-operads as such morphisms together with a cleavage on $b$ making it a split fibration.

\section{Internal algebras}
\label{sec:IntAlg}

The general context of an adjunction of 2-monads, for discussing structures within structures, is defined and discussed in Section \ref{ssec:monad-adjunctions}. Then in Section \ref{ssec:operad-morphism->monad-adjunction} we explain how a morphism of operads gives rise to an adjunction of 2-monads. In Section \ref{ssec:explicit-adj2mnd-op-morphism} we give a more explicit description of this in elementary terms. This last part is somewhat more technical, uses the theory of polynomial functors \cite{GambinoKock-PolynomialFunctors, Weber-PolynomialFunctors} heavily, and plays a role in the proof of Theorem \ref{thm:intalg-classifier-from-operad-morphism} below.

\subsection{Adjunctions of 2-monads.}
\label{ssec:monad-adjunctions}
The idea of considering structures within structures is a fundamental theme in category theory. In \cite{Batanin-EckmannHilton} certain $\Cat$-operads universally possessing certain internal structure, were found to provide some insight into the combinatorics of iterated loop spaces. 

The context giving rise to the internal algebras of \cite{Batanin-EckmannHilton} was monad theoretic. Given a 2-monad on a 2-category $\ca K$ with a terminal object $1$, and a pseudo algebra $A$ of $T$, a \emph{$T$-algebra internal} to $A$ is by definition a lax morphism of $T$-algebras $1 \to A$. When $T = \tnb{M}$, $A$ is a monoidal category, and an $\tnb{M}$-algebra therein is a lax monoidal functor $1 \to A$, which is the same thing as a monoid in $A$. When $T = \tnb{S}$ (resp. $\tnb{B}$), $A$ is a symmetric (resp. braided) monoidal category, and an internal algebra is a commutative monoid in $A$.

With just a single 2-monad $T$ one is restricted to considering structures within structures of the ``same type''. However as is clear from mathematical practise, one would like a viewpoint to encompass a wider variety of internal structure with respect to a given ambient structure. For example, there are many types of structures that may be considered internal to a symmetric monoidal category $\ca V$. Indeed for any operad, one can consider the algebras of it in $\ca V$.

To encompass these situations, the correct monad theoretic context is a general type of morphism $S \to T$ of 2-monads, in which $T$ parametrises the type of ambient structure, and $S$ parametrises the type of internal structure. In fact, the analogous situation for monads in an arbitrary bicategory $\ca B$ will be of use, and so we make
\begin{defn}\label{def:adjunction-of-2-monads}
Let $\ca B$ be a bicategory and $(L,s)$ and $(K,t)$ be monads therein. An \emph{adjunction of monads} $f : (L,s) \to (K,t)$
consists of
\begin{enumerate}
\item an arrow $f_! : L \to K$,
\label{adj-mnd-left-adjoint}
\item a 2-cell $f^c : f_!s \to tf_!$ providing the coherence of a colax monad morphism, and
\label{adj-mnd-colax-coh}
\item a right adjoint $f^* : K \to L$ of $f_!$.
\label{adj-mnd-right-adjoint}
\end{enumerate}
An adjunction of monads $F : (\ca L,S) \to (\ca K,T)$ in $\TwoCAT$ is called an \emph{adjunction of 2-monads}.
\end{defn}
Given the adjunction $f_! \ladj f^*$, 2-cells $f^c : f_!s \to tf_!$ are in bijection with 2-cells $f^l : sf^* \to f^*t$, and $f^c$ satisfies the axioms making $(f_!,f^c)$ a colax monad morphism iff $f^l$ satisfies the axioms making $(f^*,f^l)$ a lax monad morphism. A key feature of Definition \ref{def:adjunction-of-2-monads} is that the monads $s$ and $t$ can act on (possibly) different objects $L$ and $K$ of $\ca B$. However an important special case is given by
\begin{exams}\label{exams:morphisms-of-2-monads}
For an adjunction of monads $f : (L,s) \to (K,t)$ in which $L = K$ and $f_! \ladj f^*$ is the identity adjunction, $f^c = f^l : s \to t$, and the colax monad morphism axioms for $f^c$ say that this common 2-cell $s \to t$ underlies a morphism of monoids in the monoidal category $\ca B(K,K)$ of endomorphisms of $K$.
\end{exams}
In order to exhibit an adjunction of 2-monads $F : (\ca L,S) \to (\ca K,T)$ as the context within which it makes sense to discuss ``$S$-algebras internal to a $T$-algebra'', we require the preliminary
\begin{rem}\label{rem:adjmnd-data-allowing-defn-of-internal-algebra}
Let $F : (\ca L,S) \to (\ca K,T)$ be an adjunction of 2-monads. As we saw above, the colax coherence data $F^c : F_!S \to TF_!$ and the lax coherence data $F^l : SF^* \to F^*T$ determine each other uniquely. By the formal theory of monads \cite{Street-FTM} $F^c$ and $F^l$ are in turn in bijection with extensions $\underline{F}$ of $F_!$ as on the left
\[ \xygraph{{\xybox{\xygraph{!{0;(1.5,0):(0,.6667)::} {\tn{Kl}(S)}="p0" [r] {\tn{Kl}(T)}="p1" [d] {\ca K}="p2" [l] {\ca L}="p3" "p0" :@{.>}"p1"^-{\underline{F}} :@{<-}"p2"^-{F_T} :@{<-}"p3"^-{F_!}  :"p0"^-{F_S}:@{}"p2"|-{=}}}}
[r(4)]
{\xybox{\xygraph{!{0;(1.5,0):(0,.6667)::} {\Algs T}="p0" [r] {\Algs S}="p1" [d] {\ca L}="p2" [l] {\ca K}="p3"
"p0" :@{.>}"p1"^-{\overline{F}} :"p2"^-{U^S} :@{<-}"p3"^-{F^*} :@{<-}"p0"^-{U^T} :@{}"p2"|-{=}}}}} \]
and also with liftings $\overline{F}$ of $F^*$ as on the right. Given a strict $T$-algebra $(A,a)$, the $S$-algebra action $SF^*A \to F^*A$ for $\overline{F}(A,a)$ is given in explicit terms by the composite $F^*(a)F^l_A$. With the effect of $\overline{F}$ on arrows and 2-cells similarly easy to describe, one may verify directly that the lifting $\overline{F}$ extends to any of the other 2-categories of algebras compatibly with the inclusions amongst them.
\end{rem}
\begin{defn}\label{defn:internal-algebra}
Let $F : (\ca L,S) \to (\ca K,T)$ be an adjunction of 2-monads, suppose that $\ca L$ has a terminal object $1$, and let $A$ be a pseudo $T$-algebra. An \emph{$S$-algebra internal to $A$} (relative to $F$) is a lax morphism $1 \to \overline{F}A$ of $S$-algebras. The category of $S$-algebras internal to $A$ is defined to be $\PsAlgl S(1,\overline{F}A)$.
\end{defn}
\begin{exams}\label{exams:M-Sm-Br-intalg}
In the case where $F$ is the identity morphism on $\tnb{M}$, $\tnb{B}$ or $\tnb{S}$, Definition \ref{defn:internal-algebra} gives the category of monoids in a monoidal category, or of commutative monoids in a braided or symmetric monoidal category respectively. There is an evident morphism $\tnb{M} \to \tnb{B}$ (resp. $\tnb{M} \to \tnb{S}$) of 2-monads, and with respect to this Definition \ref{defn:internal-algebra} gives the category of monoids in a braided (resp. symmetric) monoidal category. There is also a morphism $\tnb{B} \to \tnb{S}$ arising from the process of taking the underlying permutation of a braid, and thus a category of $\tnb{B}$-algebras internal to a symmetric monoidal category $\ca V$. However since the induced forgetful functors between 2-categories of $\tnb{S}$ and $\tnb{B}$-algebras are 2-fully-faithful, the category of $\tnb{B}$-algebras internal to $\ca V$ is the same as the category of $\tnb{S}$-algebras internal to $\ca V$, and thus is the category of commutative monoids in $\ca V$.
\end{exams}
There are many important examples in which $\ca L$ and $\ca K$ are different. See for instance Examples \ref{exams:algebras-of-operad-in-V} and \ref{exams:adj-monad-op-morphism} below.

\subsection{Morphisms of operads.}
\label{ssec:operad-morphism->monad-adjunction}
More interesting examples of Definitions \ref{def:adjunction-of-2-monads} and \ref{defn:internal-algebra} arise from operad morphisms. We describe these examples in this section, after recalling some further required background from the theory of polynomial functors \cite{GambinoKock-PolynomialFunctors, Weber-PolynomialFunctors, Weber-OpPoly2Mnd}.

Let $\ca E$ be a category with pullbacks. The bicategory $\Polyc{\ca E}$ has objects those of $\ca E$, and an arrow $I \to J$ in $\Polyc {\ca E}$ is a \emph{polynomial} in $\ca E$ from $I$ to $J$, which by definition is a diagram as on the left
\[ \xygraph{{\xybox{\xygraph{{I}="p0" [r] {E}="p1" [r] {B}="p2" [r] {J}="p3" "p0":@{<-}"p1"^-{s}:"p2"^-{p}:"p3"^-{t}}}}
[r(5)d(.05)]
{\xybox{\xygraph{!{0;(1.5,0):(0,.5)::} {I}="p0" [ur] {E_1}="p1" [r] {B_1}="p2" [dr] {J}="p3" [dl] {B_2}="p4" [l] {E_2}="p5" "p0":@{<-}"p1"^-{s_1}:"p2"^-{p_1}:"p3"^-{t_1}:@{<-}"p4"^-{t_2}:@{<-}"p5"^-{p_2}:"p0"^-{s_2}
"p1":"p5"_-{f_2} "p2":"p4"^-{f_1}
"p1":@{}"p4"|-{\tn{pb}} "p0" [r(.5)] {\scriptstyle{=}} "p3" [l(.5)] {\scriptstyle{=}}}}}} \]
in $\ca E$ in which the middle map $p$ is exponentiable. A 2-cell $f:(s_1,p_1,t_1) \to (s_2,p_2,t_2)$ in $\Polyc {\ca E}$ is a diagram as on the right in the previous display. In elementary terms the process of forming the horizontal composite $(s_3,p_3,t_3) = (s_2,p_2,t_2) \comp (s_1,p_1,t_1)$ of polynomials is encapsulated by the commutative diagram
\[ \xygraph{{I}="b1" [r] {E_1}="b2" [r] {B_1}="b3" [r] {J}="b4" [r] {E_2}="b5" [r] {B_2}="b6" [r] {K.}="b7" "b4" [u] {B_1 \times_J E_2}="p1" [u] {F}="dl" ([r(1.5)] {B_3}="dr", [l(1.5)] {E_3}="p2")
"b1":@{<-}"b2"_-{s_1}:"b3"_-{p_1}:"b4"_-{t_1}:@{<-}"b5"_-{s_2}:"b6"_-{p_2}:"b7"_-{t_2} "dl":"p1"_-{}(:"b3"_-{},:"b5"^-{}) "b2":@{<-}"p2"_-{}:"dl"_-{}:"dr"_-{}:"b6"^(.7){} "b1":@{<-}"p2"^-{s_3} "dr":"b7"^-{t_3} "p2":@/^{1pc}/"dr"^-{p_3}
"b3" [u(1.25)] {\scriptstyle{\tn{pb}}} "b5" [u(1.25)] {\scriptstyle{\tn{dpb}}} "b4" [u(.5)] {\scriptstyle{\tn{pb}}}} \]
The regions labelled ``pb'' are pullbacks, and those labelled ``dpb'' are distributivity pullbacks in the sense of \cite{Weber-PolynomialFunctors}. The universal properties enjoyed by pullbacks and distributivity pullbacks are used in exhibiting aspects of $\Polyc{\ca E}$'s bicategory structure, as well as an explicit description of the homomorphism
\[ \begin{array}{lccr} {\PFun {\ca E} : \Polyc {\ca E} \longrightarrow \CAT}
&&& {I \mapsto \ca E/I} \end{array} \]
of bicategories with object map as indicated. The effect of $\PFun {\ca E}$ on arrows is to send the polynomial $(s,p,t)$ to the composite functor $\Sigma_t\Pi_p\Delta_s : \ca E/I \longrightarrow \ca E/J$.

In particular a \emph{span} is a polynomial whose middle map is an identity, and the composition of such polynomials coincides with the usual pullback-composition of the bicategory $\Span {\ca E}$ of spans in $\ca E$. For $f : I \to J$ in $\ca E$, the spans $f^{\bullet}$ and $f_{\bullet}$ are the polynomials
\[ \xygraph{{\xybox{\xygraph{{I}="p0" [r] {I}="p1" [r] {I}="p2" [r] {J}="p3" "p0":@{<-}"p1"^-{1_I}:"p2"^-{1_I}:"p3"^-{f}}}}
[r(4)]
{\xybox{\xygraph{{J}="p0" [r] {I}="p1" [r] {I}="p2" [r] {I}="p3" "p0":@{<-}"p1"^-{f}:"p2"^-{1_I}:"p3"^-{1_I}}}}} \]
and one has $f^{\bullet} \ladj f_{\bullet}$. The category $\PolyMnd {\ca E}$ had monads in $\Polyc{\ca E}$ as objects, and morphisms are adjunctions of monads in $\Polyc{\ca E}$ in the sense of Definition \ref{def:adjunction-of-2-monads}, in which the underlying adjunction is of the form $f^{\bullet} \ladj f_{\bullet}$. In elementary terms, such a morphism $(I,s,p,t) \to (J,s',p',t')$ amounts to a commutative diagram
\[ \xygraph{!{0;(1.5,0):(0,.6667)::} {I}="p0" [r] {E}="p1" [r] {B}="p2" [r] {I}="p3" [d] {J}="p4" [l] {B'}="p5" [l] {E'}="p6" [l] {J}="p7" "p0":@{<-}"p1"^-{s}:"p2"^-{p}:"p3"^-{t}:"p4"^-{f}:@{<-}"p5"^-{t'}:@{<-}"p6"^-{p'}:"p7"^-{s'}:@{<-}"p0"^-{f} "p1":"p6"_-{f_2} "p2":"p5"^-{f_1} "p1":@{}"p5"|-{\tn{pb}}} \]
compatible in the appropriate sense with the monad structures on $(s,p,t)$ and $(s',p',t')$. Strictly speaking the data of this last diagram is that of a 2-cell
\[ \phi : f^{\bullet} \comp (s_S,p_S,t_S) \comp f_{\bullet} \longrightarrow (s_T,p_T,t_T), \]
in $\Polyc{\ca E}$, and the colax and lax coherence data of the corresponding adjunction of monads are mates of $\phi$ via $f^{\bullet} \ladj f_{\bullet}$. Applying $\PFun {\ca E}$ to such an adjunction of monads in $\Polyc{\ca E}$, produces an adjunction of monads in $\CAT$ whose underlying adjunction is $\Sigma_f \ladj \Delta_f$.

As explained in \cite{Weber-PolynomialFunctors} section 4, the theory of polynomial functors admits an evident 2-categorical analogue. Given a 2-category $\ca K$ with pullbacks, polynomials in $\ca K$ are the 1-cells of the 2-bicategory $\Polyc{\ca K}$. A 2-bicategory is a degenerate sort of tricategory, which is just like a bicategory except that the homs are 2-categories instead of categories. Any notion, such as that of a monad, that makes sense internal to a bicategory also does so internal to a 2-bicategory $\ca M$, since $\ca M$ has an underlying bicategory obtained by forgetting the 3-cells. Taking the associated polynomial 2-functor of a polynomial in $\ca K$ is the effect on 1-cells of a homomorphism $\PFun{\ca K} : \Polyc{\ca K} \to \TwoCAT$. All of our examples in this work take place in the 2-category $\ca K = \Cat$.
\begin{exams}\label{exams:algebras-of-operad-in-V}
As recalled in Section \ref{ssec:polynomials}, given an operad $T$ with set of objects $I$ one has an associated polynomial monad
\[ \xygraph{!{0;(1.5,0):(0,.6667)::} {I}="p0" [r] {E_T}="p1" [r] {B_T}="p2" [r] {I}="p3" "p0":@{<-}"p1"^-{s_T}:"p2"^-{p_T}:"p3"^-{t_T}} \]
and the effect of $\PFun{\Cat}$ on this is a 2-monad on $\Cat/I$ which is also denoted as $T$. This polynomial monad comes with an adjunction of monads
\[ \xygraph{!{0;(1.5,0):(0,.6667)::} {I}="p0" [r] {E_T}="p1" [r] {B_T}="p2" [r] {I}="p3" [d] {1}="p4" [l] {\P}="p5" [l] *!(0,.025){\xybox{\xygraph{{\P_*}}}}="p6" [l] {1}="p7" "p0":@{<-}"p1"^-{}:"p2"^-{}:"p3"^-{}:"p4"^-{}:@{<-}"p5"^-{}:@{<-}"p6"^-{}:"p7"^-{}:@{<-}"p0"^-{} "p1":"p6"_-{} "p2":"p5"^-{b_T} "p0":@{}"p6"|-{=} "p1":@{}"p5"|-{\tn{pb}} "p2":@{}"p4"|-{=}} \]
(in which $b_T$ is a discrete fibration) and the effect of $\PFun{\Cat}$ on this is an adjunction of 2-monads
\[ \ca I_T : (\Cat/I,T) \longrightarrow (\Cat,\tnb{S}). \]
The 2-functor $\ca I_T^* : \Cat \to \Cat/I$ sends $X$ to the constant $I$-indexed family on $X$ which we denote as $X_{\bullet}$. Similarly for a symmetric monoidal category ($=$ pseudo-$\tnb{S}$-algebra) $\ca V$, consistently with \cite{Weber-OpPoly2Mnd} Example 4.6, we denote by $\ca V_{\bullet}$ the pseudo-$T$-algebra $\overline{\ca I}_T(\ca V)$. By Corollary 4.18 of \cite{Weber-OpPoly2Mnd} the category of algebras of $T$ internal to $\ca V$ in the sense of Definition \ref{defn:internal-algebra}, is isomorphic to the category of algebras of the operad $T$ in the symmetric monoidal category $\ca V$ in the usual sense.
\end{exams}
\begin{exams}\label{exams:Batanin-Berger-monad-adj}
The monad morphisms of \cite{BataninBerger-HtyThyOfAlgOfPolyMnd} Section 5.5 are adjunctions of monads in $\CAT$ in which the monads are finitary and the categories on which they act are cocomplete. The examples all arise by applying $\PFun{\Set}$ to adjunctions of monads
\[ \xygraph{!{0;(1.5,0):(0,.6667)::} {I}="p0" [r] {E}="p1" [r] {B}="p2" [r] {I}="p3" [d] {J}="p4" [l] {B'}="p5" [l] {E'}="p6" [l] {J}="p7" "p0":@{<-}"p1"^-{s}:"p2"^-{p}:"p3"^-{t}:"p4"^-{f}:@{<-}"p5"^-{t'}:@{<-}"p6"^-{p'}:"p7"^-{s'}:@{<-}"p0"^-{f} "p1":"p6"_-{f_2} "p2":"p5"^-{f_1} "p0":@{}"p6"|-{=} "p1":@{}"p5"|-{\tn{pb}} "p2":@{}"p4"|-{=}} \]
in $\Polyc{\Set}$. One obtains finitary monads on $\Set/I$ and $\Set/J$ in such examples because the middle maps $p$ and $p'$ have finite fibres. These adjunctions of monads are then regarded as adjunctions of 2-monads via the 2-functorial process $\ca E \mapsto \tn{Cat}(\ca E)$ which sends a category $\ca E$ with pullbacks to the 2-category of categories internal to $\ca E$.
\end{exams}
\begin{rem}\label{rem:Bat-Ber-alg-in-smc}
In Example \ref{exams:Batanin-Berger-monad-adj} one applied $\PFun{\Set}$ and then $\ca E \mapsto \tn{Cat}(\ca E)$ to a morphism of polynomial monads in $\Set$. However,
another way to see this two stage process is as the application of
\[ \xygraph{!{0;(2,0):(0,1)::} {\Polyc{\Set}}="p0" [r] {\Polyc{\Cat}}="p1" [r] {\TwoCAT}="p2" "p0":"p1"^-{}:"p2"^-{\PFun{\Cat}}} \]
in which the first arrow denotes the inclusion of polynomials in $\Set$ as componentwise-discrete polynomials in $\Cat$. However to consider algebras of the examples of \cite{BataninBerger-HtyThyOfAlgOfPolyMnd} in a symmetric monoidal category, one must use a different process. Namely, by \cite{KockJ-PolyFunTrees, SzawielZawadowski-TheoriesOfAnalyticMonads} a polynomial monad in $\Set$ whose middle map has finite fibres can be identified as a $\Sigma$-free operad, which is then interpretted as a categorical polynomial monad as in Example \ref{exams:algebras-of-operad-in-V}. For the relation between these two viewpoints, see \cite{Weber-OpPoly2Mnd} section 6.
\end{rem}
\begin{exams}\label{exams:adj-monad-op-morphism}
Let $S$ be an operad with object set $I$, $T$ be an operad with object set $J$, and $F : S \to T$ be a morphism of operads with underlying object function $f : I \to J$. Applying the functor $\overline{\ca N}$ of \cite{Weber-OpPoly2Mnd} Proposition 3.2 gives a morphism of polynomial monads
\[ \xygraph{!{0;(1.5,0):(0,.6667)::} {I}="p0" [r] {E_S}="p1" [r] {B_S}="p2" [r] {I}="p3" [d] {J.}="p4" [l] {B_T}="p5" [l] {E_T}="p6" [l] {J}="p7" "p0":@{<-}"p1"^-{s_S}:"p2"^-{p_S}:"p3"^-{t_S}:"p4"^-{f}:@{<-}"p5"^-{t_T}:@{<-}"p6"^-{p_T}:"p7"^-{s_T}:@{<-}"p0"^-{f} "p1":"p6"_-{f_2} "p2":"p5"^-{f_1} "p0":@{}"p6"|-{=} "p1":@{}"p5"|-{\tn{pb}} "p2":@{}"p4"|-{=}} \]
and thus an adjunction
\[ (\Cat/I,S) \longrightarrow (\Cat/J,T) \]
of 2-monads with underlying adjunction $\Sigma_f \ladj \Delta_f$. Thus for any morphism $F : S \to T$ of operads, Definition \ref{defn:internal-algebra} gives a notion of $S$-algebra internal to $T$. Example \ref{exams:algebras-of-operad-in-V} is the special case where $T$ is the terminal operad $\Com$.
\end{exams}
\begin{rem}\label{rem:nonsym-braided-internal-algebras}
Examples \ref{exams:algebras-of-operad-in-V} and \ref{exams:adj-monad-op-morphism} have evident non-symmetric and braided analogues by working instead over the polynomial monads
\[ \xygraph{{\xybox{\xygraph{{1}="p0" [r] *!(0,.025){\xybox{\xygraph{{\N_*}}}}="p1" [r] {\N}="p2" [r] {1}="p3" "p0":@{<-}"p1"^-{}:"p2"^-{}:"p3"^-{}}}}
[r(4)]
{\xybox{\xygraph{{1}="p0" [r] *!(0,.025){\xybox{\xygraph{{\B_*}}}}="p1" [r] {\B}="p2" [r] {1}="p3" "p0":@{<-}"p1"^-{}:"p2"^-{}:"p3"^-{}}}}} \]
as at the end of Section \ref{ssec:polynomials}.
\end{rem}

\subsection{Explicit description of the adjunction of 2-monads from an operad morphism.}
\label{ssec:explicit-adj2mnd-op-morphism}
As recalled in Section \ref{ssec:polynomials}, for an operad $T$ with set of colours $J$, following notation 3.7 of \cite{Weber-OpPoly2Mnd}, we also denote by $T$ the induced 2-monad on $\Cat/J$. For a morphism $F : S \to T$ of operads with object map $f : I \to J$ as in Example \ref{exams:adj-monad-op-morphism}, we shall use the analogous
\begin{notn}\label{notn:associated-adjunction-of-2-monads}
Given a morphism $F : S \to T$ of operads with object map $f : I \to J$, we also denote the associated adjunction of 2-monads described in Examples \ref{exams:adj-monad-op-morphism} as
\[ F : (\Cat/I,S) \longrightarrow (\Cat/J,T). \]
Thus in particular, $F_! = \Sigma_f$ and $F^* = \Delta_f$.
\end{notn}
A complete explicit description of the 2-monad associated to an operad was given at the end of Section 3 of \cite{Weber-OpPoly2Mnd}. We shall now extend these calculations to give an explicit description of the adjunction of 2-monads $F$ of Notation \ref{notn:associated-adjunction-of-2-monads}. This task is to describe the colax $F^c : \Sigma_fS \to T\Sigma_f$ and lax $F^l : S\Delta_f \to \Delta_fT$ monad morphism coherence 2-natural transformations, in terms of the data of the operad morphism $F : S \to T$. These calculations will be used in Section \ref{sec:examples}.

The outcome of these calculations, described below in Lemmas \ref{lem:Fc-explicit} and \ref{lem:Fl-explicit}, are as one would predict given the explicit descriptions obtained in \cite{Weber-OpPoly2Mnd} Section 3. We present these results first, and then embark on their proofs in a fairly technical discussion in which the material and notations of \cite{Weber-PolynomialFunctors} and Section 2 of \cite{Weber-OpPoly2Mnd} are treated as assumed knowledge. This more technical discussion, which is here for the sake of rigour, can be omitted without affecting the understandability of the rest of the paper.

Given $X \in \Cat/I$ and $j \in J$, $(\Sigma_fSX)_j = \coprod_{fi=j} (SX)_i$. Thus an object of $(\Sigma_fSX)_j$ consists of $(\alpha,(x_k)_k)$ where $\alpha : (i_k)_{1{\leq}k{\leq}n} \to i$ is in $S$, $fi = j$ and $x_k \in X_{i_k}$. A morphism $(\alpha,(x_k)_k) \to (\alpha',(x'_k)_k)$ consists of $(\rho,\gamma_k)$ where $\rho \in \Sigma_n$, $i_k = i'_{\rho k}$ for all $k$, and $\gamma_k : x_k \to x'_{\rho k}$ is in $X_{i_k}$. An object of $(T\Sigma_fX)_j$ consists of $(\beta,(i_k,x_k)_{1{\leq}k{\leq}n})$ where $\beta : (j_k)_k \to j$ is in $T$, $i_k \in I$ such that $fi_k = j_k$, and $x_k \in X_{i_k}$. A morphism $(\beta,(i_k,x_k)_k) \to (\beta',(i'_k,x'_k)_k)$ consists of $(\rho,(\gamma_k)_k)$ where $\rho \in \Sigma_n$ such that $\beta = \beta'\rho$ and $i_k = i'_{\rho k}$, and $\gamma_k : x_k \to x'_{\rho k} \in X_{i_k}$.
\begin{lem}\label{lem:Fc-explicit}
For $X \in \Cat/I$ and $j \in J$, $F^c_{X,j}$ is given explicitly as
\[ \begin{array}{lccr} {F^c_{X,j}(\alpha,(x_k)_k) = (F\alpha,(i_k,x_k)_k)} &&&
{F^c_{X,j}(\rho,(\gamma_k)_k) = (\rho,(\gamma_k)_k)} \end{array} \]
where $\alpha : (i_k)_{1{\leq}k{\leq}n} \to i$ is in $S$, $x_k \in X_{i_k}$, $\rho \in \Sigma_n$, and $\gamma_k : x_k \to x'_{\rho k}$ is in $X_{i_k}$ for $1 \leq k \leq n$.
\end{lem}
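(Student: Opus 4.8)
The plan is to obtain $F^c_{X,j}$ by unwinding the construction of the colax coherence of the adjunction of 2-monads $F$ from its underlying morphism of polynomial monads. Recall from Section \ref{ssec:operad-morphism->monad-adjunction} that this morphism is encoded by the 2-cell $\phi : f^{\bullet} \comp (s_S,p_S,t_S) \comp f_{\bullet} \to (s_T,p_T,t_T)$ in $\Polyc{\Cat}$, whose components are the map $f_1 : B_S \to B_T$ on operations, the map $f_2 : E_S \to E_T$ on inputs, and the pullback square relating them; and that the colax coherence of the corresponding adjunction of monads in $\Polyc{\Cat}$ is the mate of $\phi$ under $f^{\bullet} \ladj f_{\bullet}$. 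Since $F^c$ is the image under $\PFun{\Cat}$ of this mate, and $F_! = \Sigma_f$, the component $F^c_X : \Sigma_f S X \to T \Sigma_f X$ is a 2-cell in $\Cat/J$, which I would compute fibrewise over each $j \in J$.

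First I would write out the two composite polynomial 2-functors explicitly, using the formula $\Sigma_t \Pi_p \Delta_s$ for $\PFun{\Cat}$ on arrows together with Lemma 3.10 of \cite{Weber-OpPoly2Mnd}, which identifies the objects of $(SX)_i$ as labelled operations $(\alpha,(x_k)_k)$ and the morphisms as pairs $(\rho,(\gamma_k)_k)$. This yields precisely the descriptions of $(\Sigma_f S X)_j$ and $(T \Sigma_f X)_j$ recorded just before the statement. The content of the lemma is then that, under these identifications, the mate of $\phi$ acts on the operation component by $f_1$, that is $\alpha \mapsto F\alpha$, and transports the input labels $(x_k)_k$ unchanged, merely recording the colour $i_k$ explicitly, this last being forced by the passage from $\Cat/I$ to $\Cat/J$.

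The key calculation is to check that applying $\PFun{\Cat}$ to the mate of $\phi$ really does produce this assignment on objects and morphisms. On the operation component this is immediate, since the base map of the monad morphism is $f_1$, which by the construction of $\overline{\ca N}$ (\cite{Weber-OpPoly2Mnd} Proposition 3.2) sends an operation $\alpha$ of $S$ to $F\alpha$ in $T$. The substance lies in the input data: I would use the pullback square $E_S \iso B_S \times_{B_T} E_T$ to see that an input of $\alpha$, living over $E_S$, corresponds under $f_2$ to an input of $F\alpha$ over $E_T$ carrying the same underlying object $x_k \in X_{i_k}$, with its colour now remembered as the pair $(i_k,x_k)$. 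The effect on morphisms is then forced, as the permutation $\rho$ and the labelling arrows $\gamma_k$ are carried along by the same spans, giving $(\rho,(\gamma_k)_k) \mapsto (\rho,(\gamma_k)_k)$.

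I expect the main obstacle to be the bookkeeping surrounding the dependent product $\Pi_p$ and the distributivity pullbacks appearing in the composite polynomial, since it is there that the ``operation with its tuple of inputs'' structure is produced, and one must check that the mate of $\phi$ interacts correctly with the universal properties of these distributivity pullbacks, and not merely the ordinary pullbacks. Once the action of $\PFun{\Cat}$ on the mate of $\phi$ is pinned down against those universal properties, the stated formula follows by direct inspection; the verification is routine but notation-heavy, consistent with the technical discussion flagged before the statement.
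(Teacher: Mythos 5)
Your plan matches the paper's proof: the paper likewise computes $F^c_X$ as the image under $\PFun{\Cat}$ of the mate of $\phi$, with the distributivity-pullback bookkeeping you flag as the main obstacle isolated beforehand as Lemma \ref{lem:explicit-colax-lax-monad-morphism-coherence}. The only device you do not mention is the paper's use of the auxiliary ``pointed'' categories $S_{\bullet}X$ and $T_{\bullet}\Sigma_fX$ (an object together with a chosen input of its underlying operation) to reduce the verification against the bottom distributivity pullback and pullback to four routine equations.
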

In terms of labelled trees, an object of $(\Sigma_fSX)_j$ is as depicted on the left
\[ \xygraph{{\xybox{\xygraph{!{0;(.6,0):(0,1)::} 
{\scriptstyle{\alpha}} *\xycircle<5pt,5pt>{-}="p0" [ul]
{\scriptstyle{x_1}} *\xycircle<5pt,5pt>{-}="p1" [r(2)]
{\scriptstyle{x_n}} *\xycircle<5pt,5pt>{-}="p2"
"p0" (-"p1",-"p2",-[d],[u(.75)] {...},[u(.4)l(.7)] {\scriptstyle{i_1}},[u(.4)r(.75)] {\scriptstyle{i_n}},[d(.7)r(.15)] {\scriptstyle{i}})}}}
[r(2.5)]
{\xybox{\xygraph{!{0;(.6,0):(0,1)::} 
{\scriptstyle{\beta}} *\xycircle<5pt,5pt>{-}="p0" [ul]
{\scriptstyle{x_1}} *\xycircle<5pt,5pt>{-}="p1" [r(2)]
{\scriptstyle{x_n}} *\xycircle<5pt,5pt>{-}="p2"
"p0" (-"p1",-"p2",-[d],[u(.75)] {...},[u(.4)l(.7)] {\scriptstyle{fi_1}},[u(.4)r(.75)] {\scriptstyle{fi_n}},[d(.7)r(.15)] {\scriptstyle{j}})}}}
[r(2.5)]
{\xybox{\xygraph{!{0;(.6,0):(0,1)::} 
{\scriptstyle{\alpha}} *\xycircle<5pt,5pt>{-}="p0" [ul]
{\scriptstyle{x_1}} *\xycircle<5pt,5pt>{-}="p1" [r(2)]
{\scriptstyle{x_n}} *\xycircle<5pt,5pt>{-}="p2"
"p0" (-"p1",-"p2",-[d],[u(.75)] {...},[u(.4)l(.7)] {\scriptstyle{i_1}},[u(.4)r(.75)] {\scriptstyle{i_n}},[d(.7)r(.15)] {\scriptstyle{i}})}}}
:@{|->}[r(2.5)]
{\xybox{\xygraph{!{0;(.6,0):(0,1)::} 
{\scriptstyle{F\alpha}} *\xycircle<7pt,5pt>{-}="p0" [ul]
{\scriptstyle{x_1}} *\xycircle<5pt,5pt>{-}="p1" [r(2)]
{\scriptstyle{x_n}} *\xycircle<5pt,5pt>{-}="p2"
"p0" (-"p1",-"p2",-[d],[u(.75)] {...},[u(.4)l(.7)] {\scriptstyle{fi_1}},[u(.4)r(.75)] {\scriptstyle{fi_n}},[d(.7)r(.15)] {\scriptstyle{j}})}}}^-{F^c}} \]
where $fi=j$, an object of $(T\Sigma_fX)_j$ is as depicted in the middle, and the effect of $F^c$ on objects is as depicted on the right. In this display, $x_k \in X_{i_k}$ throughout.

Given $Y \in \Cat/J$ and $i \in I$, an object of $(S\Delta_fY)_i$ consists of $(\alpha,(y_k)_k)$ where $\alpha : (i_k)_{1{\leq}k{\leq}n} \to i$ is in $S$ and $y_k \in Y_{fi_k}$. A morphism $(\alpha,(y_k)_k) \to (\alpha',(y'_k)_k)$ consists of $(\rho,(\delta_k)_k)$ where $\rho \in \Sigma_n$ and $\delta_k : y_k \to y'_{\rho k} \in Y_{fi_k}$. Since $(\Delta_fTY)_i = (TY)_{fi}$, an object of $(\Delta_fTY)_i$ consists of $(\beta,(y_k)_k)$ where $\beta : (j_k)_k \to fi$ is in $T$ and $y_k \in Y_{j_k}$. A morphism $(\beta,(y_k)_k) \to (\beta',(y'_k)_k)$ consists of $(\rho,(\delta_k)_k)$, where $\rho \in \Sigma_n$ is such that $\beta = \beta'\rho$, and $\delta_k : y_k \to y'_{\rho k}$ is in $Y_{j_k}$ for all $k$.
\begin{lem}\label{lem:Fl-explicit}
For $Y \in \Cat/J$ and $i \in I$, $F^l_{Y,i}$ is given explicitly as
\[ \begin{array}{lccr} {F^l_{Y,i}(\alpha,(y_k)_k) = (F\alpha,(y_k)_k)} &&& {F^l_{Y,i}(\rho,(\delta_k)_k) = (\rho,(\delta_k)_k)} \end{array} \]
where $\alpha : (i_k)_{1{\leq}k{\leq}n} \to i$ is in $S$, $y_k \in Y_{i_k}$, $\rho \in \Sigma_n$ and $\delta_k : y_k \to y'_{\rho k} \in Y_{i_k}$.
\end{lem}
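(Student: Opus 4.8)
The plan is to obtain $F^l$ as the mate of $F^c$ under the adjunction $F_! = \Sigma_f \ladj \Delta_f = F^*$, and then to substitute the explicit description of $F^c$ furnished by Lemma \ref{lem:Fc-explicit}. Recall from Section \ref{ssec:monad-adjunctions} that $F^l$ and $F^c$ determine one another as mates via this adjunction, so that
\[ F^l = (\Delta_f T \varepsilon) \comp (\Delta_f F^c \Delta_f) \comp (\eta S \Delta_f), \]
where $\eta : 1 \to \Delta_f \Sigma_f$ and $\varepsilon : \Sigma_f \Delta_f \to 1$ are the unit and counit of $\Sigma_f \ladj \Delta_f$. Thus everything reduces to recording $\eta$ and $\varepsilon$ explicitly and then pasting the three whiskered factors together.

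Second, I would describe the unit and counit in fibrewise terms. Since $\Sigma_f$ is composition with $f$ and $\Delta_f$ is pullback along $f$, one has $(\Delta_f \Sigma_f X)_i \iso \coprod_{fi'=fi} X_{i'}$ and $(\Sigma_f \Delta_f Y)_j \iso \coprod_{fi=j} Y_j$. The unit $\eta_X$ at the fibre over $i$ is the coproduct insertion of $X_i$ as the summand indexed by $i'=i$, while the counit $\varepsilon_Y$ at the fibre over $j$ is the fold map collapsing the indexing set $\{i : fi=j\}$; that is, it forgets the chosen preimage of $j$ and returns the underlying element of $Y_j$. Both are identities on the relevant labels, which is what ultimately makes the answer clean.

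Third, I would trace an object $(\alpha,(y_k)_k) \in (S\Delta_f Y)_i$ through the three factors. Applying $\eta S \Delta_f$ leaves the pair unchanged, now viewed in the summand $i'=i$ of $(\Sigma_f S \Delta_f Y)_{fi}$. Applying $\Delta_f F^c \Delta_f$, which at this fibre is $F^c_{\Delta_f Y, fi}$, produces $(F\alpha,(i_k,y_k)_k)$ by Lemma \ref{lem:Fc-explicit}, where now $F\alpha : (fi_k)_k \to fi$ is an operation of $T$. Finally $\Delta_f T \varepsilon$ applies $\varepsilon_Y$ to each label, and since $\varepsilon_Y(i_k,y_k)=y_k$ this discards exactly the preimage data $i_k$ that $F^c$ had introduced, yielding $(F\alpha,(y_k)_k) \in (TY)_{fi} = (\Delta_f T Y)_i$, as claimed. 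The computation on morphisms is identical: the permutation $\rho$ and the labelling maps $\delta_k$ pass through $\eta$ and $\varepsilon$ untouched, and $F^c$ acts as the identity on this data by Lemma \ref{lem:Fc-explicit}, giving $F^l_{Y,i}(\rho,(\delta_k)_k) = (\rho,(\delta_k)_k)$.

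The only real subtlety, and hence the step I would be most careful about, is the fibrewise bookkeeping of the counit: one must verify that $\varepsilon_Y$ is precisely the fold map forgetting the chosen preimage, since it is the cancellation of the $i_k$ introduced by $F^c$ against the $i_k$ discarded by $\varepsilon_Y$ that makes the mate reduce to the simple relabelling $\alpha \mapsto F\alpha$. Everything else is routine whiskering. (Alternatively, one could carry out the same mate computation one level down, at the level of polynomials and the adjunction $f^{\bullet} \ladj f_{\bullet}$ in $\Polyc{\Cat}$, before applying $\PFun{\Cat}$; this is more faithful to the polynomial-functor framework but conceptually identical.)
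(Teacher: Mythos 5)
Your proof is correct, but it takes a genuinely different route from the paper's. The paper treats Lemmas \ref{lem:Fc-explicit} and \ref{lem:Fl-explicit} symmetrically: each is verified directly against the diagrammatic characterisation of $\PFun{\ca E}(\phi^c)_x$ and $\PFun{\ca E}(\phi^l)_y$ supplied by Lemma \ref{lem:explicit-colax-lax-monad-morphism-coherence}, by writing down the candidate functors ($\phi_1$, $\phi_2$, $\phi_3$) explicitly and checking the short list of equations that identify them with the arrows induced by the universal properties of the relevant pullbacks and distributivity pullbacks. You instead derive Lemma \ref{lem:Fl-explicit} from Lemma \ref{lem:Fc-explicit} by taking the mate under $\Sigma_f \ladj \Delta_f$, using the fibrewise descriptions of the unit (coproduct insertion) and counit (fold map). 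This is legitimate: the paper records in Section \ref{ssec:monad-adjunctions} that $F^c$ and $F^l$ determine each other as mates, and in Section \ref{ssec:operad-morphism->monad-adjunction} that $\phi^c$ and $\phi^l$ are mates of $\phi$ via $f^{\bullet} \ladj f_{\bullet}$, a relationship transported by the homomorphism $\PFun{\Cat}$. Your route buys a shorter argument that avoids redoing the distributivity-pullback bookkeeping a second time; its cost is the identification of the unit and counit of the \emph{image} adjunction with the standard fibrewise ones (the paper does describe $\PFun{\ca E}$ applied to the counit $c_f$ explicitly, so this is routine), and you correctly single out the counit computation as the sensitive point, since it is exactly the cancellation of the preimage labels $i_k$ introduced by $F^c$ against those discarded by $\varepsilon_Y$ that produces the clean formula. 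One small point in your favour: you read the labels as $y_k \in Y_{fi_k}$, consistent with the paragraph preceding the lemma, which is the intended reading of the $Y_{i_k}$ in the lemma statement.
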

In terms of labelled trees, an object of $(S\Delta_fY)_i$ is as depicted on the left
\[ \xygraph{{\xybox{\xygraph{!{0;(.6,0):(0,1)::} 
{\scriptstyle{\alpha}} *\xycircle<5pt,5pt>{-}="p0" [ul]
{\scriptstyle{y_1}} *\xycircle<5pt,5pt>{-}="p1" [r(2)]
{\scriptstyle{y_n}} *\xycircle<5pt,5pt>{-}="p2"
"p0" (-"p1",-"p2",-[d],[u(.75)] {...},[u(.4)l(.7)] {\scriptstyle{i_1}},[u(.4)r(.75)] {\scriptstyle{i_n}},[d(.7)r(.15)] {\scriptstyle{i}})}}}
[r(2.5)]
{\xybox{\xygraph{!{0;(.6,0):(0,1)::} 
{\scriptstyle{\beta}} *\xycircle<5pt,5pt>{-}="p0" [ul]
{\scriptstyle{y_1}} *\xycircle<5pt,5pt>{-}="p1" [r(2)]
{\scriptstyle{y_n}} *\xycircle<5pt,5pt>{-}="p2"
"p0" (-"p1",-"p2",-[d],[u(.75)] {...},[u(.4)l(.7)] {\scriptstyle{j_1}},[u(.4)r(.75)] {\scriptstyle{j_n}},[d(.7)r(.15)] {\scriptstyle{fi}})}}}
[r(2.5)]
{\xybox{\xygraph{!{0;(.6,0):(0,1)::} 
{\scriptstyle{\alpha}} *\xycircle<5pt,5pt>{-}="p0" [ul]
{\scriptstyle{y_1}} *\xycircle<5pt,5pt>{-}="p1" [r(2)]
{\scriptstyle{y_n}} *\xycircle<5pt,5pt>{-}="p2"
"p0" (-"p1",-"p2",-[d],[u(.75)] {...},[u(.4)l(.7)] {\scriptstyle{i_1}},[u(.4)r(.75)] {\scriptstyle{i_n}},[d(.7)r(.15)] {\scriptstyle{i}})}}}
:@{|->}[r(2.5)]
{\xybox{\xygraph{!{0;(.6,0):(0,1)::} 
{\scriptstyle{F\alpha}} *\xycircle<7pt,5pt>{-}="p0" [ul]
{\scriptstyle{y_1}} *\xycircle<5pt,5pt>{-}="p1" [r(2)]
{\scriptstyle{y_n}} *\xycircle<5pt,5pt>{-}="p2"
"p0" (-"p1",-"p2",-[d],[u(.75)] {...},[u(.4)l(.7)] {\scriptstyle{fi_1}},[u(.4)r(.75)] {\scriptstyle{fi_n}},[d(.7)r(.15)] {\scriptstyle{fi}})}}}^-{F^l}} \]
where $y_k \in Y_{fi_k}$, an object of $(T\Sigma_fX)_j$ is as depicted in the middle where $y_k \in Y_{j_k}$, and the effect of $F^l$ on objects is as depicted on the right.

We turn now to our more technical discussion. Let $\ca E$ be a category with pullbacks. Given polynomial endomorphisms $P = (s,p,t)$ and $Q = (s',p',t')$ on $I$ and $J$, recall \cite{GambinoKock-PolynomialFunctors, Weber-OpPoly2Mnd} that a morphism between them in the category $\PolyEnd{\ca E}$, is a pair $(f,\phi)$ where $f : I \to J$ is in $\ca E$ and $\phi : f^{\bullet} \comp P \comp f_{\bullet} \to Q$ is in $\Polyc{\ca E}$, and that in more elementary terms this amounts to a commutative diagram
\begin{equation}\label{diag:PolyEnd-morphism} 
\xygraph{!{0;(1.5,0):(0,.6667)::} {I}="p0" [r] {E}="p1" [r] {B}="p2" [r] {I}="p3" [d] {J.}="p4" [l] {B'}="p5" [l] {E'}="p6" [l] {J}="p7" "p0":@{<-}"p1"^-{s}:"p2"^-{p}:"p3"^-{t}:"p4"^-{f}:@{<-}"p5"^-{t'}:@{<-}"p6"^-{p'}:"p7"^-{s'}:@{<-}"p0"^-{f} "p1":"p6"_-{f_2} "p2":"p5"^-{f_1} "p1":@{}"p5"|-{\tn{pb}}} 
\end{equation}
We denote by
\[ \begin{array}{lccr} {\phi^c : f^{\bullet} \comp P \to Q \comp f^{\bullet}} &&&
{\phi^l : P \comp f_{\bullet} \to f_{\bullet} \comp Q} \end{array} \]
the 2-cells which correspond to $\phi$ via the adjunction $f^{\bullet} \ladj f_{\bullet}$. The problem of verifying Lemmas \ref{lem:Fc-explicit} and \ref{lem:Fl-explicit} comes down have an explicit description of
\[ \begin{array}{lccr} {\PFun{\ca E}(\phi^c) : \Sigma_f\PFun{\ca E}(P) \to \PFun{\ca E}(Q)\Sigma_f} &&&
{\PFun{\ca E}(\phi^l) : \PFun{\ca E}(P)\Delta_f \to \Delta_f\PFun{\ca E}(Q)} \end{array} \]
in the cases of interest for us. We shall obtain such an explicit description at this generality in Lemma \ref{lem:explicit-colax-lax-monad-morphism-coherence}, in terms of morphisms in $\ca E$ induced by pullbacks and distributivity pullbacks.

Some preliminary remarks regarding distributivity pullbacks are required. Given a pullback square in $\ca E$ as on the left
\[ \xygraph{{\xybox{\xygraph{{P}="p0" [r] {B}="p1" [d] {C}="p2" [l] {A}="p3" "p0":"p1"^-{q}:"p2"^-{g}:@{<-}"p3"^-{f}:@{<-}"p0"^-{p}:@{}"p2"|-{\tn{pb}}}}}
[r(3)]
{\xybox{\xygraph{!{0;(1.25,0):(0,.8)::} {\ca E/P}="p0" [r] {\ca E/B}="p1" [d] {\ca E/C}="p2" [l] {\ca E/A}="p3" "p0":@{<-}"p1"^-{\Delta_q}:"p2"^-{\Sigma_g}:"p3"^-{\Delta_f}:@{<-}"p0"^-{\Sigma_p}:@{}"p2"|-*{\iso}}}}
[r(3)]
{\xybox{\xygraph{!{0;(1.25,0):(0,.8)::} {\ca E/P}="p0" [r] {\ca E/B}="p1" [d] {\ca E/C}="p2" [l] {\ca E/A}="p3" "p0":"p1"^-{\Pi_q}:@{<-}"p2"^-{\Delta_g}:@{<-}"p3"^-{\Pi_f}:"p0"^-{\Delta_p}:@{}"p2"|-*{\iso}}}}} \]
one has the left and right Beck-Chevalley isomorphisms $\Sigma_p\Delta_q \iso \Delta_f\Sigma_g$ and $\Pi_q\Delta_p \iso \Delta_g\Pi_f$. In elementary terms $\Sigma_p\Delta_q \iso \Delta_f\Sigma_g$ witnesses the fact that if one pastes a square on top of the original pullback, then this square is a pullback iff the composite of this square with the original pullback is a pullback. In other words the left Beck-Chevalley isomorphisms are a reformulation of the elementary composability of pullbacks in a category $\ca E$ which admits all pullbacks. Similarly
\begin{lem}\label{lem:right-BC-elementary}
In a category with pullbacks, given the solid parts of the diagram on the left
\[ \xygraph{{\xybox{\xygraph{{P}="p0" [r] {B}="p1" [d] {C}="p2" [l] {A}="p3" "p0":"p1"^-{f}:"p2"^-{}:@{<-}"p3"^-{}:@{<-}"p0"^-{}:@{}"p2"|-{\tn{pb}}
"p3" [d(.5)l(1)] {X_1}="q0" [d(.5)r(1)] {X_2}="q1" [r] {X_3}="q2" "p2" [r] {X_4}="q3" "p0" [u(.5)l(1)] {X_5}="q4" "p1" [u] {X_6}="q5"
"q1"(:"q0":"p3",:"q2":"p2") "q3"(:"q2",:"p1") "q5"(:"p0",:@/^{.5pc}/"q3") "q4"(:"p0"_-{g},:"q0")
"p0" [d(.5)r(.5)] {\scriptsize{\tn{pb}}} "p3" [d(.5)r(.5)] {\scriptsize{\tn{dpb}}} "q3" [l(.5)] {\scriptsize{\tn{pb}}} "p3" [u(.5)l(.5)] {\scriptsize{\tn{pb}}} "p1" [u(.4)l(.1)] {\scriptsize{\tn{pb}}} "q5":@{.>}"q4"}}}
[r(4)]
{\xybox{\xygraph{{X_6}="p0" [r] {X_4}="p1" [d(2)] {B}="p2" [l] {P}="p3" [u] {X_5}="p4" "p0":"p1"^-{}:"p2"^-{}:@{<-}"p3"^-{f}:@{<-}"p4"^-{g}:@{<.}"p0"^-{}:@{}"p2"|-{\tn{pb}}}}}} \]
one can factor the morphism $X_6 \to P$ through $g$ as shown in such a way as to make the pullback on the right into a distributivity pullback around $(f,g)$.
\end{lem}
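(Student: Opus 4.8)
The plan is to recognise this as the elementary, distributivity-pullback formulation of the right Beck--Chevalley isomorphism $\Pi_q\Delta_p \iso \Delta_g\Pi_f$ recalled just above, and to prove it directly from the universal properties displayed in the left-hand diagram. Write $p : P \to A$, $c : A \to C$ and $h : B \to C$ for the remaining edges of the central pullback $P = A\times_C B$. Reading off the labelled regions, the solid data consist of: the distributivity pullback around $(X_1 \to A,\, c)$ producing $X_3 \to C$, with its pullback square $X_2 = A\times_C X_3$ and counit $\epsilon_1 : X_2 \to X_1$ over $A$; the pullback $X_5 = P\times_A X_1$ carrying $g : X_5 \to P$; the pullback $X_4 = B\times_C X_3$; and the pullback $X_6 = P\times_B X_4$. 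In these terms the assertion is exactly that $X_4 \to B$ computes $\Pi_f(g)$, that is, that $\Pi_f\Delta_p \iso \Delta_h\Pi_c$ holds at the object $X_1 \to A$, realised by $X_6$ together with a counit $X_6 \to X_5$.

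First I would construct the dotted map $\epsilon : X_6 \to X_5$. The composites $X_6 \to P \xrightarrow{p} A$ and $X_6 \to X_4 \to X_3 \to C$ agree over $C$: since $X_6 = P\times_B X_4$, $X_4 = B\times_C X_3$ and $P = A\times_C B$, both equal $X_6 \to P \xrightarrow{f} B \xrightarrow{h} C$. They therefore induce a map $X_6 \to A\times_C X_3 = X_2$, and composing with the counit $\epsilon_1$ yields $X_6 \to X_1$, which is over $A$ because $\epsilon_1$ is. Pairing this with the projection $X_6 \to P$, the two agreeing over $A$ by construction, gives $\epsilon : X_6 \to P\times_A X_1 = X_5$ with $g\epsilon$ the projection $X_6 \to P$. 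This is the required factorisation of $X_6 \to P$ through $g$.

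Next I would verify the universal property of a distributivity pullback around $(g,f)$ for the pair $(X_4 \to B,\, \epsilon)$. Given a competitor, namely $r' : E' \to B$ together with $\epsilon' : D' \to X_5$ over $P$ where $D' = P\times_B E'$, the essential observation is the canonical identification $D' = P\times_B E' \iso A\times_C E'$ coming from the central pullback $P = A\times_C B$; this pasting of pullbacks is precisely the Beck--Chevalley content. Projecting $\epsilon'$ onto $X_1$ gives a map $A\times_C E' \to X_1$ over $A$, to which the universal property of the distributivity pullback producing $X_3$ assigns a unique $\psi : E' \to X_3$ over $C$, and pairing $\psi$ with $r'$ produces the comparison $\phi : E' \to B\times_C X_3 = X_4$ over $B$.

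The main obstacle is the final verification, that $\epsilon\circ\Delta_f(\phi) = \epsilon'$ as maps $D' \to X_5 = P\times_A X_1$. I would test the two components separately: the $P$-component agrees because both sides lie over $P$, while the $X_1$-component reduces, after tracing the construction of $\epsilon$ through $\Delta_f(\phi)$ and using the identification $p\,\pi_P = \pi_A$ on $D'$, to the defining equation of $\psi$ from the distributivity pullback for $X_3$. Uniqueness of $\phi$ then follows since its $B$-component is forced to be $r'$ and its $X_3$-component is forced by the uniqueness clause for $X_3$. Everything outside this last matching is routine pullback bookkeeping of the several ``over $A$'', ``over $C$'', ``over $P$'' and ``over $B$'' conditions; the single non-formal ingredient is the isomorphism $P\times_B E' \iso A\times_C E'$, which is where the hypothesis that $P = A\times_C B$ is a pullback does its work.
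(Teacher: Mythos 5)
The paper does not actually prove this lemma: it explicitly leaves ``a direct proof \ldots using the universal properties of pullbacks and distributivity pullbacks as an instructive exercise for the reader.'' Your proposal is precisely that exercise carried out correctly: the construction of $\epsilon : X_6 \to X_5$ via the induced map $X_6 \to X_2 = A \times_C X_3$ followed by the counit, the pasting identification $P \times_B E' \iso A \times_C E'$, and the reduction of the terminality of $(X_4 \to B,\, \epsilon)$ to the terminality of the given distributivity pullback producing $X_3$ are exactly the intended ingredients, and the component-wise verification and uniqueness argument go through as you describe. The only blemish is notational: you write ``distributivity pullback around $(g,f)$'' and ``around $(X_1 \to A,\, c)$'' where the paper's convention (outer map first) would have $(f,g)$ and $(c,\, X_1 \to A)$.
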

\noindent is an elementary formulation of the right Beck-Chevalley isomorphisms. We leave the task of giving a direct proof of this lemma using the universal properties of pullbacks and distributivity pullbacks as an instructive exercise for the reader. Another elementary fact which we shall use below is
\begin{lem}\label{lem:vertical-comp-dpb}
If in a category with pullbacks one has
\[ \xygraph{{X_1}="p0" [r] {X_2}="p1" [r] {X_3}="p2" [r] {X_4}="p3" [r] {X_5}="p4" [d(2)] {X_6}="p5" [l(2)] {X_7}="p6" [l(2)] {X_8}="p7" [r(2)u] {X_9}="p8" "p0":"p1"^-{}:"p2"^-{}:"p3"^-{j}:"p4"^-{g}:"p5"^-{f}:@{<-}"p6"^-{}:@{<-}"p7"^-{}:@{<-}"p0"^-{}
"p8" (:@{<-}"p1"^-{k},:"p3",:"p6"^-{h},[d(.2)l] {\scriptsize{\tn{dpb}}},[d(.2)r] {\scriptsize{\tn{dpb}}}, [u(.6)] {\scriptsize{\tn{pb}}})} \]
in which the left distributivty pullback is around $(h,k)$ and the right distributivty pullback is around $(f,g)$, then the composite diagram is a distributivity pullback around $(f,gj)$.
\end{lem}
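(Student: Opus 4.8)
The plan is to verify directly the universal property of a distributivity pullback around $(f,gj)$, by exhibiting a natural bijection between test data for the composite and test data for the two given distributivity pullbacks taken in sequence.

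First I would record that the composite diagram already carries the structure of a candidate distributivity pullback around $(f,gj)$. Write $e\colon X_9 \to X_4$ for the comparison map of the right distributivity pullback, so that $g\comp e$ is the projection $X_9 \to X_5$ and $h\colon X_9 \to X_7$ is the other projection of the pullback $X_9 = X_5\times_{X_6}X_7$; write $\varepsilon\colon X_1 \to X_2$ for the comparison map of the left distributivity pullback, so that $k\comp\varepsilon$ is the projection $X_1 \to X_9$; and note that the top pullback exhibits $X_2 = X_9\times_{X_4}X_3$, with $k$ as one projection. Two applications of the pasting law for pullbacks identify the vertex $X_1 = X_9\times_{X_7}X_8$ with $X_5\times_{X_6}X_8$, using the factorisation $X_8 \to X_7 \to X_6$; hence the outer square with base $X_8 \to X_6$ is the required $f$-pullback. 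I take the comparison map for the composite to be $X_1 \xrightarrow{\varepsilon} X_2 \to X_3$, and a short chase — combining $g\comp e = \pi$, $k\comp\varepsilon = \pi$, and commutativity of the top pullback — shows that composing it with $gj$ recovers the projection $X_1 \to X_5$, as demanded.

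The substance is universality. Given a test datum consisting of $r'\colon D' \to X_6$, the induced pullback $Q' = X_5\times_{X_6}D'$, and $\sigma'\colon Q' \to X_3$ with $gj\comp\sigma'$ equal to the projection $Q' \to X_5$, I proceed in three steps. First, composing $\sigma'$ with $j$ gives a test datum for the right distributivity pullback around $(f,g)$, whose universal property yields a unique $u\colon D'\to X_7$ with $(X_7\to X_6)\comp u = r'$ and the evident $e$-compatibility. Second, since $(X_7\to X_6)\comp u = r'$, pasting of pullbacks gives $X_9\times_{X_7}D' \iso X_5\times_{X_6}D' = Q'$, and under this identification the $e$-compatibility from the first step says exactly that the projection $Q'\to X_9$ and the map $\sigma'\colon Q'\to X_3$ agree over $X_4$; so the universal property of the top pullback produces a unique $\sigma''\colon Q'\to X_2$ with $k\comp\sigma''$ the projection $Q'\to X_9$ and $(X_2\to X_3)\comp\sigma'' = \sigma'$. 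This $(u,\sigma'')$ is precisely a test datum for the left distributivity pullback around $(h,k)$. Third, its universal property yields a unique $w\colon D'\to X_8$, and one checks using the identities assembled above that $w$ realises the original $\sigma'$ for the composite. Uniqueness of $w$ follows by running the same three steps in reverse, invoking in turn the uniqueness clauses of the left distributivity pullback, the top pullback, and the right distributivity pullback.

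I expect the only real obstacle to be the bookkeeping in the middle step: one must verify that the intermediate pullback of $X_9 \to X_7$ along $u$ is canonically $Q'$, and that the compatibility equations line up so that the output of each universal property is a legitimate input for the next. Once this threading is in place the remaining verifications are routine chases with projections, and the pasting law for ordinary pullbacks does the heavy lifting; no appeal to exponentiability is needed beyond what is already packaged into the two given distributivity pullbacks.
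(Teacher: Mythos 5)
The paper itself leaves this lemma as ``a straight forward exercise'', so there is no written proof to compare against; judged on its own terms, your three-step skeleton (right distributivity pullback, then the top pullback, then the left distributivity pullback, then reverse for uniqueness) is the right strategy, but the execution rests on a wrong reading of which square in a distributivity pullback is required to be a pullback, and this breaks both halves of the argument. In a pullback around $(f,g)$ with data $P \to W \xrightarrow{g} X \xrightarrow{f} Y \leftarrow Q \leftarrow P$, the pullback square has boundary $fgp = rq$, i.e.\ $P \iso W \times_Y Q$ over the \emph{composite} $fg$, not $X \times_Y Q$ over $f$. Hence in the right-hand dpb one has $X_9 \iso X_4 \times_{X_6} X_7$ over $fg$, not $X_5 \times_{X_6} X_7$; in the left-hand one $X_1 \iso X_2 \times_{X_7} X_8$ over $hk$, not $X_9 \times_{X_7} X_8$; and what you must exhibit for the composite is $X_1 \iso X_3 \times_{X_6} X_8$ over $fgj$, not the ``$f$-pullback'' $X_5 \times_{X_6} X_8$. (The correct identification does follow from two pastings, $X_1 \iso X_2 \times_{X_7} X_8 \iso (X_3 \times_{X_4} X_9)\times_{X_7}X_8 \iso X_3 \times_{X_6} X_8$, so your first paragraph is repairable.)

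The same confusion undoes the universality argument. Your test datum $\sigma' : X_5 \times_{X_6} D' \to X_3$ over $X_5$ is not a pullback around $(f,gj)$, and the square with sides $j\comp\sigma'$, $\pi_{D'}$, $fg$, $r'$ that you feed to the right dpb in your first step is not a pullback (its apex would have to be $X_4 \times_{X_6} D'$, not $X_5 \times_{X_6} D'$), so the terminal-object universal property of that dpb does not apply to it. What you are implicitly invoking is the adjunction-style property that maps $D' \to X_7$ over $X_6$ correspond to maps $X_5 \times_{X_6} D' \to X_4$ over $X_5$, i.e.\ that $X_7 \to X_6$ computes $\Pi_f(g)$; this is equivalent to the dpb property, but that equivalence is precisely the non-elementary fact (Proposition 2.2.3 of \cite{Weber-PolynomialFunctors}, quoted immediately after the lemma) of which this lemma is meant to be the elementary counterpart, and you neither prove nor cite it. The repair is to start from a genuine pullback around $(f,gj)$: $p' : P' \to X_3$, $q' : P' \to Q'$, $r' : Q' \to X_6$ with $fgjp' = r'q'$ a pullback. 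Then $(jp', q', r')$ is \emph{literally the same square} viewed as a pullback around $(f,g)$, so the right dpb yields unique $\alpha : P' \to X_9$ and $\beta : Q' \to X_7$; the top pullback lifts the pair $(p',\alpha)$ to $\gamma : P' \to X_2$; a pasting argument (using $X_2 \iso X_3 \times_{X_6} X_7$) shows $(\gamma, q', \beta)$ is a pullback around $(h,k)$; and the left dpb finishes, with uniqueness obtained by reversing the three steps as you describe.
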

\noindent whose proof is also a straight forward exercise. This is the elementary counterpart of the fact, described in Proposition 2.2.3 of \cite{Weber-PolynomialFunctors}, that for a distibutivity pullback
\[ \xygraph{{P}="p0" [r] {W}="p1" [r] {X}="p2" [d] {Y}="p3" [l(2)] {Q}="p4" "p0":"p1"^-{p}:"p2"^-{g}:"p3"^-{f}:@{<-}"p4"^-{r}:@{<-}"p0"^-{q}:@{}"p3"|-{\tn{dpb}}} \]
the canonical natural transformation $\Sigma_r\Pi_q\Delta_p \to \Pi_f\Sigma_g$ is an isomorphism. Other elementary facts concerning distributivity pullbacks were given in section 2.2 of \cite{Weber-PolynomialFunctors}.

To get an elementary description of $\phi^c$, form the diagram on the left
\begin{equation}\label{eq:def-phic-phil} 
\xygraph{{\xybox{\xygraph{!{0;(.8,0):(0,1)::} {I}="p0" [ur] {F_2}="p1" [r] {F_3}="p2" [d(2)r] {J}="p3" [l] {B'}="p4" [l] {E'}="p5" [l] {J}="p6" [ur] {F_1}="p7" "p0":@{<-}"p1"^-{s''}:"p2"^-{p''}:"p3"^-{t''}:@{<-}"p4"^-{t'}:@{<-}"p5"^-{p'}:"p6"^-{s'}:@{<-}"p0"^-{f} "p7"(:"p0",:"p5",:@{<-}"p1",[d(.5)l(.5)] {\scriptsize{\tn{pb}}},[r(.5)d(.25)] {\scriptsize{\tn{dpb}}}) "p2":"p4"}}}
[r(3.25)]
{\xybox{\xygraph{!{0;(.8,0):(0,1)::} {I}="p0" [ur] {F_2}="p1" [r] {F_3}="p2" [d(2)r] {J}="p3" [l] {B'}="p4" [l] {E'}="p5" [l] {J}="p6" [ur] {F_1}="p7" "p0":@{<-}"p1"^-{}:"p2"^-{}:"p3"^-{}:@{<-}"p4"^-{t'}:@{<-}"p5"^-{p'}:"p6"^-{s'}:@{<-}"p0"^-{f} "p7"(:"p0",:"p5",:@{<-}"p1",[d(.5)l(.5)] {\scriptsize{\tn{pb}}},[r(.5)d(.25)] {\scriptsize{\tn{dpb}}}) "p2":"p4"
"p1" [u(1.5)l] {I}="q0" [r] {E}="q1" [r] {B}="q2" [r] {I}="q3"
"q0":@{<-}"q1"^-{s}:"q2"^-{p}:"q3"^-{t} "q0":"p0"_-{1_I} "q1":@/_{1pc}/"p5"_(.25){f_2} "q2":@/^{1pc}/"p4"^(.25){f_1} "q3":"p3"^-{f} "q1":@{.>}"p1"|-{\phi^c_2} "q2":@{.>}"p2"|-{\phi^c_1}}}}
[r(3.25)]
{\xybox{\xygraph{!{0;(.8,0):(0,1.5)::} {I}="p0" [r] {E}="p1" [r] {B}="p2" [r] {I}="p3" [d(2)] {J}="p4" [l] {B'}="p5" [l] {E'}="p6" [l] {J}="p7" "p0":@{<-}"p1"^-{s}:"p2"^-{p}:"p3"^-{t}:"p4"^-{f}:@{<-}"p5"^-{t'}:@{<-}"p6"^-{p'}:"p7"^-{s'}:@{<-}"p0"^-{f} "p1":"p6"_-{f_2} "p2":"p5"_(.75){f_1}
"p1" [r(.5)d] {G_2}="q0" [r] {G_1}="q1" "q0"(:"p6",:"q1"(:"p5",:"p3",:@{<.}"p2"|-{\phi^l_1}),:@{<.}"p1"|-{\phi^l_2},:"p7")}}}}
\end{equation}
and it is easily verified that $Q \comp f^{\bullet}$ is the polynomial $(s'',p'',t'')$, and that the morphisms $F_2 \to E'$ and $F_3 \to B'$ are the components of $Q \comp c_f$, where $c_f$ is the counit of $f^{\bullet} \ladj f_{\bullet}$, and is given explicitly by
\[ \xygraph{!{0;(1.5,0):(0,.5)::}
{J}="p0" [ur] {I}="p1" [r] {I}="p2" [dr] {J.}="p3" [dl] {J}="p4" [l] {J}="p5" "p0":@{<-}"p1"^-{f}:"p2"^-{1_I}:"p3"^-{f}:@{<-}"p4"^-{1_J}:@{<-}"p5"^-{1_J}:"p0"^-{1_J}
"p1":"p5"_-{f} "p2":"p4"^-{f}
"p1":@{}"p4"|-{\tn{pb}} "p0" [r(.5)] {\scriptstyle{=}} "p3" [l(.5)] {\scriptstyle{=}}} \]
Since the equation $\phi = (Q \comp c_f)(\phi^c \comp f_{\bullet})$ determines $\phi^c$ uniquely, the components of $\phi^c$ are induced as in the diagram in the middle of (\ref{eq:def-phic-phil}). Similarly the components of $\phi^l$ are induced in the diagram on the right of (\ref{eq:def-phic-phil}), in which the squares with vertices $(G_1,I,J,B')$ and $(G_2,G_1,B',E')$ are pullback squares.

Given $x:X \to I$ and the data (\ref{diag:PolyEnd-morphism}) one can form the commutative diagram on the left
\begin{equation}\label{diag:monad-morphisms-coherences-explicit}
\xygraph{{\xybox{\xygraph{!{0;(1,0):(0,.6667)::} {I}="p0" [r] {E}="p1" [r] {B}="p2" [r] {I}="p3" [d] {J}="p4" [l] {B'}="p5" [l] {E'}="p6" [l] {J}="p7" "p0":@{<-}"p1"^-{s}:"p2"^-{p}:"p3"^-{t}:"p4"^-{f}:@{<-}"p5"^-{t'}:@{<-}"p6"^-{p'}:"p7"^-{s'}:@{<-}"p0"^-{f} "p1":"p6"_-{f_2} "p2":"p5"^-{f_1} "p1":@{}"p5"|-{\tn{pb}}
"p0" [u] {X}="q0" [r] {X_2}="q1" [u] {X_3}="q2" [r] {X_4}="q3"
"q2" (:"q1"(:"q0":"p0"_-{x},:"p1"),:"q3"(:"p2",:"p3"))
"q0" [d(.4)r(.5)] {\scriptsize{\tn{pb}}} "q1" [u(.1)r(.5)] {\scriptsize{\tn{dpb}}}
"p7" [d] {X}="r0" [r] {X'_2}="r1" [d] {X'_3}="r2" [r] {X'_4}="r3"
"r2" (:"r1"(:"r0":"p7"^-{fx},:"p6"),:"r3"(:"p5",:"p4"))
"r0" [u(.4)r(.5)] {\scriptsize{\tn{pb}}} "r1" [d(.1)r(.5)] {\scriptsize{\tn{dpb}}}
"q1":@{.>}@/_{1pc}/"r1" "q2":@{.>}@/^{1pc}/"r2" "q3":@{.>}@/^{1pc}/"r3"}}}
[r(5)]
{\xybox{\xygraph{!{0;(1,0):(0,.6667)::} {I}="p0" [r] {E}="p1" [r] {B}="p2" [r] {I}="p3" [d] {J}="p4" [l] {B'}="p5" [l] {E'}="p6" [l] {J}="p7" "p0":@{<-}"p1"^-{s}:"p2"^-{p}:"p3"^-{t}:"p4"_-{f}:@{<-}"p5"^-{t'}:@{<-}"p6"^-{p'}:"p7"^-{s'}:@{<-}"p0"_-{f} "p1":"p6"_-{f_2} "p2":"p5"^-{f_1} "p1":@{}"p5"|-{\tn{pb}}
"p0" [l] {Y_1}="q1" "p7" [l] {Y}="q0" "p0" [u] {Y_2}="q2" "p1" [u(2)] {Y_3}="q3" "p2" [u(2)] {Y_4}="q4" 
"q1" (:"q0":"p7"|-{y},:"p0") "q2"(:"q1",:"p1") "q3"(:"q2",:"q4"(:"p2",:"p3"))
"q1":@{}"p7"|-{\tn{pb}} "q2":@{}"p0"|-{\tn{pb}} "p1" [r(.25)u] {\scriptsize{\tn{dpb}}}
"p7" [d] {Y'_1}="r2" "p6" [d(2)] {Y'_2}="r3" "p5" [d(2)] {Y'_3}="r4" "p4" [r] {Y'_4}="r5"
"r2"(:"q0",:"p6") "r3"(:"r2",:"r4"(:"p5",:"p4")) "r5"(:"r4",:"p3")
"r2":@{}"p7"|-{\tn{pb}} "p6" [r(.25)d] {\scriptsize{\tn{dpb}}} "p4" [r(.5)] {\scriptsize{\tn{pb}}}
"q2":@/_{1pc}/@{.>}"r2" "q3":@/_{1pc}/@{.>}"r3" "q4":@/_{1pc}/@{.>}"r4" "q4":@/^{1pc}/@{.>}"r5"}}}}
\end{equation}
in which one induces $X_2 \to X'_2$ using the bottom pullback, and $X_3 \to X'_3$ and $X_4 \to X'_4$ are induced by the bottom distributivity pullback. As such, $X_4 \to X'_4$ is a morphism $\Sigma_f\PFun{\ca E}(P)(x) \to \PFun{\ca E}(Q)\Sigma_f(x)$ in $\ca E/J$. Similarly given $y : Y \to J$ one can form the commmutative diagram on the right in which the dotted arrows are induced in the evident manner, and then $Y_4 \to Y'_4$ is a morphism $\PFun{\ca E}(P)\Delta_f(y) \to \Delta_f\PFun{\ca E}(Q)(y)$ in $\ca E/I$.
\begin{lem}\label{lem:explicit-colax-lax-monad-morphism-coherence}
Given a morphism in $\PolyEnd{\ca E}$ as in (\ref{diag:PolyEnd-morphism}), $x : X \to I$ and $y : Y \to J$, then
\begin{enumerate}
\item The morphism $X_4 \to X'_4$ in (\ref{diag:monad-morphisms-coherences-explicit}) is $\PFun{\ca E}(\phi^c)_x$.
\item The morphism $Y_4 \to Y'_4$ in (\ref{diag:monad-morphisms-coherences-explicit}) is $\PFun{\ca E}(\phi^l)_y$.
\end{enumerate}
\end{lem}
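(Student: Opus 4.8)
The plan is to prove the two statements together, using that they are mates of one another. Since $\phi^c$ and $\phi^l$ are the two mates of the single $2$-cell $\phi$ under $f^{\bullet} \ladj f_{\bullet}$, and since $\PFun{\ca E}$ is a homomorphism of bicategories---hence preserves adjunctions and mates---carrying $f^{\bullet} \ladj f_{\bullet}$ to $\Sigma_f \ladj \Delta_f$, the transformations $\PFun{\ca E}(\phi^c)$ and $\PFun{\ca E}(\phi^l)$ are again mates of one another under $\Sigma_f \ladj \Delta_f$. It therefore suffices to establish (1), after which (2) follows by mate-duality, equivalently by rerunning the computation below with the right-hand diagram of (\ref{diag:monad-morphisms-coherences-explicit}) in place of the left-hand one. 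The strategy for (1) is to unwind $\PFun{\ca E}$ on the $2$-cell $\phi^c$ and to match it, object by object, with the maps induced in the left-hand diagram of (\ref{diag:monad-morphisms-coherences-explicit}).

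First I would record the elementary description of $\PFun{\ca E}$ on a $2$-cell of polynomials. Such a $2$-cell, from $(s_1,p_1,t_1)$ to $(s_2,p_2,t_2)$, is given by maps $u$, $w$ of the middle objects with $s_2u=s_1$ and $t_2w=t_1$, the square relating $p_1,p_2,u,w$ being a pullback. Using $\Delta_{s_1}=\Delta_u\Delta_{s_2}$ and $\Sigma_{t_1}=\Sigma_{t_2}\Sigma_w$, the induced transformation is the composite
\[ \Sigma_{t_2}\Sigma_w\Pi_{p_1}\Delta_u\Delta_{s_2} \iso \Sigma_{t_2}\Sigma_w\Delta_w\Pi_{p_2}\Delta_{s_2} \longrightarrow \Sigma_{t_2}\Pi_{p_2}\Delta_{s_2} \]
of the right Beck--Chevalley isomorphism $\Pi_{p_1}\Delta_u \iso \Delta_w\Pi_{p_2}$ for that pullback with the counit $\Sigma_w\Delta_w \to \id$. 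By the left-hand diagram of (\ref{eq:def-phic-phil}) the source of $\phi^c$ is $f^{\bullet} \comp P = (s,p,ft)$ and its target is $Q \comp f^{\bullet}=(s'',p'',t'')$, with components $\phi^c_2 : E \to F_2$ and $\phi^c_1 : B \to F_3$ the maps there induced by a pullback and a distributivity pullback respectively.

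The core of the argument is the evaluation at $x : X \to I$. In the left-hand diagram of (\ref{diag:monad-morphisms-coherences-explicit}) the top pullback computes $X_2 = \Delta_s X$ over $E$, the top distributivity pullback computes $X_4 = \Pi_p(\Delta_s X)$ over $B$, and hence $X_4$ regarded over $J$ via $f\comp t$ is $(\Sigma_f \PFun{\ca E}(P))(x)$; dually the bottom pullback and distributivity pullback give $X'_4 = (\PFun{\ca E}(Q)\Sigma_f)(x)$. The comparison $X_2 \to X'_2$, induced by $f_2$ into the pullback defining $X'_2$, is manifestly the $\Delta$-component of the transformation above. The substance is then to show that the map $X_4 \to X'_4$ built from the universal property of the bottom distributivity pullback is the $x$-component of the Beck--Chevalley isomorphism followed by the counit. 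Here Lemma \ref{lem:right-BC-elementary} is used to recognise that isomorphism elementarily, as a factorisation through a distributivity pullback, and the pullback square $(E,E',B',B)$ of (\ref{diag:PolyEnd-morphism}) identifies $X_3$ with the pullback of $X_4$ along $p'$, so that the map over $E'$ fed into the bottom distributivity pullback is exactly $X_3 \to X_2 \to X'_2$; Lemma \ref{lem:vertical-comp-dpb} is used to see that the relevant pasting of distributivity pullbacks still computes the target polynomial $Q \comp f^{\bullet}$.

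The main obstacle is precisely this last identification: a priori there are two universally induced maps $X_4 \to X'_4$, one coming directly from (\ref{diag:monad-morphisms-coherences-explicit}) and one from the Beck--Chevalley-plus-counit formula for $\PFun{\ca E}(\phi^c)$, and the point is to verify that they solve the \emph{same} universal problem against the bottom distributivity pullback, whence uniqueness forces their equality. Carrying this out cleanly requires keeping careful track of the identifications of $F_2$ and $F_3$ with the pullback and distributivity-pullback objects of the composite $Q \comp f^{\bullet}$, and of the fact that $\phi^c_1$ and $\phi^c_2$ were themselves defined by universal properties in (\ref{eq:def-phic-phil}); Lemmas \ref{lem:right-BC-elementary} and \ref{lem:vertical-comp-dpb} are exactly the tools that reduce these compatibilities to universal properties already in hand. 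Part (2) is then the mate of part (1) under $\Sigma_f \ladj \Delta_f$, obtained identically from the right-hand diagrams of (\ref{eq:def-phic-phil}) and (\ref{diag:monad-morphisms-coherences-explicit}).
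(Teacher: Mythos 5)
Your treatment of part (1) is essentially the paper's proof: unwind the $2$-cell map of $\PFun{\ca E}$ as a right Beck--Chevalley isomorphism followed by a counit, identify the stages of the left-hand diagram of (\ref{diag:monad-morphisms-coherences-explicit}) with the stages of that composite, and use the universal property of the bottom distributivity pullback (together with Lemma \ref{lem:vertical-comp-dpb} to recognise the pasted distributivity pullbacks as computing $Q \comp f^{\bullet}$) to force the two induced arrows $X_4 \to X'_4$ to coincide. One small bookkeeping point: in the paper Lemma \ref{lem:right-BC-elementary} is not needed for part (1) at all; it is used only in part (2), to identify the object $Y'_4$ of the mate computation with the $Y'_4$ of (\ref{diag:monad-morphisms-coherences-explicit}).

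The place where your proposal has a genuine soft spot is the reduction of (2) to (1) by mates. It is true that $\PFun{\ca E}(\phi^c)$ and $\PFun{\ca E}(\phi^l)$ are mates under $\Sigma_f \ladj \Delta_f$, since $\PFun{\ca E}$ is a homomorphism carrying $f^{\bullet} \ladj f_{\bullet}$ to that adjunction. But that only tells you that $\PFun{\ca E}(\phi^l)$ is the mate of the family $(X_4 \to X'_4)_x$; to conclude part (2) you must still show that this mate equals the \emph{elementary} family $(Y_4 \to Y'_4)_y$ constructed in the right-hand diagram of (\ref{diag:monad-morphisms-coherences-explicit}). The two elementary constructions are defined independently by different universal properties, so this identification is an additional diagram chase, not a formality, and it is roughly comparable in length to a direct proof of (2). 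Your fallback of ``rerunning the computation'' is the route the paper actually takes, but note that it is not a mechanical dualisation of (1): the composite $P \comp f_{\bullet}$ has an extra pullback stage, Lemma \ref{lem:right-BC-elementary} is needed precisely there to see that the object called $Y'_4$ in the mate computation agrees with the $Y'_4$ of (\ref{diag:monad-morphisms-coherences-explicit}), and the final reconciliation is done by checking that the two candidate arrows $\alpha,\beta : Y_4 \to Y'_4$ agree after composition with the pullback projections $I \leftarrow Y'_4 \to Y'_3$, rather than against a distributivity pullback as in part (1). So the proposal is workable, but as written part (2) needs either the missing mate-identification or the (not quite symmetric) direct computation spelled out.
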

\begin{proof}
By the explicit description of $\phi^c$ and that of the 2-cell map of $\PFun{\ca E}$ given above, $\PFun{\ca E}(\phi^c)_x$ is the induced arrow $X_4 \to X'_7$ in
\begin{equation}\label{diag:phic-proof}
\xygraph{!{0;(1.85,0):(0,.25)::} {I}="p0" [ur] {E}="p1" [r] {B}="p2" [d(2)] {F_3}="p4" [l] {F_2}="p5" [u(.5)l(.5)] {F_1}="p6" "p0":"p1"^-{}:"p2"^-{}:"p4"^-{}:"p5"^-{}:"p6"^-{}:"p0"^-{} "p1":"p5"_-{} "p2":"p4"^{}
"p0" [l] {X}="q0" "p1" [ul] {X_2}="q1" [ur] {X_3}="q2" [r] {X_4}="q3"
"q2" (:"q1"(:"q0":"p0"^-{x},:"p1"),:"q3":"p2")
"p6" [dl] {X'_2}="r0" "p5" [dl] {X'_5}="r1" [dr] {X'_3}="r2" [r] {X'_4}="r3"
"r2" (:"r1"(:"r0"(:"q0",:"p6"),:"p5"),:"r3":"p4")
"q1":@/^{1.5pc}/@{.>}"r1" "q2":@/^{1.5pc}/@{.>}"r2" "q3":@/^{1.5pc}/@{.>}"r3"
"p1" [r(.35)u] {\scriptsize{\tn{dpb}}} [d(2)r(.15)] {\scriptsize{\tn{pb}}} [d(2)l(.15)] {\scriptsize{\tn{dpb}}} "q1" [d] {\scriptsize{\tn{pb}}} [d(1.5)l(.25)]  {\scriptsize{\tn{pb}}} [d(.75)r(.55)] {\scriptsize{\tn{pb}}}}
\end{equation}
noting that $X'_2$ really is the result of pulling back $x$ along $F_1 \to I$ because of the diagram on the left in
\[ \xygraph{{\xybox{\xygraph{{X'_2}="p0" [r] {X}="p1" [d] {I}="p2" [d] {J}="p3" [l] {E'}="p4" [u] {F_1}="p5" "p0":"p1"^-{}:"p2"^-{x}:"p3"^-{f}:@{<-}"p4"^-{}:@{<-}"p5"^-{}:@{<-}"p0"^-{} "p5":"p2"^{} "p0":@{}"p2"|-{\tn{pb}}:@{}"p4"|-{\tn{pb}}}}}
[r(3)]
{\xybox{\xygraph{!{0;(.75,0):(0,1)::} {X'_3}="p0" [r(2)] {X'_4}="p1" [d(2)] {F_3}="p2" [d(2)] {B'}="p3" [l(2)] {E'}="p4" [u] {F_1}="p5" [u] {X'_2}="p6" [u] {X'_5}="p7" [dr] {F_2}="p8" "p0":"p1"^-{}:"p2"^-{}:"p3"^-{}:@{<-}"p4"^-{}:@{<-}"p5"^-{}:@{<-}"p6"^-{}:@{<-}"p7"^-{}:@{<-}"p0"^-{}
"p8" (:@{<-}"p7",:"p2",:"p5", [u(1)r(.25)] {\scriptsize{\tn{dpb}}}, [d(1)r(.25)] {\scriptsize{\tn{dpb}}}, [l(.5)] {\scriptsize{\tn{pb}}})}}}
[r(3)]
{\xybox{\xygraph{{X'_3}="p0" [r] {X'_4}="p1" [d(2)] {B'}="p2" [l] {E'}="p3" [u] {X'_2}="p4" "p0":"p1"^-{}:"p2"^-{}:@{<-}"p3"^-{}:@{<-}"p4"^-{}:@{<-}"p0"^-{}
"p4" [r(.5)u(.1)] {\scriptsize{\tn{dpb}}}}}}} \]
and $X'_3$ and $X'_4$ really are the result of taking a distributivity pullback around $X'_2 \to E' \to B'$, because by Lemma \ref{lem:vertical-comp-dpb}, one can identify the diagrams in the middle and right in the previous display. One can then check that the composite $X_2 \to X'_5 \to X'_2$ in (\ref{diag:phic-proof}) coincides with the morphism $X_2 \to X'_2$ of (\ref{diag:monad-morphisms-coherences-explicit}), by verifying that these coincide after composing with the pullback projections $X \leftarrow X'_2 \to E'$. From this one can then reconcile the morphisms $X_3 \to X'_3$ and $X_4 \to X'_4$ in (\ref{diag:monad-morphisms-coherences-explicit}) and (\ref{diag:phic-proof}) using the universal property of the distributivity pullback on the right in the previous display, thus establishing our explicit description of $\PFun{\ca E}(\phi^c)_x$.

By the explicit description of $\phi^l$ and that of the 2-cell map of $\PFun{\ca E}$, $\PFun{\ca E}(\phi^l)_y$ is the induced arrow
\begin{equation}\label{diag:phil-proof}
\xygraph{!{0;(1.85,0):(0,.25)::} {J}="p0" [u(.5)r(.5)] {I}="p1h" [u(.5)r(.5)] {E}="p1" [r] {B}="p2" [d(2)] {G_1}="p4" [l] {G_2}="p5" [u(.5)l(.5)] {E'}="p6" "p0":"p1h"^-{}:"p1"^-{}:"p2"^-{}:"p4"^-{}:"p5"^-{}:"p6"^-{}:"p0"^-{} "p1":"p5"_-{} "p2":"p4"^{}
"p0" [l] {Y}="q0" "p1h" [ul] {Y_1}="q1h" "p1" [ul] {Y_2}="q1" [ur] {Y_3}="q2" [r] {Y_4}="q3"
"q2" (:"q1"(:"q1h"(:"q0":"p0"^-{y},:"p1h"),:"p1"),:"q3":"p2")
"p6" [dl] {Y'_1}="r0" "p5" [dl] {Y'_5}="r1" [dr] {Y'_6}="r2" [r] {Y'_4}="r3"
"r2" (:"r1"(:"r0"(:"q0",:"p6"),:"p5"),:"r3":"p4")
"q1":@/^{1.5pc}/@{.>}"r1" "q2":@/^{1.5pc}/@{.>}"r2" "q3":@/^{1.5pc}/@{.>}"r3"
"p1" [r(.35)u] {\scriptsize{\tn{dpb}}} [d(2)r(.15)] {\scriptsize{\tn{pb}}} [d(2)l(.15)] {\scriptsize{\tn{dpb}}} "q1" [d(2.5)l(.25)] {\scriptsize{\tn{pb}}} [d(.75)r(.55)] {\scriptsize{\tn{pb}}} "p0" [u(.5)l(.25)] {\scriptsize{\tn{pb}}} "p1h" [u(.6)l(.1)] {\scriptsize{\tn{pb}}}}
\end{equation}
in which the name $Y'_4$ is compatible with the diagram (\ref{diag:monad-morphisms-coherences-explicit}) because by Lemma \ref{lem:right-BC-elementary} one has
\[ \xygraph{!{0;(1,0):(0,.65)::} {Y'_5}="p0" [ur] {Y'_6}="p1" [d(.25)r(1.5)] {Y'_4}="p2" [d(3.5)] {Y'_3}="p3" [d(.25)l(1.5)] {Y'_2}="p4" [ul] {Y'_1}="p5" "p0":@{<-}"p1"^-{}:"p2"^-{}:@/^{3.5pc}/"p3"^-{}:@{<-}"p4"^-{}:"p5"^-{}:@{<-}"p0"^-{}
"p0" [d(.5)r] {G_2}="q0" [r] {G_1}="q1" [r] {I}="q2" [d] {J}="q3" [l] {B'}="q4" [l] {E'}="q5"
"q0":"q1"^-{}:"q2"^-{}:"q3"^-{}:@{<-}"q4"^-{}:@{<-}"q5"^-{}:@{<-}"q0"^-{} "q1":"q4"^{}
"p0":"q0" "p2" (:"q1",:"q2") "p3" (:"q3",:"q4") "p5":"q5"
"q0" [d(.5)l(.5)] {\scriptsize{\tn{pb}}} [r] {\scriptsize{\tn{pb}}} [r] {\scriptsize{\tn{pb}}} [r(.8)] {\scriptsize{\tn{pb}}} "q0" [r(.25)u(.75)] {\scriptsize{\tn{dpb}}} "q5" [r(.25)d(.75)] {\scriptsize{\tn{dpb}}} "p2" [d(.75)] {\scriptsize{=}} "p3" [u(.75)] {\scriptsize{=}}} \]
and so it remains to reconcile the morphisms $Y_4 \to Y'_4$ in (\ref{diag:monad-morphisms-coherences-explicit}) and (\ref{diag:phil-proof}). To this end we collect everything into one diagram
\[ \xygraph{!{0;(1,0):(0,.65)::} {Y'_5}="p0" [ur] {Y'_6}="p1" [d(.25)r(1.5)] {Y'_4}="p2" [d(3.5)] {Y'_3}="p3" [d(.25)l(1.5)] {Y'_2}="p4" [ul] {Y'_1}="p5" "p0":@{<-}"p1"^-{}:"p2"^-{}:@/^{3.5pc}/"p3"^-{}:@{<-}"p4"^-{}:"p5"^-{}:@{<-}"p0"^-{}
"p0" [d(.5)r] {G_2}="q0" [r] {G_1}="q1" [r] {I}="q2" [d] {J}="q3" [l] {B'}="q4" [l] {E'}="q5"
"q0":"q1"^-{}:"q2"^-{}:"q3"^-{}:@{<-}"q4"^-{}:@{<-}"q5"^-{}:@{<-}"q0"^-{} "q1":"q4"^{}
"p0":"q0" "p2" (:"q1",:"q2") "p3" (:"q3",:"q4") "p5":"q5"
"q0" [d(.5)l(.5)] {\scriptsize{\tn{pb}}} [r] {\scriptsize{\tn{pb}}} [r] {\scriptsize{\tn{pb}}} [r(.8)] {\scriptsize{\tn{pb}}} "q0" [r(.25)u(.75)] {\scriptsize{\tn{dpb}}} "q5" [r(.25)d(.75)] {\scriptsize{\tn{dpb}}} "p2" [d(.75)] {\scriptsize{=}} "p3" [u(.75)] {\scriptsize{=}}
"q5" [l(2)] {J}="r0" [l] {Y}="r1" [u] {Y_1}="r2" [r] {I}="r3" "q0" [u(2.25)l(.75)] {E}="r4" "q1" [u(2.25)] {B}="r5"
"q5":"r0"|(.5)*=<5pt>{}:@{<-}"r1"^-{}:@{<-}"r2"^-{}:"r3"^-{}:"r0"^-{} "r3":@{<-}"r4":"r5":@/^{1.5pc}/"q2" "r4":"q0" "r5":"q1" "p5":@/^{.5pc}/"r1"
"r4" [l(.75)u(.75)] {Y_2}="r6" [u(.5)r(1.25)] {Y_3}="r7" [r(1.5)] {Y_4}="r8"
"r7" (:"r6"(:"r2",:"r4"),:"r8":"r5")
"p5":@{}"r0"|(.4){\tn{pb}}:@{}"r2"|-{\tn{pb}}:@{}"r4"|-{\tn{pb}}:@{}"r8"|-{\tn{dpb}}
"r6":@{.>}"p0" "r7" (:@{.>}"p1",:@{.>}@/_{.75pc}/"p4") "r8" (:@{.>}@/_{.3pc}/"p3",:@{.>}@<-.75ex>@/^{1pc}/"p2"|-{\alpha},:@{.>}@<.75ex>@/^{1pc}/"p2"|-{\beta})} \]
in which $\alpha$ is the arrow induced in (\ref{diag:phil-proof}), $\beta$ is the arrow induced in (\ref{diag:monad-morphisms-coherences-explicit}), and so our task is to show that $\alpha = \beta$. By definition $\alpha$ is induced by the distributivity pullback with vertices $(Y'_5,G_2,G_1,Y'_4,Y'_6)$, and $\beta$ is induced by the pullback $(Y'_4,Y'_4,J,I)$. Using the above diagram, one checks $\alpha = \beta$ by showing that they are identified by composing with the pullback projections $I \leftarrow Y'_4 \to Y'_3$.
\end{proof}
\begin{proof}
(\emph{of Lemma \ref{lem:Fc-explicit}}).
By Lemma \ref{lem:explicit-colax-lax-monad-morphism-coherence} $F^c_X$ is the morphism $SX \to T\Sigma_fX$ over $J$ induced in
\[ \xygraph{!{0;(2,0):(0,.4)::} {I}="p0" [r] {E_S}="p1" [r] {B_S}="p2" [r] {I}="p3" [d] {J}="p4" [l] {B_T}="p5" [l] {E_T}="p6" [l] {J}="p7" "p0":@{<-}"p1"^-{}:"p2"^-{}:"p3"^-{}:"p4"^-{f}:@{<-}"p5"^-{}:@{<-}"p6"^-{}:"p7"^-{}:@{<-}"p0"^-{f} "p1":"p6"_-{f_2} "p2":"p5"^-{f_1} "p1":@{}"p5"|-{\tn{pb}}
"p0" [u] {X}="q0" [r] {X \times_I E_S}="q1" [u] {S_{\bullet}X}="q2" [r] {SX}="q3"
"q2" (:"q0"_-{\psi_5},:"q1"(:"q0":"p0"_-{x},:"p1"),:"q3"^-{\psi_3}(:"p2",:"p3"^-{\psi_1}))
"q0" [d(.4)r(.5)] {\scriptsize{\tn{pb}}} "q1" [u(.1)r(.5)] {\scriptsize{\tn{dpb}}}
"p7" [d] {X}="r0" [r] {X \times_J E_T}="r1" [d] {T_{\bullet}\Sigma_fX}="r2" [r] {T\Sigma_fX}="r3"
"r2" (:"r0"^-{\psi_6},:"r1"(:"r0":"p7"^-{fx},:"p6"),:"r3"_-{\psi_4}(:"p5",:"p4"_-{\psi_2}))
"r0" [u(.4)r(.5)] {\scriptsize{\tn{pb}}} "r1" [d(.1)r(.5)] {\scriptsize{\tn{dpb}}}
"q1":@{.>}@/_{1.5pc}/"r1" "q2":@{.>}@/^{1.5pc}/"r2" "q3":@{.>}@/^{1.5pc}/"r3"} \]
Let us define $\phi_1 : SX \to T\Sigma_fX$ as what we seek $F^c_X$ to be, namely as
\[ \begin{array}{lccr} {\phi_1(\alpha,(x_k)_k) = (F\alpha,(i_k,x_k)_k)} &&&
{\phi_1(\rho,(\gamma_k)_k) = (\rho,(\gamma_k)_k).} \end{array} \]
By construction and the explicit descriptions of $SX$ (and $TX$) established above, one may identify the objects of $S_{\bullet}X$ as comprising the data of an object of $SX$ together with a chosen input for the underlying operation of $S$. That is, one may an object of $S_{\bullet}X$ as $(\alpha,(x_k)_k,l)$ where $\alpha : (i_k)_{1{\leq}k{\leq}n} \to i$ is in $S$, $x_k \in X_{i_k}$ and $1 \leq l \leq n$. Similarly, a morphism $(\alpha,(x_k)_k,l) \to (\beta,(y_k)_k,m)$ of $S_{\bullet}X$ consists of a morphism $(\rho,(\gamma)_k) : (\alpha,(x_k)_k) \to (\beta,(y_k)_k)$ of $SX$ such that $\rho l = m$. Now define $\phi_2 : S_{\bullet}X \to T_{\bullet}\Sigma_fX$ to be
\[ \begin{array}{lccr} {\phi_2(\alpha,(x_k)_k,l) = (F\alpha,(i_k,x_k)_k,l)} &&&
{\phi_2(\rho,(\gamma_k)_k) = (\rho,(\gamma_k)_k).} \end{array} \]
By the universal property of the bottom distributivity pullback, and the bottom pullback in the above diagram, it suffices to verify that
\[ \begin{array}{lcccccr} {\psi_2\phi_1 = f\psi_1} && {\psi_3\phi_1 = \psi_4\phi_2} && {fx\psi_6\phi_2 = x\psi_5} && {\psi_8\phi_2 = f_2\psi_7} \end{array} \]
where $\psi_7$ and $\psi_8$ are the composites $S_{\bullet}X \to X \times_I E_S \to E_S$ and $T_{\bullet}\Sigma_fX \to X \times_J E_T \to E_T$ respectively, which is entirely straight forward since all the functors in our present situation now have such explicit descriptions.
\end{proof}
\begin{proof}
(\emph{of Lemma \ref{lem:Fl-explicit}}).
By Lemma \ref{lem:explicit-colax-lax-monad-morphism-coherence} $F^l_Y$ is the morphism $S\Delta_fY \to \Delta_fTY$ over $I$ induced in
\[ \xygraph{!{0;(1.5,0):(0,.5)::} {I}="p0" [r] {E_T}="p1" [r] {B_T}="p2" [r] {I}="p3" [d] {J}="p4" [l] {B_S}="p5" [l] {E_S}="p6" [l] {J}="p7" "p0":@{<-}"p1"^-{}:"p2"^-{}:"p3"^-{}:"p4"_-{f}:@{<-}"p5"^-{}:@{<-}"p6"^-{}:"p7"^-{}:@{<-}"p0"_-{f} "p1":"p6"_-{f_2} "p2":"p5"^-{f_1} "p1":@{}"p5"|-{\tn{pb}}
"p0" [l] {Y \times_J I}="q1" "p7" [l] {Y}="q0" "p0" [u] {Y \times_J E}="q2" "p1" [u(2)] {S_{\bullet}\Delta_fY}="q3" "p2" [u(2)] {S\Delta_fY}="q4" 
"q1" (:"q0"_-{\psi_8}:"p7",:"p0") "q2"(:"q1"_-{\psi_7},:"p1"^-{\psi_9}) "q3"(:"q2"^-{\psi_6},:"q4"(:"p2"^-{\psi_4},:"p3"^-{\psi_1}))
"q1":@{}"p7"|-{\tn{pb}} "q2":@{}"p0"|-{\tn{pb}} "p1" [r(.25)u] {\scriptsize{\tn{dpb}}}
"p7" [d] {Y \times_J E_T}="r2" "p6" [d(2)] {T_{\bullet}Y}="r3" "p5" [d(2)] {TY}="r4" "p4" [r] {\Delta_fTY}="r5"
"r2"(:"q0"^-{\psi_{11}},:"p6"_-{\psi_{12}}) "r3"(:"r2"^-{\psi_{10}},:"r4"(:"p5"_-{\psi_5},:"p4")) "r5"(:"r4"^-{\psi_3},:"p3"_-{\psi_2})
"r2":@{}"p7"|-{\tn{pb}} "p6" [r(.25)d] {\scriptsize{\tn{dpb}}} "p4" [r(.5)] {\scriptsize{\tn{pb}}}
"q2":@/_{1.25pc}/@{.>}"r2" "q3":@/_{1.25pc}/@{.>}"r3" "q4":@/_{1.25pc}/@{.>}"r4" "q4":@/^{1.25pc}/@{.>}"r5"} \]
Let us define $\phi_1 : S\Delta_fY \to \Delta_fTY$ as what we seek $F^l_Y$ to be, namely as
\[ \begin{array}{lccr} {\phi_1(\alpha,(y_k)_k) = (F\alpha,(y_k)_k)} &&& {\phi_1(\rho,(\delta_k)_k) = (\rho,(\delta_k)_k).} \end{array} \]
We define $\phi_2 : S\Delta_fY \to TY$ by the same equations
\[ \begin{array}{lccr} {\phi_2(\alpha,(y_k)_k) = (F\alpha,(y_k)_k)} &&& {\phi_2(\rho,(\delta_k)_k) = (\rho,(\delta_k)_k)} \end{array} \]
the difference with $\phi_1$ being that $\phi_2$ now lives over $J$. As with the proof of Lemma \ref{lem:Fc-explicit}, we define $\phi_3 : S_{\bullet}\Delta_fY \to T_{\bullet}Y$ by
\[ \begin{array}{lccr} {\phi_3(\alpha,(y_k)_k,l) = (F\alpha,(y_k)_k,l)} &&& {\phi_3(\rho,(\delta_k)_k) = (\rho,(\delta_k)_k)} \end{array} \]
in view of the explicit descriptions of $S_{\bullet}\Delta_fY$ and $T_{\bullet}Y$. In order to identify $\phi_1$ with the above induced map, it suffices by the universal properties of the bottom distributivity pullback and those of the pullbacks defining $Y \times_J E_T$ and $\Delta_fTY$, to verify that
\[ \begin{array}{lllll} {\phi_1\psi_2 = \psi_1} && {\psi_3\phi_1 = \phi_2} && {\psi_5\phi_2 = f_1\psi_4} \\ {\psi_{10}\psi_{12}\phi_3 = f_2\psi_9\psi_6} && {\psi_{11}\psi_{10}\phi_3 = \psi_8\psi_7\psi_6.} && \end{array} \]
As in the proof of Lemma \ref{lem:Fc-explicit}, this is straight forward because the functors participating in these equations now have completely explicit descriptions.
\end{proof}

\section{Internal algebra classifiers and codescent objects}
\label{sec:IntAlgClass}

Internal algebra classifiers are defined formally in Section \ref{ssec:int-alg-classifiers} and their flexibility is established. We review codescent objects and cateads in Section \ref{ssec:codescent-review}, and present the example of a simplicial object of which we wish to take codescent, but which is not a catead. Then in Section \ref{ssec:IntAlgClass-as-codescent} we describe internal algebra classifiers as codescent objects. In Section \ref{ssec:IntAlgClass-first-examples} we explain why in the examples of interest for us, the simplicial objects of which we take codescent are category objects.

\subsection{Internal algebra classifiers.}
\label{ssec:int-alg-classifiers}
With the notion of $S$-algebra internal to a $T$-algebra provided by Definition \ref{defn:internal-algebra}, it is natural to wonder about the ``free $T$-algebra generated by an internal $S$-algebra''. In various of our examples there is a well-established understanding of what this object should be. For instance as explained in MacLane \cite{MacLane-CWM}, the algebraists' simplicial category $\Delta_+$ is the free strict monoidal category containing a monoid. Similarly as is well known, the category $\mathbb{S}$ of finite ordinals and all functions between them is the free symmetric strict monoidal category containing a commutative monoid{\footnotemark{\footnotetext{We give a proof of this result in Theorem \ref{thm:free-smc-containing-a-commutative-monoid} below as an illustration of the general methods of this article.}}}.

In this section we give a general definition of such universal objects, called \emph{internal algebra classifiers} at the generality of Definition \ref{defn:internal-algebra}. Under mild conditions internal algebra classifiers also enjoy a universal property of a bicategorical nature with respect to pseudo algebras. Applied to $\Delta_+$ (resp. $\mathbb{S}$) this bicategorical universal property gives an equivalence of categories between the category of monoids (resp. commutative monoids) in a monoidal (resp. symmetric monoidal) category $\ca V$, and the category of strong monoidal functors $\Delta_+ \to \ca V$ (resp. symmetric strong monoidal functors $\mathbb{S} \to \ca V$) and monoidal natural transformations between them.
\begin{rem}\label{rem:J-F}
Given an adjunction $F : (\ca L,S) \to (\ca K,T)$ of 2-monads, one has commutative squares
\[ \xygraph{!{0;(2,0):(0,.5)::} {\ca K}="p1" [r] {\Algs{T}}="p2" [r] {\Algl{T}}="p3" [d] {\Algl{S}}="p4" [l] {\Algs{S}}="p5" [l] {\ca L}="p6"
"p1":@{<-}"p2"^-{U^T}:"p3"^-{J_T}:"p4"^-{\overline{F}}:@{<-}"p5"^-{J_S}:"p6"^-{U^S}:@{<-}"p1"^-{F^*} "p2":"p5"^-{\overline{F}}
"p3" [r] {\PsAlgl{T}}="p7" [d] {\PsAlgl{S}}="p8" "p3":"p7":"p8"^{\overline{F}}:@{<-}"p4"} \]
in which the unlabelled arrows are the inclusions (see Remark \ref{rem:adjmnd-data-allowing-defn-of-internal-algebra}). We denote by
\[ J_F : \Algs T \longrightarrow \Algl S \]
the diagonal of the middle square in the above display, and by $(-)^{\dagger}_F$ the left adjoint to $J_F$, when it exists.
\end{rem}
\begin{defn}\label{def:int-alg-classifier}
Let $F : (\ca L,S) \to (\ca K,T)$ be an adjunction of 2-monads, suppose that $\ca L$ has a terminal object $1$, and let $A$ be a strict $T$-algebra. An internal $S$-algebra $s : 1 \to \overline{F}A$ exhibits $A$ as the \emph{internal $S$-algebra classifier} (relative to $F$) when for all strict $T$-algebras $B$, the functor
\[ \Algs{T}(A,B) \longrightarrow \Algl{S}(1,\overline{F}B) \]
given by precomposition with $s$ is an isomorphism of categories.
\end{defn}
\begin{rem}\label{rem:T-to-the-S}
The defining universal property of a internal $S$-algebra classifier $s : 1 \to \overline{F}A$ is exactly that of the component of the unit of the adjunction $(-)^{\dagger}_F \ladj J_F$ at $1$. When a choice of left adjoint $(-)^{\dagger}_F$ to $J_F$ is given, we denote by
\[ g_T^S : 1 \longrightarrow \overline{F}T^S \]
the unit component at $1$, which exhibits $T^S$ as an internal $S$-algebra classifier.
\end{rem}
The universal property that $T^S$ enjoys with respect to pseudo algebras is due in part to the flexibility of $T^S$ as a $T$-algebra, and also to general coherence theorems that are available in the situations of interest. We now recall the required 2-dimensional monad theory.

A functor or 2-functor is said to \emph{have rank} when it preserves $\lambda$-filtered colimits for some regular cardinal $\lambda$, and \emph{finitary} when $\lambda$ can be taken to be the cardinality of $\N$. The study of coherence issues for 2-monads, and of internal algebras, begins with the fact that for a 2-monad $T$ with rank on a complete and cocomplete 2-category $\ca K$, the inclusions
\[ \begin{array}{lccr} {J:{\Algs{T}} \longrightarrow {\Alg{T}}} &&& {J_{\tn{p}}:{\Algs{T}} \longrightarrow {\PsAlg{T}}} \end{array} \]
have left adjoints, which we shall denote as $Q$ and $(-)'$ respectively. The importance of such adjunctions was first recognised in \cite{BWellKellyPower-2DMndThy}. In \cite{Lack-Codescent} the left adjoints were constructed explicitly using certain 2-categorical colimits called codescent objects leading to even more general conditions on their existence, namely just that these colimits exist in $\Algs{T}$. We shall discuss codescent objects further in Section \ref{ssec:codescent-review}.

The adjunction $Q \ladj J$ allows us to understand coherence for pseudo morphisms at this general level. It is not true in general that the inclusion
\[ J_{A,B}:\Algs{T}(A,B) \longrightarrow \Alg{T}(A,B) \]
of the strict $T$-algebra morphisms amongst the pseudo morphisms is an equivalence for all strict algebras $A$ and $B$, but this is true when $A$ is flexible as we now recall. 

Following \cite{BWellKellyPower-2DMndThy} section 4, the components of the unit and counit of $Q \ladj J$ are denoted as $p_A:A \to QA$ and $q_A:QA \to A$ respectively. In general one has, by Theorem 4.2 of \cite{BWellKellyPower-2DMndThy}, that $p_A \ladj q_A$ in $\Alg{T}$ with unit an identity and counit invertible. Thus $p_A$ is a section and a pseudo inverse to $q_A$, but note that $p_A$ is a pseudo morphism. The strict $T$-algebra $A$ is said to be \emph{flexible} when it has a section in $\Algs{T}$. By \cite{BWellKellyPower-2DMndThy} Theorem 4.2 any such section is also a pseudo inverse in $\Algs{T}$, and moreover this condition is also equivalent to $A$ being a retract of $QC$ for some $C$. The result that for flexible $A$, $J_{A,B}$ is an equivalence is contained in Theorem 4.7 of \cite{BWellKellyPower-2DMndThy}. In other words for flexible $A$, every pseudo morphism $A \to B$ is isomorphic in $\PsAlg{T}$ to a strict one.

To understand when one has a coherence Theorem for pseudo algebras one considers the adjunction $(-)' \ladj J_{\tn{p}}$. For a given pseudo $T$-algebra $A$, the unit of this adjunction is pseudo morphism $s_A:A \to A'$, where $A'$ is by definition a strict $T$-algebra, and one may then ask when $s_A$ is an equivalence. The 2-monads of interest in this article are all 2-monads $T$ with rank on 2-categories $\tn{Cat}(\ca E)$ of categories internal to a locally presentable category $\ca E$. For such $\ca E$, $\tn{Cat}(\ca E)$ is complete and cocomplete, and if moreover $T$ preserves internal functors whose object maps are invertible, then the general coherence result of Power \cite{Power-GeneralCoherenceResult}, as formulated still more generally by Lack in \cite{Lack-Codescent} Theorem 4.10, gives a natural situation in which all the components $s_A$ are equivalences. Thus for such 2-monads every pseudo algebra is equivalent to a strict one in a canonical way. This applies in particular to the case where $\ca E$ is $\Set$, and $T$ is $\tnb{M}$, $\tnb{S}$ or $\tnb{B}$, to give the coherence Theorems for monoidal, symmetric monoidal and braided monoidal categories respectively.
\begin{prop}\label{prop:uprop-intalg-pseudo}
Let $F:(\ca L,S) \to (\ca K, T)$ be an adjunction of 2-monads, $S$ and $T$ have rank, and $\ca K$ and $\ca L$ have all limits and colimits. Then $T^S$ exists and is flexible. If moreover $\ca K$ is of the form $\Cat(\ca E)$ for some category $\ca E$ with pullbacks, and $T$ preserves internal functors whose object maps are invertible, then one has equivalences
\[ \PsAlg{T}(T^S,A) \catequiv \PsAlgl{S}(1,\overline{F}A) \]
pseudo naturally in $A \in \PsAlg{T}$.
\end{prop}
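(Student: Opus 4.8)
The plan is to treat the two assertions separately, building the existence and flexibility of $T^S$ first and then feeding these into the pseudo universal property. For existence I would exploit the factorisation $J_F = J_S\overline{F}$ of Remark~\ref{rem:J-F}, in which $\overline{F} : \Algs{T} \to \Algs{S}$ is the lifting of $F^*$ and $J_S : \Algs{S} \to \Algl{S}$ is the usual inclusion. Since $S$ has rank and $\ca L$ is cocomplete, Lack's construction \cite{Lack-Codescent} of codescent objects supplies a left adjoint $(-)^{\dagger}_S \ladj J_S$. To obtain a left adjoint to $\overline{F}$ I would invoke the (2-categorical) adjoint lifting theorem: $\overline{F}$ commutes with the monadic forgetful 2-functors over $F^*$, that is $U^S\overline{F} = F^*U^T$; the base 2-functor $F^*$ has the left adjoint $F_!$; and $\Algs{T}$ has reflexive coequalisers because $\ca K$ is cocomplete and $T$ has rank. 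Hence $\overline{F}$ has a left adjoint $\widehat{F}$, and composing gives $(-)^{\dagger}_F = \widehat{F}\comp(-)^{\dagger}_S \ladj J_F$. In particular $T^S = (1)^{\dagger}_F = \widehat{F}((1)^{\dagger}_S)$ exists.

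For flexibility the key observation is that comparing left adjoints of the two sides of $U^S\overline{F} = F^*U^T$ yields $\widehat{F}F_S \iso F_TF_!$, so that $\widehat{F}$ carries free $S$-algebras to free, hence flexible, $T$-algebras. As a left 2-adjoint, $\widehat{F}$ preserves all colimits, in particular the flexible colimits of free algebras out of which cofibrant algebras are built; since flexibility is stable under these colimits and under retracts, $\widehat{F}$ preserves flexible objects. Because $(1)^{\dagger}_S$ is flexible, being a value of the codescent-object left adjoint $(-)^{\dagger}_S$ \cite{Lack-Codescent}, its image $T^S = \widehat{F}((1)^{\dagger}_S)$ is flexible too.

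For the pseudo universal property I would assemble a zig-zag of equivalences in $A$. Given a pseudo $T$-algebra $A$, the coherence theorem for pseudo algebras (\cite{Power-GeneralCoherenceResult}, in the form of \cite{Lack-Codescent} Theorem 4.10, applicable since $\ca K = \Cat(\ca E)$ and $T$ preserves internal functors with invertible object map) provides an equivalence $s_A : A \to A'$ in $\PsAlg{T}$ with $A'$ strict, whence postcomposition gives $\PsAlg{T}(T^S,A) \catequiv \PsAlg{T}(T^S,A') = \Alg{T}(T^S,A')$. As $T^S$ is flexible, Theorem 4.7 of \cite{BWellKellyPower-2DMndThy} makes the inclusion $\Algs{T}(T^S,A') \to \Alg{T}(T^S,A')$ an equivalence, and the defining universal property of the classifier (Definition~\ref{def:int-alg-classifier}) identifies $\Algs{T}(T^S,A') \iso \Algl{S}(1,\overline{F}A')$. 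Finally, since $\overline{F}A'$ is a strict $S$-algebra one has $\Algl{S}(1,\overline{F}A') = \PsAlgl{S}(1,\overline{F}A')$, and the equivalence $\overline{F}s_A$ induces $\PsAlgl{S}(1,\overline{F}A) \catequiv \PsAlgl{S}(1,\overline{F}A')$. Splicing these together yields the desired $\PsAlg{T}(T^S,A) \catequiv \PsAlgl{S}(1,\overline{F}A)$.

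The main obstacle is not any single equivalence in this chain -- each is a direct appeal to a cited theorem -- but the verification that they combine \emph{pseudo naturally} in $A$. I would handle this by noting that the assignment $A \mapsto A'$ together with $s_A$ is precisely the unit of the coherence adjunction $(-)' \ladj J_{\tn{p}}$, hence pseudo natural; that the flexibility comparison and the classifier isomorphism are (2-)natural in the second variable; and that $\overline{F}$, being a 2-functor extended to pseudo algebras and lax morphisms as in Remark~\ref{rem:adjmnd-data-allowing-defn-of-internal-algebra}, preserves pseudo naturality. Choosing quasi-inverses to the equivalences coherently, via their 2-dimensional universal properties, then exhibits the composite as a pseudo natural equivalence, completing the proof.
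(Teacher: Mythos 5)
Your argument is correct, and for two of its three components it follows the paper's own proof: the existence of $(-)^{\dagger}_F$ is obtained in the paper exactly as you do, by factoring $J_F = J_S\overline{F}$, taking the left adjoint of $J_S$ from \cite{BWellKellyPower-2DMndThy}, and producing a left adjoint to $\overline{F}$ from the triangle $U^S\overline{F} = F^*U^T$ via the Dubuc adjoint triangle theorem \cite{Dubuc-KanExtensions} (your ``adjoint lifting theorem'' is the same device); and your chain of equivalences for the pseudo universal property is the one in the paper, with the useful improvement that you make explicit the step $\Algs{T}(T^S,A') \catequiv \Alg{T}(T^S,A')$ coming from flexibility and Theorem 4.7 of \cite{BWellKellyPower-2DMndThy}, which the paper leaves compressed inside its middle ``isomorphism''.

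Where you genuinely diverge is the flexibility of $T^S$. The paper does not pass through the left adjoint $\widehat{F}$ of $\overline{F}$ at all for this point: it applies Theorem 5.1 of \cite{BWellKellyPower-2DMndThy} directly, taking $\ca M = \Algl{S}$, $G = \overline{F}J'$ and $H = (-)^{\dagger}_F$, which yields in one stroke that every value of $(-)^{\dagger}_F$ is flexible. Your route instead decomposes $T^S = \widehat{F}\bigl((1)^{\dagger}_S\bigr)$, notes that $(1)^{\dagger}_S$ is flexible, and argues that $\widehat{F}$ preserves flexibility because it sends frees to frees, preserves weighted colimits, and flexible algebras are the closure of the free algebras under flexible colimits and retracts. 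This works, but it leans on that closure characterisation of flexibility — a genuine theorem (due to Bird, Kelly, Power and Street) which is not among the results the paper cites, and which is a heavier input than Theorem 5.1 of \cite{BWellKellyPower-2DMndThy}; the latter is tailored to exactly this situation and gives the conclusion without any analysis of how $\widehat{F}$ interacts with colimits. If you want a self-contained argument from the paper's toolkit, the BKP route is the one to take; your route has the mild compensating virtue of isolating the reusable fact that $\widehat{F}$ preserves flexible algebras.
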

\begin{proof}
Under the given hypotheses $J_S$ has a left adjoint \cite{BWellKellyPower-2DMndThy}, and by a standard application of the Dubuc adjoint triangle Theorem \cite{Dubuc-KanExtensions}, so does $\overline{F} : \Algs T \to \Algs S$. Thus $(-)^{\dagger}_F$ and hence $T^S$ exist.

To establish the flexibility of $T^S$ we apply Theorem 5.1 of \cite{BWellKellyPower-2DMndThy} which says the following. Given a 2-monad $T$ with rank on a complete and cocomplete 2-category $\ca K$, a 2-functor $G:\Alg{T} \to \ca M$, and a left adjoint $H$ to $GJ$ with unit $s:1 \to GJH$; then
\begin{enumerate}
\item For all $M \in \ca M$, $HM$ is flexible.\label{BKP-flex}
\item For all $M \in \ca M$ and $A \in \Alg{T}$,
\[ G(-) \comp s_M : \Alg{T}(JHM,A) \longrightarrow \ca M(M,GA) \]
is a surjective equivalence.\label{BKP-bicat-uprop}
\end{enumerate}
For our application of this result we take $\ca M = \Algl S$ and note that $J_F$ may be factored as
\[ \xygraph{!{0;(1.5,0):} {\Algs T}="p0" [r] {\Alg T}="p1" [r] {\Algl T}="p2" [r] {\Algl S}="p3" "p0":"p1"^-{J}:"p2"^-{J'}:"p3"^-{\overline{F}}} \]
in which $J$ and $J'$ are the inclusions, we take $G = \overline{F}J'$ and so $H$ is the left adjoint $(-)^{\dagger}_F$ whose existence we established above. Thus by (\ref{BKP-flex}) with $M = 1$, $T^S$ is indeed flexible.

We now assume $\ca K$ is of the form $\tn{Cat}(\ca E)$ for some category $\ca E$ with pullbacks and that $T$ preserves internal functors whose object maps are invertible. As recalled above for any pseudo $T$-algebra $A$ one has a strict $T$-algebra $A'$ and pseudomorphism $s_A:A \to A'$ which is an equivalence in $\PsAlg{T}$. Then for any strict $S$-algebra $B$ we have
\[ \begin{array}{lll} {\PsAlg{T}(B^{\dagger}_F,A)} & {\catequiv} & {\Alg{T}(B^{\dagger}_F,A')} \\ {} & {\iso} & {\Algl{S}(B,\overline{F}A')} \\ {} & {\catequiv} & {\PsAlgl{S}(B,\overline{F}A)} \end{array} \]
in which the isomorphism is from the definition of $B^{\dagger}_F$, the first equivalence is given by composing with the equivalence $s_A$, and the other equivalence is given by composing with the equivalence $\overline{F}(s_A)$. All this is clearly pseudo natural in $A$, and putting $B = 1$ gives the result.
\end{proof}

\subsection{Review of codescent objects.}
\label{ssec:codescent-review}
There are various 2-categorical colimits that are called codescent objects in the literature \cite{Street-CorrFibrationInBicats, Lack-Codescent, Bourke-Thesis}. In this article we shall just consider the variant which arises by considering how to compute the left adjoint $(-)^{\dagger}_F$ to $J_F$ in the context of Remark \ref{rem:J-F}. Recall the standard cosimplicial object $\delta : \Delta \to \Cat$ recalled in the introduction, and the notions of weighted limit and colimit from enriched category theory \cite{Kelly-EnrichedCatsBook}.
\begin{defn}\label{def:codescent-object}
\cite{Street-CorrFibrationInBicats}
Let $\ca K$ be a 2-category, $X \in [\Delta^{\op},\ca K]$ and $Y \in [\Delta,\ca K]$.
\begin{enumerate}
\item A \emph{descent object} of $Y$ is a limit of $Y$ weighted by $\delta$.
\item A \emph{codescent object} of $X$ is a colimit of $X$ weighted by $\delta$.
\end{enumerate}
\end{defn}
As such the descent object of $Y$ consists of an object $\tn{Desc}(Y)$ of $\ca K$ together with isomorphisms as on the left
\[ \begin{array}{lccr} {\ca K(Z,\tn{Desc}(Y)) \iso [\Delta,\Cat](\delta,\ca K(Z,Y-))} &&& {\tn{Desc}(Y) \iso [\Delta,\Cat](\delta,Y)} \end{array} \]
\[  \]
2-natural in $Z$. When $\ca K = \Cat$ the descent object of $Y$ exists in general and is computed as on the right in the previous display. In this case one has the following simple description of the category $\tn{Desc}(Y)$ since the data of a natural transformation $f:\delta \to Y$ is determined by its components $f_{[0]}$ and $f_{[1]}$:
\begin{itemize}
\item An object consists of an object $y_0 \in Y_0$, together with a morphism $y_1:\delta_1y_0 \to \delta_0y_0$ in $Y_1$, such that $\sigma_0y_1 = 1_{y_0}$ and $\delta_1y_1 = (\delta_0y_1)(\delta_2y_1)$.
\item A morphism $(y_0,y_1) \to (z_0,z_1)$ consists of $f:y_0 \to z_0$ in $Y_0$ such that $(\delta_0f)y_1 = z_1(\delta_1f)$.
\end{itemize}

The codescent object of $X$ consists of an object $\tn{CoDesc}(X)$ of $\ca K$ together with isomorphisms
\begin{equation}\label{eq:codesc-defining-isos}
\ca K(\tn{CoDesc}(X),Z) \iso \tn{Desc}(\ca K(X-,Z))
\end{equation}
of categories 2-natural in $Z$. An object of the category on the right is called a \emph{cocone} for $X$ with vertex $Z \in \ca K$, which in explicit terms is a pair $(f_0,f_1)$, where $f_0:X_0 \to Z$ and $f_1:f_0d_1 \to f_0d_0$ are in $\ca K$, and satisfy $f_1s_0=1_{f_0}$ and $(f_1d_0)(f_1d_2)=f_1d_1$. A morphism of cocones $\phi : (f_0,f_1) \to (g_0,g_1)$ is a 2-cell $\phi : f_0 \to g_0$ such that $(\phi d_0)f_1 = g_1(\phi d_1)$. We denote by $(q_0,q_1)$ the cocone corresponding under (\ref{eq:codesc-defining-isos}) to $1_{\tn{Codesc}(X)}$, whose universal property can be used to give a more hands-on definition of the notion of codescent object, as we now recall.

To give a codescent object for $X \in [\Delta^{\op},\ca K]$ is to give an object $\tn{CoDesc}(X)$ of $\ca K$ together with a cocone $(q_0,q_1)$ with for $X$ vertex $\tn{CoDesc}(X)$ satisfying the following universal properties.
\begin{enumerate}
\item (1-dimensional universal property): For any cocone $(f_0,f_1)$ with vertex $Z$, there is a unique $f':\tn{CoDesc}(X) \to Y$ such that $f'q_0 = f_0$ and $f'q_1 = f_1$.\label{1-dim-uprop-codesc}
\item (2-dimensional universal property): For any morphism of cocones $\phi:(f_0,f_1) \to (g_0,g_1)$ for $X$ with vertex $Z$, there is a unique $\phi':f' \to g'$ such that $\phi'q_0 = \phi$.\label{2-dim-uprop-codesc}
\end{enumerate}
Recall that when $\ca K$ admits cotensors with $[1]$, the 1-dimensional universal property implies the 2-dimensional one.
\begin{exam}\label{ex:cat-as-codesc-of-nerve}
Let $X$ be a category and regard it as a simplicial set by taking its nerve
\[ \xygraph{!{0;(2,0):(0,1)::} {X_2}="p0" [r] {X_1}="p1" [r] {X_0}="p2"
"p2":"p1"|-{s_0} "p1":@<1.5ex>"p2"^-{d_1} "p1":@<-1.5ex>"p2"_-{d_0} "p0":@<1.5ex>"p1"^-{d_2} "p0":"p1"|-{d_1} "p0":@<-1.5ex>"p1"_-{d_0}} \]
and then as a simplicial object in $\Cat$ be regarding the $X_n$ as discrete categories. Taking $q_0 : X_0 \to X$ to be the inclusion of objects, and $q_1 : q_0d_1 \to q_0d_0$ to be the natural transformation with components given by $(q_1)_f = f$, exhibits $X$ as the codescent object of its nerve.
\end{exam}
\begin{exam}\label{ex:internal-nerve}
Recall that given a category $\ca E$ with pullbacks, a \emph{category object} in $\ca E$ is a simplicial object $X \in [\Delta^{\op},\ca E]$ such that for all $n \in \N$ the square
\begin{equation}\label{eq:pb-square-for-cat-object}
\xygraph{!{0;(1.5,0):(0,.6667)::} {X_{n+2}}="tl" [r] {X_{n+1}}="tr" [d] {X_n}="br" [l] {X_{n+1}}="bl" "tl":"tr"^-{d_{n+2}}:"br"^-{d_0}:@{<-}"bl"^-{d_{n+1}}:@{<-}"tl"^-{d_0}}
\end{equation}
is a pullback. One has a 2-category $\tn{Cat}(\ca E)$ in which the objects are internal categories and whose arrows $f:X \to Y$ are just morphisms in $[\Delta^{\op},\ca E]$ and are called \emph{internal functors}. A 2-cell $\phi:f \to g$ has underlying data an arrow $\phi_0:X_0 \to Y_1$ in $\ca E$ such that $d_1\phi_0 = f_0$ and $d_0\phi_0 = g_0$, given such $\phi_0$ one induces $\phi_1$, $\phi_2:X_1 \to Y_2$ in $\ca E$ unique such that
\[ \begin{array}{lcccccr} {d_2\phi_1 = \phi_0d_1} && {d_0\phi_1 = g_1} && {d_2\phi_2 = f_1} && {d_0\phi_2 = \phi_0d_0} \end{array} \]
and then one demands that $d_1\phi_1 = d_1\phi_2$. As explained in \cite{Bourke-Thesis}, Example \ref{ex:cat-as-codesc-of-nerve} generalises to internal categories. Thus given a simplicial object $X$ in $\tn{Cat}(\ca E)$, if it is componentwise discrete and a category object, then to ``calculate'' its codescent object is just to interpret the simplicial diagram $X$ as an internal category, that is to say, as an object of $\tn{Cat}(\ca E)$.
\end{exam}
In \cite{Bourke-Thesis} 2-categories of the form $\tn{Cat}(\ca E)$, for $\ca E$ a category with pullbacks, were characterised in terms of the existence and well-behavedness of codescent objects of cateads. A \emph{catead} in a 2-category $\ca K$ is a category object $X : \Delta^{\op} \to \ca K$ such that the span
\[ \xygraph{{X_0}="p0" [r] {X_1}="p1" [r] {X_0}="p2" "p0":@{<-}"p1"^-{d_1}:"p2"^-{d_0}} \]
is a 2-sided discrete fibration in the sense of \cite{Street-FibrationIn2cats}. In particular for any category $\ca E$ with pullbacks, the 2-category $\tn{Cat}(\ca E)$ of categories internal to $\ca E$ admits codescent objects of all cateads, and these are preserved by 2-functors of the form $\tn{Cat}(F) : \tn{Cat}(\ca E) \to \tn{Cat}(\ca F)$, where $F : \ca E \to \ca F$ is a pullback preserving functor.

For $\ca E$ a category with pullbacks, (the underlying category of) $\tn{Cat}(\ca E)$ is a category with pullbacks, and the functor $\tn{ob}_{\ca E} : \tn{Cat}(\ca E) \to \ca E$ which sends an internal category to its object of objects is pullback preserving, so that one has a 2-functor
\[ \tn{Cat}(\tn{ob}_{\ca E}) : \tn{Cat}(\tn{Cat}(\ca E)) \longrightarrow \tn{Cat}(\ca E). \]
The straight forward computation of codescent objects of cateads in $\tn{Cat}(\ca E)$ is described by
%
%
\begin{prop}\label{prop:Bourke-codesc-catead}
\cite{Bourke-Thesis}
Let $\ca E$ be a category with pullbacks and $X$ be a catead in $\tn{Cat}(\ca E)$. Then
\[ \CoDesc(X) =  \tn{Cat}(\tn{ob}_{\ca E})(X). \]
\end{prop}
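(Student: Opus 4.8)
The plan is to verify the one-dimensional universal property of the codescent object directly, exhibiting $C := \tn{Cat}(\tn{ob}_{\ca E})(X)$ as the required colimit; the two-dimensional property then comes for free, since $\tn{Cat}(\ca E)$ admits cotensors with $[1]$, as recalled after Definition \ref{def:codescent-object}. Because $\tn{ob}_{\ca E}$ preserves pullbacks, $C$ is a genuine category object in $\ca E$: its object of objects is $\tn{ob}(X_0)$, its object of arrows is $\tn{ob}(X_1)$, with source and target $\tn{ob}(d_1)$ and $\tn{ob}(d_0)$, identities $\tn{ob}(s_0)$, and composition induced by $\tn{ob}(d_1) : \tn{ob}(X_2) \to \tn{ob}(X_1)$. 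Intuitively, viewing the catead $X$ as an internal double category, $C$ is its \emph{horizontal} internal category, whose objects are the objects of $X_0$ and whose arrows are the objects of $X_1$.

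First I would construct the universal cocone $(q_0,q_1)$. The internal functor $q_0 : X_0 \to C$ is the identity on objects; on morphisms it must send an arrow of $X_0$ (a ``vertical'' arrow $v$) to an object of $X_1$ (a ``horizontal'' arrow), and here the catead hypothesis is essential. Because the span $X_0 \xleftarrow{d_1} X_1 \xrightarrow{d_0} X_0$ is a two-sided discrete fibration, each vertical arrow $v$ has a canonical \emph{companion} horizontal arrow $\hat{v}$, obtained as the unique cocartesian lift of $v$ against the leg $d_0$ starting at the horizontal identity $\tn{ob}(s_0)$ on its source (up to the orientation conventions). This lifting is phrased through universal properties in $\ca E$, and one checks it respects identities and composition of vertical arrows, so that $q_0$ is a bona fide internal functor. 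The comparison $2$-cell $q_1 : q_0d_1 \to q_0d_0$ is given on $\tn{ob}(X_1)$ by the identity, sending each horizontal arrow to itself regarded as an arrow of $C$; its naturality against squares, and the cocone axioms $q_1s_0 = 1$ and $(q_1d_0)(q_1d_2) = q_1d_1$, reduce to the identity and composition laws of $C$.

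Next I would establish the bijection, $2$-natural in $Z \in \tn{Cat}(\ca E)$, between cocones $(f_0,f_1)$ for $X$ with vertex $Z$ and internal functors $C \to Z$. Unwinding the description of $2$-cells in Example \ref{ex:internal-nerve}, a cocone consists of an object map $\tn{ob}(X_0) \to \tn{ob}(Z)$, a vertical-arrow map carried by $f_0$, and a map $(f_1)_0 : \tn{ob}(X_1) \to \tn{mor}(Z)$ underlying the $2$-cell $f_1$, subject to naturality and the cocone axioms. Given such a cocone I would define $f' : C \to Z$ to be $f_0$ on objects and $(f_1)_0$ on arrows; the cocone axioms are exactly what is needed for $f'$ to preserve identities and composition, hence to be an internal functor, and $f'q_1 = f_1$ holds by construction since $(q_1)_0 = \tn{id}$. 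The equation $f'q_0 = f_0$ is trivial on objects, and on vertical arrows it reads $(f_1)_0(\hat{v}) = f_0(v)$, which follows by applying naturality of $f_1$ to the companion square together with $f_1s_0 = 1$. Conversely $f'$ is forced: $f'q_0 = f_0$ pins down its object map and $f'q_1 = f_1$ pins down its arrow map, giving uniqueness.

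The main obstacle is reconciling the two descriptions of the vertical-arrow data: an internal functor out of $C$ carries no information about the morphisms of $X_0$, whereas a cocone does, through $f_0$. The crux is therefore to show that the two-sided discrete fibration condition renders this vertical data \emph{redundant} --- uniquely determined by the horizontal data via the companion relation $(f_1)_0(\hat{v}) = f_0(v)$, and freely reconstructible from it. This is precisely where the catead hypothesis is indispensable, and carrying it out rigorously means expressing the companion construction and the cocartesian lifts through the internal (representable) universal properties in $\ca E$ rather than elementwise, which is the only genuinely technical part of the argument.
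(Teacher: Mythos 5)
The paper does not actually prove this proposition: it is imported from Bourke's thesis \cite{Bourke-Thesis} with no argument given, so there is no internal proof to compare yours against. That said, your strategy is the expected one and is essentially the argument of the cited source: take $C = \tn{Cat}(\tn{ob}_{\ca E})(X)$, build the cocone whose $1$-cell component sends a vertical arrow $v$ to its companion $\hat v$ (the $d_0$-opcartesian lift of $v$ out of the horizontal identity, whose $d_1$-image is an identity by two-sided discreteness), verify the one-dimensional universal property, and let cotensors with $[1]$ supply the two-dimensional one. Your identification of the crux --- that the two-sided discrete fibration condition makes the vertical-arrow component of a cocone redundant, via $(f_1)_0(\hat v) = f_0(v)$, which you correctly derive from naturality of $f_1$ at the companion square together with $f_1 s_0 = 1$ --- is exactly right, as is the observation that the uniqueness of $f'$ is immediate because $(q_1)_h = h$ exhausts the arrows of $C$.

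One justification is wrong as written. You assert that the naturality of $q_1$ against squares ``reduces to the identity and composition laws of $C$''. It does not: for a square $\sigma : h \to h'$ with $d_1\sigma = u$ and $d_0\sigma = v$, naturality of $q_1$ is the equation $\hat v \comp h = h' \comp \hat u$ in $C$, and this is precisely where the remaining axiom of a two-sided discrete fibration is consumed --- every square factors uniquely as a $d_1$-cartesian square followed by a $d_0$-opcartesian one, and the lifts are compatible with $d_1 : X_2 \to X_1$ (cf.\ Remark \ref{rem:catead-vs-crossed-int-cat}). The same compatibility of $d_1$ with the chosen lifts, together with the fact that chosen lifts compose, is what makes $v \mapsto \hat v$ functorial, so that $q_0$ is an internal functor; this too deserves an explicit argument rather than a passing remark. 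Only the cocone identities $q_1 s_0 = 1$ and $(q_1 d_0)(q_1 d_2) = q_1 d_1$ genuinely reduce to the unit and composition laws of $C$. Neither point breaks the proof --- the needed facts are true for any catead --- but they are where the hypothesis does its work, and, as you yourself note, they must ultimately be expressed through the representably-defined lifting data in $\tn{Cat}(\ca E)$ rather than elementwise.
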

In particular when $X$ is componentwise discrete it is a catead, and so Proposition \ref{prop:Bourke-codesc-catead} generalises Example \ref{ex:internal-nerve}. However the most interesting examples for us are not cateads, such as
\begin{exam}\label{exam:bar-of-Sm-not-a-catead}
An instance of the bar construction (see Section \ref{ssec:IntAlgClass-as-codescent} below) in the case of the 2-monad $\tnb{S}$ gives rise to the simplicial object
\[ \xygraph{!{0;(3,0):(0,1)::}
{\tnb{S}^3(1)}="p0" [r] {\tnb{S}^2(1)}="p1" [r] {\tnb{S}(1)}="p2"
"p2":"p1"|-{\eta_1} "p1":@<1.5ex>"p2"^-{\mu_1} "p1":@<-1.5ex>"p2"_-{\tnb{S}(t_{\tnb{S}(1)})} "p0":@<1.5ex>"p1"^-{\mu_{\tnb{S}(1)}} "p0":"p1"|-{\tnb{S}(\mu_1)} "p0":@<-1.5ex>"p1"_-{\tnb{S}(t_{\tnb{S}(1)})}} \]
which, as we shall see now, is not a catead. If it were a catead then $d_1 = \mu_1$, as the left leg of a 2-sided discrete fibration, would be a fibration by \cite{Weber-2Toposes} Theorem 2.11(3). In explicit terms $\tnb{S}(1)$ is the permutation category $\P$, the effect of $\mu_1$ on objects is $(m_k)_{1{\leq}k{\leq}n} \mapsto m_1 + ... + m_k$, and on arrows sends the morphism $(\rho,(\rho_k)_k) : (m_k)_k \to (m'_k)_k$ of $\tnb{S}(\P)$ to the result
\[ \rho(\rho_k)_k : \sum_{k=1}^n m_k \longrightarrow \sum_{k=1}^n m'_k \]
of substituting the permutations $\rho_k$ into the permutation $\rho$. If $\mu_1$ is a fibration, then for any $(m'_k)_{1{\leq}k{\leq}n}$ from $\tnb{S}(\P)$, and any permutation $\psi : m \to \sum_k m'_k$, one can find $\rho$ and $(\rho_k)_k$ such that $\psi = \rho(\rho_k)_k$. Let us call any subset $s \subseteq \underline{m}$ an \emph{interval} if given $i_1$ and $i_2 \in s$, and $i_3 \in m$ such that $i_1 < i_3 < i_2$, then $i_3 \in s$. The condition that $\psi$ is of the form $\rho(\rho_k)_k$ implies that for any $k$, $\psi^{-1}m'_k \subseteq m$ is an interval, where $m'_k$ is regarded in the evident way as a subinterval of $\underline{m}$. For a counter example take $n=2$, $m'_1=m'_2=2$ and $\psi = (124)$. Thus as a subinterval of $\underline{4}$, $m'_1$ is $\{1,2\}$, and so $\psi^{-1}m'_1 = \{1,4\}$ which is not an interval. 
\end{exam}

\subsection{Internal algebra classifiers as codescent objects.}
\label{ssec:IntAlgClass-as-codescent}
The construction of the left adjoint $(-)^{\dagger}_T$ to $J_T : \Algs T \to \Algl T$ in terms of codescent objects was discussed in \cite{Bourke-Thesis, Lack-Codescent}. In this section we generalise this from the setting of a single 2-monad $T$ to that of an adjunction $F:(\ca L,S) \to (\ca K,T)$ of 2-monads. Instances of this general construction play a central role in \cite{Batanin-EckmannHilton, BataninBerger-HtyThyOfAlgOfPolyMnd}.

Let $(\ca K,T)$ be a 2-monad and denote by $C_T : \Delta_+^{\op} \to [\Algs T,\Algs T]$ the strict monoidal functor corresponding to the comonad generated by the Eilenberg-Moore adjunction $F^T \ladj U^T$. We define the 2-functor $\ca R_T$ as on the left in
\[ \begin{array}{lccr} {\ca R_T : \Algs T \to [\Delta^{\op},\Algs T]}
&&& {\Delta^{\op} \hookrightarrow \Delta_+^{\op} \xrightarrow{C_T} [\Algs T,\Algs T]} \end{array} \]
as the adjoint transpose of the composite given on the right in the previous display. In more explicit terms $\ca R_TX$ is given on objects by
\[ (\ca R_TX)_n = T^{n+1}X \]
and has face and degeneracy maps given by the formulae
\[ \begin{array}{lccr}
{d_i^{n+1} = \left\{
\begin{array}{lll} {T^{n+1}x} && {i = 0} \\
{T^{n+1-i}\mu^T_{T^{i-1}X}} && {1 \leq i \leq n+1} \end{array}\right.}
&&& {s_i^{n+1} = T^{n+1-i}\eta^T_{T^iX}.}
\end{array} \]
where $x:TX \to X$ denotes the $T$-algebra structure. Thus the part of $\ca R_TX$ which affects the value of its codescent object is
\[ \xygraph{!{0;(2,0):(0,1)::} {T^3X}="p0" [r] {T^2X}="p1" [r] {TX.}="p2"
"p2":"p1"|-{T\eta^T_X} "p1":@<1.5ex>"p2"^-{\mu^T_X} "p1":@<-1.5ex>"p2"_-{Tx} "p0":@<1.5ex>"p1"^-{\mu^T_{TX}} "p0":"p1"|-{T\mu^T_X} "p0":@<-1.5ex>"p1"_-{T^2x}} \]
For instance $\ca R_{\tnb{S}}1$ is the simplicial object considered in Example \ref{exam:bar-of-Sm-not-a-catead}. The simplicial object $\ca R_TX$ is well known and is usually called the \emph{bar construction}. We now give a generalisation of this construction for adjunctions of 2-monads.
\begin{const}\label{const:simplicial-objects-from-monad-adjunction}
Given an adjunction of 2-monads $F : (\ca L,S) \to (\ca K,T)$ we now construct a 2-functor
\[ \ca R_F : \Algs S \longrightarrow [\Delta^{\op},\Algs T] \]
which in the case $F = 1_{(\ca K,T)}$ is $\ca R_T$. For $X \in \Algs S$, $Y \in \Algs T$ and $n \in \N$, one has isomorphisms
\[ \Algs S(S^{n+1}X,\overline{F}Y) \iso \ca L(S^nX,F^*Y) \iso \ca K(F_!S^nX,Y) \iso \Algs T(TF_!S^nX,Y) \]
2-natural in $X$ and $Y$. By the Yoneda lemma, we define $\ca R_FX \in [\Delta^{\op},\Algs T]$ as unique such that $(\ca R_FX)_n = TF_!S^nX$, and
\begin{equation}\label{eq:RF-defining-nat-iso}
\Algs S(\ca R_SX,\overline{F}Y) \iso \Algs T(\ca R_FX,Y)
\end{equation}
2-naturally in $Y$. The effect of $\ca R_F$ on arrows and 2-cells is determined uniquely when we ask that the isomorphisms (\ref{eq:RF-defining-nat-iso}) be 2-natural in $X$.
\end{const}
Below we shall be computing codescent objects of simplicial objects of the form $\ca R_FX$. Thus it will be helpful to have a more explicit handle on their face and degeneracy maps, and this is provided by
\begin{lem}\label{lem:face-degen-R-F-X}
Given an adjunction $F:(\ca L,S) \to (\ca K,T)$ of 2-monads and a strict $S$-algebra $(X,x)$, the face and degeneracy maps of $\ca R_FX$ are given by the formulae
\[ \begin{array}{lccr}
{d_i^{n+1} = \left\{\begin{array}{lll}
{TF_!S^nx} && {i = 0} \\
{TF_!S^{n-i}\mu^S_{S^{i-1}X}} && {1 \leq i \leq n} \\
{\mu^T_{F_!S^nX}T(F^c_{S^nX})} && {i = n+1}
\end{array}\right.}
&&& {s_i^{n+1} = TF_!S^{n-i}\eta^S_{S^iX}.}
\end{array} \]
Thus the part of $\ca R_FX$ which affects the value of its codescent object is
\[ \xygraph{!{0;(3,0):(0,1)::} {TF_!S^2X}="p0" [r] {TF_!SX}="p1" [r] {TF_!X.}="p2"
"p2":"p1"|-{TF_!\eta^S_X} "p1":@<1.5ex>"p2"^-{\mu^T_{F_!X}T(F^c_X)} "p1":@<-1.5ex>"p2"_-{TF_!x} "p0":@<1.5ex>"p1"^-{\mu^T_{F_!SX}T(F^c_{SX})} "p0":"p1"|-{TF_!\mu^S_X} "p0":@<-1.5ex>"p1"_-{TF_!Sx}} \]
\end{lem}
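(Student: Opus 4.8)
The plan is to read off the simplicial operators of $\ca R_FX$ from the defining isomorphism (\ref{eq:RF-defining-nat-iso}) by the Yoneda lemma, transporting the operators of the bar construction $\ca R_SX$ across the composite of transposition bijections used in Construction~\ref{const:simplicial-objects-from-monad-adjunction}. Since $\ca R_SX$ is the bar construction for the 2-monad $S$, its operators $(\ca R_SX)_{n+1} \to (\ca R_SX)_n$ are $d^{(S)}_0 = S^{n+1}x$, $d^{(S)}_i = S^{n+1-i}\mu^S_{S^{i-1}X}$ for $1 \le i \le n+1$, and $s^{(S)}_i = S^{n+1-i}\eta^S_{S^iX}$. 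The family (\ref{eq:RF-defining-nat-iso}) is an isomorphism of cosimplicial objects in $Y$, assembled from the free-forgetful adjunction $F^S \ladj U^S$, the adjunction $F_! \ladj F^*$, and the free-forgetful adjunction $F^T \ladj U^T$. Hence a simplicial operator $\theta$ of $\ca R_FX$ is characterised, via naturality in $Y$ and Yoneda, by the requirement that precomposition with $\theta$ agree, across the bijection, with precomposition with the matching operator of $\ca R_SX$. It therefore suffices to verify that the maps asserted in the statement have this property.

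First I would treat the faces $d_i$ for $0 \le i \le n$ together with all the degeneracies. Each corresponding operator of $\ca R_SX$ lies in the image of the free functor $F^S$: explicitly $d^{(S)}_0 = F^S(S^nx)$, $d^{(S)}_i = F^S(S^{n-i}\mu^S_{S^{i-1}X})$ for $1 \le i \le n$, and $s^{(S)}_i = F^S(S^{n-i}\eta^S_{S^iX})$, the inner maps being morphisms of $\ca L$ between the relevant powers of $S$ applied to $X$. A short chase with the triangle identities of the three adjunctions shows that precomposition with $F^S(h)$ is carried, across the composite bijection, to precomposition with $F^TF_!(h)$, whose underlying arrow is $TF_!(h)$; applying this with $h$ the relevant inner map yields precisely the stated formulae for these $d_i$ and $s_i$.

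The substantive case is the top face, whose companion is $d^{(S)}_{n+1} = \mu^S_{S^nX}$, equivalently the counit component $\epsilon^S_{F^S(S^nX)}$ of $F^S \ladj U^S$. Being a counit rather than a free map, it does not transport by the recipe above, and it is here that the colax coherence enters. Tracing $\mu^S_{S^nX}$ through the first transpose replaces precomposition with it by extension along the $S$-action on $F^*Y$, which by Remark~\ref{rem:adjmnd-data-allowing-defn-of-internal-algebra} factors through the lax coherence $F^l_Y$; passing through the adjunction $F_! \ladj F^*$ then converts $F^l$ into its mate $F^c$ (again Remark~\ref{rem:adjmnd-data-allowing-defn-of-internal-algebra}), and the final free-$T$-algebra transpose supplies the outer multiplication $\mu^T$. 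Assembling these, together with one application of the colax monad-morphism axioms and a triangle identity, identifies the transported operator as $\mu^T_{F_!S^nX}T(F^c_{S^nX})$. The main obstacle is organising exactly this mate computation so that the $S$-multiplication, the coherence $F^l$/$F^c$ and the $T$-multiplication fit together into the asserted composite; the remaining cases are routine naturality bookkeeping.
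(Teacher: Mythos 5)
Your proposal is correct and follows essentially the same route as the paper: the paper likewise reads off the operators of $\ca R_FX$ by transporting those of $\ca R_SX$ across the naturality of (\ref{eq:RF-defining-nat-iso}), using the explicit unit/counit description of $\varphi_{X,Y,n}$ and its inverse, and leaves the final computation to the reader. Your packaging of that computation — free operators $F^S(h)$ transport to $TF_!(h)$ immediately, and only the top face, being a counit, requires the mate exchange $F^l \leftrightarrow F^c$ producing $\mu^T_{F_!S^nX}T(F^c_{S^nX})$ — is an accurate account of exactly those omitted calculations (though note the colax coherence axioms are not actually needed, only the mate relation, naturality, and the strict algebra axiom $y\mu^T_Y = yT(y)$).
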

\begin{proof}
We use the naturality of (\ref{eq:RF-defining-nat-iso}) and the explicit formulae for the face and degeneracy maps of $\ca R_SX$ to read off the formulae for the face and degeneracy maps of $\ca R_FX$. Let $(Y,y)$ be a strict $T$-algebra. The underlying object of $\overline{F}Y$ is by definition $F^*Y$ and its $S$-algebra structure is the composite $F^*(y)F^l_Y$. We denote by
\[ \varphi_{X,Y,n} : \Algs S(S^{n+1}X,\overline{F}Y) \longrightarrow \Algs T(TF_!S^nX,Y) \]
the components of (\ref{eq:RF-defining-nat-iso}). Recall that this isomorphism was a composite of various adjunction isomorphisms, and expressing these in terms of the units and counits of the participating adjunctions, one obtains for $f:S^{n+1}X \to \overline{F}Y$ the explicit formula
\[ \varphi_{X,Y,n}(f) = yT(\varepsilon^F_Y)TF_!(f)TF_!(\eta^S_{S^nX}). \]
Doing the same for $\varphi_{X,Y,n}^{-1}$ gives for $g:TF_!S^nX \to Y$, the explicit formula
\[ \varphi_{X,Y,n}^{-1}(g) = F^*(y)F^l_YSF^*(g)SF^*(\eta^T_{F_!S^nX})S(\eta^F_{S^nX}).  \]
The naturality of $\varphi_{X,TF_!S^nX}$ ensures that 
\[ \xygraph{!{0;(5,0):(0,.2)::} {\Algs T(TF_!S^nX,TF_!S^nX)}="p0" [r] {\Algs T(TF_!S^{n+1}X,TF_!S^nX)}="p1" [d] {\Algs S(S^{n+2}X,\overline{F}TF_!S^nX)}="p2" [l] {\Algs S(S^{n+1}X,\overline{F}TF_!S^nX)}="p3" "p0":"p1"^-{(-) \comp d_i^{n+1}}:@{<-}"p2"^-{\varphi}:@{<-}"p3"^-{(-) \comp d_i^{n+1}}:@{<-}"p0"^-{\varphi^{-1}}} \]
commutes, and the face map $d_i^{n+1} \in \Algs T(TF_!S^{n+1}X,TF_!S^nX)$ is the effect of the common composite on the identity $1_{TF_!S^nX}$. Thus applying the composite around the bottom to $1_{TF_!S^nX}$ gives a description of the face maps of $\ca R_FX$ in terms of those of $\ca R_SX$. Similarly, one expresses the degeneracy maps of $\ca R_FX$ in terms of those of $\ca R_SX$, and then the explicit formulae follow from straight forward calculations which are left to the reader.
\end{proof}
Having defined $\ca R_FX$ and given an explicit description of it, we now establish that its codescent object is the value on objects of the left adjoint $(-)^{\dagger}_F$ to $J_F$.
\begin{prop}\label{prop:dagger-F}
Let $F:(\ca L,S) \to (\ca K,T)$ be an adjunction of 2-monads and suppose that $\Algs T$ has all codescent objects. Then the left adjoint $(-)^{\dagger}_F$ to $J_F$ exists and is given on objects by the formula
\begin{equation}\label{eq:dagger-F-codesc-formula}
X^{\dagger}_F = \tn{CoDesc}(\ca R_FX).
\end{equation}
\end{prop}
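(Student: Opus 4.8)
The plan is to establish, for each strict $S$-algebra $X$, the object-level universal property of a left adjoint to $J_F$ at $X$. Concretely, writing $D := \CoDesc(\ca R_FX)$ --- which exists in $\Algs T$ by hypothesis --- I will produce an isomorphism of categories
\[ \Algs T(D, Y) \iso \Algl S(X, J_FY) \]
2-natural in $Y \in \Algs T$, recalling from Remark \ref{rem:J-F} that $J_FY = \overline{F}Y$ regarded as a strict (hence lax) $S$-algebra. Since every hom-2-functor $\Algl S(X, J_F(-))$ is thereby 2-represented by $D$, the standard representability criterion for 2-adjunctions yields the left adjoint $(-)^{\dagger}_F$ with $X^{\dagger}_F = D$ on objects, the unit at $X$ being the image of $1_D$ under the isomorphism at $Y = D$. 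Note that only the object assignment needs $\ca R_F$, so the fact that $\ca R_F$ is defined on $\Algs S$ rather than $\Algl S$ is immaterial.

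The isomorphism is assembled as a chain. First, the defining property (\ref{eq:codesc-defining-isos}) of the codescent object in $\ca K = \Algs T$ gives $\Algs T(D, Y) \iso \tn{Desc}(\Algs T(\ca R_FX-, Y))$ 2-naturally in $Y$, the right-hand side being the category of cocones for $\ca R_FX$ with vertex $Y$. Next I invoke the defining isomorphism (\ref{eq:RF-defining-nat-iso}) of Construction \ref{const:simplicial-objects-from-monad-adjunction}. The point to secure here is that (\ref{eq:RF-defining-nat-iso}) is not merely a pointwise isomorphism but an isomorphism of cosimplicial categories $\Algs S(\ca R_SX-, \overline{F}Y) \iso \Algs T(\ca R_FX-, Y)$; this is exactly the content of the construction, since $\ca R_FX$ was defined by Yoneda precisely so that the component isomorphisms $\varphi_{X,Y,n}$ respect the (co)simplicial structure. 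Applying the weighted-limit functor $\tn{Desc} = [\Delta, \Cat](\delta, -)$, which preserves isomorphisms, transports this to an isomorphism $\tn{Desc}(\Algs T(\ca R_FX-, Y)) \iso \tn{Desc}(\Algs S(\ca R_SX-, \overline{F}Y))$, 2-natural in $Y$ because $\overline{F}$ is a 2-functor.

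The final, and key, input is the single-monad case, obtained by specialising $\ca R_F$ to the bar construction $\ca R_S$. For any strict $S$-algebra $W$, a cocone for $\ca R_SX$ with vertex $W$ is precisely a lax morphism $X \to W$ of $S$-algebras, and a morphism of such cocones is precisely an $S$-algebra 2-cell; this is the analysis of the bar resolution due to Lack \cite{Lack-Codescent} (and Bourke \cite{Bourke-Thesis}), and it gives a natural isomorphism $\tn{Desc}(\Algs S(\ca R_SX-, W)) \iso \Algl S(X, W)$. I will emphasise that this is used only at the level of descent objects, which always exist in $\Cat$, so that \emph{no} assumption that $\Algs S$ admits codescent objects is required --- only the hypothesis that $\Algs T$ does, which guarantees $D$ exists. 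Setting $W = \overline{F}Y$ and composing the three isomorphisms yields $\Algs T(D, Y) \iso \Algl S(X, \overline{F}Y) = \Algl S(X, J_FY)$, 2-naturally in $Y$, as desired.

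The main obstacle is the bookkeeping in the second step: confirming that the individual hom-isomorphisms of (\ref{eq:RF-defining-nat-iso}), built from the adjunction units and counits as in the proof of Lemma \ref{lem:face-degen-R-F-X}, are compatible with all coface and codegeneracy operators, so that they genuinely constitute an isomorphism of functors $\Delta \to \Cat$ to which $\tn{Desc}$ may be applied. Once this naturality is in hand, the remainder is a formal consequence of the codescent universal property, the 2-functoriality of $\overline{F}$, and the cited single-monad identification of cocones with lax morphisms.
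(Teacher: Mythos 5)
Your proposal is correct and follows essentially the same route as the paper: the same chain of three natural isomorphisms (codescent universal property, the defining isomorphism (\ref{eq:RF-defining-nat-iso}) of $\ca R_F$ applied under $\tn{Desc}$, and the single-monad identification of descent data for the bar resolution with lax morphisms), just written in the opposite order. The only cosmetic difference is that the paper attributes the last identification to Street and spells out the explicit object-level formulas, whereas you cite Lack and Bourke; the "naturality obstacle" you flag is indeed dissolved by the Yoneda-style definition in Construction \ref{const:simplicial-objects-from-monad-adjunction}, exactly as you observe.
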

\begin{proof}
Given any 2-monad $(\ca K,T)$, strict $T$-algebras $(X,x)$ and $(Y,y)$, and a lax $T$-morphism $f:X \to Y$ with coherence datum $\overline{f}:yT(f) \to fx$, determines the object $(yT(f),yT(\overline{f}))$ of the category $\tn{Desc}(\Algs T(\ca R_TX,Y))$. Conversely, given an object
\[ \begin{array}{lccr} {f_0:TX \to Y} &&& {f_1:f_0\mu^T_X \to f_0T(x)} \end{array} \]
of $\tn{Desc}(\Algs T(\ca R_TX,Y))$, the data
\[ \begin{array}{lccr} {f_0T(\eta_X):X \to Y} &&& {f_1\eta^T_{TX}:yT(f_0\eta^T_X) \to f_0\eta^T_Xx} \end{array} \]
is that of a lax $T$-morphism $X \to Y$. As observed by Street in \cite{Street-CorrFibrationInBicats} the processes just described are the effect on objects of an isomorphism of categories
\begin{equation}\label{eq:homs-TAlgl-via-Desc}
\Algl T(X,Y) \iso \tn{Desc}(\Algs T(\ca R_TX,Y)).
\end{equation}
In the present situation, given a strict $S$-algebra $X$ and a strict $T$-algebra $Y$, we thus have natural isomorphisms
\begin{equation}\label{eq:adjunctions-for-dagger-F} 
\begin{array}{rcl} {\Algl S(X,\overline{F}Y)}
& {\iso} & {\tn{Desc}(\Algs S(\ca R_SX,\overline{F}Y))} \\
& {\iso} & {\tn{Desc}(\Algs T(\ca R_FX,Y))} \\
& {\iso} & {\Algs T(\tn{CoDesc}(\ca R_FX),Y)} \end{array}
\end{equation}
the first of which is from (\ref{eq:homs-TAlgl-via-Desc}), the second is from the definition of $\ca R_F$, and the third is from the definition of codescent objects.
\end{proof}
It will also be useful to have an explicit description of the component
\[ g^F_X : X \longrightarrow \overline{F}X^{\dagger}_F \]
at a strict $T$-algebra $(X,x)$, of the unit of the adjunction $(-)^{\dagger}_F \ladj \overline{F}J_T$ in terms of a given universal codescent cocone for $\ca R_FX$. As a lax $T$-morphism, its underlying $1$-cell and coherence data will be of the form
\[ \begin{array}{lccr} {g^F_X : X \to F^*X^{\dagger}_F} &&& {\overline{g^F_X} : F^*(x^{\dagger}_F)F^l_{X^{\dagger}_F}T(g^F_X) \to g^F_Xx} \end{array} \]
where $x^{\dagger}_F:TX^{\dagger}_X \to X^{\dagger}_F$ denotes the $T$-algebra structure. So we suppose that a universal codescent cocone for $\ca R_FX$
\[ \begin{array}{lccr} {q^F_{X,0} : TF_!X \to X^{\dagger}_F} &&& {q^F_{X,1} : q^F_{X,0}\mu^T_{F_!X}T(F^c_X) \to q^F_{X,0}TF_!(x)} \end{array} \]
is given, and then we have
\begin{lem}\label{lem:unit-in-terms-of-codescent}
Let $T$ be a 2-monad on a 2-category $\ca K$ such that $\Algs T$ admits codescent objects. Suppose one has a universal codescent cocone for $\ca R_FX$ as above defining the effect on a strict $T$-algebra $(X,x)$ of the left adjoint $(-)^{\dagger}_F$ of Proposition(\ref{prop:dagger-F}). Then
\[ \begin{array}{lccr} {g^F_X = F^*(q^F_{X,0})F^*(\eta^T_{F_!X})\eta^F_X} &&& {\overline{g^F_X} = F^*(q^F_{X,1})F^*(\eta^T_{F_!SX})\eta^F_{SX}} \end{array} \]
then describes the data of the unit component $g^F_X:X \to \overline{F}X^{\dagger}_F$ in $\Algl S$.
\end{lem}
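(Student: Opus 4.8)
The plan is to obtain $g^F_X$ as the image of the identity $1_{X^{\dagger}_F}$ under the composite adjunction isomorphism
\[ \Algs T(X^{\dagger}_F, X^{\dagger}_F) \iso \Algl S(X, \overline F X^{\dagger}_F) \]
assembled in the proof of Proposition \ref{prop:dagger-F}. Indeed, for any adjunction the unit at $X$ is the image of the identity at $X^{\dagger}_F$ under the adjunction bijection, so it suffices to track $1_{X^{\dagger}_F}$ backwards through the three isomorphisms of (\ref{eq:adjunctions-for-dagger-F}).

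First I would run through those three isomorphisms in turn. Under the codescent isomorphism (\ref{eq:codesc-defining-isos}) the identity $1_{X^{\dagger}_F}$ corresponds, by the very definition of the universal cocone recalled after (\ref{eq:codesc-defining-isos}), to the given cocone $(q^F_{X,0}, q^F_{X,1})$. Next, the middle isomorphism is $\tn{Desc}$ applied to the inverse of (\ref{eq:RF-defining-nat-iso}); since (\ref{eq:RF-defining-nat-iso}) is computed levelwise by the maps $\varphi_{X, X^{\dagger}_F, n}$ of the proof of Lemma \ref{lem:face-degen-R-F-X}, the transport to the $S$-side sends the cocone to the descent datum $(\varphi^{-1}_0(q^F_{X,0}), \varphi^{-1}_1(q^F_{X,1}))$, where I abbreviate $\varphi^{-1}_n = \varphi^{-1}_{X, X^{\dagger}_F, n}$. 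Finally, the inverse of Street's isomorphism (\ref{eq:homs-TAlgl-via-Desc}), read in the $S$-context exactly as in the proof of Proposition \ref{prop:dagger-F}, sends a descent datum $(\tilde q_0, \tilde q_1)$ to the lax $S$-morphism with underlying $1$-cell $\tilde q_0 \eta^S_X$ and coherence $2$-cell $\tilde q_1 \eta^S_{SX}$. Substituting the explicit form of $\varphi^{-1}_0$ and $\varphi^{-1}_1$ recorded in the proof of Lemma \ref{lem:face-degen-R-F-X} then gives explicit composites for $g^F_X$ and $\overline{g^F_X}$.

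It then remains to reduce these composites to the stated forms, and here the two cases are identical up to a shift of level. For the $1$-cell, expanding $\varphi^{-1}_0(q^F_{X,0})$ and postcomposing with $\eta^S_X$ produces
\[ F^*(x^{\dagger}_F)\, F^l_{X^{\dagger}_F}\, SF^*(q^F_{X,0})\, SF^*(\eta^T_{F_!X})\, S(\eta^F_X)\, \eta^S_X, \]
and I would use the $2$-naturality of the unit $\eta^S : 1_{\ca L} \to S$ three times to slide $\eta^S_X$ leftwards past the factors $S(\eta^F_X)$, $SF^*(\eta^T_{F_!X})$ and $SF^*(q^F_{X,0})$. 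This isolates the prefix $F^*(x^{\dagger}_F) F^l_{X^{\dagger}_F} \eta^S_{F^*X^{\dagger}_F}$, which is the $S$-algebra structure of $\overline F X^{\dagger}_F$ (as described in Remark \ref{rem:adjmnd-data-allowing-defn-of-internal-algebra}) precomposed with its own unit, and hence is the identity by the $S$-algebra unit axiom. What survives is exactly $F^*(q^F_{X,0}) F^*(\eta^T_{F_!X}) \eta^F_X$. Repeating the argument one level up, with $\varphi^{-1}_1(q^F_{X,1})$ and $\eta^S_{SX}$ in place of $\varphi^{-1}_0(q^F_{X,0})$ and $\eta^S_X$, yields $\overline{g^F_X} = F^*(q^F_{X,1}) F^*(\eta^T_{F_!SX}) \eta^F_{SX}$.

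The only real difficulty is bookkeeping rather than mathematical depth: one must keep the directions of the three isomorphisms straight (in particular that the universal cocone lives on the $T$-side, so one transports by $\varphi^{-1}$ and not $\varphi$), and must keep careful track of which symbols denote $1$-cells being whiskered and which the transported $2$-cell, so that the naturality reductions of $\eta^S$ and the invocation of the unit axiom are applied to the correct cells. Once this is organised the computation is routine.
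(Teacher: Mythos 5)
Your proposal is correct and follows essentially the same route as the paper: both trace $1_{X^{\dagger}_F}$ back through the three isomorphisms of (\ref{eq:adjunctions-for-dagger-F}) using the explicit formula for $\varphi^{-1}_{X,Y,n}$. The only (harmless) difference is in the final reduction: the paper first collapses the composite via naturality of $F^l$ and the strictness of $q^F_{X,0}$ to $F^*(q^F_{X,0})F^l_{F_!X}S(\eta^F_X)\eta^S_X$ and then invokes the compatibility $F^l_{F_!X}S(\eta^F_X)\eta^S_X = F^*(\eta^T_{F_!X})\eta^F_X$, whereas you slide $\eta^S$ leftward and cancel the prefix by the $S$-algebra unit law of $\overline{F}X^{\dagger}_F$ --- the same identities organised in a different order.
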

\begin{proof}
The required unit is by definition the effect of the composite isomorphisms of (\ref{eq:adjunctions-for-dagger-F}) on $1_{X^{\dagger}_F} \in \Algs T(X^{\dagger}_F,X^{\dagger}_F)$, and the third of these isomorphisms sends $1_{X^{\dagger}_F}$ to the given universal codescent cocone $(q^F_{X,0},q^F_{X,1})$ for $\ca R_FX$. Applying the second isomorphism of (\ref{eq:adjunctions-for-dagger-F}) to this, gives a codescent cocone for $\ca R_SX$ with vertex $\overline{F}X^{\dagger}_F$ which we now describe explicitly. Note that this isomorphism came from (\ref{eq:RF-defining-nat-iso}) above, which in turn can be written explicitly in terms of the adjunctions $F^S \ladj U^S$, $F_! \ladj F^*$ and $F^T \ladj U^T$. Doing so gives the $1$-cell datum of the $\ca R_SX$-codescent cocone as the upper most composite in the commutative diagram
\[ \xygraph{!{0;(2.5,0):(0,.4)::} {SX}="p0" [r] {SF^*F_!X}="p1" [r] {SF^*TF_!X}="p2" [r] {SF^*X^{\dagger}_F}="p3" [d] {F^*TX^{\dagger}_F}="p4" [d] {F^*X^{\dagger}_F}="p5" [l] {F^*TF_!X}="p6" [ul] {F^*TF_!X}="p7" [r] {F^*T^2F_!X}="p8"
"p0":"p1"^-{S\eta^F}:"p2"^-{SF^*\eta^T}:"p3"^-{SF^*q^F_{X,0}}:"p4"^-{F^l}:"p5"^-{F^*x^{\dagger}_F}:@{<-}"p6"^-{F^*q^F_{X,0}}:@{<-}"p7"^-{1}:@{<-}"p1"^-{F^l}
"p8"(:@{<-}"p7"_-{F^*T\eta^T},:@{<-}"p2"_-{F^l},:"p4"^-{F^*Tq^F_{X,0}},:"p6"^-{F^*\mu^T_{F_!X}})} \]
and so the $1$-cell part of the required cocone is, more simply, $F^*(q^F_{X,0})F^l_{F_!X}S(\eta^F_X)$. Similarly, the required 2-cell datum is $F^*(q^F_{X,1})F^l_{F_!SX}S(\eta^F_{SX})$. Thus, the data $(g^F_X,\overline{g^F_X})$ is the result of applying the first isomorphism of (\ref{eq:adjunctions-for-dagger-F}) to this last cocone. The effect of this isomorphism was described explicitly in the first part of the proof of Proposition \ref{prop:dagger-F}, and using those details one obtains
\[ \begin{array}{lccr} {g^F_X = F^*(q^F_{X,0})F^l_{F_!X}S(\eta^F_X)\eta^S_X} &&& {\overline{g^F_X} = F^*(q^F_{X,1})F^l_{F_!SX}S(\eta^F_{SX})\eta^S_{SX}.} \end{array} \]
Noting that for all $X \in \ca L$ one has
\[ \xygraph{!{0;(2,0):(0,.5)::} {X}="p0" [r] {F^*F_!X}="p1" [dr] {F^*TF_!X}="p2" [l] {SF^*F_!X}="p3" [l] {SX}="p4" "p0":"p1"^-{\eta^F_X}:"p2"^-{F^*\eta^T_{F_!X}}:@{<-}"p3"^-{F^l_{F_!X}}:@{<-}"p4"^-{S\eta^F_X}:@{<-}"p0"^-{\eta^S_X} "p1":"p3"_-{\eta^S_{F^*F_!X}}} \]
commutative, the result follows.
\end{proof} 
\begin{rem}\label{rem:universal-int-alg-explicit}
Given an adjunction $F : (\ca K,S) \to (\ca L,T)$ of 2-monads such that $\Algs T$ admits all codescent objects and $\ca L$ has a terminal object $1$, Lemma \ref{lem:unit-in-terms-of-codescent} in the case $X = 1$ is an explicit description of the universal $S$-algebra internal to a $T$-algebra $g^S_T : 1 \to \overline{F}T^S$.
\end{rem}
The codescent object appearing in (\ref{eq:dagger-F-codesc-formula}) is a particular 2-categorical colimit in $\Algs T$. In the absence of further assumptions, an explicit computation of such a colimit in terms of colimits in $\ca K$ is difficult. It will involve a codescent object in $\ca K$, as well as other colimits in organised into a transfinite construction of the sort considered in \cite{Kelly-Transfinite}. Thankfully in the examples of interest for us, the 2-monad $T$ actually preserves the codescent objects that we care about, and so at least those codescent objects in $\Algs T$ are computed as in $\ca K$.

Let us write $T^S$ now for the underlying object in $\ca K$ of $1^{\dagger}_F$, and denote by $a^S : T(T^S) \to T^S$ its $T$-algebra structure. Denoting by $\sigma_T:TU^T \to U^T$ the 2-cell datum of the Eilenberg-Moore object of $T$, which in explicit terms is the 2-natural transformation with components $(\sigma_T)_{(X,x)} = x$, we obtain the following immediate corollary of Proposition \ref{prop:dagger-F}.
\begin{cor}\label{cor:alg-class-explicit}
Let $F:(\ca L,S) \to (\ca K, T)$ be an adjunction of 2-monads, $T$ have rank, $\ca K$ have all codescent objects, and $\ca L$ have a terminal object $1$. If $T$ and $T^2$ preserve the codescent object of $U^T\ca R_F1$, then
\[ \begin{array}{lccr} {T^S = \tn{CoDesc}(U^T\ca R_F1)} &&& {a^S = \tn{CoDesc}(\sigma_T\ca R_F1).} \end{array} \]
\end{cor}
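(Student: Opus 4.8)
The plan is to derive the corollary from Proposition~\ref{prop:dagger-F} together with the standard fact that a monadic forgetful 2-functor creates those codescent objects which are preserved by $T$ and $T^2$. Since $T$ has rank and $\ca K$ has all codescent objects, $\Algs T$ admits all codescent objects \cite{Lack-Codescent}, so Proposition~\ref{prop:dagger-F} applies and $1^{\dagger}_F = \tn{CoDesc}(\ca R_F 1)$, computed in $\Algs T$. By construction $T^S$ is the underlying object of this codescent object and $a^S$ is its $T$-algebra structure; thus the task is to show that this codescent object is created by $U^T$, and to identify the induced structure map with $\tn{CoDesc}(\sigma_T\ca R_F 1)$.

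First I would apply $U^T$ to obtain the simplicial object $U^T\ca R_F 1 \in [\Delta^{\op},\ca K]$, whose codescent object exists by hypothesis. The strict $T$-algebra structures carried by the objects $(\ca R_F 1)_n$ are exactly the components of $\sigma_T\ca R_F 1 : TU^T\ca R_F 1 \to U^T\ca R_F 1$. Because $T$ preserves the codescent object of $U^T\ca R_F 1$, the simplicial object $TU^T\ca R_F 1$ has codescent object $T\,\tn{CoDesc}(U^T\ca R_F 1)$, so functoriality of $\tn{CoDesc}$ applied to $\sigma_T\ca R_F 1$ yields a morphism $T\,\tn{CoDesc}(U^T\ca R_F 1) \to \tn{CoDesc}(U^T\ca R_F 1)$. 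I would then invoke the 2-categorical analogue of the creation-of-colimits theorem for monadic functors: since $T$ and $T^2$ preserve $\tn{CoDesc}(U^T\ca R_F 1)$, the universal cocone lifts uniquely to $\Algs T$, its vertex equipped with the above morphism as $T$-algebra structure is the codescent object of $\ca R_F 1$ in $\Algs T$, and $U^T$ sends it back to $\tn{CoDesc}(U^T\ca R_F 1)$. Uniqueness of colimits identifies this algebra with $1^{\dagger}_F$, giving $T^S = \tn{CoDesc}(U^T\ca R_F 1)$ and $a^S = \tn{CoDesc}(\sigma_T\ca R_F 1)$.

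The only point needing care is the verification that $\tn{CoDesc}(\sigma_T\ca R_F 1)$ really is a strict $T$-algebra structure, and this is where both preservation hypotheses enter. Preservation by $T$ is what makes its domain equal to $T(T^S)$, so that it has the correct type; preservation by $T^2$ is needed for the associativity law, which one checks by precomposing $a^S\mu^T_{T^S}$ and $a^S T(a^S)$ with the legs of the universal cocone for $T^2 U^T\ca R_F 1 = T^2\,\tn{CoDesc}(U^T\ca R_F 1)$, the two composites agreeing because each $(\ca R_F 1)_n$ is already an algebra. The unit law is checked the same way and needs no preservation. As this creation argument is the routine 2-dimensional lift of the classical monad-theoretic statement, applied degreewise to the face and degeneracy data and to the 2-cell part of the codescent cocone, the corollary follows as claimed.
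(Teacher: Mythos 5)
Your proposal is correct and takes the same route the paper intends: the paper states this as an ``immediate corollary'' of Proposition~\ref{prop:dagger-F}, leaving implicit precisely the standard argument that $U^T$ creates those codescent objects preserved by $T$ and $T^2$, which you spell out. The only small caveat is that you need not (and probably cannot, from the stated hypotheses) claim that $\Algs T$ has \emph{all} codescent objects -- the creation argument produces the one codescent object of $\ca R_F1$ that is actually required, and the chain of isomorphisms in the proof of Proposition~\ref{prop:dagger-F} only uses the existence of that particular colimit.
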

In other words, applying $\tn{CoDesc} : [\Delta^{\op},\ca K] \to \ca K$ to the morphism $\sigma_T\ca R_F1$ of simplicial objects in $\ca K$ whose codescent-relevant parts are depicted in
\[ \xygraph{!{0;(3.5,0):(0,.5)::}
{T^2F_!S^21}="pt0" [r] {T^2F_!S1}="pt1" [r] {T^2F_!1}="pt2"
"pt2":"pt1"|-{T^2F_!\eta^S_1} "pt1":@<1.5ex>"pt2"^-{T\mu^T_{F_!1}T^2(F^c_1)} "pt1":@<-1.5ex>"pt2"_-{T^2F_!(t_{S1})} "pt0":@<1.5ex>"pt1"^-{T\mu^T_{F_!S1}T^2(F^c_{S1})} "pt0":"pt1"|-{T^2F_!\mu^S_1} "pt0":@<-1.5ex>"pt1"_-{T^2F_!S(t_{S1})}
"pt0" [d] {TF_!S^21}="pb0" [r] {TF_!S1}="pb1" [r] {TF_!1}="pb2"
"pb2":"pb1"|-{TF_!\eta^S_1} "pb1":@<1.5ex>"pb2"^-{\mu^T_{F_!1}T(F^c_1)} "pb1":@<-1.5ex>"pb2"_-{TF_!(t_{S1})} "pb0":@<1.5ex>"pb1"^-{\mu^T_{F_!S1}T(F^c_{S1})} "pb0":"pb1"|-{TF_!\mu^S_1} "pb0":@<-1.5ex>"pb1"_-{TF_!S(t_{S1})}
"pt0":"pb0"|-{\mu^T_{F_!S^21}} "pt1":"pb1"|-{\mu^T_{F_!S1}} "pt2":"pb2"|-{\mu^T_{F_!1}}} \]
where $t_{S1} : S1 \to 1$ is the unique map, gives the $T$-algebra structure map $a^S$.
\begin{exams}\label{exams:easy-actions-operadic-examples}
The middle maps of the polynomials underlying $\tnb{M}$, $\tnb{B}$ and $\tnb{S}$ are discrete fibrations with finite fibres. Thus by \cite{Weber-PolynomialFunctors} Theorem 4.5.1 these 2-monads preserve sifted colimits, and thus in particular codescent objects by \cite{Lack-Codescent} Proposition 4.3. Moreover since the property of being a discrete fibration with finite fibres is stable by pullback along arbitrary functors, the 2-monad corresponding to any operad as in \cite{Weber-OpPoly2Mnd} also preserves sifted colimits. Similarly for 2-monads arising from non-symmetric or braided operads in the manner discussed in Section \ref{ssec:polynomials}. Thus the adjunction of 2-monads arising from any morphism of operads as in Examples \ref{exams:adj-monad-op-morphism} or its variants of Remark \ref{rem:nonsym-braided-internal-algebras} conform to Corollary \ref{cor:alg-class-explicit}.
\end{exams}

\subsection{Category objects from bar constructions.}
\label{ssec:IntAlgClass-first-examples}
All of the examples of simplicial objects of which we take codescent are in fact category objects by
\begin{prop}\label{prop:RF-cat-object}
If $F : (\ca L,S) \to (\ca K,T)$ is an adjunction of 2-monads such that the naturality squares of $\mu^T$ and $F^c$ are pullbacks, $T$ preserves pullbacks, and $(A,a)$ is a strict $S$-algebra, then $\ca R_FA$ is a category object.
\end{prop}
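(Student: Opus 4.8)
The plan is to verify the defining condition of a category object from Example \ref{ex:internal-nerve} directly: for each $n \in \N$ the square whose top-left corner is $(\ca R_FA)_{n+2}$ and whose edges are the faces $d_0$, $d_{n+1}$ and $d_{n+2}$ must be a pullback. Since the forgetful 2-functor $U^T : \Algs T \to \ca K$ creates limits, it suffices to produce this pullback in $\ca K$. Writing $Z := S^nA$ and $g := S^na : S^{n+1}A \to S^nA$, and using the identity $S^{n+1}a = S(g)$, the formulas of Lemma \ref{lem:face-degen-R-F-X} identify the four edges of this square as the top map $\mu^T_{F_!SZ}T(F^c_{SZ})$, the bottom map $\mu^T_{F_!Z}T(F^c_Z)$, and the two vertical maps $TF_!S(g)$ (on the left) and $TF_!(g)$ (on the right).

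First I would observe that these data are exactly the naturality square, evaluated at the morphism $g : SZ \to Z$, of the composite 2-natural transformation
\[ h \;:=\; (\mu^T F_!)\,(TF^c) \;:\; TF_!S \Longrightarrow TF_! \]
obtained by whiskering: the two horizontal edges are $h_{SZ}$ and $h_Z$, and the two vertical edges are $TF_!S(g)$ and $TF_!(g)$. Thus the category-object condition reduces to the assertion that the naturality squares of $h$ (at the morphisms $g = S^na$) are pullbacks.

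The key step is then to paste. The naturality square of $h$ at $g$ factors horizontally, through the object $T^2F_!(-)$, as the horizontal composite of the naturality square of $TF^c$ at $g$ followed by the naturality square of $\mu^T F_!$ at $g$. The latter is precisely the naturality square of $\mu^T$ at the morphism $F_!(g)$, hence a pullback by hypothesis. The former is the image under $T$ of the naturality square of $F^c$ at $g$, which is a pullback by hypothesis; since $T$ preserves pullbacks, its image is a pullback as well. By the pasting lemma for pullbacks the outer rectangle --- that is, the naturality square of $h$ at $g$, which is our category-object square --- is a pullback, as required.

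The argument is essentially formal once the square is correctly set up, so the only genuine work is the bookkeeping: confirming via Lemma \ref{lem:face-degen-R-F-X} that the three relevant faces really are $h_{SZ}$, $h_Z$ and the two whiskerings $TF_!S(g)$, $TF_!(g)$ of $g = S^na$ (this is where the identity $S^{n+1}a = S(S^na)$ is used), together with the remark that the pullback may be tested in $\ca K$ rather than in $\Algs T$. I do not anticipate any obstacle beyond this identification; the degeneracies and the remaining faces play no part in the category-object condition, and the hypotheses on $\mu^T$, $F^c$ and $T$ are precisely what the two-step pasting requires.
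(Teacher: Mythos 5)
Your proposal is correct and is essentially the paper's own argument: the paper likewise decomposes the category-object square for $\ca R_FA$ horizontally through $T^2F_!S^{n+1}A$ into the $T$-image of a naturality square of $F^c$ pasted with a naturality square of $\mu^T$, and concludes by the hypotheses and the pasting lemma for pullbacks. The only addition is your explicit remark that the pullback can be tested in $\ca K$ rather than $\Algs T$, which the paper leaves implicit.
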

\begin{proof}
As we recalled in Example \ref{ex:internal-nerve} a category object in $\ca K$ is a simplicial object $Y \in [\Delta^{\op},\ca K]$ such that for all $n \in \N$ the square
\[ \xygraph{{\xybox{\xygraph{!{0;(1.5,0):(0,.6667)::} {Y_{n+2}}="tl" [r] {Y_{n+1}}="tr" [d] {Y_n}="br" [l] {Y_{n+1}}="bl" "tl":"tr"^-{d_{n+2}}:"br"^-{d_0}:@{<-}"bl"^-{d_{n+1}}:@{<-}"tl"^-{d_0}}}}
[r(5)u(.03)]
{\xybox{\xygraph{!{0;(2.5,0):(0,.4)::} {TF_!S^{n+2}A}="p0" [r] {T^2F_!S^{n+1}A}="p1" [r] {TF_!S^{n+1}A}="p2" [d] {TF_!S^nA}="p3" [l] {T^2F_!S^nA}="p4" [l] {TF_!S^{n+1}A}="p5" "p0":"p1"^-{T(F^c_{S^{n+1}A})}:"p2"^-{\mu^T_{F_!S^{n+1}A}}:"p3"^-{TF_!S^na}:@{<-}"p4"^-{\mu^T_{F_!S^nA}}:@{<-}"p5"^-{T(F^c_{S^nA})}:@{<-}"p0"^-{TF_!S^{n+1}a} "p1":"p4"^{T^2F_!S^na}}}}} \]
on the left is a pullback. In the case of $\ca R_FA$ these squares decompose horizontally as on the right in the previous display.
\end{proof}
\begin{rem}\label{rem:basic-bar-as-cat-object}
Applying Proposition \ref{prop:RF-cat-object} when $F$ is an identity one recovers the well known fact that for any cartesian 2-monad $T$ and strict $T$-algebra $A$, $\ca R_TA$ is a category object.
\end{rem}
\begin{exams}\label{exams:easy-actions-internalised-examples}
Many examples, such as those of \cite{Batanin-EckmannHilton, BataninBerger-HtyThyOfAlgOfPolyMnd}, arise in the following way. One begins with an adjunction of monads $F : (\ca D,S) \to (\ca E,T)$ in the 2-category $\Cart$, of categories with pullbacks, pullback preserving functors and cartesian natural transformations. Applying the 2-functor $\ca E \mapsto \tn{Cat}(-)$ to this gives an adjunction of 2-monads, which for the sake of brevity we denote as $F : (\tn{Cat}(\ca D),S) \to (\tn{Cat}(\ca E),T)$. In this situation $\ca R_F1$ is a componentwise discrete, a category object by Proposition \ref{prop:RF-cat-object}, and so its codescent object is obtained as in Example \ref{ex:internal-nerve}, and is clearly preserved by composites of $T$. Thus such situations also conform to Corollary \ref{cor:alg-class-explicit}, and the underlying object of $T^S$ is
\[ \xygraph{!{0;(3,0):(0,1)::} {TF_!S^21}="p0" [r] {TF_!S1}="p1" [r] {TF_!1}="p2"
"p2":"p1"|-{TF_!\eta^S_1} "p1":@<1.5ex>"p2"^-{\mu^T_{F_!1}T(F^c_1)} "p1":@<-1.5ex>"p2"_-{TF_!t_{S1}} "p0":@<1.5ex>"p1"^-{\mu^T_{F_!S1}T(F^c_{S1})} "p0":"p1"|-{TF_!\mu^S_1} "p0":@<-1.5ex>"p1"_-{TF_!St_{S1}}} \]
viewed as an object of $\tn{Cat}(\ca E)$.
\end{exams}
We recall the most basic instance of Example \ref{exams:easy-actions-internalised-examples} in
\begin{exam}\label{exam:delta-plus}
For $\tnb{M}$ the 2-monad for monoidal categories recalled in Section \ref{ssec:polynomials}, the strict monoidal structure $\tnb{M}(\tnb{M}^{\tnb{M}}) \to \tnb{M}^{\tnb{M}}$ is the functor which is sent by the nerve functor to the morphism
\[ \xygraph{!{0;(3,0):(0,.4)::}
{\tnb{M}^41}="t0" [r] {\tnb{M}^31}="t1" [r] {\tnb{M}^21}="t2"
"t2":"t1"|-{\tnb{M}^2\eta_1} "t1":@<1.5ex>"t2"^-{\tnb{M}\mu_1} "t1":@<-1.5ex>"t2"_-{\tnb{M}^2t_{M1}} "t0":@<1.5ex>"t1"^-{\tnb{M}\mu_{\tnb{M}1}} "t0":"t1"|-{\tnb{M}^2\mu_1} "t0":@<-1.5ex>"t1"_-{\tnb{M}^3t_{M1}}
"t0" [d] {\tnb{M}^31}="b0" [r] {\tnb{M}^21}="b1" [r] {\tnb{M}1}="b2"
"b2":"b1"|-{\tnb{M}\eta_1} "b1":@<1.5ex>"b2"^-{\mu_1} "b1":@<-1.5ex>"b2"_-{\tnb{M}t_{M1}} "b0":@<1.5ex>"b1"^-{\mu_{\tnb{M}1}} "b0":"b1"|-{\tnb{M}\mu_1} "b0":@<-1.5ex>"b1"_-{\tnb{M}^2t_{M1}}
"t0":"b0"_-{\mu_{\tnb{M}^21}} "t1":"b1"^-{\mu_{\tnb{M}1}} "t2":"b2"^-{\mu_1}} \]
of simplicial sets. Thus an object of $\tnb{M}^{\tnb{M}}$ is an element of $\tnb{M}1$, which is a natural number. A morphism of $\tnb{M}^{\tnb{M}}$ is a sequence of natural numbers $(m_1,...,m_n)$, the source of which is $m = m_1 + ... + m_n$, and the target of which is $n$. Thus we identify $(m_1,...,m_n)$ with the order preserving function $\underline{m} \to \underline{n}$ whose fibre over $k \in n$ has cardinality $m_k$. In these terms $\tnb{M}\eta_1$ and $\tnb{M}\mu_1$ correspond to the identities and composition of order preserving functions, and so the bottom row exhibits the underlying object of $\tnb{M}^{\tnb{M}}$ as identifiable with $\Delta_+$. The effect of the action $\tnb{M}(\tnb{M}^{\tnb{M}}) \to \tnb{M}^{\tnb{M}}$ on objects is $\mu_1$, and so corresponds with ordinal sum, and similarly on morphisms. Thus one recovers $\Delta_+$ as the monoid classifier.
\end{exam}
If $\ca K = \Cat$ in the situation of Remark \ref{rem:basic-bar-as-cat-object}, then $\ca U^TR_TA$ is a category object in $\Cat$. A category object $X$ in $\Cat$ is commonly known as a \emph{double category}, and we now fix our double categorical terminology and conventions. The structure of a double category $X$ includes categories and functors as in
\[ \xygraph{!{0;(2,0):(0,1)::} {X_2}="p0" [r] {X_1}="p1" [r] {X_0}="p2"
"p2":"p1"|-{s_0} "p1":@<1.5ex>"p2"^-{d_1} "p1":@<-1.5ex>"p2"_-{d_0} "p0":@<1.5ex>"p1"^-{d_2} "p0":"p1"|-{d_1} "p0":@<-1.5ex>"p1"_-{d_0}} \]
and so in elementary terms, $X$ consists of (1) \emph{objects} -- which are the objects of $X_0$, (2) \emph{vertical arrows} -- which are the arrows of $X_0$, (3) \emph{horizontal arrows} -- which are the objects of $X_1$, and (4) \emph{squares} -- which are the arrows of $X_1$. A typical square $\alpha$ of $X$ is drawn as
\[ \xygraph{{x_1}="tl" [r] {x_2}="tr" [d] {x_4}="br" [l] {x_3}="bl" "tl":"tr"^-{h_1}:"br"^-{v_2}:@{<-}"bl"^-{h_2}:@{<-}"tl"^-{v_1}:@{}"br"|*{\alpha}} \]
and has vertical source and target arrows -- in this case $d_1\alpha = v_1$ and $d_0\alpha = v_2$, and horizontal source and target arrows -- in this case $h_1$ and $h_2$, which are the source and target of $\alpha$ as an arrow of $X_1$.

For each object $x$, the category structure of $X_0$ provides us with a vertical identity arrow $1^{\tn{v}}_x$ on $x$, and for any composable pair $(f,g)$ of vertical arrows, a composite $g \comp_{\tn{v}} f$. The horizontal arrow $s_0x$ is denoted $1^{\tn{h}}_x$ and called the horizontal identity on $x$, and for any object of $X_2$, which is a composable pair $(f,g)$ of horizontal arrows, $d_1(f,g)$ is denoted $g \comp_{\tn{h}} f$. We shall write either of these composition operations as concatenation when no confusion would result from doing so.

For any vertical arrow $f$, the identity square $s_0f$ on $f$ is drawn as on the left
\[ \xygraph{{\xybox{\xygraph{{x}="p0" [r] {x}="p1" [d] {y}="p2" [l] {y}="p3" "p0":"p1"^-{1^{\tn{h}}_x}:"p2"^-{f}:@{<-}"p3"^-{1^{\tn{h}}_y}:@{<-}"p0"^-{f}:@{}"p2"|-*{\id^{\tn{h}}_f}}}}
[r(3)]
{\xybox{\xygraph{{x}="p0" [r] {y}="p1" [d] {y}="p2" [l] {x}="p3" "p0":"p1"^-{g}:"p2"^-{1^{\tn{v}}_y}:@{<-}"p3"^-{g}:@{<-}"p0"^-{1_x^{\tn{v}}}:@{}"p2"|-*{\id^{\tn{v}}_g}}}}} \]
and for any horizontal arrow $g$, the identity on $g$ in the category $X_1$ is denoted as on the right. We shall omit the $(-)^{\tn{h}}$ and $(-)^{\tn{v}}$ superscripts when no confusion would arise from doing so.

A composable pair of arrows of $X_1$ is a configuration as in the first diagram of
\[ \xygraph{{\xybox{\xygraph{!{0;(.8,0):(0,1.25)::} {x_1}="p0" [r] {x_2}="p1" [d] {x_4}="p2" [d] {x_6}="p3" [l] {x_5}="p4" [u] {x_3}="p5" "p0":"p1"^-{h_1}:"p2"^-{v_2}:"p3"^-{v_4}:@{<-}"p4"^-{h_3}:@{<-}"p5"^-{v_3}:@{<-}"p0"^-{v_1} "p5":"p2"^{h_2} "p2" (:@{}"p0"|-*{\alpha},:@{}"p4"|-*{\beta})}}}
[r(2)]
{\xybox{\xygraph{!{0;(1.25,0):(0,.8)::} {x_1}="p0" [r] {x_2}="p1" [d] {x_6}="p2" [l] {x_5}="p3" "p0":"p1"^-{h_1}:"p2"^-{v_4v_2}:@{<-}"p3"^-{h_3}:@{<-}"p0"^-{v_3v_1}:@{}"p2"|-*{\beta \comp_{\tn{v}} \alpha}}}}
[r(2.5)]
{\xybox{\xygraph{!{0;(.8,0):(0,1.25)::} {x_1}="p0" [r] {x_2}="p1" [r] {x_3}="p2" [d] {x_6}="p3" [l] {x_5}="p4" [l] {x_4}="p5" "p0":"p1"^-{h_1}:"p2"^-{h_2}:"p3"^-{v_3}:@{<-}"p4"^-{h_4}:@{<-}"p5"^-{h_3}:@{<-}"p0"^-{v_1} "p1":"p4"|-{v_2} "p1" (:@{}"p5"|-*{\alpha},:@{}"p3"|-*{\beta})}}}
[r(2.5)]
{\xybox{\xygraph{!{0;(1.25,0):(0,.8)::} {x_1}="p0" [r] {x_3}="p1" [d] {x_6}="p2" [l] {x_4}="p3" "p0":"p1"^-{h_2h_1}:"p2"^-{v_3}:@{<-}"p3"^-{h_4h_3}:@{<-}"p0"^-{v_1} :@{}"p2"|-*{\beta \comp_{\tn{h}} \alpha}}}}} \]
and its composite is as denoted in the second diagram. In diagramatic contexts, we shall speak of the composite embodied by the left-most diagram, leaving the notation $\beta \comp_{\tn{v}} \alpha$ mostly for equations. Similarly, a morphism of $X_2$ is a configuration as in the third diagram of the previous display, and its composite, which is the effect of $d_1$ on this arrow of $X_2$, is as denoted on the right. The functoriality of $d_1 : X_2 \to X_1$ implies that the value of the composite
\[ \xygraph{{x_1}="p0" [r] {x_2}="p1" [r] {x_3}="p2" [d] {x_6}="p3" [d] {x_9}="p4" [l] {x_8}="p5" [l] {x_7}="p6" [u] {x_4}="p7" [r] {x_5}="p8" "p0":"p1"^-{h_1}:"p2"^-{h_2}:"p3"^-{v_3}:"p4"^-{v_6}:@{<-}"p5"^-{h_6}:@{<-}"p6"^-{h_5}:@{<-}"p7"^-{v_4}:@{<-}"p0"^-{v_1} "p8" (:@{<-}"p7"^-{h_3},:@{<-}"p1"^-{v_2},:"p3"^-{h_4},:"p5"^-{v_5}, :@{}"p0"|-*{\alpha}, :@{}"p2"|-*{\beta}, :@{}"p4"|-*{\delta}, :@{}"p6"|-*{\gamma})} \]
is unambiguous, or in equational terms, that
\[ (\delta \comp_{\tn{h}} \gamma) \comp_{\tn{v}} (\beta \comp_{\tn{h}} \alpha) = (\delta \comp_{\tn{v}} \beta) \comp_{\tn{h}} (\gamma \comp_{\tn{v}} \alpha). \]

An internal functor $f:X \to Y$ in $\Cat$ is commonly known as a \emph{double functor}. In explicit terms $f$ sends objects, vertical arrows, horizontal arrows and squares in $X$ to objects, vertical arrows, horizontal arrows and squares in $Y$, in a manner compatible with sources, targets, identities and compositions. Having recalled these preliminaries, we come to the last example of this section.
\begin{exam}\label{exam:dblcat-for-cmon-classifier}
The double category $U^{\tnb{S}}\ca R_{\tnb{S}}1$, whose codescent-relevant parts we write as
\[ \xygraph{!{0;(3,0):(0,1)::} {\tnb{S}^2(\mathbb{P})}="p0" [r] {\tnb{S}(\mathbb{P})}="p1" [r] {\mathbb{P}}="p2"
"p2":"p1"|-{\tnb{S}(\eta_1)} "p1":@<1.5ex>"p2"^-{\mu_1} "p1":@<-1.5ex>"p2"_-{\tnb{S}(t_{\mathbb{P}})} "p0":@<1.5ex>"p1"^-{\mu_{\mathbb{P}}} "p0":"p1"|-{\tnb{S}(\mu_1)} "p0":@<-1.5ex>"p1"_-{\tnb{S}^2(t_{\mathbb{P}})}} \]
is described as follows. At the level of objects we are in the situation of Example \ref{exam:delta-plus}, that is $\tn{ob}\,U^{\tnb{S}}\ca R_{\tnb{S}}1 = U^{\tnb{M}}\ca R_{\tnb M}1$. Thus the category of objects and horizontal arrows of $U^{\tnb{S}}\ca R_{\tnb{S}}1$ is $\Delta_+$. By definition the category of objects and vertical maps of $U^{\tnb{S}}\ca R_{\tnb{S}}1$ is $\P$. To give a square in $U^{\tnb{S}}\ca R_{\tnb{S}}1$ is to give a morphism of $\tnb{S}(\mathbb{P})$, which is to give $n, m_1, ..., m_n \in \N$, $\rho \in \Sigma_n$, and $\rho_k \in \Sigma_{m_k}$ for $1 \leq k \leq n$. We depict such a square as on the left
\[ \xygraph{{\xybox{\xygraph{!{0;(2.5,0):(0,.4)::} {\underline{m}}="p0" [r] {\underline{n}}="p1" [d] {\underline{n}}="p2" [l] {\underline{m}}="p3" "p0":"p1"^-{(m_{\rho 1},...,m_{\rho n})} :"p2"^-{\rho}:@{<-}"p3"^-{(m_1,...,m_n)}:@{<-}"p0"^-{\rho(\rho_k)_k}:@{}"p2"|-*{(\rho,(\rho_k)_k)}}}}
[r(5)]
{\xybox{\xygraph{!{0;(2.5,0):(0,.4)::} {\underline{m}}="p0" [r] {\underline{n}}="p1" [d] {\underline{n}}="p2" [l] {\underline{m}}="p3" "p0":"p1"^-{f} :"p2"^-{\rho}:@{<-}"p3"^-{g}:@{<-}"p0"^-{\rho'}:@{}"p2"|-*{=}}}}} \]
so as to make explicit its horizontal and vertical sources and targets. By the definition of $\rho(\rho_k)_k$ one has
\[ \rho \comp (m_{\rho 1},...,m_{\rho n}) = (m_1,...,m_n) \comp \rho(\rho_k)_k \]
in $\Set$. Conversely given monotone functions $f$ and $g$, and permutations $\rho$ and $\rho'$ making the square on the right commute, one obtains $\rho_k \in \Sigma_{|f^{-1}\{k\}|}$ by restricting $\rho'$ to the fibre $f^{-1}\{k\}$, and then $\rho' = \rho(\rho_k)_k$. Thus a square of $U^{\tnb{S}}\ca R_{\tnb{S}}1$ is determined uniquely by its boundary, and a square will exist with the boundary $(f,\rho,g,\rho')$ as on the right in the previous display iff $\rho f = g \rho'$ in $\Set$. Moreover one easily checks that horizontal and vertical composition of squares in $U^{\tnb{S}}\ca R_{\tnb{S}}1$ corresponds to horizontal and vertical pasting of such squares.
\end{exam}

\section{Codescent for crossed internal categories.}

In this section we present our general method of calculation of codescent objects and internal algebra classifiers. In Section \ref{ssec:CrIntCats} crossed internal categories are defined and explained. Then in Section \ref{ssec:cic-from-adj2mnd} abstract conditions on an adjunction of 2-monads are given, that ensure that the simplicial object participating in the calculation of the corresponding internal algebra classifier, is a crossed internal category. As it turns out later in the section, the calculation of codescent objects of crossed internal categories is done in two steps, the first of which produces an internal 2-category, called the 2-category of corners. This 2-category is described in Section \ref{ssec:Corners}, and in Section \ref{ssec:CodescComp} the general results on the calculation of codescent objects are given.

\subsection{Crossed internal categories.}
\label{ssec:CrIntCats}
Obtaining $\Delta_+$ as in Example \ref{exam:delta-plus} from the 2-monad $\tnb{M}$ via the computation of codescent objects is well-known \cite{Batanin-EckmannHilton, Bourke-Thesis}. If however one attempts to tell the same story for the 2-monad $\tnb{S}$ instead of $\tnb{M}$, one runs into the problem identified in Example \ref{exam:bar-of-Sm-not-a-catead}, that the resulting codescent object is not a catead. Thus Proposition \ref{prop:Bourke-codesc-catead} does not apply. In this section we identify the structure of \emph{crossed internal category}, which is enjoyed by the simplicial object of Example \ref{exam:bar-of-Sm-not-a-catead}, and which enables its codescent object to be computed in Section \ref{ssec:functions}.

The structure of a crossed internal category involves the notion of an opfibration within a 2-category \cite{Street-FibrationIn2cats}. We now briefly recall this background, in a manner compatible notationally and terminologically with the fuller recollection of this theory given in section(4.2) of \cite{Weber-PolynomialFunctors}.

For a functor $f : A \to B$, an arrow $\alpha : a_1 \to a_2$ in $A$ is \emph{$f$-opcartesian} when for all $\gamma : a_1 \to a_3$ and $\beta:fa_2 \to fa_3$ such that $\beta f(\alpha) = f(\gamma)$, there is a unique $\overline{\beta} : a_2 \to a_3$ such that $f\overline{\beta} = \beta$ and $\overline{\beta}\alpha = \gamma$. To give $f$ the structure of an \emph{opfibration} is to give, for each $a \in A$ and $\beta : fa \to b$, an $f$-opcartesian arrow $\overline{\beta} : a \to a'$ such that $f\overline{\beta} = \beta$. The arrows of $A$ that arise as $\overline{\beta}$ for some $(a,\beta)$ are said to be \emph{chosen} $f$-opcartesian. When the chosen $f$-opcartesian arrows include all the identity arrows, and are closed under composition, $f$ said to be a \emph{split} opfibration.

For an arrow $f : A \to B$ in arbitrary 2-category $\ca K$, given arrows $a_1 : X \to A$ and $a_2 : X \to A$, an $f$-opcartesian 2-cell $\alpha : a_1 \to a_2$ is one which is $\ca K(X,f)$-opcartesian as an arrow of $\ca K(X,A)$, where $\ca K(X,f) : \ca K(X,A) \to \ca K(X,B)$ is the functor given by composition with $f$. To give $f$ the structure of a split opfibration is to give $\ca K(X,f)$ the structure of a split opfibration in the sense of the previous paragraph for all $X \in \ca K$, such that for all $g : Y \to X$, the functor $\ca K(g,A) : \ca K(X,A) \to \ca K(Y,A)$ given by precomposing with $g$ preserves chosen opcartesian arrows. In this context we call the chosen opcartesian arrows of $\ca K(X,f)$ for some $X$, the \emph{chosen $f$-opcartesian 2-cells}. By definition, all identity 2-cells between arrows with codomain $A$ are chosen $f$-opcartesian, and chosen $f$-opcartesian 2-cells are closed under vertical composition.

At this generality split opfibrations can be composed, when $\ca K$ has pullbacks split opfibrations can be pulled back along arbitrary maps in $\ca K$, and when $\ca K$ has comma objects one has an easy to describe 2-monad $\Psi_{\ca K}$ on the 2-category $\ca K^{[1]}$ of arrows of $\ca K$, whose strict algebras are exactly the split opfibrations in $\ca K$. A strict morphism $f \to g$ of $\Psi_{\ca K}$-algebras is a pair $(u,v)$ fitting into a commutative square
\[ \xygraph{{\xybox{\xygraph{!{0;(1.5,0):(0,1)::} {A}="p0" [r] {C}="p1" [d] {D}="p2" [l] {B}="p3" "p0":"p1"^-{u}:"p2"^-{g}:@{<-}"p3"^-{v}:@{<-}"p0"^-{f}}}}
[r(4)]
{\xybox{\xygraph{!{0;(1.5,0):(0,1)::} {A}="p0" [r] {C}="p1" [d] {D}="p2" [l] {B}="p3" "p0":@/^{1pc}/"p1"^(.4){u_1}|(.4){}="pdtop":"p2"^-{g}:@/_{1pc}/@{<-}"p3"_(.6){v_1}|(.6){}="pdbot":@{<-}"p0"^-{f}
"p0":@/_{1pc}/"p1"_(.6){u_2}|(.6){}="pctop" "p3":@/_{1pc}/"p2"_(.6){v_2}|(.6){}="pcbot"
"pdtop":@{}"pctop"|(.25){}="dtop"|(.75){}="ctop" "dtop":@{=>}"ctop"^{\alpha}
"pdbot":@{}"pcbot"|(.25){}="dbot"|(.75){}="cbot" "dbot":@{=>}"cbot"^{\beta}}}}} \]
as on the left, such that post-composition with $u$ sends chosen $f$-opcartesian 2-cells to chosen $g$-opcartesian 2-cells. A 2-cell $(u_1,v_1) \to (u_2,v_2)$ of $\Algs {\Psi_{\ca K}}$ consists of a pair $(\alpha,\beta)$ fitting into a commutative cylinder as on the right.

With these necessary notions recalled, we are now in a position to give the central definition of this article.
\begin{defn}\label{def:crossed-internal-cat}
A \emph{crossed internal category} in a 2-category $\ca K$ is a category object $X:\Delta^{\op} \to \ca K$, together with the structure of a split opfibration on $d_0:X_1 \to X_0$ such that 
\[ \xygraph{{X_0}="p1" [r] {X_1}="p2" [r] {X_2}="p3" [dl] {X_0}="p4" "p1":"p2"^-{s_0}:@{<-}"p3"^-{d_1}:"p4"^-{d_0^2}:@{<-}"p1"^-{1} "p2":"p4"^{d_0}} \]
are morphisms of split opfibrations over $X_0$. When $\ca K = \Cat$ we shall say that $X$ is a \emph{crossed double category}.
\end{defn}
To make sense of the definition, as recalled above, given
\[ \xygraph{{A}="tl" [r] {B}="tr" [d] {D}="br" [l] {C}="bl" "tl":"tr"^-{f'}:"br"^-{g}:@{<-}"bl"^-{f}:@{<-}"tl"^-{h}:@{}"br"|-{\tn{pb}}} \]
in $\ca K$, a split opfibration structure on $f$ gives rise to a split opfibration structure on $f'$ such that the above square becomes a morphism $f' \to f$ of split opfibrations. Moreover split opfibrations compose. Thus for a category object $X$ as in the definition, the structure of split opfibration on $d_0:X_1 \to X_0$, gives split opfibration structures to the $d_0^n:X_n \to X_0$.
\begin{defn}\label{def:crossed-internal-functor}
Let $X$ and $Y$ be crossed internal categories in a 2-category $\ca K$. A \emph{crossed internal functor} $f:X \to Y$ is a simplicial morphism such that the square
\[ \xygraph{{X_1}="tl" [r] {X_0}="tr" [d] {Y_0}="br" [l] {Y_1}="bl" "tl":"tr"^-{d_0}:"br"^-{f_0}:@{<-}"bl"^-{d_0}:@{<-}"tl"^-{f_1}} \]
is a morphism from $d_0:X_1 \to X_0$ to $d_0:Y_1 \to Y_0$ of split opfibrations. When $\ca K = \Cat$ we shall say that $f$ is a \emph{crossed double functor}.
\end{defn}
When $\ca K = \Cat$ a crossed internal category is a double category with extra structure, and now we unpack what this amounts to in elementary terms. A square $\alpha$ as on the left
\[ \xygraph{{\xybox{\xygraph{{w}="tl" [r] {x}="tr" [d] {z}="br" [l] {y}="bl" "tl":"tr"^-{f}:"br"^-{g}:@{<-}"bl"^-{h}:@{<-}"tl"^-{k}:@{}"br"|*{\alpha}}}} [r(2.5)]
{\xybox{\xygraph{{w}="tl" [r] {x}="tr" [d] {z}="br" [d] {b}="bbr" [l] {a}="bbl" "tl":"tr"^-{f}:"br"^-{g}:"bbr"^-{j}:@{<-}"bbl"^-{i}:@{<-}"tl"^-{l}:@{}"bbr"|*{\beta}}}} [r(1.25)] {=} [r(1.25)]
{\xybox{\xygraph{{w}="tl" [r] {x}="tr" [d] {z}="br" [d] {b}="bbr" [l] {a}="bbl" [u] {y}="bl" "tl":"tr"^-{f}:"br"^-{g}:@{<-}"bl"|-{h}:@{<-}"tl"^-{k}:@{}"br"|*{\alpha}
"br":"bbr"^-{j}:@{<-}"bbl"^-{i}:@{<-}"bl"^-{m}:@{}"bbr"|*{\gamma}
}}}} \]
in $X$ is $d_0$-opcartesian as an arrow of $X_1$ iff given $(i,j,l,\beta)$ as above, there exists unique $(m,\gamma)$ satisfying the equation on the right in the previous display. Henceforth we shall say that such an $\alpha$ is an \emph{opcartesian square}, meaning that it is $d_0$-opcartesian as an arrow of $X_1$.

To provide the chosen opcartesian arrows making $d_0:X_1 \to X_0$ an opfibration, is to give a choice of opcartesian square
\[ \xygraph{{w}="tl" [r] {x}="tr" [d] {y}="br" [l] {z}="bl" "tl":"tr"^-{f}:"br"^-{g}:@{<-}"bl"^-{\rho_{f,g}}:@{<-}"tl"^-{\lambda_{f,g}}:@{}"br"|-*{\kappa_{f,g}}} \]
for each pair $(f,g)$ as shown. This is a split opfibration when the $\kappa_{f,g}$ are compatible with vertical composition in $X$, that is when $\kappa_{f,1}=1_f$ and $\kappa_{\rho_{f,g_1},g_2} \comp_{\tn{v}} \kappa_{f,g_1} = \kappa_{f,g_2g_1}$. To say that $s_0:X_0 \to X_1$ and $d_1:X_2 \to X_1$ underlie morphisms of split opfibrations as in Definition \ref{def:crossed-internal-cat}, is to say that the $\kappa_{f,g}$ are compatible with horizontal composition in $X$. That is, $\kappa_{1,g}=1_g$ and $\kappa_{f_2,g} \comp_{\tn{h}} \kappa_{f_1,\lambda_{f_2,g}} = \kappa_{f_2f_1,g}$. Thus the extra structure on a double category which makes it into a crossed internal category in $\Cat$ is a choice of opcartesian square $\kappa_{f,g}$ for each pair $(f,g)$ as above, which is compatible with vertical and horizontal composition in $X$. A crossed double functor is a double functor which preserves these chosen opcartesian squares. The names chosen in Definitions \ref{def:crossed-internal-cat} and \ref{def:crossed-internal-functor} come from
\begin{exam}\label{ex:Loday-Fieodorowicz}
Crossed simplicial groups of (1.1) of \cite{FieLoday-CrossedSimplicial} may be identified, essentially by Proposition 2.8 of \cite{FieLoday-CrossedSimplicial}, with crossed double categories $X$ such that the $X_n$ are groupoids, the category of objects and horizontal arrows is $\Delta$, and the functor $d_0:X_1 \to X_0$ is a discrete opfibration. The condition on a double category that it be a crossed double category such that $d_0$ is a discrete opfibration was called the ``star condition'' (2.3) of \cite{FieLoday-CrossedSimplicial}. The discreteness of the opfibration $d_0$ amounts to the uniqueness of the squares $\kappa_{f,g}$. In the motivating example of \cite{FieLoday-CrossedSimplicial} relevant for cyclic homology, the category of objects and vertical arrows is the (categorical) coproduct of the cyclic groups.
\end{exam}
\begin{rem}\label{rem:catead-vs-crossed-int-cat}
A componentwise discrete category object $X$ in a 2-category $\ca K$ is a crossed internal category. Moreover, for any catead $X$, the morphism $d_0 : X_1 \to X_0$ has a canonical structure of split opfibration by \cite{Weber-Fam2fun} Theorem 3.5. In fact, with respect to this structure, $d_1 : X_2 \to X_1$ is a morphism of split opfibrations. To see this in the case $\ca K = \Cat$, use Remark 2.86 of \cite{Bourke-Thesis} and the above explicit description of the structure of crossed double categories in elementary terms, and then the general statement follows by a representable argument. However, $s_0 : X_0 \to X_1$ will not be a morphism of split opfibrations unless $X$ is componentwise discrete. So for example, the higher kernel (see \cite{Bourke-Thesis}) of a functor $f : A \to B$ such that $A$ is not discrete, is a catead which is not a crossed internal category. Thus in view of Example \ref{exam:bar-of-Sm-not-a-catead}, crossed internal categories and cateads are different generalisations of componentwise discrete category objects.
\end{rem}

\subsection{Crossed internal categories from adjunctions of 2-monads.}
\label{ssec:cic-from-adj2mnd}
We now identify monad theoretic situations which give examples of crossed internal categories. To this end we recall the notions of opfamilial 2-functor and opfamilial 2-natural transformation from \cite{Weber-Fam2fun, Weber-PolynomialFunctors}.

For a 2-functor $T:\ca K \to \ca L$ and an object $X \in \ca K$, we denote by $T_X : \ca K/X \to \ca L/TX$ the 2-functor given on objects by applying $T$ to morphisms into $X$. A \emph{local right adjoint} $\ca K \to \ca L$ is a 2-functor $T:\ca K \to \ca L$ equipped with a left adjoint to $T_X$ for all $X \in \ca K$. When $\ca K$ has a terminal object $1$, to exhibit $T$ as a local right adjoint, it suffices to give a left adjoint to $T_1$. When $\ca K$ and $\ca L$ have comma objects and $\ca K$ has a terminal object, an \emph{opfamilial 2-functor} $T : \ca K \to \ca L$ is a local right adjoint equipped with $\overline{T}_1 : \ca K \to \Algs {\Psi_{T1}}$ such that $U^{\Psi_{T1}}\overline{T}_1 = T_1$, where $\Psi_{T1}$ is the 2-monad on $\ca L/T1$ whose algebras are opfibrations in $\ca L$ into $T1$. As expressed by Proposition 4.3.3 of \cite{Weber-PolynomialFunctors}, opfamilial 2-functors are those 2-functors which are compatible with the theory of opfibrations. In particular, they preserve split opfibrations.

A 2-natural transformation $\phi : S \to T$ between opfamilial 2-functors is opfamilial when its naturality squares are pullbacks, and when for all $X \in \ca K$, $\alpha$'s naturality square with respect to the unique map $t_X : X \to 1$ is a morphism of split fibrations $(\alpha_X,\alpha_1) : St_X \to Tt_X$. By Lemma 4.3.5 of \cite{Weber-PolynomialFunctors} this implies that the naturality squares of $\phi$ with respect to any split opfibration are morphisms of split opfibrations. An \emph{opfamilial 2-monad} is a 2-monad whose underlying endo-2-functor, and unit and multiplication are opfamilial.
\begin{prop}\label{prop:crossed-int-cats-from-classifiers}
Let $F:(\ca L,S) \to (\ca K, T)$ be an adjunction of 2-monads, $T$ have rank, $\ca K$ have all codescent objects, $\ca L$ have a terminal object $1$, and $T$ and $T^2$ preserve the codescent object of $U^T\ca R_F1$. If the 2-monad $T$ and the 2-functor $F_!$ are opfamilial, and the naturality squares of $F^c$ are pullbacks, then the morphism of simplicial objects
\[ \sigma_T\ca R_F1 : TU^T\ca R_F1 \longrightarrow U^T\ca R_F1 \]
in $\ca K$ of Corollary \ref{cor:alg-class-explicit} is a crossed internal functor between crossed internal categories.
\end{prop}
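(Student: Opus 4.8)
I want to show that $\sigma_T\ca R_F1$ is a crossed internal functor between crossed internal categories. The proof decomposes into three tasks: (1) verify that both the source $TU^T\ca R_F1$ and the target $U^T\ca R_F1$ are category objects carrying the structure of a crossed internal category; (2) verify that $\sigma_T\ca R_F1$ is a simplicial morphism commuting appropriately with the split opfibration structures; and (3) assemble these into the claim. The governing idea throughout is that the hypotheses --- $T$ and $F_!$ opfamilial, the naturality squares of $F^c$ pullbacks --- are exactly what is needed to produce and preserve split opfibration structure on the relevant $d_0$ maps, and that opfamilial 2-functors preserve split opfibrations (Proposition 4.3.3 of \cite{Weber-PolynomialFunctors}) while opfamilial 2-natural transformations have naturality squares that are morphisms of split opfibrations (Lemma 4.3.5 of \cite{Weber-PolynomialFunctors}).

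\medskip
\noindent\textbf{Step 1: the target is a crossed internal category.} First I would check that $U^T\ca R_F1$ is a category object. This is immediate from Proposition \ref{prop:RF-cat-object}: the hypotheses there are that the naturality squares of $\mu^T$ and $F^c$ are pullbacks and $T$ preserves pullbacks, all of which follow here since $T$ opfamilial means $\mu^T$ has pullback naturality squares and $T$ preserves pullbacks, and $F^c$ has pullback naturality squares by assumption. Next I must equip $d_0 : (\ca R_F1)_1 \to (\ca R_F1)_0$, that is $d_0 : TF_!S1 \to TF_!1$, with a split opfibration structure. Reading off the explicit formula from Lemma \ref{lem:face-degen-R-F-X}, this $d_0$ is $TF_!(t_{S1})$ (the degenerate face, in the notation of Corollary \ref{cor:alg-class-explicit}), and more generally $d_0^n : TF_!S^n1 \to TF_!1$ is obtained by applying the opfamilial 2-functor $TF_!$ to the map $S^n1 \to 1$. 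Since $TF_!$ is a composite of opfamilial 2-functors, it is opfamilial, hence preserves split opfibrations; I apply it to a canonical split opfibration structure on $S^n 1 \to 1$ (every map to a terminal object admits one, via the opfamilial structure of the identity or by direct inspection). I then confirm that $s_0$, $d_1$ and the relevant $d_0$ maps are morphisms of split opfibrations over $X_0 = TF_!1$, which is exactly the content of Definition \ref{def:crossed-internal-cat}; this follows because all the structure maps are images under the opfamilial $TF_!$ of the corresponding simplicial maps of $\ca R_S1$, and opfamilial 2-functors send morphisms of split opfibrations to morphisms of split opfibrations.

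\medskip
\noindent\textbf{Step 2: the source and the morphism.} The source $TU^T\ca R_F1$ has $n$-th object $T^2F_!S^n1$, with its $d_0$ being $T$ applied to the $d_0$ of $U^T\ca R_F1$; since $T$ is opfamilial it carries split opfibrations to split opfibrations, so the source is again a crossed internal category by the same reasoning as Step 1. It remains to show $\sigma_T\ca R_F1$ is a crossed internal functor, i.e. that it is a simplicial map whose component square (in the sense of Definition \ref{def:crossed-internal-functor}) relating the two $d_0$'s is a morphism of split opfibrations. That it is simplicial is automatic, being a map of bar-type simplicial objects. For the opfibration compatibility, the key point is that $\sigma_T = TU^T \to U^T$ has components given by the algebra actions, and its naturality squares sit inside the opfamilial structure of $T$ and $\mu^T$; I would invoke Lemma 4.3.5 of \cite{Weber-PolynomialFunctors} to conclude that the square built from $\sigma_{T}$ and the $d_0$ maps is a morphism of split opfibrations, since $\sigma_T\ca R_F1$ is assembled from the multiplication $\mu^T$ (an opfamilial 2-natural transformation) applied to the various $F_!S^n1$.

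\medskip
\noindent\textbf{The main obstacle.} The delicate part is not the category-object structure (which is Proposition \ref{prop:RF-cat-object}) but pinning down the split opfibration structure on $d_0$ and verifying the three compatibility conditions of Definition \ref{def:crossed-internal-cat} in a way that is coherent with the composites $TF_!$ and $T^2F_!$. Concretely, I must ensure that the opfibration structure I place on $d_0 : TF_!S1 \to TF_!1$ is genuinely the one transported by the opfamilial structures, and that $s_0$ (the hardest of the three, since for cateads $s_0$ typically fails to be a morphism of split opfibrations, as Remark \ref{rem:catead-vs-crossed-int-cat} warns) really does respect the chosen opcartesian lifts. This works precisely because $s_0$ here is $TF_!$ applied to the unit $\eta^S$, and $\eta^S$ is part of the opfamilial structure of $S$ --- so the decisive input is that the \emph{unit and multiplication} of $S$, not merely $S$ itself, behave well; this is what distinguishes the present crossed situation from the generic catead. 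I would therefore organise the argument so that all three maps $s_0, d_1, d_0$ are exhibited as $TF_!$ (resp. $T^2 F_!$) applied to opfamilial data of the $S$-bar construction, and then the compatibility conditions reduce, via preservation of morphisms of split opfibrations by opfamilial 2-functors, to the corresponding statements for $\ca R_S1$, which hold because $\ca R_S1$ is the bar construction of an opfamilial 2-monad.
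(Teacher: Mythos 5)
Your proposal is correct and follows essentially the same route as the paper: establish the category-object structure via Proposition \ref{prop:RF-cat-object}, transport split opfibration structures by applying the opfamilial composite $TF_!$ (and then $T$) to the maps $S^n1 \to 1$ over the terminal object, and obtain the crossed internal functor condition from the opfamiliality of $\mu^T$ via its naturality square at the split opfibration $F_!t_{S1}$. One small correction to your closing justification: the compatibility of $s_0$, $d_1$ and $d_0$ does not reduce to ``$\ca R_S1$ being the bar construction of an opfamilial 2-monad'' --- $S$ being opfamilial is not among the hypotheses and is not needed; the relevant diagram $S^21 \to S1 \leftarrow 1$ lives over the terminal object $1$ of $\ca L$, where every arrow carries a unique split opfibration structure and every commuting triangle is automatically a morphism of split opfibrations, so the conditions hold trivially before $TF_!$ is applied.
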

\begin{proof}
By Proposition \ref{prop:RF-cat-object} $\ca R_F1$ is an internal category, and since $T$ and $U^T$ preserve pullbacks, so are $TU^T\ca R_F1$ and $U^T\ca R_F1$, and so $\sigma_T\ca R_F1$ is an internal functor. In a unique way the diagram on the left
\[ \xygraph{{\xybox{\xygraph{{S^21}="p0" [r] {S1}="p1" [r] {1}="p2" [dl] {1}="p3" "p0":"p1"^-{\mu^S_1}:@{<-}"p2"^-{\eta^S_1}:"p3"^-{1_1}:@{<-}"p0"^-{t_{S^21}}
"p1":"p3"|-{t_{S1}}}}}
[r(4.5)]
{\xybox{\xygraph{!{0;(1.5,0):(0,.6667)::} {TF_!S^21}="p0" [r] {TF_!S1}="p1" [r] {TF_!1}="p2" [dl] {TF_!1}="p3" "p0":"p1"^-{TF_!\mu^S_1}:@{<-}"p2"^-{TF_!\eta^S_1}:"p3"^-{1_{TF_!1}}:@{<-}"p0"^-{TF_!t_{S^21}}
"p1":"p3"|-{TF_!t_{S1}}}}}} \]
depicts morphisms of split opfibrations over $1$. Since $TF_!$ is opfamilial it preserves split opfibrations and morphisms thereof, and so the diagram on the right is a diagram of split opfibrations over $TF_!1$, and so $U^T\ca R_F1$ is a crossed internal category in $\ca K$. Since $T$ preserves split opfibrations and morphisms thereof, $TU^T\ca R_F1$ is also a crossed internal category. Since $T$'s multiplication $\mu^T$ is opfamilial, its naturality square at the split opfibration $F_!t_{S1}$ is a morphism $(\mu^T_{F_!S1},\mu^T_{F_!1})$ of split opfibrations $T^2F_!(t_{S1}) \to TF_!(t_{S1})$, and so $\sigma_T\ca R_F1$ is a crossed internal functor.
\end{proof}
Thus in the context of Proposition \ref{prop:crossed-int-cats-from-classifiers} the underlying object of the corresponding internal algebra classifier $T^S$ is obtained as the codescent object of a crossed internal category in $\ca K$,
and its $T$-algebra structure $a^S : T(T^S) \to T^S$ is similarly obtained from a crossed internal functor.
\begin{exams}\label{exams:int-examples-opfamilial->cic}
For adjunctions of 2-monads arising as in Examples \ref{exams:easy-actions-internalised-examples}, by applying $\tn{Cat}(-)$ to an adjunction of monads in $\Cart$, satisfy the conditions of Proposition \ref{prop:crossed-int-cats-from-classifiers}, by \cite{Weber-Fam2fun} and \cite{Weber-PolynomialFunctors} Remark 4.3.7. However, in this case the conclusion of Proposition \ref{prop:crossed-int-cats-from-classifiers} follows easily and directly, since $TU^T\ca R_F1$ and $U^T\ca R_F1$ are componentwise discrete category objects, and any simplicial morphism between componentwise discrete category objects is a crossed internal functor.
\end{exams}
\begin{exams}\label{exams:opfam->cic-polycases}
In the situation of Examples \ref{exams:adj-monad-op-morphism} of an adjunction of 2-monads $F : (\Cat/I,S) \longrightarrow (\Cat/J,T)$ arising from an operad morphism $F : S \to T$,  $F^c$, $F_!$ and $T$ are opfamilial by \cite{Weber-PolynomialFunctors} Theorem 4.4.5, and $T$ preserves all codescent objects as explained in Example \ref{exams:easy-actions-operadic-examples}. Similarly for adjunctions of 2-monads coming from morphisms of non-symmetric or braided operads as in Remark \ref{rem:nonsym-braided-internal-algebras}.
\end{exams}
\begin{exam}\label{exam:bar-of-Sm-as-cics}
Applying Examples \ref{exams:opfam->cic-polycases} to the case of the identity on $\tnb{S}$ exhibits $U^{\tnb{S}}\ca R_{\tnb{S}}1$ of Examples \ref{exam:bar-of-Sm-not-a-catead} and \ref{exam:dblcat-for-cmon-classifier} as a crossed double category, and $\sigma_{\tnb{S}}\ca R_{\tnb{S}}1$ as a crossed double functor. A chosen opcartesian square of $U^{\tnb{S}}\ca R_{\tnb{S}}1$ is by definition a chosen opcartesian arrow of $\tnb{S}(\mathbb{P})$ with respect to the split opfibration $\tnb{S}(t_{\mathbb{P}})$. For any category $A$, the identity arrows of $A$ are the chosen opcartesian arrows which exhibit the functor $t_A : A \to 1$ as a split opfibration. Recall that an arrow of $\tnb{S}(A)$ is of the form
\[ (\rho,(\alpha_k)_{1{\leq}k{\leq}n}) : (a_k)_{1{\leq}k{\leq}n} \longrightarrow (b_k)_{1{\leq}k{\leq}n} \]
where $\rho \in \Sigma_n$ and $\alpha_k : a_k \to b_{\rho k}$ is in $A$ for $1 \leq k \leq n$. By the way that $\tnb{S}$ preserves split opfibrations as explained in Lemma 6.3 of \cite{Weber-Fam2fun}, $(\rho,(\alpha_k)_k)$ is a chosen opcartesian arrow of $\tnb{S}(t_A)$ iff the $\alpha_k$ are all identities. Applying this in the case $A = \mathbb{P}$ and using the explicit description of the squares of $U^{\tnb{S}}\ca R_{\tnb{S}}1$ found in Example \ref{exam:dblcat-for-cmon-classifier}, a square
\[ \xygraph{{\underline{m}}="p0" [r] {\underline{n}}="p1" [d] {\underline{n}}="p2" [l] {\underline{m}}="p3" "p0":"p1"^-{f}:"p2"^-{\rho}:@{<-}"p3"^-{g}:@{<-}"p0"^-{\rho'}:@{}"p2"|-{=}} \]
in $U^{\tnb{S}}\ca R_{\tnb{S}}1$ is chosen opcartesian iff $\rho'$ is order preserving on the fibres of $g$. Since in this case a square is uniquely determined by its boundary, for any $f$ and $\rho$ as above there exist unique $g$ and $\rho'$ such that $\rho f = g\rho'$ and $\rho'$ is order preserving on the fibres of $g$.
\end{exam}
\begin{rem}\label{rem:bij-mon-factorisation}
We denote by $\S$ the category whose objects are natural numbers and morphisms $m \to n$ are functions $\underline{m} \to \underline{n}$, and regard $\Delta_+$ and $\P$ as subcategories of $\mathbb{S}$.  As explained in the proof of Proposition 3.1 of \cite{Batanin-EckmannHilton}, every $h : m \to n$ in $\mathbb{S}$ factors uniquely as $h = g\rho$, where $\rho \in \P$, $g \in \Delta_+$, and $\rho$ is order preserving on the fibres of $g$. We shall refer to this as the \emph{bijective-monotone factorisation} of $h$. Thus in the context of Example \ref{exam:bar-of-Sm-as-cics} the chosen opcartesian square $\kappa_{f,\rho}$ is obtained by taking the bijective-monotone factorisation of $\rho f$.
\end{rem}

\subsection{The 2-category of corners.}
\label{ssec:Corners}

The first step in our general computation of codescent objects of crossed internal categories is to produce an internal 2-category, called the \emph{internal 2-category of corners}, from a crossed internal category. For the purposes of our first definition, recall that an object $D$ of a 2-category $\ca K$ is \emph{discrete} when for all $X \in \ca K$, the hom category $\ca K(X,D)$ is discrete. In other words, any 2-cell between arrows into $D$ must be an identity.
\begin{defn}\label{def:internal-2-category}
Let $\ca K$ be a 2-category. Then a \emph{2-category in $\ca K$} is a category object in $\ca K$ whose object of objects is discrete.
\end{defn}
\begin{rem}\label{rem:2-cat-as-cic}
For any 2-category $\ca K$ and discrete object $D$ therein, every morphism $X \to D$ has a canonical structure of a split (op)fibration. This extends to morphisms and 2-cells of split opfibrations over $D$, and so any 2-category in $\ca K$ has the structure of a crossed internal category.
\end{rem}
When $\ca K = \Cat$ a 2-category in $\ca K$ is just a (small) 2-category, viewed as a double category whose vertical arrows are all identities. Recall that for a 2-category $\ca K$ with pullbacks, a category object in $\ca K$ can also be regarded as a monad in $\Span {\ca K}$. An advantage of this viewpoint is that one can use the basic notions of monad theory, such as distributive laws, when talking about category objects.

Let $\ca K$ be a 2-category with pullbacks and comma objects. Since cotensoring with a fixed object $A \in \ca K$ is a pullback preserving functor $\Cat^{\op} \to \ca K$, its restriction to $\Delta^{\op}$ gives a category object whose underlying span is
\[ A \xleftarrow{A^{\delta_1}} A^{[1]} \xrightarrow{A^{\delta_0}} A. \]
When $\ca K = \Cat(\ca E)$ for some category $\ca E$ with pullbacks, writing $i_A:A_0 \to A$ for the inclusion of objects, the composite span ${i_A}_{\bullet} \comp A^{[1]} \comp i_A^{\bullet}$ underlies another monad in $\Span {\ca K}$ whose corresponding simplicial object is the just the category object $A$ in $\ca E$ viewed as a componentwise-discrete category object in $\ca K$ as in Example \ref{ex:internal-nerve}.
\begin{const}\label{const:span-distlaw-from-crintcat}
Let $X$ be a crossed internal category in a 2-category $\ca K$ with comma objects and pullbacks. The underlying endospans
\[ \xygraph{{\xybox{\xygraph{{X_0}="p0" [r] {X_1}="p1" [r] {X_0}="p2" "p0":@{<-}"p1"^-{d_1}:"p2"^-{d_0}}}}
[r(4)] {\xybox{\xygraph{{X_0}="p0" [r] *!(0,-.02){X_0^{[1]}}="p1" [r] {X_0}="p2" "p0":@{<-}"p1"^-{X^{\delta_1}}:"p2"^-{X^{\delta_0}}}}}} \]
underlie monads in $\Span {\ca K}$, and we shall now construct the underlying 2-cell datum
\[ \delta_X : X_0^{[1]} \comp X_1 \longrightarrow X_1 \comp X_0^{[1]} \]
of a distributive law between them. The composite spans $X_0^{[1]} \comp X_1$ and $X_1 \comp X_0^{[1]}$ are explicitly
\[ \xygraph{{\xybox{\xygraph{!{0;(.75,0):(0,1)::} {d_0 \downarrow 1_{X_0}}="top" (:[dl] {X_1} (:[dl] {X_0},:[dr] {X_0}="pm"),:[dr] {X_0^{[1]}} (:"pm",:[dr] {X_0})) "pm" [u] {\scriptstyle{\tn{pb}}}}}} [r(2.5)] {\tn{and}} [r(2.5)]
{\xybox{\xygraph{!{0;(.75,0):(0,1)::} {1_{X_0} \downarrow d_1}="top" (:[dl] {X_0^{[1]}} (:[dl] {X_0},:[dr] {X_0}="pm"),:[dr] {X_1} (:"pm",:[dr] {X_0}))
"pm" [u] {\scriptstyle{\tn{pb}}}}}}} \]
respectively. We write
\[ \xygraph{{\xybox{\xygraph{!{0;(1.5,0):(0,.6667)::} {d_0 \downarrow 1_{X_0}}="p0" [r] {X_0}="p1" [d] {X_0}="p2" [l] {X_1}="p3" "p0":"p1"^-{q_1}:"p2"^-{1_{X_0}}:@{<-}"p3"^-{d_0}:@{<-}"p0"^-{p_1} "p0" [d(.55)r(.35)] :@{=>}[r(.3)]^-{\lambda_1}}}}
[r(4)]
{\xybox{\xygraph{!{0;(1.5,0):(0,.6667)::} {1_{X_0} \downarrow d_1}="p0" [r] {X_1}="p1" [d] {X_0}="p2" [l] {X_0}="p3" "p0":"p1"^-{q_2}:"p2"^-{d_1}:@{<-}"p3"^-{1_{X_0}}:@{<-}"p0"^-{p_2} "p0" [d(.55)r(.35)] :@{=>}[r(.3)]^-{\lambda_2}}}}} \]
for the defining comma squares for $d_0 \downarrow 1_{X_0}$ and $1_{X_0} \downarrow d_1$. Since $d_0$ is a split opfibration there is an arrow $r:d_0 \downarrow 1_{X_0} \to X_1$ and $\lambda_3:p_1 \to r$ unique such that $\lambda_3$ is chosen $d_0$-opcartesian and $d_0\lambda_3 = \lambda_1$. Using the universal property of $\lambda_2$, we define $\delta_X:d_0 \downarrow 1_{X_0} \to 1_{X_0} \downarrow d_1$ as the unique arrow in $\ca K$ such that
\[ \begin{array}{lcccr} {p_2\delta_x = p_1d_1} && {q_2\delta_X = r} && {\lambda_2\delta_x = d_1\lambda_3} \end{array} \]
from which it follows easily that the arrow $\delta_X$ in $\ca K$ underlies a morphism of spans as required.
\end{const}
In the case where $\ca K = \Cat$ we unpack an explicit description of $\delta_X$ in double categorical terms. An object of $d_0 \downarrow 1_{X_0}$ consists of $(f,g)$ in $X$ as on the left in
\[ \xygraph{{\xybox{\xygraph{{a}="p0" [r] {b}="p1" [d] {c}="p2" "p0":"p1"^-{f}:"p2"^-{g}}}}
[r(3)d(.1)]
{\xybox{\xygraph{{a}="p0" [r] {b}="p1" [d] {b'}="p2" [l] {a'}="p3" "p0":"p1"^-{f}:"p2"^-{v}:@{<-}"p3"^-{f'}:@{<-}"p0"^-{u} "p0" [d(.5)r(.5)] {w}}}}
[r(2)]
{\xybox{\xygraph{{c}="p0" [d] {c'}="p1" "p0":"p1"^-{x}}}}} \]
\[  \]
and a morphism $(f,g) \to (f',g')$ in $d_0 \downarrow 1_{X_0}$ consists of $(u,v,w,x)$ as on the right in the previous display such that $xg = g'v$. On the other hand an object of the category $1_{X_0} \downarrow d_1$ consists of $(f,g)$ in $X$ as in
\[ \xygraph{{\xybox{\xygraph{{a}="p0" [d] {b}="p1" [r] {c}="p2" "p0":"p1"_-{f}:"p2"_-{g}}}}
[r(3)]
{\xybox{\xygraph{{a}="p0" [d] {a'}="p1" "p0":"p1"^-{u}}}}
[r(2)]
{\xybox{\xygraph{{b}="p0" [r] {c}="p1" [d] {c'}="p2" [l] {b'}="p3" "p0":"p1"^-{g}:"p2"^-{w}:@{<-}"p3"^-{g'}:@{<-}"p0"^-{v} "p0" [d(.5)r(.5)] {x}}}}} \]
\[  \]
and a morphism $(f,g) \to (f',g')$ consists of  $(u,v,w,x)$ as on the right in the previous display such that $vf = f'u$. Recall that given an object $(f,g)$ of $d_0 \downarrow 1_{X_0}$, the crossed internal structure of $X$ gives us a chosen opcartesian square as on the left in
\[ \xygraph{{\xybox{\xygraph{{a}="p0" [r] {b}="p1" [d] {c}="p2" [l] {d}="p3" "p0":"p1"^-{f}:"p2"^-{g}:@{<-}"p3"^-{\rho_{f,g}}:@{<-}"p0"^-{\lambda_{f,g}} "p0" [d(.5)r(.5)] {\kappa_{f,g}}}}}
[r(4.5)]
{\xybox{\xygraph{{\xybox{\xygraph{{a}="p0" [r] {b}="p1" [d] {c}="p2" [d] {c'}="p3" [l] {d'}="p4" [u] {d}="p5" "p0":"p1"^-{f}:"p2"^-{g}:"p3"^-{x}:@{<-}"p4"^-{\rho_{f',g'}}:@{<-}"p5"^-{\delta}:@{<-}"p0"^-{\lambda_{f,g}} "p5":"p2"_-{\rho_{f,g}} "p0":@{}"p2"|-*{\kappa_{f,g}} "p5":@{}"p3"|-*{\varepsilon}}}}
[r(1.5)] {=} [r(1.25)]
{\xybox{\xygraph{{a}="p0" [r] {b}="p1" [d] {b'}="p2" [d] {c'}="p3" [l] {d'}="p4" [u] {a'}="p5" "p0":"p1"^-{f}:"p2"^-{v}:"p3"^-{g'}:@{<-}"p4"^-{\rho_{f',g'}}:@{<-}"p5"^-{\lambda_{f',g'}}:@{<-}"p0"^-{u} "p5":"p2"^-{f'} "p0":@{}"p2"|-*{w} "p5":@{}"p3"|-*{\kappa_{f',g'}}}}}}}}} \]
The effect on objects of $\delta_X$ is $(f,g) \mapsto (\lambda_{f,g},\rho_{f,g})$, and $\delta_X$ sends a morphism $(u,v,w,x) : (f,g) \to (f',g')$ of $d_0 \downarrow 1_{X_0}$ to $(u,\delta,x,\varepsilon)$ where $(\delta,\varepsilon)$ are unique satisfying the equation on the right in the previous display.
\begin{prop}\label{prop:span-dist-law-from-crintcat}
Let $\ca K$ be a 2-category with comma objects and pullbacks and $X$ be a crossed internal category in $\ca K$. Then the morphism of spans $\delta_X$ of Construction \ref{const:span-distlaw-from-crintcat} satisfies the axioms of a distributive law in $\Span {\ca K}$.
\end{prop}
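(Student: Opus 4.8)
The plan is to exhibit the four distributive-law axioms as direct transcriptions of the four pieces of data making $X$ a crossed internal category. Write $V$ for the monad in $\Span{\ca K}$ with underlying cotensor span $X_0 \xleftarrow{X^{\delta_1}} X_0^{[1]} \xrightarrow{X^{\delta_0}} X_0$, whose unit $\eta^V$ and multiplication $\mu^V$ encode the identities and composition of ``vertical'' arrows, and write $H$ for the monad with span $X_0 \xleftarrow{d_1} X_1 \xrightarrow{d_0} X_0$ coming from the category object $X$, whose unit $\eta^H = s_0$ and multiplication $\mu^H$ (through $X_2 \xrightarrow{d_1} X_1$) encode the identities and composition of ``horizontal'' arrows. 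A distributive law $\delta_X : V \comp H \to H \comp V$ must satisfy two unit axioms — that $\delta_X$ composed with $\eta^V$ (resp.\ $\eta^H$) inserted into the appropriate factor agrees with $\eta^V$ (resp.\ $\eta^H$) inserted into the other factor — and the two analogous multiplication axioms for $\mu^V$ and $\mu^H$. Each axiom is an equation between morphisms of spans in $\ca K$, i.e.\ between arrows of $\ca K$, and I would verify it by comparing the two sides after post-composition with the legs and projections of the comma objects $d_0 \downarrow 1_{X_0}$ and $1_{X_0} \downarrow d_1$ and the pullbacks out of which the relevant composite spans are assembled; the only ingredient beyond these limit-universal-properties is the existence-and-uniqueness universal property of the chosen $d_0$-opcartesian lifts used to build $\delta_X$ in Construction~\ref{const:span-distlaw-from-crintcat}.

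The two axioms involving $V$ reduce exactly to the split opfibration structure on $d_0$. For the $\eta^V$-axiom I would use that appending an identity vertical arrow to a horizontal arrow $f$ is lifted by the identity square on $f$; abstractly this is the fact, built into the definition of split opfibration, that identity $2$-cells are chosen $d_0$-opcartesian, so that the arrow $r$ and the opcartesian $\lambda_3$ of Construction~\ref{const:span-distlaw-from-crintcat} are forced by uniqueness to be identities. For the $\mu^V$-axiom I would use that the lift of a vertical composite agrees with the composite of the successive lifts; this is precisely the closure of chosen $d_0$-opcartesian $2$-cells under vertical composition, again combined with the uniqueness clause of opcartesianness to identify the two induced maps into $1_{X_0} \downarrow d_1$. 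In the elementary terms of Section~\ref{ssec:CrIntCats} these are the identities $\kappa_{f,1}=1_f$ and $\kappa_{\rho_{f,g_1},g_2} \comp_{\tn{v}} \kappa_{f,g_1} = \kappa_{f,g_2g_1}$.

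The two axioms involving $H$ are where the genuinely crossed structure enters, namely the requirement in Definition~\ref{def:crossed-internal-cat} that $s_0$ and $d_1$ be morphisms of split opfibrations over $X_0$. For the $\eta^H$-axiom I would use that lifting a vertical arrow $g$ starting from a horizontal identity returns a horizontal identity square; this is exactly the statement that $s_0$ carries chosen opcartesian $2$-cells of $1_{X_0} : X_0 \to X_0$ to chosen $d_0$-opcartesian ones, that is $\kappa_{1,g}=1_g$. For the $\mu^H$-axiom I would use that the lift $\kappa_{f_2f_1,g}$ of $g$ at a horizontal composite $f_2f_1$ equals the horizontal composite $\kappa_{f_2,g} \comp_{\tn{h}} \kappa_{f_1,\lambda_{f_2,g}}$ of successive lifts, which is the assertion that $d_1$ preserves chosen opcartesian $2$-cells as in Definition~\ref{def:crossed-internal-functor}. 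In each case the equality of the two morphisms of spans is obtained by checking components against the legs of $d_0 \downarrow 1_{X_0}$ and $1_{X_0} \downarrow d_1$ and invoking uniqueness of opcartesian lifts.

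Since the correspondence between the four axioms and the four defining conditions is conceptually clean, I expect the main obstacle to be organisational rather than substantial: for each axiom one must identify the two iterated composite spans involved, built from nested pullbacks and the comma objects above, and track the canonical comparison maps through them; and in the two multiplication axioms one must show that two \emph{a priori} distinct composites of chosen opcartesian lifts coincide, the step where both the existence of the split structure and, crucially, the uniqueness part of the opcartesian universal property are needed. The explicit $\Cat$-level description of $\delta_X$ preceding the proposition is a reliable guide to these computations, and the general-$\ca K$ argument is its element-free transcription. Alternatively, since split opfibrations in $\ca K$ are defined representably and $\ca K(Z,-)$ preserves pullbacks and comma objects, one could verify each axiom after applying every representable $\ca K(Z,-)$, reducing it to exactly the elementary double-categorical identities on the $\kappa_{f,g}$ now applied to the hom-categories $\ca K(Z,X_n)$; I would keep the direct universal-property route primary, as it avoids any appeal to how span composition interacts with the representables. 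I anticipate the $\mu^H$-axiom, which mixes horizontal composition with opcartesian lifting, to be the most delicate bookkeeping.
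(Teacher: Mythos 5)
Your proposal is correct and matches the paper's proof in substance: the four distributive-law axioms are discharged by exactly the four compatibilities you name --- $\kappa_{f,1}=1_f$, $\kappa_{1,g}=1_g$, and the closure of the chosen opcartesian squares under vertical and horizontal composition --- with the uniqueness clause of opcartesianness doing the real work in the two multiplication axioms, just as you anticipate. The only cosmetic difference is that the paper takes your secondary route as primary, reducing immediately to $\ca K = \Cat$ by a representable argument and then carrying out the verifications as elementary double-categorical diagram calculations.
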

\begin{proof}
Since everything is defined in terms of limits in $\ca K$, the result for general $\ca K$ follows from the case $\ca K = \Cat$ by a representable argument. There are 4 axioms to check, two which involve the monad units, and two which involve the monad multiplications. One of the unit axioms is encoded by the commutativity of the triangle on the left
\[ \xygraph{{\xybox{\xygraph{!{0;(1.75,0):(0,.57143)::} {X_0^{[1]}}="p0" [r] {X_0^{[1]} \comp X_1}="p1" [d] {X_1 \comp X_0^{[1]}}="p2" "p0":"p1"^-{X_0^{[1]} \comp s_0}:"p2"^-{\delta_X}:@{<-}"p0"^-{s_0 \comp X_0^{[1]}}}}}
[r(3)d(.1)]
{\xybox{\xygraph{{b}="p0" [r] {b}="p1" [d] {c}="p2" [l] {c}="p3" "p0":"p1"^-{1_b}:"p2"^-{g}:@{<-}"p3"^-{1_c}:@{<-}"p0"^-{g} "p0" [d(.5)r(.5)] {\id_g}}}}
[r(3.5)]
{\xybox{\xygraph{{\xybox{\xygraph{{b}="p0" [r] {b}="p1" [d] {c}="p2" [d] {c'}="p3" [l] {c'}="p4" [u] {c}="p5" "p0":"p1"^-{1_b}:"p2"^-{g}:"p3"^-{x}:@{<-}"p4"^-{1_{c'}}:@{<-}"p5"^-{x}:@{<-}"p0"^-{g} "p5":"p2"_-{1_c} "p0":@{}"p2"|-*{\id_g} "p5":@{}"p3"|-*{\id_x}}}}
[r(1.1)] {=} [r(1.1)]
{\xybox{\xygraph{{b}="p0" [r] {b}="p1" [d] {b'}="p2" [d] {c'}="p3" [l] {c'}="p4" [u] {b'}="p5" "p0":"p1"^-{1_b}:"p2"^-{v}:"p3"^-{g'}:@{<-}"p4"^-{1_{c'}}:@{<-}"p5"^-{g'}:@{<-}"p0"^-{v} "p5":"p2"^-{1_{b'}} "p0":@{}"p2"|-*{\id_{v}} "p5":@{}"p3"|-*{\id_{g'}}}}}}}}} \]
For any vertical arrow $g$, $\kappa_{1_b,g}$ is the identity square indicated in the middle of the previous display, and from this follows the commutativity of the left triangle on objects. Given a morphism $(v,x) : g \to g'$ in $X_0^{[1]}$, the equation on the right in the previous display ensures that $\delta_X(v,v,\id_v,x) = (v,x,x,\id_x)$, and so the left triangle commutes on arrows. For all horizontal arrows $f$, $\kappa_{f,1_b} = \id_f$. Using this fact and the universal property of opcartesian cells, one similarly verifies the other unit law.

One of the axioms involving the monad multiplications is encoded by the commutativity of
\begin{equation}\label{eq:dist-multn-axiom}
\xygraph{!{0;(2.5,0):(0,.4)::} {X_0^{[2]} \comp X_1}="p0" [r] {X_0^{[1]} \comp X_1 \comp X_0^{[1]}}="p1" [r] {X_1 \comp X_0^{[2]}}="p2" [l(.5)d] {X_1 \comp X_0^{[1]}}="p3" [l] {X_0^{[1]} \comp X_1}="p4" "p0":"p1"^-{X_0^{[1]} \comp \delta_X}:"p2"^-{\delta_X \comp X_0^{[1]}}:"p3"^-{X_1 \comp X_0^{s_1}}:@{<-}"p4"^-{\delta_X}:@{<-}"p0"^-{X_0^{s_1} \comp X_1}}
\end{equation}
An object of $X_0^{[2]} \comp X_1$ is a triple $(f,g,h)$ as on the left in
\[ \xygraph{{\xybox{\xygraph{{a}="p0" [r] {b}="p1" [d] {c}="p2" [d] {d}="p3" "p0":"p1"^-{f}:"p2"^-{g}:"p3"^-{h}}}}
[r(3)]
{\xybox{\xygraph{{a}="p0" [r] {b}="p1" [d] {c}="p2" [d] {d}="p3" [l] {e_2}="p4" [u] {e_1}="p5" "p0":"p1"^-{f}:"p2"^-{g}:"p3"^-{h}:@{<.}"p4"^-{}:@{<-}"p5"^-{}:@{<-}"p0"^-{} "p5":"p2"^{} "p0":@{}"p2"|-*{\kappa_{f,g}} "p5":@{}"p3"|-*{\kappa_{\rho_{f,g},h}} "p0":@{.>}@/_{1pc}/"p4"}}}} \]
and the effect of $(X_1 \comp X_0^{s_1})(\delta_X \comp X_0^{[1]})(X_0^{[1]} \comp \delta_X)$ on $(f,g,h)$ is obtained by vertically composing the two chosen opcartesian squares depicted on the right in the previous display, and taking the dotted pair of arrows as indicated. This is the same as first vertically composing $h$ and $g$, and then taking the lower boundary of the corresponding chosen opcartesian square, since chosen opcartesian squares vertically compose, and so (\ref{eq:dist-multn-axiom}) commutes at the level of objects.

A morphism $(f,g,h) \to (f',g',h')$ in $X_0^{[2]} \comp X_1$ consists of $(u,v,w,x,y)$ in $X$ as in
\[ \xygraph{{a}="p0" [r] {b}="p1" [d] {b'}="p2" [l] {a'}="p3" "p0":"p1"^-{f}:"p2"^-{v}:@{<-}"p3"^-{f'}:@{<-}"p0"^-{u}:@{}|-*{w}"p2"
"p1" [r] {c}="p4" [d] {c'}="p5" "p4":"p5"^-{x}
"p4" [r] {d}="p6" [d] {d'}="p7" "p6":"p7"^-{y}} \]
such that $xg = g'v$ and $yh = h'x$. Inducing $(\delta_1,\varepsilon_1)$ as on the left
\[ \xygraph{{\xybox{\xygraph{{\xybox{\xygraph{{a}="p0" [r] {b}="p1" [d] {c}="p2" [d] {c'}="p3" [l] {e_3}="p4" [u] {e_1}="p5" "p0":"p1"^-{f}:"p2"^-{g}:"p3"^-{x}:@{<-}"p4"^-{\rho_{f',g'}}:@{<-}"p5"^-{\delta_1}:@{<-}"p0"^-{} "p5":"p2"^{} "p0":@{}"p2"|-*{\kappa_{f,g}} "p5":@{}"p3"|-*{\varepsilon_1}}}}
[r(1.2)] {=} [r(1.2)]
{\xybox{\xygraph{{a}="p0" [r] {b}="p1" [d] {b'}="p2" [d] {c'}="p3" [l] {e_3}="p4" [u] {a'}="p5" "p0":"p1"^-{f}:"p2"^-{v}:"p3"^-{g'}:@{<-}"p4"^-{\rho_{f',g'}}:@{<-}"p5"^-{}:@{<-}"p0"^-{u} "p5":"p2"^-{f'} "p0":@{}"p2"|-*{w} "p5":@{}"p3"|-*{\kappa_{f',g'}}}}}}}}
[r(5)]
{\xybox{\xygraph{{\xybox{\xygraph{{e_1}="p0" [r] {c}="p1" [d] {d}="p2" [d] {d'}="p3" [l] {e_4}="p4" [u] {e_2}="p5" "p0":"p1"^-{\rho_{f,g}}:"p2"^-{h}:"p3"^-{y}:@{<-}"p4"^-{}:@{<-}"p5"^-{\delta_2}:@{<-}"p0"^-{} "p5":"p2"^-{}
"p0":@{}"p2"|-*{\kappa_{\rho_{f,g},h}} "p5":@{}"p3"|-*{\varepsilon_2}}}}
[r(1.2)] {=} [r(1.2)]
{\xybox{\xygraph{{e_1}="p0" [r] {c}="p1" [d] {c'}="p2" [d] {d'}="p3" [l] {e_4}="p4" [u] {e_3}="p5" "p0":"p1"^-{\rho_{f,g}}:"p2"^-{x}:"p3"^-{h'}:@{<-}"p4"^-{}:@{<-}"p5"^-{}:@{<-}"p0"^-{\delta_1} "p5":"p2"^-{}
"p0":@{}"p2"|-*{\varepsilon_1} "p5":@{}"p3"|-{\kappa_{\rho_{f',g'},h'}}}}}}}}} \]
and then $(\delta_2,\varepsilon_2)$ as on the right, the effect of $(X_1 \comp X_0^{s_1})(\delta_X \comp X_0^{[1]})(X_0^{[1]} \comp \delta_X)$ on $(u,v,w,x,y)$ is $(u,v,x,\varepsilon_2)$. This coincides with the effect of $\delta_X(X_0^{s_1} \comp X_1)$ on $(u,v,w,x,y)$ by
\[ \xygraph{{\xybox{\xygraph{{a}="p0" [r] {b}="p1" [d] {d}="p2" [d] {d'}="p3" [l] {e_4}="p4" [u] {e_5}="p5" "p0":"p1"^-{}:"p2"^-{}:"p3"^-{}:@{<-}"p4"^-{}:@{<-}"p5"^-{}:@{<-}"p0"^-{} "p5":"p2"^-{} "p0":@{}"p2"|-*{\kappa_{f,hg}} "p5":@{}"p3"|-*{\varepsilon_2}}}}
[r] {=} [r]
{\xybox{\xygraph{{a}="p0" [r] {b}="p1" [d] {c}="p2" [d] {d}="p3" [d] {d'}="p4" [l] {e_4}="p5" [u] {e_5}="p6" [u] {e_1}="p7" "p0":"p1"^-{}:"p2"^-{}:"p3"^-{}:"p4"^-{}:@{<-}"p5"^-{}:@{<-}"p6"^-{}:@{<-}"p7"^-{}:@{<-}"p0"^-{} "p7":"p2"^-{} "p6":"p3"^-{} "p0":@{}"p2"|-*{\kappa_{f,g}} "p7":@{}"p3"|-*{\kappa_{\rho_{f,g},h}} "p6":@{}"p4"|-*{\varepsilon_2}}}}
[r] {=} [r]
{\xybox{\xygraph{{a}="p0" [r] {b}="p1" [d] {c}="p2" [d] {c'}="p3" [d] {d'}="p4" [l] {e_4}="p5" [u] {e_3}="p6" [u] {e_1}="p7" "p0":"p1"^-{}:"p2"^-{}:"p3"^-{}:"p4"^-{}:@{<-}"p5"^-{}:@{<-}"p6"^-{}:@{<-}"p7"^-{}:@{<-}"p0"^-{} "p7":"p2"^-{} "p6":"p3"^-{} "p0":@{}"p2"|-*{\kappa_{f,g}} "p7":@{}"p3"|-*{\varepsilon_1} "p6":@{}"p4"|-{\kappa_{\rho_{f',g'},h'}}}}}
[r] {=} [r]
{\xybox{\xygraph{{a}="p0" [r] {b}="p1" [d] {b'}="p2" [d] {c'}="p3" [d] {d'}="p4" [l] {e_4}="p5" [u] {e_3}="p6" [u] {a'}="p7" "p0":"p1"^-{}:"p2"^-{}:"p3"^-{}:"p4"^-{}:@{<-}"p5"^-{}:@{<-}"p6"^-{}:@{<-}"p7"^-{}:@{<-}"p0"^-{} "p7":"p2"^-{} "p6":"p3"^-{} "p0":@{}"p2"|-*{w} "p7":@{}"p3"|-*{\kappa_{f',g'}} "p6":@{}"p4"|-{\kappa_{\rho_{f',g'},h'}}}}}
[r] {=} [r]
{\xybox{\xygraph{{a}="p0" [r] {b}="p1" [d] {b'}="p2" [d] {d'}="p3" [l] {e_4}="p4" [u] {a'}="p5" "p0":"p1"^-{}:"p2"^-{}:"p3"^-{}:@{<-}"p4"^-{}:@{<-}"p5"^-{}:@{<-}"p0"^-{} "p5":"p2"^-{} "p0":@{}"p2"|-*{w} "p5":@{}"p3"|*-{\kappa_{f',h'g'}}}}}} \]
and the uniqueness part of the universal property of $\kappa_{f,hg}$. Using the fact that chosen opcartesian squares horizontally compose, one similarly verifies the remaining distributive law axiom.
\end{proof}
Thanks to the above result, the composite span $X_1 \comp X_0^{[1]}$ has the structure of a monad. In the case where $\ca K = \Cat(\ca E)$ for some category $\ca E$ with pullbacks, we have the inclusion of objects $i_{X_0} : X_{00} \to X_0$ of $X_0$, and in $\Span {\ca K}$ the adjunction $(i_{X_0})^{\bullet} \ladj (i_{X_0})_{\bullet}$. Thus the composite span
\[ (i_{X_0})_{\bullet} \comp X_1 \comp X_0^{[1]}  \comp (i_{X_0})^{\bullet}  \]
is the underlying endomorphism of a monad in $\Span{\ca K}$ on $X_{00}$. Since $X_{00}$ is discrete, we have thus exhibited an internal 2-category from the crossed internal category $X$.
\begin{defn}\label{def:Cnr}
Let $X$ be a crossed internal category in a 2-category $\ca K$ of the form $\tn{Cat}(\ca E)$ for $\ca E$ a category with pullbacks. Then the internal 2-category just described is called the \emph{internal 2-category of corners} of $X$, and is denoted as $\tn{Cnr}(X)$.
\end{defn}
\begin{rem}\label{rem:Cnr-of-2-cat}
By Remark \ref{rem:2-cat-as-cic} an internal 2-category $X$ is a crossed internal category. In this case the monad $X_0^{[1]}$ in $\Span {\ca K}$ and $i_{X_0}$ are identities, and so $\tn{Cnr}(X) = X$.
\end{rem}
In the case where $\ca K = \Cat$ one can unpack the following explicit description of $\tn{Cnr}(X)$ in terms of the double category $X$. Its objects are the objects of $X$, and an arrow $x \to y$ is a pair $(f,g)$ where $f$ is a vertical arrow and $g$ is a horizontal arrow as on the left in
\[ \xygraph{{\xybox{\xygraph{{x}="l" [d] {z}="m" [r] {y}="r" "l":"m"^-{f}:"r"^-{g}}}}
[r(3)]
{\xybox{\xygraph{{x}="ttl" [d] {a}="tl" [r] {y}="tr" [d] {b}="br" [l] {c}="bl" [r(2)] {z}="brr"
"ttl":"tl"_{f}:"tr"^-{g}:"br"^-{h}:@{<-}"bl"^-{\rho_{g,h}}:@{<-}"tl"^-{\lambda_{g,h}}:@{}"br"|-*{\kappa_{g,h}}:"brr"_-{k}}}}
[r(3)]
{\xybox{\xygraph{{x}="ttl" [d] {z_1}="tl" [r] {y}="tr" [d] {y}="br" [l] {z_2}="bl" "ttl":"tl"_-{f}:"tr"^-{g}:"br"^-{1_y}:@{<-}"bl"^-{k}:@{<-}"tl"^-{\alpha} "tl":@{}"br"|-*{\beta}}}}} \]
and is called a \emph{corner} from $x$ to $y$. Such an $(f,g)$ is an identity in $\Cnr(X)$ when $f$ and $g$ are identities. The composite of $(f,g):x \to y$ and $(h,k):y \to z$ in $\Cnr(X)$ is defined to be $(\lambda_{g,h}f,k\rho_{g,h})$ as in the middle of the previous display. Given $(f,g)$ and $(h,k):x \to y$, a 2-cell $(f,g) \to (h,k)$ in $\Cnr(X)$ is a pair $(\alpha,\beta)$ where $\alpha$ is a vertical arrow and $\beta$ is a square as on the right in the previous display, such that $\alpha f=h$. Vertical composition of 2-cells in $\Cnr(X)$ is given in the evident manner by vertical composition in $X$.
\begin{exam}\label{exam:cnr-bar-of-Sm}
Continuing the discussion of Examples \ref{exam:dblcat-for-cmon-classifier} and \ref{exam:bar-of-Sm-as-cics}, the 2-category $\tn{Cnr}(U^{\tnb{S}}\ca R_{\tnb{S}}1)$ has the following explicit description. It has natural numbers as objects, and an arrow $m \to n$ is a pair $(\rho,f)$ where $\rho \in \Sigma_m$ and $f : \underline{m} \to \underline{n}$ is an order preserving function. The composite of
\[ \xygraph{!{0;(1.5,0):(0,1)::} {l}="p0" [r] {m}="p1" [r] {n}="p2" "p0":"p1"^-{(\rho_1,f_1)}:"p2"^-{(\rho_2,f_2)}} \]
is given by $(\rho_3\rho_1,f_2f_3)$, where $\rho_2f_1 = f_3\rho_3$ is the bijective-monotone factorisation (see Remark \ref{rem:bij-mon-factorisation}) of $\rho_2f_1$. Given $(\rho_1,f_1)$ and $(\rho_2,f_2) : m \to n$, a 2-cell $(\rho_1,f_1) \to (\rho_2,f_2)$ is unique if it exists, and exists iff $f_2\rho_2\rho_1^{-1} = f_1$, that is iff $f_2\rho_2 = f_1\rho_1$.
\end{exam}
As we saw in Remark \ref{rem:Cnr-of-2-cat} one has $\tn{Cnr}(X) = X$ for $X$ an internal 2-category. Another special case in which $\tn{Cnr}(X)$ assumes a simple form is given by the following result, which covers the Example \ref{ex:Loday-Fieodorowicz} of crossed simplicial groups.
\begin{lem}\label{lem:discrete-Cnr}
Let $X$ be a crossed internal category in a 2-category $\ca K$ of the form $\Cat(\ca E)$ for $\ca E$ a category with pullbacks, and suppose that $d_0:X_1 \to X_0$ is a discrete opfibration. Then $\Cnr(X)$ is componentwise discrete.
\end{lem}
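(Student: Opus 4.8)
The plan is to show that $\Cnr(X)$ has no non-identity $2$-cells; since its object of objects $X_{00}$ is discrete, this is exactly the assertion that $\Cnr(X)$ is componentwise discrete (its object of $1$-cells is then discrete, and the higher components are pullbacks of discrete objects). First I would dispatch the case $\ca K = \Cat$ using the elementary description of $\Cnr(X)$ recalled above. There a $2$-cell $(f,g) \to (h,k)$ of $\Cnr(X)$ is a pair $(\alpha,\beta)$ in which $\alpha$ is a vertical arrow with $\alpha f = h$, and $\beta$ is a square whose horizontal source is $g$, whose horizontal target is $k$, whose left vertical edge is $d_1\beta = \alpha$, and whose right vertical edge is $d_0\beta = 1_y$.

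The key step is then immediate from the hypothesis that $d_0 : X_1 \to X_0$ is a discrete opfibration: the unique $d_0$-opcartesian lift of an identity arrow is an identity, so any square $\beta$ with $d_0\beta$ an identity is forced to be an identity square, i.e. $\beta = 1_g$. This collapses everything --- one gets $k = g$, then $\alpha = d_1\beta = 1$, and hence $h = \alpha f = f$ --- so that $(h,k) = (f,g)$ and $(\alpha,\beta)$ is the identity $2$-cell. Thus $\Cnr(X)$ is locally discrete when $\ca K = \Cat$. Internally, the only fact being used is that for a discrete opfibration a square $\beta$ is an identity precisely when $d_0\beta$ is --- equivalently, that the naturality square of $d_0$ on identity-arrows is a pullback --- so that the object of $2$-cells of $\Cnr(X)$ is identified, through the identity inclusion, with its object of $1$-cells.

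For a general $\ca E$ I would reduce to the case $\ca K = \Cat$ by a representable argument. The representable functors $\ca E(e,-) : \ca E \to \Set$ preserve finite limits and are jointly conservative, so the induced $2$-functors $\Cat(\ca E(e,-)) : \Cat(\ca E) \to \Cat$ preserve pullbacks, comma objects and discrete opfibrations, and jointly detect discreteness of objects. The point requiring care --- and the main obstacle, apart from the final counting step, where the discreteness hypothesis is genuinely needed --- is that these $2$-functors must commute with the formation of $\Cnr$. In general the construction of $\Cnr(X)$ (Construction \ref{const:span-distlaw-from-crintcat} and Definition \ref{def:Cnr}) depends on the chosen opcartesian lifts, which are extra structure and need not be preserved by a merely limit-preserving $2$-functor; but once $d_0$ is a discrete opfibration these lifts are uniquely determined by pullbacks, and are therefore preserved by each $\Cat(\ca E(e,-))$. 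Hence $\Cat(\ca E(e,-))(\Cnr(X)) \iso \Cnr(\Cat(\ca E(e,-))(X))$, which is componentwise discrete by the $\Cat$ case already established; since the functors $\ca E(e,-)$ jointly reflect discreteness, $\Cnr(X)$ is componentwise discrete, as required.
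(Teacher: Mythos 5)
Your proof is correct and follows essentially the same route as the paper: in the $\Cat$ case the right vertical edge $d_0\beta = 1_y$ of the square $\beta$ in a $2$-cell of $\Cnr(X)$ forces $\beta$ (and hence $\alpha$) to be an identity because $d_0$ is a discrete opfibration, so $\Cnr(X)_1$ is discrete, and the remaining components follow since $\Cnr(X)_0$ is discrete and $\Cnr(X)$ is a category object; the general case is then a representable reduction to $\Cat$. The paper dispatches that reduction with the single phrase ``by a representable argument,'' so your extra care about the representables preserving the (now uniquely determined) opcartesian lifts is a harmless elaboration rather than a divergence.
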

\begin{proof}
The general statement follows from the case $\ca K = \Cat$ by a representable argument. In that case observe that the $\alpha$ and $\beta$ of a 2-cell $(f,g) \to (h,k)$ must necessarily coincide with
\[ \xygraph{{z_1}="p0" [r] {y}="p1" [d] {y}="p2" [l] {z_1}="p3" "p0":"p1"^-{g}:"p2"^-{1_y}:@{<-}"p3"^-{k=g}:@{<-}"p0"^-{1_{z_1}}:@{}"p2"|-*{1_g}} \]
and thus be identities, whence $\Cnr(X)_1$ is discrete. Since $\tn{Cnr}(X)_0$ is discrete by definition and $\Cnr(X)$ is a category object, the result follows.
\end{proof}
\begin{rem}\label{rem:functoriality-Cnr}
The observation of Remark \ref{rem:2-cat-as-cic} extends to morphisms so that one has a full inclusion $2\tn{-Cat}(\ca K) \hookrightarrow \CrIntCat {\ca K}$ between the category of 2-categories in $\ca K$ to that of crossed internal categories and crossed internal functors in $\ca K$. In general we have a functor
\[ \Cnr_{\ca K} : \CrIntCat {\ca K} \longrightarrow 2\tn{-Cat}(\ca K) \]
which is a retraction of the inclusion. Since $\Cnr_{\ca K}$ is described in terms of limits, it is a limit preserving functor in general. In the case $\ca K = \Cat$, since at the set-wise level the limits used in $\Cnr_{\Cat}$'s explicit description are finite and connected, it follows that $\Cnr_{\Cat}$ also preserves filtered colimits and coproducts, since these commute in $\Set$ with finite connected limits.
\end{rem}

\subsection{Computation of codescent objects.}
\label{ssec:CodescComp}

In this section we prove results involving the computation of codescent objects of crossed internal categories within 2-categories $\ca K$ of the form $\tn{Cat}(\ca E)$. The first, Theorem \ref{thm:codesc-2-cat}, explains how to compute codescent objects of internal 2-categories. Then in Theorem \ref{thm:codesc-cr-int-cat}, we see that the codescent object of a crossed internal category $X$ is that of $\tn{Cnr}(X)$, its associated internal 2-category of corners. Thus we can compute codescent for an arbitrary crossed internal category within a 2-category $\ca K$ of the form $\tn{Cat}(\ca E)$ where $\ca E$ is a category with pullbacks and pullback-stable reflexive coequalisers. This is recorded below in Corollary \ref{cor:codesc-gen-cicat}. On the other hand we can compute codescent for crossed internal categories satisfying the hypotheses of Lemma \ref{lem:discrete-Cnr} in complete generality. This is recorded below in Corollary \ref{cor:codesc-gen-E}.

In an earlier version of this article Lemma \ref{lem:pi0-pres-enough-pbs}, Theorem \ref{thm:codesc-2-cat} and Corollary \ref{cor:codesc-gen-cicat} were stated less generally, for a locally cartesian closed category $\ca E$ with coequalisers. I am indebted to John Bourke for pointing out that the proofs of these results go through in the greater generality now presented here.

Given a category $\ca E$ with pullbacks and reflexive coequalisers, the inclusion $d$ of discrete objects of $\tn{Cat}(\ca E)$ has a left adjoint $\pi_0$, which on objects is given by taking the coequaliser of the source and target maps. Regarding $d$ as an inclusion, we denote the unit of this adjunction as $\eta_X :X \to \pi_0X$. Since $d$ preserves coequalisers, we regard the defining coequaliser of $\pi_0X$ as living in $\tn{Cat}(\ca E)$.

Recall from Example \ref{ex:internal-nerve} that for $f:X \to Y$ in $\tn{Cat}(\ca E)$, a 2-cell $\phi:f \to g$ has underlying data an arrow $\phi_0:X_0 \to Y_1$ in $\ca E$ such that $d_1\phi_0 = f_0$ and $d_0\phi_0 = g_0$. Given such $\phi_0$ one induces $\phi_1$, $\phi_2:X_1 \to Y_2$ in $\ca E$ unique such that
\[ \begin{array}{lcccccr} {d_2\phi_1 = \phi_0d_1} && {d_0\phi_1 = g_1} && {d_2\phi_2 = f_1} && {d_0\phi_2 = \phi_0d_0} \end{array} \]
and then the ``naturality condition'' for $\phi$ demands that $d_1\phi_1 = d_1\phi_2$. Recall also that a morphism $h:A \to B$ in a 2-category $\ca K$ is \emph{cofully faithful} when for all $Z \in \ca K$, the functor $\ca K(h,Z) : \ca K(B,Z) \to \ca K(A,Z)$ given by precomposition with $h$, is fully faithful.
\begin{lem}\label{lem:coff-into-pi0}
Let $\ca E$ be a category with pullbacks and reflexive coequalisers, and let $\ca K = \tn{Cat}(\ca E)$. Then for all $X \in \ca K$, $\eta_X : X \to \pi_0X$ is cofully faithful.
\end{lem}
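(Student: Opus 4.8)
The plan is to verify directly that the precomposition functor $\ca K(\eta_X,Z) : \ca K(\pi_0X,Z) \to \ca K(X,Z)$ is fully faithful for every $Z \in \ca K$, i.e.\ bijective on 2-cells. Throughout I write $\eta_{X,0} : X_0 \to \pi_0X$ for the object-component of $\eta_X$, which by definition is the coequaliser of $d_0, d_1 : X_1 \to X_0$. Recall from Example \ref{ex:internal-nerve} that a 2-cell in $\ca K$ is determined by its underlying component $\phi_0 : X_0 \to Y_1$, and that whiskering a 2-cell $\theta$ between maps $\pi_0X \to Z$ by $\eta_X$ has underlying component $\theta_0 \comp \eta_{X,0}$. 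Faithfulness is then immediate: a coequaliser is an epimorphism, so $\theta_0\eta_{X,0} = \theta'_0\eta_{X,0}$ forces $\theta_0 = \theta'_0$, whence $\theta = \theta'$.

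For fullness, fix $u,v : \pi_0X \to Z$ and a 2-cell $\psi : u\eta_X \to v\eta_X$ with underlying datum $\psi_0 : X_0 \to Z_1$. The crucial observation is that, since $u\eta_X$ and $v\eta_X$ factor through the discrete object $\pi_0X$, their arrow-parts are degenerate; concretely, the simplicial identity $d_1^{\pi_0X}(\eta_X)_1 = \eta_{X,0}d_1^X$ together with $d_1^{\pi_0X} = \id$ gives $(\eta_X)_1 = \eta_{X,0}d_0^X = \eta_{X,0}d_1^X$, so that $(u\eta_X)_1 = s_0^Z u_0 \eta_{X,0} d_1^X$ and similarly for $v$. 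First I would unwind the induced $\psi_1,\psi_2 : X_1 \to Z_2$ and compute $d_1\psi_1$ and $d_1\psi_2$ using the unit laws of the internal category $Z$: because one of the two arrows being composed in each of $\psi_1,\psi_2$ is a degenerate square, one finds $d_1\psi_1 = \psi_0d_1^X$ and $d_1\psi_2 = \psi_0d_0^X$. The naturality equation $d_1\psi_1 = d_1\psi_2$ for $\psi$ therefore reduces exactly to $\psi_0 d_0^X = \psi_0 d_1^X$, i.e.\ $\psi_0$ coequalises the source and target of $X$. By the universal property of $\eta_{X,0}$ there is then a unique $\theta_0 : \pi_0X \to Z_1$ with $\theta_0\eta_{X,0} = \psi_0$.

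It remains to check that $\theta_0$ underlies a genuine 2-cell $\theta : u \to v$ with $\theta\eta_X = \psi$. The source and target conditions $d_1\theta_0 = u_0$ and $d_0\theta_0 = v_0$ follow by composing with the epimorphism $\eta_{X,0}$ and using the corresponding conditions for $\psi$. The naturality condition for $\theta$ is automatic: since $\pi_0X$ is discrete its degeneracy $s_0$ is an identity, so the induced $\theta_1,\theta_2$ represent the composites of $\theta_0$ with identity squares, and the unit laws of $Z$ give $d_1\theta_1 = \theta_0 = d_1\theta_2$. Finally $\theta\eta_X$ and $\psi$ have the common underlying component $\psi_0$, hence coincide.

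The hard part will be the internal computation of the second paragraph. Unlike the limit-based constructions elsewhere in the paper, here a representable reduction to $\ca E = \Set$ is \emph{not} available, because $\pi_0$ is a colimit rather than a limit; the argument must be carried out directly in $\ca E$. The delicate point is to identify $d_1\psi_1$ and $d_1\psi_2$ correctly from the defining equations for $\psi_1,\psi_2$, which hinges on the degeneracy of the arrow-parts $(u\eta_X)_1$, $(v\eta_X)_1$ and on the internal unit laws of $Z$; once this is done, the equivalence between the naturality of $\psi$ and the coequalisation property of $\psi_0$, and hence the whole lemma, falls out.
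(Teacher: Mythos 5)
Your proposal is correct and takes essentially the same route as the paper's own proof: the paper likewise identifies $\psi_1 = s_1\psi_0 d_1$ and $\psi_2 = s_0\psi_0 d_0$ (using the degeneracy of the arrow-parts of $u\eta_X$ and $v\eta_X$ and the uniqueness of the induced fillers), so that the naturality condition $d_1\psi_1 = d_1\psi_2$ collapses to $\psi_0 d_1 = \psi_0 d_0$, and then invokes the universal property of the coequaliser exactly as you do. The remaining checks (source/target via epi-ness of $\eta_{X,0}$, automatic naturality over the discrete $\pi_0X$, uniqueness) also coincide with the paper's argument.
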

\begin{proof}
Let us suppose that $Y$, $f$ and $g:\pi_0X \to Y$, and $\phi:f\eta_X \to g\eta_X$ in $\ca K$ are given. We must exhibit a unique $\psi:f \to g$ such that $\psi \eta_X = \phi$. To this end we consider the following diagram in $\ca E$
\[ \xygraph{!{0;(2,0):(0,.5)::}
{X_2}="p0" [r] {X_1}="p1" [r] {X_0}="p2"
"p2":"p1"|-{} "p1":@<1.5ex>"p2"^-{} "p1":@<-1.5ex>"p2"_-{} "p0":@<1.5ex>"p1"^-{} "p0":"p1"|-{} "p0":@<-1.5ex>"p1"_-{}
"p0" [d] {\pi_0X}="q0" [r] {\pi_0X}="q1" [r] {\pi_0X}="q2"
"q2":"q1"|-{} "q1":@<1.5ex>"q2"^-{} "q1":@<-1.5ex>"q2"_-{} "q0":@<1.5ex>"q1"^-{} "q0":"q1"|-{} "q0":@<-1.5ex>"q1"_-{}
"q0" [d] {Y_2}="r0" [r] {Y_1}="r1" [r] {Y_0}="r2"
"r2":"r1"|-{} "r1":@<1.5ex>"r2"^-{} "r1":@<-1.5ex>"r2"_-{} "r0":@<1.5ex>"r1"^-{} "r0":"r1"|-{} "r0":@<-1.5ex>"r1"_-{}
"p0":"q0"_-{\eta_{X,2}}(:@<1ex>"r0"^(.35){g_2},:@<-1ex>"r0"_-{f_2}) "p1":"q1"^-{\eta_{X,1}}(:@<1ex>"r1"^(.4){g_1},:@<-1ex>"r1"_-{f_1})
"p2":"q2"^-{\eta_{X,0}}(:@<1ex>"r2"^-{g_0},:@<-1ex>"r2"_-{f_0})
"p2":@{.>}"r1"|(.25){\phi_0}
"p1"(:@{}@<1ex>"r0"|(.1){}="d1"|(.8){}="c1",:@{}@<-1ex>"r0"|(.2){}="d2"|(.85){}="c2")
"d1":@{.>}"c1"|(.75){\phi_1} "d2":@{.>}"c2"|(.2){\phi_2}} \]
in which the arrows of the middle row are all identities. By definition $\phi_1$ is the unique morphism such that $d_2\phi_1 = \phi_0d_1$ and $d_0\phi_1 = s_0g_0\eta_{X,1}$. Note that
\[ d_0s_1\phi_0d_1 = s_0d_0\phi_0d_1 = s_0g_0\eta_{X,0}d_1 = s_0g_0\eta_{X,1} \]
and $d_2s_1\phi_0d_1 = \phi_0d_1$, and so $\phi_1 = s_1\phi_0d_1$. Similarly $\phi_2 = s_0\phi_0d_0$. Thus
\[ \phi_0d_1 = d_1s_1\phi_0d_1 = d_1\phi_1 = d_1\phi_2 = d_1s_0\phi_0d_0 = \phi_0d_0 \]
and so there is a unique $\psi_0:\pi_0X \to Y_1$ such that $\psi_0\eta_{X,0} = \phi_0$. Since $d_1\psi_0\eta_{X,0} = f_0\eta_{X,0}$, $d_0\psi_0\eta_{X,0} = g_0\eta_{X,0}$ and $\eta_{X,0}$ is epi, we have $d_1\psi_0 = f_0$ and $d_0\psi_0 = g_0$, and the naturality condition for $\psi$ is automatically satisfied since $\pi_0X$ is a discrete category object. The required uniqueness follows from the uniqueness aspect of $\eta_{X,0}$'s universal property as a coequaliser.
\end{proof}
The next lemma ensures, that for nice enough $\ca E$, $\pi_0$ preserves enough pullbacks so that when it is applied componentwise to an internal 2-category, the result will still be a category object. Recall that a category $\ca E$ as in Lemma \ref{lem:coff-into-pi0} has \emph{pullback stable reflexive coequalisers} when for all $f : A \to B \in \ca E$, the functor $\Delta_f : \ca E/B \to \ca E/A$ preserves reflexive coequalisers. For $D \in \ca E$ the functor $\Sigma_D : \ca E/D \to \ca E$, which sends an arrow into $D$ to its domain, creates pullbacks and colimits, and so any slice of $\ca E$ will have pullback stable reflexive coequalisers whenever $\ca E$ does. In particular for such $\ca E$ and $A \in \ca E$, the functor $(-) \times A : \ca E \to \ca E$ preserves reflexive coequalisers. 
\begin{lem}\label{lem:pi0-pres-enough-pbs}
Let $\ca E$ be a category with pullbacks and pullback-stable reflexive coequalisers, and let $\ca K = \tn{Cat}(\ca E)$. Then $\pi_0$ preserves pullbacks
\[ \xygraph{{P}="p0" [r] {A}="p1" [d] {D}="p2" [l] {B}="p3" "p0":"p1"^-{}:"p2"^-{}:@{<-}"p3"^-{}:@{<-}"p0"^-{}:@{}"p2"|-{\tn{pb}}} \]
in $\ca K$ such that $D$ is discrete.
\end{lem}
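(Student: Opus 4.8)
The plan is to reduce the statement to a fact about binary products in a slice category preserving reflexive coequalisers, exploiting that $D$ being discrete forces all the relevant data to live over the fixed object $D$. First I would record two preliminary reductions. Pullbacks in $\ca K = \tn{Cat}(\ca E)$ are computed componentwise, being pointwise limits in $[\Delta^{\op},\ca E]$; as $D$ is discrete we have $D_n = D$ for all $n$, so the pullback $P = A \times_D B$ has $P_n = A_n \times_D B_n$. Moreover, for a category object $X$ one has $\pi_0 X = \tn{coeq}(d_0, d_1 : X_1 \to X_0)$, and this is a \emph{reflexive} coequaliser, the common section being $s_0$. Since $D$ is discrete, an internal functor $A \to D$ is the same as a map $f_0 : A_0 \to D$ coequalising $d_0$ and $d_1$, that is, a map $\pi_0 A \to D$; thus the whole coequaliser diagram $A_1 \rightrightarrows A_0 \to \pi_0 A$ lives in the slice $\ca E/D$, where it remains a reflexive coequaliser because $\Sigma_D : \ca E/D \to \ca E$ creates colimits. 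The same applies to $B$, and $P_0 = A_0 \times_D B_0$, $P_1 = A_1 \times_D B_1$ are precisely the binary products in $\ca E/D$.

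Next I would show that for fixed $x : X \to D$ the functor $(-) \times_D X : \ca E/D \to \ca E/D$ preserves reflexive coequalisers. This follows from the factorisation $(-) \times_D X = \Sigma_x \Delta_x$, where $\Delta_x : \ca E/D \to \ca E/X$ is pullback along $x$ and $\Sigma_x$ is postcomposition with $x$: the functor $\Sigma_x$ is a left adjoint and so preserves all colimits, while $\Delta_x$ preserves reflexive coequalisers precisely because $\ca E$ has pullback-stable reflexive coequalisers (and, as noted before the lemma, slices of $\ca E$ inherit this property). Hence the binary product bifunctor $\times_D$ on $\ca E/D$ preserves reflexive coequalisers in each variable separately.

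The key step is then the standard ``diagonal'' lemma: a bifunctor preserving reflexive coequalisers in each variable sends a pair of reflexive coequaliser diagrams to a coequaliser of the diagonal pair. Concretely, writing the two coequalisers as $A_1 \rightrightarrows A_0 \to \pi_0 A$ and $B_1 \rightrightarrows B_0 \to \pi_0 B$ in $\ca E/D$, one factors $q_A \times_D q_B$ as $(\pi_0 A \times_D q_B)(q_A \times_D B_0)$; each factor is a coequaliser by the previous paragraph, a composite of coequalisers is the coequaliser of the generated relation, and a short chase using the common reflexive sections identifies the coequaliser of that relation with that of the diagonal pair $(d_0^A \times_D d_0^B,\, d_1^A \times_D d_1^B)$ on $P_1 \rightrightarrows P_0$. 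This yields $\pi_0 P \iso \pi_0 A \times_D \pi_0 B$ in $\ca E/D$. I expect this interchange of coequalisers to be the only delicate point; everything else is formal.

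Finally I would identify this with $\pi_0$ applied to the square. Since $D$ is discrete, $\pi_0 D = D$, so $\pi_0 A \times_D \pi_0 B = \pi_0 A \times_{\pi_0 D} \pi_0 B$ is exactly the pullback in $\ca E$ of $\pi_0 A \to \pi_0 D \leftarrow \pi_0 B$, computed among the discrete objects of $\ca K$. Checking that the canonical comparison map $\pi_0 P \to \pi_0 A \times_{\pi_0 D} \pi_0 B$ coincides with the isomorphism just produced shows that $\pi_0$ preserves the given pullback, as required.
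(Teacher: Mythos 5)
Your proposal is correct and follows essentially the same route as the paper: reduce the pullback over the discrete $D$ to a binary product in the slice $\ca E/D$ (the paper phrases this as working in $\tn{Cat}(\ca E/D) \iso \tn{Cat}(\ca E)/D$), and then deduce that $\pi_0$ preserves binary products from the fact that the product bifunctor preserves reflexive coequalisers in each variable, via the standard interchange argument — which is exactly the "$(3\times 3)$-lemma" application the paper cites. Your write-up just makes explicit the steps the paper leaves to the reader.
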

\begin{proof}
Under the given assumptions on $\ca E$, $\pi_0$ preserves products by a standard application of the $(3 \times 3)$-lemma (\cite{Johnstone-ToposTheory} Lemma 0.17), since the coequalisers in question are all reflexive coequalisers. We can apply this in the case $\ca K = \tn{Cat}(\ca E/D) \iso \tn{Cat}(\ca E)/D$ to obtain the result.
\end{proof}
In the context of this last result, given a 2-category $X$ internal to $\ca K$, we denote by $\pi_{0*}X$ the componentwise discrete simplicial object
\[ \Delta^{\op} \xrightarrow{X} \ca K \xrightarrow{\pi_0} \ca E \xrightarrow{d} \ca K.  \]
A mild reformulation of the condition recalled in Example \ref{ex:internal-nerve} for $X$ to be a category object is that 
\[ \xygraph{!{0;(1.5,0):(0,.6667)::} {X_{n+2}}="p0" [r] {X_{n+1}}="p1" [d] {X_0}="p2" [l] {X_1}="p3" "p0":"p1"^-{d_{n+2}}:"p2"^-{d_0^{\comp n}}:@{<-}"p3"^-{d_1}:@{<-}"p0"^-{d_0^{\comp n}}} \]
is a pullback for all $n \in \N$, and so since $X_0$ is discrete, $\pi_{0*}X$ is also a category object by Lemma \ref{lem:pi0-pres-enough-pbs}. Since $\pi_{0*}X$ is a componentwise-discrete category object, its codescent object exists, and is just $\pi_{0*}X$ regarded as an object of $\ca K$. In the case $\ca K = \Cat$, $X$ is a 2-category and $\pi_{0*}X$ is the category whose objects are those of $X$ and hom sets are the sets of connected components of the corresponding hom categories of $X$.
\begin{thm}\label{thm:codesc-2-cat}
Let $\ca E$ be a category with pullbacks and pullback-stable reflexive coequalisers, $\ca K = \tn{Cat}(\ca E)$ and let $X$ be an internal 2-category. Then the codescent object of $X$ exists and $\tn{CoDesc}(X) = \pi_{0*}X$.
\end{thm}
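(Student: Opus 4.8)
The plan is to verify the defining universal property of the codescent object directly, by showing that the canonical comparison $\eta_* : X \to \pi_{0*}X$ in $[\Delta^{\op},\ca K]$ — whose components are the units $\eta_{X_n} : X_n \to \pi_0X_n$, and which is a map of simplicial objects since $\eta$ is a $2$-natural transformation $\mathrm{Id}\to d\pi_0$ — induces a $2$-natural isomorphism $\ca K(\pi_{0*}X,Z) \iso \tn{Desc}(\ca K(X-,Z))$ for every $Z\in\ca K$. The discussion preceding the theorem shows, via Lemma \ref{lem:pi0-pres-enough-pbs} and the discreteness of $X_0$, that $\pi_{0*}X$ is a componentwise-discrete category object whose codescent object is itself; hence $\ca K(\pi_{0*}X,Z) \iso \tn{Desc}(\ca K(\pi_{0*}X-,Z))$ by Example \ref{ex:internal-nerve}. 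Thus it suffices to prove that precomposition with $\eta_*$ is an isomorphism of categories $\tn{Desc}(\ca K(\pi_{0*}X-,Z)) \to \tn{Desc}(\ca K(X-,Z))$.

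Recall that an object of $\tn{Desc}(\ca K(X-,Z))$ is a codescent cocone $(f_0,f_1)$ for $X$ with vertex $Z$: an internal functor $f_0 : X_0 \to Z$ and a $2$-cell $f_1 : f_0d_1 \to f_0d_0$ in $\ca K$, subject to $f_1s_0 = 1$ and $(f_1d_0)(f_1d_2)=f_1d_1$. The key observation is that, $X_0$ being discrete, both $f_0d_1$ and $f_0d_0$ send every arrow of the internal category $X_1$ to an identity, and each therefore factors uniquely through $\eta_{X_1}$: a functor $X_1\to Z$ trivialising all arrows has object-component coequalising the source and target of $X_1$, and so factors through the defining coequaliser $\pi_0X_1$. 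Writing $\overline{f_0d_1},\overline{f_0d_0}:\pi_0X_1\to Z$ for the induced functors, the cofull faithfulness of $\eta_{X_1}$ (Lemma \ref{lem:coff-into-pi0}) then lifts $f_1$ to a unique $2$-cell $\tilde f_1:\overline{f_0d_1}\to\overline{f_0d_0}$ with $\tilde f_1\eta_{X_1}=f_1$, fullness giving existence and faithfulness uniqueness. Since $\eta_{X_0}=1$, as $\pi_0X_0=X_0$, this produces cocone data $(f_0,\tilde f_1)$ for $\pi_{0*}X$, and by uniqueness of the factorisations this assignment is inverse on objects to precomposition with $\eta_*$.

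First I would check that the two descent equations transport correctly under this bijection. The identity and associativity conditions are equations between $2$-cells living in $\ca K(X_1,Z)$ and $\ca K(X_2,Z)$ respectively; applying Lemma \ref{lem:coff-into-pi0} at the levels $X_1$ and $X_2$ (where $\eta_{X_1},\eta_{X_2}$ are again cofully faithful) shows that they hold for $(f_0,f_1)$ if and only if they hold for $(f_0,\tilde f_1)$, faithfulness yielding one implication and fullness the lift needed for the other. The same device handles morphisms: a morphism of cocones is a $2$-cell in $\ca K(X_0,Z)=\ca K(\pi_0X_0,Z)$ satisfying a compatibility equation in $\ca K(X_1,Z)$, which transports through the full and faithful $\ca K(\eta_{X_1},Z)$. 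Hence the comparison is an isomorphism of categories, and its $2$-naturality in $Z$ is clear since the units $\eta_{X_n}$, the factorisations, and the descent structure are all natural in $Z$. Finally, as $\ca K=\tn{Cat}(\ca E)$ admits cotensors with $[1]$, the established $1$-dimensional content already forces the full $2$-dimensional universal property, so $\tn{CoDesc}(X)$ exists and equals $\pi_{0*}X$.

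I expect the main obstacle to be the bookkeeping guaranteeing that the two descent conditions and the morphism-compatibility equation transport cleanly in both directions: the factorisations rest on the source and target functors trivialising arrows, and the transport of the genuinely $2$-cell-valued equations requires using cofull faithfulness simultaneously at the levels $X_1$ and $X_2$. The conceptual heart — that ``sends every arrow to an identity'' is exactly the condition to factor through the discretification $\eta_{X_1}$, and that cofull faithfulness then rigidifies the accompanying $2$-cells — is what lets the argument run over an arbitrary $\ca E$ with pullbacks and pullback-stable reflexive coequalisers, with no appeal to elements.
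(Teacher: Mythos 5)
Your proposal is correct and takes essentially the same route as the paper: both reduce to the fact that $\pi_{0*}X$ is a componentwise-discrete category object (hence its own codescent object, via Lemma \ref{lem:pi0-pres-enough-pbs}), use the cofull faithfulness of $\eta_{X_1}$ and $\eta_{X_2}$ from Lemma \ref{lem:coff-into-pi0} to transport cocone data and the descent equations uniquely across $\eta_*$, and invoke cotensors with $[1]$ for the $2$-dimensional universal property. The paper merely phrases this as a direct verification of the universal cocone $(q_0,q_1\eta_{X_1})$ rather than as an isomorphism of descent categories, which is a cosmetic difference.
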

\begin{proof}
We continue to regard $d$ as an inclusion. In
\[ \xygraph{!{0;(2,0):(0,.5)::}
{X_2}="p0" [r] {X_1}="p1" [r] {X_0}="p2"
"p2":"p1"|-{} "p1":@<1.5ex>"p2"^-{} "p1":@<-1.5ex>"p2"_-{} "p0":@<1.5ex>"p1"^-{} "p0":"p1"|-{} "p0":@<-1.5ex>"p1"_-{}
"p0" [d] {\pi_{0*}X_2}="q0" [r] {\pi_{0*}X_1}="q1" [r] {\pi_{0*}X_0}="q2"
"q2":"q1"|-{} "q1":@<1.5ex>"q2"^-{} "q1":@<-1.5ex>"q2"_-{} "q0":@<1.5ex>"q1"^-{} "q0":"q1"|-{} "q0":@<-1.5ex>"q1"_-{}
"p0":"q0"_-{\eta_{X_2}} "p1":"q1"^-{\eta_{X_1}} "p2":"q2"^-{\eta_{X_0}}
"q2" :[r] {C}="q3"^-{q_0} "p2" :@{.>}[r] {Y}="p3"^-{r_0}} \]
$\eta_{X_0} = 1_{X_0}$ since $X_0$ is discrete, and we have $q_1:q_0\pi_0(d_1) \to q_0\pi_0(d_0)$ such that $(q_0,q_1)$ exhibit $C$ as the codescent object of $\pi_{0*}X$. It suffices to show that $(q_0,q_1\eta_{X_1})$ exhibits $C$ as the codescent object of $X$. The calculations
\[ \begin{array}{l} {q_1\eta_{X_1}s_0 = q_1\pi_0(s_0)\eta_{X_0} = \id} \\
{(q_1\eta_{X_1}d_0)(q_1\eta_{X_1}d_2) = (q_1\pi_0(d_0)\eta_{X_2})(q_1\pi_0(d_2)\eta_{X_2}) = q_1\pi_0(d_1)\eta_{X_2} = q_1\eta_{X_1}d_1} \end{array} \]
ensure that $(q_0,q_1\eta_{X_1})$ is a cocone. Since $\ca K$ admits cotensors with $[1]$, it suffices to verify the 1-dimensional universal property of $(q_0,q_1\eta_{X_1})$. So we suppose $Y$, $r_0$ and $r_1:d_1r_0 \to d_0r_0$ are given making $(r_0,r_1)$ the data of a cocone for $X$ with vertex $Y$. Since $\eta_{X_0}$ is an identity we regard $r_0$ as a morphism $r_0:\pi_{0*}X_0 \to Y$. Since $\eta_{X,1}$ is cofully faithful by Lemma \ref{lem:coff-into-pi0}, there is a unique $r'_1:r_0\pi_0(d_1) \to r_0\pi_0(d_0)$ such that $r'_1\eta_{X_1} = r_1$. To see that $(r_0,r'_1)$ is a cocone for $\pi_{0*}X$ with vertex $Y$, note that
\[ r'_1\pi_0(s_0) = r'_1\pi_0(s_0)\eta_{X_0} = r'_1\eta_{X_1}s_0 = r_1s_0 = \id  \]
and that
\[ \begin{array}{rcl} {(r'_1\pi_0(d_0))(r'_1\pi_0(d_2))\eta_{X_2}}
& = & {(r'_1\eta_{X_1}d_0)(r'_1\eta_{X_1}d_2) = (r_1d_0)(r_1d_2) = r_1d_1} \\
& = & {r'_1\eta_{X_1}d_1 = r'_1\pi_0(d_1)\eta_{X_2},} \end{array} \]
and so since $\eta_{X_2}$ is cofully faithful, $(r'_1\pi_0(d_0))(r'_1\pi_0(d_2)) = r'_1\pi_0(d_1)$. Since $(q_0,q_1)$ is a codescent cocone, there is a unique $s:C \to Y$ such that $sq_0 = r_0$ and $sq_1 = r'_1$. Since $\eta_{X_1}$ is cofully faithful this last equation is equivalent to $sq_1\eta_{X_1} = r_1$.
\end{proof}
In the case $\ca E = \Set$, this last result says that the codescent object of a 2-category $X$, regarded as a double category whose vertical arrows are all identities, is the category whose objects are those of $X$ and hom sets are the sets of connected components of the corresponding hom-categories of $X$.
\begin{thm}\label{thm:codesc-cr-int-cat}
Let $\ca E$ be a category with pullbacks, $\ca K = \tn{Cat}(\ca E)$, and let $X$ be a crossed internal category in $\ca K$. Then the codescent object of $X$ exists iff the codescent object of $\Cnr(X)$ does, in which case $\tn{CoDesc}(X) = \tn{CoDesc}(\tn{Cnr}(X))$.
\end{thm}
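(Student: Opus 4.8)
The plan is to argue entirely at the level of cocones, via the representable characterisation of codescent objects. By the defining $2$-natural isomorphism (\ref{eq:codesc-defining-isos}), a codescent object of a category object $Y$ in $\ca K$ is exactly a representation of the $2$-functor $Z \mapsto \tn{Desc}(\ca K(Y-,Z))$, whose value at $Z$ is the category of cocones for $Y$ with vertex $Z$. Hence it suffices to produce an isomorphism of categories
\[ \Phi_Z : \tn{Desc}(\ca K(X-,Z)) \longrightarrow \tn{Desc}(\ca K(\tn{Cnr}(X)-,Z)) \]
$2$-natural in $Z \in \ca K$: for then the two $2$-functors are simultaneously representable or not, and when they are representable their representing objects, namely $\tn{CoDesc}(X)$ and $\tn{CoDesc}(\tn{Cnr}(X))$, must coincide. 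Since $\ca K = \tn{Cat}(\ca E)$ admits cotensors with $[1]$, I need only match the objects (cocone data $(f_0,f_1)$) and morphisms (cocone morphisms) of these categories, with no separate $2$-dimensional clause.

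I would construct $\Phi_Z$ in the case $\ca K = \Cat$ and then pass to general $\ca E$ by the representable argument used throughout this section, since $\tn{Cnr}(X)$, its chosen opcartesian squares $\kappa_{f,g}$, and the cocone constraints are all defined by limits in $\ca K$. Given a cocone $(f_0,f_1)$ for $X$, with $f_0 : X_0 \to Z$ and $f_1 : f_0d_1 \to f_0d_0$, I set $g_0 = f_0i_{X_0}$ on the discrete object of objects $X_{00}$, and define $g_1$ on a corner $(v,h) : x \to y$ — with $v$ vertical and $h$ horizontal — to be the composite $f_1(h)\comp f_0(v)$ in $Z$. In the reverse direction a cocone $(g_0,g_1)$ for $\tn{Cnr}(X)$ reconstructs $f_0$ on a vertical arrow $v$ as the value of $g_1$ on $(v,1^{\tn{h}})$, and $f_1$ on a horizontal arrow $h$ as the value of $g_1$ on $(1^{\tn{v}},h)$, the square-naturality of $f_1$ being read off from the $2$-cells of $\tn{Cnr}(X)$. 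These assignments are patently mutually inverse once I know the cocone axioms correspond.

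Matching the cocone axioms is the crux, and the step I expect to be the main obstacle. The unit axiom $f_1s_0 = 1$ says $f_1$ is trivial on horizontal identities, corresponding to the unit cocone axiom for $\tn{Cnr}(X)$. For the multiplication axiom, the composite of corners $(v_1,h_1)$ and $(v_2,h_2)$ is $(\lambda_{h_1,v_2}v_1,\ h_2\rho_{h_1,v_2})$, formed using the chosen opcartesian square $\kappa_{h_1,v_2}$ supplied by the crossed internal structure — equivalently by the distributive law $\delta_X$ of Construction \ref{const:span-distlaw-from-crintcat}. Substituting $g_1(v,h)=f_1(h)\comp f_0(v)$, the requirement that $g_1$ of the composite corner equal $g_1(v_2,h_2)\comp g_1(v_1,h_1)$ unwinds — using functoriality of $f_0$ and the $X$-multiplication axiom $(f_1d_0)(f_1d_2)=f_1d_1$ to split $f_0(\lambda_{h_1,v_2}v_1)$ and $f_1(h_2\rho_{h_1,v_2})$ — into exactly the naturality of the $2$-cell $f_1$ at the square $\kappa_{h_1,v_2}$, which holds automatically. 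Conversely, checking that data reconstructed from a $\tn{Cnr}(X)$-cocone satisfies the $X$-cocone axioms is where the crossed coherences $\kappa_{1,g}=1_g$, $\kappa_{f,1}=1_f$ and the compatibility of the $\kappa_{f,g}$ with horizontal and vertical composition (the conditions that $s_0$ and $d_1$ are morphisms of split opfibrations) are indispensable: they force the reconstructed $f_0$ to be functorial on vertical arrows and $f_1$ to respect horizontal composition. Reconciling the single multiplication axiom governed by corner composition with the separate unit and multiplication axioms of an $X$-cocone is the delicate bookkeeping.

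Finally I would verify that cocone morphisms transport under $\Phi_Z$: a morphism of $X$-cocones is a $2$-cell $\phi : f_0 \to f_0'$ with $(\phi d_0)f_1 = f_1'(\phi d_1)$, and $\Phi_Z$ sends it to $\phi i_{X_0}$, the two compatibility equations being equivalent by the same reasoning with corners. Naturality of $\Phi_Z$ in $Z$ is immediate, all constructions being post-composition with a map $Z \to Z'$. Assembling these facts yields the isomorphism of cocone $2$-functors, and hence the theorem, including the claimed equivalence of the existence of the two codescent objects.
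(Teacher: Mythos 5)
Your proposal follows essentially the same route as the paper's proof: reduce to $\ca K = \Cat$ by a representable argument, establish a bijection of cocones natural in the vertex (with the same formulas in both directions, namely $g_1(v,h) = f_1(h)\comp f_0(v)$ and, conversely, evaluating a $\tn{Cnr}(X)$-cocone at corners of the form $(v,1^{\tn{h}})$ and $(1^{\tn{v}},h)$), invoke cotensors with $[1]$ for the two-dimensional aspect, and use the compatibility of the chosen opcartesian squares with horizontal and vertical composition to match the cocone axioms. This is the paper's argument in all essentials.
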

\begin{proof}
For $Y \in \ca K$, we shall exhibit a bijection between cocones for $\tn{Cnr}(X)$ with vertex $Y$, and cocones for $X$ with vertex $Y$, naturally in $Y$. Since $\ca K$ admits cotensors with $[1]$, the result follows from this. The construction of $\tn{Cnr}(X)$ from $X$ was done using limits in $\ca K$, so one has $\ca K(Z,\tn{Cnr}(X)) \iso \tn{Cnr}(\ca K(Z,X))$ naturally in $Z$. Thus by a representable argument it suffices to exhibit the required bijections in the case $\ca K = \Cat$.

Let $q_0:X_{00} \to Y$ and $q_1:q_0d_1 \to q_0d_0$ be the data of a cocone for $\tn{Cnr}(X)$ with vertex $Y$. We define a cocone $(q'_0,q'_1)$ for $X$ with vertex $Y$ as follows. For an object $a \in X_0$, we define $q'_0a = q_0a$. For an arrow $f:a \to b$ in $X_0$, which recall is a vertical arrow of the double category $X$, we define $q'_0f = (q_1)_{(f,1_b)}$. From
\[ q'_01_a = (q_1)_{(1_a,1_a)} = (q_1s_0)_a = 1_{q_0a} \]
where $a \in X_0$, and
\[ \begin{array}{rclcl} {q'_0(g)q'_0(f)} & = & (q_1)_{(g,1_c)}(q_1)_{(f,1_b)} & = & ((q_1d_0)(q_1d_2))_{(g,1_c),(f,1_b)} \\
& = & (q_1d_1)_{(g,1_c),(f,1_b)} & = & (q_1)_{(gf,1_c)} = q'_0(gf)  \end{array} \]
where $f:a \to b$ and $g:b \to c$ are in $X_0$, $q'_0$ is a functor. For a horizontal arrow $h:a \to b$ of the double category $X$, which recall is an object of $X_1$ such that $d_1h = a$ and $d_0h = b$, we define $(q'_1)_h = (q_1)_{(1_a,h)}$.

We now verify the naturality of $q'_1$ with respect to an arbitrary morphism $(u,v,w) : h \to k$ which recall is a square in the double category $X$ as depicted on the left in
\[ \xygraph{!{0;(3,0):(0,1)::} 
{\xybox{\xygraph{{a}="p0" [r] {b}="p1" [d] {d}="p2" [l] {c}="p3" "p0":"p1"^-{h}:"p2"^-{v}:@{<-}"p3"^-{k}:@{<-}"p0"^-{u}:@{}"p2"|-*{w}}}}
[r]
{\xybox{\xygraph{!{0;(1.5,0):(0,.6667)::} {q_0a}="p0" [r] {q_0b}="p1" [d] {q_0d}="p2" [l] {q_0c}="p3" "p0":"p1"^-{(q_1)_{(1_a,h)}}:"p2"^-{(q_1)_{(v,1_d)}}:@{<-}"p3"^-{(q_1)_{(1_c,k)}}:@{<-}"p0"^-{(q_1)_{(u,1_c)}}}}}
[r]
{\xybox{\xygraph{{a}="p0" [r] {b}="p1" [d] {d}="p2" [d] {d}="p3" [l] {c}="p4" [u] {e}="p5" "p0":"p1"^-{h}:"p2"^-{v}:"p3"^-{1_d}:@{<-}"p4"^-{k}:@{<-}"p5"^-{x}:@{<-}"p0"^-{\lambda} "p5":"p2"|-{\rho} "p0":@{}"p2"|-*{\kappa}:@{}"p4"|-*{y}}}}} \]
and the corresponding naturality square in $Y$, which we must exhibit as commutative, is given in the middle of this display. We factor the original square $w$ as on the right, where $\kappa$ is chosen opcartesian. Now
\[ (q_1)_{(1_c,k)}(q_1)_{(u,1_c)} = ((q_1d_0)(q_1d_2))_{((1_c,k),(u,1_c))} = (q_1d_1)_{((1_c,k),(u,1_c))} = (q_1)_{(u,k)} \]
and, from the explicit description of composition in $\tn{Cnr}(X)$, one also has
\[ (q_1)_{(v,1_d)}(q_1)_{(1_a,h)} = ((q_1d_0)(q_1d_2))_{((v,1_d),(1_a,h))} = (q_1d_1)_{((v,1_d),(1_a,h))} = (q_1)_{(\lambda,\rho)}. \]
Note that the data $(x,y)$ above is that of a 2-cell $(\lambda,\rho) \to (u,k)$ in the 2-category $\tn{Cnr}(X)$, which is the same thing as an arrow in the category $\tn{Cnr}(X)_1$. The naturality of $q_1$ with respect to this arrow says exactly that $(q_1)_{(u,k)} = (q_1)_{(\lambda,\rho)}$, whence the commutativity of the given naturality square for $q'_1$. The calculations
\[ (q'_1s_0)_a = (q'_1)_{1_a} = (q_1)_{(1_a,1_a)} = (q_1s_0)_a = 1_{q_0a}  \]
and
\[ \begin{array}{rcl} {(q'_1d_0)(q'_1d_2)_{(k,h)}} & = & {((q_1d_0)(q_1d_2))_{((1_b,k),(1_a,h))} = (q_1d_1)_{((1_b,k),(1_a,h))}} \\
& = & {(q_1)_{kh} = (q'_1d_1)_{(k,h)}} \end{array} \]
exhibit $(q'_0,q'_1)$ as a cocone for $X$ with vertex $Y$. It is straight forward to verify that the assignation $(q_0,q_1) \mapsto (q'_0,q'_1)$ just spelled out is natural in $Y$.

Conversely given $r_0:X_0 \to Y$ and $r_1:r_0d_1 \to r_0d_0$ forming a cocone for $X$ with vertex $Y$ we define a cocone $(r'_0,r'_1)$ for $\tn{Cnr}(X)$ as follows. First we take $r'_0 = r_0i_{X_0}$ so that $r'_0$ is just the effect of $r_0$ on objects. Given $(f,g) : a \to b$ in the 2-category $\tn{Cnr}(X)$ we must define the component $(r'_1)_{(f,g)}:r_0a \to r_0b$ in $Y$, and this we take to be the composite $(r_1)_gr_0(f)$. For a 2-cell $(u,v) : (f,g) \to (h,k)$ of $\tn{Cnr}(X)$, which is a square in $X$ as on the left in
\[ \xygraph{{\xybox{\xygraph{{x}="p0" [r] {b}="p1" [d] {b}="p2" [l] {y}="p3" "p0":"p1"^-{g}:"p2"^-{1_b}:@{<-}"p3"^-{k}:@{<-}"p0"^-{u}:@{}"p2"|-*{v}}}}
[r(3.5)]
{\xybox{\xygraph{!{0;(1.5,0):(0,.6667)::} {r_0a}="p0" [r] {r_0x}="p1" [r] {r_0b}="p2" [d] {r_0b}="p3" [l] {r_0y}="p4" [l] {r_0a}="p5" "p0":"p1"^-{r_0f}:"p2"^-{(r_1)_g}:"p3"^-{1_{r_0b}}:@{<-}"p4"^-{(r_1)_k}:@{<-}"p5"^-{r_0h}:@{<-}"p0"^-{1_{r_0a}} "p1":"p4"^{r_0u}}}}} \]
such that $uf = h$, the corresponding naturality for $r'_1$ is the commutativity of the outside of the diagram on the right, in which the left inner square commutes by the functoriality of $r_0$ and the right inner square commutes by the naturality of $r_1$ with respect to $(u,1_b,v)$. The calculation
\[ (r'_1s_0)_a = (r'_1)_{(1_a,1_a)} = (r_1s_0)_ar_0(1_a) = 1_{r_0a} \]
witnesses the cocone unit axiom for $(r'_0,r'_1)$. Given a composable pair of arrows $((h,k),(f,g))$ in $\tn{Cnr}(X)$ as indicated on the left in
\[ \xygraph{{\xybox{\xygraph{{a}="p0" [d] {x}="p1" [r] {b}="p2" [d] {y}="p3" ([l] {z}="p4",[r] {c}="p5") "p0":"p1"^-{f}:"p2"^-{g}:"p3"^-{h}:@{<-}"p4"^-{\rho}:@{<-}"p1"^-{\lambda}:@{}"p3"|-*{\kappa}:"p5"_-{k}}}}
[r(4)]
{\xybox{\xygraph{!{0;(1.5,0):(0,.6667)::} {r_0a}="p0" [r] {r_0x}="p1" [r] {r_0b}="p2" [d] {r_0y}="p3" [d] {r_0c}="p4" [ul] {r_0z}="p5" "p0":"p1"^-{r_0f}:"p2"^-{(r_1)_g}:"p3"^-{r_0h}:"p4"^-{(r_1)_k}:@{<-}"p5"^-{(r_1)_{k\rho}}:@{<-}"p0"^-{r_0(\lambda f)} "p1":"p5"^-{r_0\lambda}:"p3"^-{(r_1)_{\rho}}}}}} \]
in which $\kappa$ is a chosen opcartesian square. The commutativity of the outside of the diagram on the right, in which the square commutes by the naturality of $r_1$, the upper triangle commutes by the functoriality of $r_0$ and the lower triangle commutes since $(r_1d_0)(r_1d_2) = r_1d_1$, witnesses the equation $(r'_1d_0)(r'_1d_2) = r'_1d_1$ for the component $((h,k),(f,g))$.

It remains to show that the two processes just described are mutually inverse. Supposing $(r_0,r_1) = (q'_0,q'_1)$, $r'_0$ is clearly $q_0$, and for any morphism $(f,g)$ of $\tn{Cnr}(X)$
\[ (r'_1)_{(f,g)} = (q_1)_{(1,g)}(q_1)_{(f,1)} = (q_1)_{(f,g)}   \]
this last equality since $(q_1d_0)(q_1d_2) = q_1d_1$, and so $r'_1 = q_1$. Conversely supposing $(q_0,q_1) = (r'_0,r'_1)$, the functors $q'_0$ and $r_0$ clearly agree on objects, and
\[ \begin{array}{lccr} {q'_0f = (r'_1)_{(f,1)} = r_0f}
&&& {(q'_1)_h = (r'_1)_{(1,h)} = (r_1)_h} \end{array} \]
establishes that $q'_0 = r_0$ and $q'_1 = r_1$.
\end{proof}
This result as two important corollaries. First, putting it together with Theorem \ref{thm:codesc-2-cat} gives the following calculation of codescent for general crossed internal categories, though with some assumptions on $\ca E$.
\begin{cor}\label{cor:codesc-gen-cicat}
Let $\ca E$ be a category with pullbacks and pullback-stable reflexive coequalisers, $\ca K = \tn{Cat}(\ca E)$, and let $X$ be a crossed internal category in $\ca K$. Then the codescent object of $X$ exists and $\tn{CoDesc}(X) = \pi_{0*}(\tn{Cnr}(X))$.
\end{cor}
\begin{rem}\label{rem:codesc-cdc-explicit-cocone}
In the case where $\ca K = \Cat$ so that $X$ is a crossed double category, tracing through the constructions of Theorems \ref{thm:codesc-2-cat} and \ref{thm:codesc-cr-int-cat}, one has an explicit codescent cocone $(q_0,q_1)$
\[ \xygraph{!{0;(2,0):(0,1)::} {X_2}="p0" [r] {X_1}="p1" [r] {X_0}="p2" [r] {\pi_{0*}(\tn{Cnr}(X))}="p3"
"p2":"p1"|-{s_0} "p1":@<1.5ex>"p2"^-{d_1} "p1":@<-1.5ex>"p2"_-{d_0} "p0":@<1.5ex>"p1"^-{d_2} "p0":"p1"|-{d_1} "p0":@<-1.5ex>"p1"_-{d_0}
"p2":"p3"^-{q_0}} \]
in which $q_0$ is the identity on objects and sends a vertical arrow $f : x \to y$ of $X$ to the connected component of the corner $(f,1_y)$. The component of $q_1$ at a horizontal arrow $g : x \to y$ of $X$, is the connected component of the corner $(1_x,g)$. Since for a general morphism $h$ of the codescent object, there exists a corner $(f,g)$ of $X$ such that $q_0(f,g) = h$ and $(1,g) \comp (f,1) = (f,g)$ in $\Cnr(X)$, every such arrow $h$ factors as $h = h_1h_2$ where $h_2$ is in the image of $q_0$ and $h_1$ is a component of the natural transformation $q_1$.
\end{rem}
On the other hand, given a condition on the crossed internal category under consideration one obtains a codescent calculation valid for any $\ca E$ with pullbacks. Recall from Lemma \ref{lem:discrete-Cnr} that if a crossed internal category satisfies the condition that $d_0:X_1 \to X_0$ is a discrete opfibration, then $\Cnr(X)$ is componentwise discrete, and so its codescent object exists and is just $\Cnr(X)$ viewed as an object of $\ca K = \tn{Cat}(\ca E)$.
\begin{cor}\label{cor:codesc-gen-E}
Let $\ca E$ be a category with pullbacks, $\ca K = \tn{Cat}(\ca E)$, and let $X$ be a crossed internal category in $\ca K$ such that $d_0:X_1 \to X_0$ is a discrete opfibration. Then the codescent object of $X$ exists and $\tn{CoDesc}(X) = \tn{Cnr}(X)$.
\end{cor}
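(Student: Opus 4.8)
The plan is to assemble the corollary directly from the two results that immediately precede it, so that no new construction is required. First I would invoke Lemma \ref{lem:discrete-Cnr}: since by hypothesis $d_0 : X_1 \to X_0$ is a discrete opfibration, the internal 2-category $\tn{Cnr}(X)$ is componentwise discrete. Next I would observe, as recalled in Example \ref{ex:internal-nerve} (equivalently, as the componentwise-discrete special case of Proposition \ref{prop:Bourke-codesc-catead}), that a componentwise discrete category object in $\ca K = \tn{Cat}(\ca E)$ has a codescent object computed simply by reinterpreting the simplicial diagram as a single internal category. Hence $\tn{CoDesc}(\tn{Cnr}(X))$ exists and equals $\tn{Cnr}(X)$ regarded as an object of $\ca K$.

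Finally I would apply Theorem \ref{thm:codesc-cr-int-cat}, which asserts that for a crossed internal category $X$ in $\ca K$ the codescent object of $X$ exists if and only if that of $\tn{Cnr}(X)$ does, and that the two agree when they exist. Combining the previous step with this equivalence shows that $\tn{CoDesc}(X)$ exists and that $\tn{CoDesc}(X) = \tn{CoDesc}(\tn{Cnr}(X)) = \tn{Cnr}(X)$, which is exactly the claim.

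There is no substantive obstacle here: all the effort has been front-loaded into Lemma \ref{lem:discrete-Cnr} and Theorem \ref{thm:codesc-cr-int-cat}, and the corollary is a formal chaining of those facts. The only point deserving care is that the identification of $\tn{CoDesc}$ of a componentwise discrete category object with the object itself is valid for an \emph{arbitrary} $\ca E$ with pullbacks; but this is precisely the content of Example \ref{ex:internal-nerve}, which imposes no local cartesian closedness or coequaliser hypotheses on $\ca E$. This is exactly why, unlike Corollary \ref{cor:codesc-gen-cicat}, the present corollary needs no assumptions on $\ca E$ beyond the existence of pullbacks: the discrete-opfibration hypothesis forces $\tn{Cnr}(X)$ to be componentwise discrete, so one never has to apply $\pi_0$ and hence never relies on pullback-stable reflexive coequalisers.
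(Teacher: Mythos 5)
Your proposal is correct and follows exactly the route the paper takes: the paragraph preceding the corollary derives it by combining Lemma \ref{lem:discrete-Cnr} (componentwise discreteness of $\tn{Cnr}(X)$), the computation of codescent for componentwise discrete category objects from Example \ref{ex:internal-nerve}, and Theorem \ref{thm:codesc-cr-int-cat}. Your closing remark about why no hypotheses on $\ca E$ beyond pullbacks are needed is also the paper's stated point of contrast with Corollary \ref{cor:codesc-gen-cicat}.
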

\begin{exams}\label{exams:connes-cyclic}
Corollary \ref{cor:codesc-gen-E} applies in particular to the crossed simplicial groups of Loday-Fiedorowicz \cite{FieLoday-CrossedSimplicial}. In such cases the arrows of $\tn{CoDesc}(X)$ are literally formal composites of arrows of two special types, these coming from the vertical and horizontal arrows of the double category $X$. In the most famous example, $X$ is the double category whose category of objects and horizontal arrows is $\Delta$, the vertical arrows are given by cyclic permutations and $\CoDesc(X)$ is Connes' cyclic category.
\end{exams}

\section{Examples}
\label{sec:examples}

This final section is devoted to illustrative examples. In Section \ref{ssec:functions} we exhibit $\S$ as the free symmetric strict monoidal category containing a commutative monoid. Then in Section \ref{ssec:SigmaDelta} we explain how the categories $\Sigma\Delta$ and $\tn{B}\Delta$ of \cite{DayStreet-Substitudes} fit into our framework. The free braided strict monoidal category containing a commutative monoid is explained in Section \ref{ssec:Vines} using the work of Lavers \cite{Lavers-Vines}. In Section \ref{ssec:props} we calculate internal algebra classifiers for adjunctions of 2-monads arising from operad morphisms, and in Section \ref{ssec:braided-props} we look at the non-symmetric and braided analogues of this. For an operad morphism $F : S \to T$ there is in fact more than one naturally associated adjunction of 2-monads, and thus more than one internal algebra classifier one could consider. In Section \ref{ssec:Sigma-free-operads} we understand how these are related, and in Examples \ref{exams:Batanin-Berger-classifiers}, reconcile the internal algebra classifier calculations of \cite{BataninBerger-HtyThyOfAlgOfPolyMnd} with those of Section \ref{ssec:props}.

\subsection{Finite sets and functions.}
\label{ssec:functions}
As in Remark \ref{rem:bij-mon-factorisation} we regard $\Delta_+$ and $\tnb{S}(1) = \P$ as subcategories of the category $\S$, which itself is a skeleton of the category of finite sets. Ordinal sum is the tensor product for a symmetric strict monoidal structure on $\S$, and since $1$ is terminal in $\S$, it underlies a unique commutative monoid. We denote by $u : 1 \to \S$ the corresponding symmetric lax monoidal functor.
\begin{thm}\label{thm:free-smc-containing-a-commutative-monoid}
\cite{Davydov-QComAlg}
The commutative monoid $u : 1 \to \S$ exhibits $\S$ as the internal $\tnb{S}$-algebra classifier.
\end{thm}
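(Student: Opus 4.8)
The plan is to apply the general machinery built up in the paper, specialised to the identity adjunction of 2-monads on $\tnb{S}$, and to identify the resulting codescent object explicitly with $\S$. By Examples \ref{exams:opfam->cic-polycases} and \ref{exam:bar-of-Sm-as-cics}, taking $F$ to be the identity on $\tnb{S}$ satisfies the hypotheses of Proposition \ref{prop:crossed-int-cats-from-classifiers} and Corollary \ref{cor:alg-class-explicit}: the simplicial object $U^{\tnb{S}}\ca R_{\tnb{S}}1$ is a crossed double category, already described explicitly in Examples \ref{exam:dblcat-for-cmon-classifier} and \ref{exam:bar-of-Sm-as-cics}. Hence by Corollary \ref{cor:codesc-gen-cicat} the underlying object of the internal $\tnb{S}$-algebra classifier $\tnb{S}^{\tnb{S}}$ is $\pi_{0*}(\tn{Cnr}(U^{\tnb{S}}\ca R_{\tnb{S}}1))$, and the 2-category of corners in question is precisely the one computed in Example \ref{exam:cnr-bar-of-Sm}.

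First I would recall from Example \ref{exam:cnr-bar-of-Sm} that $\tn{Cnr}(U^{\tnb{S}}\ca R_{\tnb{S}}1)$ has the natural numbers as objects, arrows $m \to n$ given by pairs $(\rho,f)$ with $\rho \in \Sigma_m$ and $f : \underline m \to \underline n$ order preserving, composition via the bijective-monotone factorisation of Remark \ref{rem:bij-mon-factorisation}, and a (unique, when it exists) 2-cell $(\rho_1,f_1) \to (\rho_2,f_2)$ exactly when $f_2\rho_2 = f_1\rho_1$. Applying $\pi_{0*}$ identifies two parallel arrows precisely when they lie in the same connected component of the relevant hom-category, and by the explicit 2-cell condition this means $(\rho_1,f_1)$ and $(\rho_2,f_2)$ become equal in $\tnb{S}^{\tnb{S}}$ iff $f_1\rho_1 = f_2\rho_2$. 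The key step is then to produce the functor $\tn{Cnr}(U^{\tnb{S}}\ca R_{\tnb{S}}1) \to \S$ sending $(\rho,f)$ to the composite function $f\rho : \underline m \to \underline n$ in $\S$, to check it is well-defined on objects and respects composition using the bijective-monotone factorisation, and to verify that it identifies exactly the $\pi_0$-classes just described. The bijective-monotone factorisation (Remark \ref{rem:bij-mon-factorisation}) guarantees that every morphism of $\S$ arises as some $f\rho$ and that the induced map on connected components is a bijection on hom-sets, so $\pi_{0*}(\tn{Cnr}(U^{\tnb{S}}\ca R_{\tnb{S}}1)) \iso \S$ as categories.

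Having identified the underlying category, I would then check that the symmetric strict monoidal structure transported along this isomorphism, obtained from the $T$-algebra structure $a^{\tnb{S}}$ of Corollary \ref{cor:alg-class-explicit} via the crossed double functor $\sigma_{\tnb{S}}\ca R_{\tnb{S}}1$, coincides with ordinal sum on $\S$, and that the universal internal algebra $g^{\tnb{S}}_{\tnb{S}} : 1 \to \overline{F}\tnb{S}^{\tnb{S}}$ of Remark \ref{rem:universal-int-alg-explicit} is the commutative monoid $u : 1 \to \S$. This last verification is the step I expect to be the main obstacle: the tensor product comes from the $T$-algebra action computed as a codescent object in Corollary \ref{cor:alg-class-explicit}, so one must trace the explicit codescent cocone of Remark \ref{rem:codesc-cdc-explicit-cocone} through the opcartesian-square description of $\sigma_{\tnb{S}}\ca R_{\tnb{S}}1$ in Example \ref{exam:bar-of-Sm-as-cics}, and confirm that on objects it is addition of natural numbers while on morphisms it is the evident disjoint-union-of-functions operation, thereby recovering ordinal sum. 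Once the monoidal structure and the universal monoid are matched, Definition \ref{def:int-alg-classifier} gives the stated classifier property directly.
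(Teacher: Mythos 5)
Your proposal is correct and follows essentially the same route as the paper's own proof: identify the classifier's underlying object as $\pi_{0*}\Cnr(U^{\tnb{S}}\ca R_{\tnb{S}}1)$ via Corollary \ref{cor:codesc-gen-cicat}, use the bijective-monotone factorisation to match connected components of corners with functions, and then reconcile the algebra structure with ordinal sum and the universal internal algebra with $u$. The only difference is one of emphasis — the paper treats the verification that the codescent cocone lives in $\Algs{\tnb{S}}$ as a straightforward check of a single commuting square, whereas you flag it as the main obstacle — but the content is the same.
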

\begin{proof}
The underlying category of an internal $\tnb{S}$-algebra classifier is the codescent object of $U^{\tnb{S}}\ca R_{\tnb{S}}1$ by Proposition \ref{prop:dagger-F}, Corollary \ref{cor:alg-class-explicit} and Examples \ref{exams:easy-actions-operadic-examples}. By Corollary \ref{cor:codesc-gen-cicat} this codescent object is computed as $\pi_{0*}\Cnr(U^{\tnb{S}}\ca R_{\tnb{S}}1)$. An explicit description of $\Cnr(U^{\tnb{S}}\ca R_{\tnb{S}}1)$ was given in Example \ref{exam:cnr-bar-of-Sm}. From that explicit description, a pair of arrows $(\rho_1,f_1)$ and $(\rho_2,f_2) : m \to n$ are in the same connected component of the hom category $\Cnr(U^{\tnb{S}}\ca R_{\tnb{S}}1)(m,n)$ iff $f_1\rho_1 = f_2\rho_2$. So an object of the codescent object may be identified as a natural number, and the morphism corresponding to the equivalence class containing the arrow $(\rho,f)$ of $\Cnr(U^{\tnb{S}}\ca R_{\tnb{S}}1)$, can be identified with the composite function $f\rho$. Conversely for any function $h : \underline{m} \to \underline{n}$, take its bijective-monotone factorisation $h = f\rho$, and then the connected component of $(\rho,f)$ is identified with $h$. Thus one has
\[ \CoDesc(U^{\tnb{S}}\ca R_{\tnb{S}}1) = \mathbb{S}. \]
In view of Remark \ref{rem:codesc-cdc-explicit-cocone} the codescent cocone is $(i,\overline{i})$, where $i : \P \to \S$ is the inclusion, and the component of $\overline{i}$ at an order preserving function $f : \underline{m} \to \underline{n}$ is just $f$ viewed as an arrow of $\S$. To see that the given codescent cocone lives in $\Algs {\tnb{S}}$ it suffices by Corollary \ref{cor:alg-class-explicit} to show that
\[ \xygraph{!{0;(2,0):(0,.5)::} {\tnb{S}(\P)}="p0" [r] {\tnb{S}(\S)}="p1" [d] {\S}="p2" [l] {\P}="p3" "p0":"p1"^-{\tnb{S}(i)}:"p2"^-{+}:@{<-}"p3"^-{i}:@{<-}"p0"^-{\mu_1}} \]
commutes, and this is straight forward. Since $i\eta_1$ is the functor which picks out $1$, it is equal to $u$ as a functor, and so the result follows by Remark \ref{rem:universal-int-alg-explicit}.
\end{proof}
\begin{rem}\label{rem:Cnr->S-biequiv}
In terms of the notation of Remark \ref{rem:T-to-the-S}, Theorem \ref{thm:free-smc-containing-a-commutative-monoid} says that $\tnb{S}^{\tnb{S}} = \S$. Observe that the homs of the 2-category $\Cnr(U^{\tnb{S}}\ca R_{\tnb{S}}1)$ are equivalent to discrete categories, and that this is witnessed by the fact that the 2-functor
\[ \Cnr(U^{\tnb{S}}\ca R_{\tnb{S}}1) \longrightarrow \S \]
which sends any corner to its connected component (viewed as a function), is a biequivalence.
\end{rem}
\begin{rem}\label{rem:alternative-proof-via-Br}
We denote by $\pi : \tnb{B} \to \tnb{S}$ the morphism of 2-monads which comes from the process of taking the underlying permutation of a braid ($\pi_1$ is literally this). By Examples \ref{exams:M-Sm-Br-intalg} a $\tnb{B}$-algebra internal to a symmetric monoidal category $\ca V$ is also just a commutative monoid in $\ca V$. Thus one also has $\tnb{S}^{\tnb{B}} = \S$. One can see this directly in terms of codescent objects via a mild variation of the proof of Theorem \ref{thm:free-smc-containing-a-commutative-monoid}, in which the double category $U^{\tnb{S}}\ca R_{\tnb{S}}1$ is replaced by
\[ \xygraph{!{0;(3,0):(0,1)::}
{\tnb{S}(\tnb{B}(\B))}="p0" [r] {\tnb{S}(\B)}="p1" [r] {\P.}="p2"
"p2":"p1"|-{\tnb{S}(\eta^{\tnb{B}}_1)} "p1":@<1.5ex>"p2"^-{\mu^{\tnb{S}}_1\tnb{S}(\pi_1)} "p1":@<-1.5ex>"p2"_-{\tnb{S}(t_{\B})} "p0":@<1.5ex>"p1"^-{\mu^{\tnb{S}}_{\B}\tnb{S}(\pi_{\B})} "p0":"p1"|-{\tnb{S}(\mu^{\tnb{B}}_1)} "p0":@<-1.5ex>"p1"_-{\tnb{S}(\tnb{B}(t_{\B}))}} \]
As a result the corresponding 2-category of corners has more 2-cells, and so is no longer biequivalent to a category. However, these extra 2-cells are then quotiented away by the process of computing the codescent object.
\end{rem}

\subsection{$\Sigma\Delta$ and $\tn{B}\Delta$.}
\label{ssec:SigmaDelta}
Section 4 of \cite{DayStreet-Substitudes} an description of $\Sigma\Delta$, the free symmetric strict monoidal category containing a monoid, and of $\tn{B}\Delta$, the free braided strict monoidal category containing a monoid, was given. We recover $\Sigma\Delta$ by applying our machinery to the monad morphism $\iota : \tnb{M} \to \tnb{S}$ mentioned in Examples \ref{exams:M-Sm-Br-intalg}. We denote by $\ca S$ the double category
\[ \xygraph{!{0;(3,0):(0,1)::}
{\tnb{S}(\tnb{M}(\N))}="p0" [r] {\tnb{S}(\N)}="p1" [r] {\P.}="p2"
"p2":"p1"|-{\tnb{S}(\eta^{\tnb{M}}_1)} "p1":@<1.5ex>"p2"^-{\mu^{\tnb{S}}_1\tnb{M}(\iota_1)} "p1":@<-1.5ex>"p2"_-{\tnb{S}(t_{\N})} "p0":@<1.5ex>"p1"^-{\mu^{\tnb{S}}_{\N}\tnb{S}(\iota_{\N})} "p0":"p1"|-{\tnb{S}(\mu^{\tnb{M}}_1)} "p0":@<-1.5ex>"p1"_-{\tnb{S}(\tnb{M}(t_{\N}))}} \]
underlying the computation of $\tnb{S}^{\tnb{M}}$. In an evident way one can regard $\ca S$ as the sub-double category of $U^{\tnb{S}}\ca R_{\tnb{S}}1$ consisting of all the objects, vertical arrows and horizontal arrows, and just the chosen opcartesian squares. From this explicit description, $\Cnr(\ca S)$ is the underlying category of $\tn{Cnr}(U^{\tnb{S}}\ca R_{\tnb{S}}1)$.

As a functor between discrete categories $t_{\N}$ is a discrete opfibration. Since $\tnb{S}$ is an opfamilial 2-functor by Theorem 4.3.12 and Remark 4.3.13 of \cite{Weber-OpPoly2Mnd}, it preserves discrete opfibrations by Theorem 6.2 of \cite{Weber-Fam2fun}. Formally, the fact that $\Cnr(\ca S)$ turns out to be a category comes from the fact that $\tnb{S}(t_{\N})$ is a discrete opfibration and Lemma \ref{lem:discrete-Cnr}. By the explicit description of $\tn{Cnr}(U^{\tnb{S}}\ca R_{\tnb{S}}1)$, $\tn{Cnr}(\ca S)$ is exactly $\Sigma\Delta$ as described in \cite{DayStreet-Substitudes}. The ``distributive law for $\Delta$ over $\P$'' participating in \cite{DayStreet-Substitudes}'s description of $\Sigma\Delta$ comes from Proposition \ref{prop:span-dist-law-from-crintcat} applied to this case. To summarise, one has $\Sigma\Delta =  \tnb{S}^{\tnb{M}} = \Cnr(\ca S)$.

Similarly, taking instead the monad morphism $\iota' : \tnb{M} \to \tnb{B}$, the double category $\ca B$ underlying the computation of $\tnb{B}^{\tnb{M}}$ is
\[ \xygraph{!{0;(3,0):(0,1)::}
{\tnb{B}(\tnb{M}(\N))}="p0" [r] {\tnb{B}(\N)}="p1" [r] {\B.}="p2"
"p2":"p1"|-{\tnb{B}(\eta^{\tnb{M}}_1)} "p1":@<1.5ex>"p2"^-{\mu^{\tnb{B}}_1\tnb{M}(\iota'_1)} "p1":@<-1.5ex>"p2"_-{\tnb{B}(t_{\N})} "p0":@<1.5ex>"p1"^-{\mu^{\tnb{B}}_{\N}\tnb{B}(\iota'_{\N})} "p0":"p1"|-{\tnb{B}(\mu^{\tnb{M}}_1)} "p0":@<-1.5ex>"p1"_-{\tnb{B}(\tnb{M}(t_{\N}))}} \]
As with case of $\iota$, $\tnb{B}(t_{\N})$ is a discrete opfibration, making $\Cnr(\ca B)$ a category. This category is exactly $\tn{B}\Delta$, and so we have $\tn{B}\Delta = \tnb{B}^{\tnb{M}} = \Cnr(\ca B)$.

\subsection{Vines.}
\label{ssec:Vines}
We turn now to a discussion of the free braided strict monoidal category containing a commutative monoid. A computation of this within our setting proceeds as with Theorem \ref{thm:free-smc-containing-a-commutative-monoid}, except with $\tnb{B}$ replacing $\tnb{S}$ throughout. Thus $\tnb{B}^{\tnb{B}}$ is a category with natural numbers as objects, and a morphism is some braided analogue of a function. Such morphisms have been studied in detail by Lavers \cite{Lavers-Vines} and are called \emph{vines}.

Vines generalise braids in that the strings are allowed to merge.
\[ \xygraph{!{0;(.85,0):(0,.7)::}
{\bullet}="t1" [r(2)] {\bullet}="t2" [r(2)] {\bullet}="t3" "t2" [dr] *{}="m"
"t1" [d(4)] [r] {\bullet}="b1" [r(2)] {\bullet}="b2"
"t1"-@`{"t1"+(-1,-2),"m"+(2,-1)}"b1"|(.89)*=<3pt>{}
"t2"-@`{"t2"+(0,-.5)}"m"
"t3"-@`{"t3"+(0,-.5),"m"+(-.5,.1),"m"+(-.05,.05)}"m"
"m"-@`{"m"+(.5,-.5),"m"+(-4,-1)}"b2"|(.52)*=<3pt>{}} \]
Formally, given natural numbers $m$ and $n$ one fixes $m$ distinct points in $\R^3$ along the line $l_I = \{(x,y,z) \, \, : \, \, y = 0, \, z=1\}$ and $n$ distinct points along $l_O = \{(x,y,z) \, \, : \, \, y = 0, \, z=0\}$. The $m$ distinguished points on $l_I$ are linearly ordered according to their $x$-coordinate, and similarly for the $n$ distinguished points of $l_O$. Writing $\underline{m} \cdot \mathbb{I}$ for the coproduct of $m$ copies of the unit interval $\mathbb{I}$, a vine $m \to n$ is an equivalence class of piecewise linear continuous functions $(v_1,...,v_m) : \underline{m} \cdot \mathbb{I} \to \R^3$ such that
\begin{enumerate}
\item For all $i$, $v_i(0)$ is the $i$-th distinguished point on $l_I$.
\item For all $i$, $v_i(1)$ is one of distinguished points on $l_O$.
\item The paths $v_i$ descend from height $z=1$ to height $z=0$ at constant speed.
\item For $i < j$, $v_i(t) = v_j(t)$ implies $v_i(s) = v_j(s)$ for all $s \geq t$.
\end{enumerate}
The equivalence relation is generated by the appropriate notion of homotopy, that is, deforming the path $v_i$ to $v'_i$ by a homotopy whose image is disjoint from those of the $v_j$ for $j \neq i$, exhibits $(v_1,...,v_i,....,v_n)$ and $(v_1,...,v'_i,....,v_n)$ as equivalent vines. The precise definition in the case $m = n$ is given as Definition 1 of \cite{Lavers-Vines}, and it is trivial to adapt this to give a definition of a vine $m \to n$ in general. We follow the aesthetic choice made in \cite{Lavers-Vines} and draw our vines as though they are smooth, which is justified by the fact that any smooth path can be approximated arbitrarily well by piecewise-linear ones.

Vines can be composed vertically giving a category $\mathbb{V}$ whose objects are natural numbers and morphisms are vines. Every vine $v : m \to n$ has an underlying function sending $i \in \underline{m}$ to the distinguished point $v_i(1)$ of $l_O$, this prescription being independent of the piecewise-linear continuous function $(v_1,...,v_m) : \underline{m} \cdot \mathbb{I} \to \R^3$ chosen to represent $v$. Vines whose underlying functions are bijective are exactly braids, and thus the maximal subgroupoid of $\mathbb{V}$ is exactly $\B$. Ordinal sum and horizontal disjoint union of vines is the tensor product for a braided strict monoidal structure on $\mathbb{V}$, the unit of which is $0$.

Given $(v_1,...,v_m) : \underline{m} \cdot \mathbb{I} \to \R^3$ which respresents a vine $v : m \to n$, we call the constituent paths $v_i$ \emph{strings}, and say for $i < j$ that the strings $v_i$ and $v_j$ \emph{merge at time} $t$ when $t \in \mathbb{I}$ is the least such that $v_i(t) = v_j(t)$. The equivalence relation which participates in the definition of vines enables one to move the $t$ at which two strings merge, to be as early or as late as possible, without changing the vine. If for instance consecutive strings $v_i,v_{i+1},...,v_j$ cross over each other before all merging, then by merging as early as possible these crossings can be ignored to produce a simpler representative of the same vine. For example
\[ \xygraph{{\xybox{\xygraph{!{0;(1,0):(0,1.5)::}
{\bullet}="t1" [r] {\bullet}="t2" [l(.5)d] {\bullet}="b"
"t1"-@`{"t2"+(1,-.25),"t1"+(-1.5,-.5),"t2"+(.5,-.75)}"b"|(.052)*=<4pt>{}|(.7)*=<4pt>{}
"t2"-@`{"t1"+(-1,-.25),"t2"+(1.5,-.5),"t1"+(-.5,-.75)}"b"|(.3575)*=<4pt>{}}}}
[r(3)]
{\xybox{\xygraph{!{0;(1,0):(0,1.5)::}
{\bullet}="t1" [r] {\bullet}="t2" [l(.5)d] {\bullet}="b"
"t1"-@/_{.5pc}/"b"
"t2"-@/^{.5pc}/"b"}}}} \]
represent the same vine. Thus in particular, $1$ is a terminal object in $\mathbb{V}$, and so underlies a commutative monoid. We denote by $u : 1 \to \mathbb{V}$ the corresponding braided lax monoidal functor.

An order preserving function can be regarded as a vine whose underlying function is order preserving, and which has no crossings. In this way $\Delta_+$ is a subcategory of $\mathbb{V}$. Given a representative $(v_1,...,v_m)$ of an arbitrary vine $v : m \to n$, one can leave merging as late as possible
\[ \xygraph{*{\xybox{\xygraph{!{0;(.65,0):(0,3)::} 
{\bullet}="t1" [r] {\bullet}="t2" [r] {\bullet}="t3" [r] {\bullet}="t4" "t1" [l(.5)d]
{\bullet}="b1" [r] {\bullet}="b2" [r] {\bullet}="b3" [r] {\bullet}="b4" [r] {\bullet}="b5"
"t2" [d(.2)] *{}="m1" "b3" [u(.25)r(.5)] *{}="m2"
"t1"-@`{"m1"+(0,.04)}"m1"
"t3"-@`{"m1"+(0,.05)}"m1"|(.28)*=<3pt>{}
"m1"-@`{"m1"+(0,-.04),"t3"+(.5,-.4),"t3"+(.5,-.5),"t2"+(-1,-.65)}"b2"|(.25)*=<3pt>{}|(.84)*=<3pt>{}
"t2"-@`{"t3"+(.4,-.2),"t3"+(.4,-.25),"m1"+(-3,-.3)}"m2"|(.4725)*=<4pt>{}
"t4"-@`{"t4"+(0,-.3),"t2"+(-1.5,-.45)}"m2"|(.8)*=<3pt>{}
"m2"-@`{"m2"+(.5,-.05)}"b4"
"t1" [l(1.5)] {4} :[d] {5}_-{v}}}}
[r(5)]
*!(0,.6){\xybox{\xygraph{!{0;(.65,0):(0,3)::}
{\bullet}="t1" [r] {\bullet}="t2" [r] {\bullet}="t3" [r] {\bullet}="t4" "t1" [d(.6667)]
{\bullet}="m1" [r] {\bullet}="m2" [r] {\bullet}="m3" [r] {\bullet}="m4" "m1" [l(.5)d(.3333)]
{\bullet}="b1" [r] {\bullet}="b2" [r] {\bullet}="b3" [r] {\bullet}="b4" [r] {\bullet}="b5"
"t1"-@`{"t1"+(-.5,-.1),"t3"+(0.5,-.325),"t3"+(0.5,-.35),"m1"+(0,.1)}"m1"|(.36)*=<3pt>{}|(.735)*=<3pt>{}
"t2"-@`{"t3"+(0,-.12),"t3"+(0,-.14),"t1"+(0,-.4),"m3"+(.3,.2)}"m3"|(.38)*=<2pt>{}|(.44)*=<2pt>{}
"t3"-@`{"t2"+(-.5,-.175),"t4"+(0.5,-.36),"t4"+(0.5,-.38),"m2"+(0,.1)}"m2"|(.045)*=<3pt>{}|(.232)*=<3pt>{}|(.81)*=<3pt>{}
"t4"-@`{"t2"+(.5,-.33),"t2"+(.5,-.35)}"m4"|(.61)*=<2pt>{}|(.76)*=<2pt>{}
"b2"(-"m1",-"m2") "b4"(-"m3",-"m4")
"t1" [l(1.5)] {4} :[d(.6667)] {4}_-{\rho} :[d(.3333)] {5}_-{f}}}}} \]
and then decompose $v$ as $v = f\rho$ where $f$ is an order preserving function and $\rho$ is a braid. This factorisation is unique with the extra condition that the restriction of the braid $\rho$ to each fibre of $f$ is trivial. Lavers made this heuristic argument precise in the proof of Proposition 8 of \cite{Lavers-Vines}, and so we have
\begin{lem}\label{lem:bij-mon-V}
\cite{Lavers-Vines}
Every vine $v : m \to n$ factors uniquely as $v = f\rho$ where $f : m \to n$ is an order preserving function, $\rho \in \tn{Br}_m$, and $\rho$'s restriction to each fibre of $f$ is an identity braid.
\end{lem}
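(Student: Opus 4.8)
The plan is to reduce the factorisation of a vine to the bijective--monotone factorisation of its underlying function in $\S$, recalled in Remark \ref{rem:bij-mon-factorisation}, and then to lift this back to $\mathbb{V}$ using the geometric freedom in the equivalence relation defining vines. First I would pass to the underlying function $\underline{v} : \underline{m} \to \underline{n}$ of $v$, which is a morphism of $\S$, and take its bijective--monotone factorisation $\underline{v} = g\sigma$, where $g \in \Delta_+$ and $\sigma \in \P$ is order preserving on the fibres of $g$. The order preserving function $g$ is regarded as a crossing--free vine $f : m \to n$, and the remaining task is to produce a braid $\rho \in \tn{Br}_m$ lying over $\sigma$, trivial on each fibre of $f$, with $v = f\rho$ in $\mathbb{V}$.

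For existence I would make the heuristic preceding the lemma precise, following the proof of Proposition 8 of \cite{Lavers-Vines}. Given a piecewise--linear representative $(v_1,\dots,v_m)$ of $v$, one delays all merges as late as possible: using the defining homotopies one arranges a height $z=c$ above which no two strings have yet met and below which strings only merge and never cross. The portion above $c$ is then a genuine braid, which I take to be $\rho$, its endpoints on the line $z=c$ being grouped, in the order dictated by $g$, according to the output they eventually reach; the portion below $c$ is crossing--free and realises precisely the order preserving vine $f$. Since within each group the strings descend monotonically without crossing before merging, the restriction of $\rho$ to each fibre of $f$ is an identity braid, and by construction the underlying permutation of $\rho$ is $\sigma$, so $v = f\rho$ has the required form.

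For uniqueness, suppose $v = f\rho = f'\rho'$ with both factorisations of the prescribed form. Passing to underlying functions yields two bijective--monotone factorisations of $\underline{v}$ in $\S$, so by the uniqueness in Remark \ref{rem:bij-mon-factorisation} one gets $g = g'$ and that $\rho,\rho'$ share the underlying permutation $\sigma$; as a crossing--free order preserving vine is determined by its underlying monotone function, also $f = f'$. It then remains to show $\rho = \rho'$ as braids, and this is the step I expect to be the main obstacle. The essential point is that postcomposing with the merging vine $f$ forgets exactly the braiding taking place within the fibres of $f$: two braids over $\sigma$ give the same vine after merging via $f$ if and only if they differ by a braid supported within the fibres of $f$, which is absorbed by the merging. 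Thus the only ambiguity in $\rho$ is such a fibrewise braid, and the requirement that both $\rho$ and $\rho'$ restrict to the identity on each fibre of $f$ normalises this ambiguity away, forcing $\rho = \rho'$. Making the phrase ``supported within the fibres'' precise---isolating the relevant subgroup of $\tn{Br}_m$ and verifying that it is exactly what is killed by merging---is the technical heart, and is where I would lean most directly on the analysis of \cite{Lavers-Vines}.
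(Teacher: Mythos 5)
Your proposal matches the paper's treatment: the paper likewise gives only the heuristic of delaying all merges as late as possible, so that a vine splits into a braid followed by a crossing-free order-preserving vine, and defers the rigorous argument to Proposition 8 of \cite{Lavers-Vines} --- exactly as you do for the technical heart of the uniqueness step. The additional detail you supply, reducing uniqueness of the underlying combinatorial data to the bijective-monotone factorisation in $\S$ of Remark \ref{rem:bij-mon-factorisation}, is consistent with and slightly more explicit than what the paper records.
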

This result is the analogue of the bijective-monotone factorisation of Remark \ref{rem:bij-mon-factorisation}, for the category $\mathbb{V}$. We are thus in a position to give the main result of this section, that the category $\mathbb{V}$ is the free braided strict monoidal category containing a commutative monoid. While this result is known to experts, see \cite{Davydov-QComAlg} for instance, I know of no place in the literature where a proof is given.
\begin{thm}\label{thm:free-bmc-containing-a-commutative-monoid}
The commutative monoid $u : 1 \to \mathbb{V}$ exhibits $\mathbb{V} = \tnb{B}^{\tnb{B}}$.
\end{thm}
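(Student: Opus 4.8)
The plan is to run the argument of Theorem \ref{thm:free-smc-containing-a-commutative-monoid} essentially verbatim, replacing $\tnb{S}$ by $\tnb{B}$, the permutation category $\P$ by the braid category $\B$, the symmetric groups $\Sigma_n$ by the braid groups $\tn{Br}_n$, and the category $\S$ by $\mathbb{V}$. First I would note that all the enabling hypotheses transfer: by Examples \ref{exams:opfam->cic-polycases} the 2-monad $\tnb{B}$ is opfamilial and preserves codescent objects, so Proposition \ref{prop:dagger-F}, Corollary \ref{cor:alg-class-explicit} and Proposition \ref{prop:crossed-int-cats-from-classifiers} all apply (the latter exhibiting $U^{\tnb{B}}\ca R_{\tnb{B}}1$ as a crossed double category, just as in Example \ref{exam:bar-of-Sm-as-cics} for the symmetric case). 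Hence the underlying category of $\tnb{B}^{\tnb{B}}$ is $\CoDesc(U^{\tnb{B}}\ca R_{\tnb{B}}1)$, and by Corollary \ref{cor:codesc-gen-cicat} this is computed as $\pi_{0*}\Cnr(U^{\tnb{B}}\ca R_{\tnb{B}}1)$.

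Next I would record the braided analogue of Example \ref{exam:cnr-bar-of-Sm}. The crossed double category $U^{\tnb{B}}\ca R_{\tnb{B}}1$ admits the same description as $U^{\tnb{S}}\ca R_{\tnb{S}}1$ (Examples \ref{exam:dblcat-for-cmon-classifier} and \ref{exam:bar-of-Sm-as-cics}), but with $\B$ in place of $\P$: its objects and horizontal arrows form $\Delta_+$, its vertical arrows are braids, and — using the way $\tnb{B}$ preserves split opfibrations, exactly as in Example \ref{exam:bar-of-Sm-as-cics} via Lemma 6.3 of \cite{Weber-Fam2fun} — the chosen opcartesian squares are those whose vertical braid is trivial on each fibre of the relevant monotone map. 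Consequently $\Cnr(U^{\tnb{B}}\ca R_{\tnb{B}}1)$ has natural numbers as objects, an arrow $m \to n$ is a pair $(\rho, f)$ with $\rho \in \tn{Br}_m$ and $f : \underline{m} \to \underline{n}$ order-preserving, and the composite of $(\rho_1, f_1)$ and $(\rho_2, f_2)$ is $(\rho_3\rho_1, f_2f_3)$, where $\rho_2 f_1 = f_3\rho_3$ is now the bijective-monotone factorisation of vines provided by Lemma \ref{lem:bij-mon-V}. A 2-cell $(\rho_1, f_1) \to (\rho_2, f_2)$ exists, and is then unique, iff $f_1\rho_1 = f_2\rho_2$ in $\mathbb{V}$.

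Then I would compute connected components. The assignment $(\rho, f) \mapsto f\rho$ sends a corner to a vine, identifies two corners precisely when they lie in the same connected component of the relevant hom-category, and is a bijection on connected components by the uniqueness in Lemma \ref{lem:bij-mon-V}. A short check using $(f_2 f_3)(\rho_3\rho_1) = (f_2\rho_2)(f_1\rho_1)$ shows this assignment respects composition, so $\pi_{0*}\Cnr(U^{\tnb{B}}\ca R_{\tnb{B}}1) = \mathbb{V}$ as a category, with codescent cocone $(i, \overline{i})$ for $i : \B \to \mathbb{V}$ the inclusion. Finally, as in the symmetric case, I would verify that this cocone lives in $\Algs{\tnb{B}}$ by checking the commutativity of the square relating $\tnb{B}(\B) \xrightarrow{\tnb{B}(i)} \tnb{B}(\mathbb{V})$, the braided tensor (ordinal sum and horizontal disjoint union) $\tnb{B}(\mathbb{V}) \to \mathbb{V}$, $\mu_1$ and $i$, and then conclude by Remark \ref{rem:universal-int-alg-explicit} that $u = i\eta_1$ exhibits $\mathbb{V}$ as $\tnb{B}^{\tnb{B}}$.

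The hard part will be the geometric bridge in the second and third steps: matching the abstractly-defined composition in $\Cnr(U^{\tnb{B}}\ca R_{\tnb{B}}1)$ — built from chosen opcartesian squares and the braided factorisation — with honest vertical composition of vines, and in particular confirming that the factorisation condition ``trivial on each fibre'' produced by the opfibration structure of $\tnb{B}(t_{\B})$ coincides exactly with the uniqueness condition of Lemma \ref{lem:bij-mon-V}. Everything else is a transcription of the symmetric argument, with Lavers' factorisation result doing for $\mathbb{V}$ precisely what the bijective-monotone factorisation of Remark \ref{rem:bij-mon-factorisation} does for $\S$.
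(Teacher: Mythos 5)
Your proposal is correct and follows essentially the same route as the paper, which likewise adapts the proof of Theorem \ref{thm:free-smc-containing-a-commutative-monoid} by replacing permutations with braids, describes the squares and chosen opcartesian squares of $U^{\tnb{B}}\ca R_{\tnb{B}}1$ via restriction of braids to fibres, and invokes Lemma \ref{lem:bij-mon-V} to identify the codescent object with $\mathbb{V}$. The one point you flag as the ``hard part'' --- matching the opfibration-theoretic factorisation with Lavers' vine factorisation --- is exactly the bridge the paper relies on, observing that a square exists iff $\rho f = g\rho'$ holds in $\mathbb{V}$ and is chosen opcartesian iff the fibre-restricted braids are identities.
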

\begin{proof}
We adapt the proof of Theorem \ref{thm:free-smc-containing-a-commutative-monoid} replacing permutations by braids throughout. The double category $U^{\tnb{B}}\ca R_{\tnb{B}}1$ has the following explicit description. The category of objects and horizontal maps is $\Delta_+$ and that of objects and vertical maps is $\B$. A square is determined uniquely by its boundary, and will exist with boundary
\[ \xygraph{{m}="p0" [r] {n}="p1" [d] {n}="p2" [l] {m}="p3" "p0":"p1"^-{f}:"p2"^-{\rho}:@{<-}"p3"^-{g}:@{<-}"p0"^-{\rho'}} \]
iff $\rho'=\rho(\rho'_k)_{1{\leq}k{\leq}n}$, where $\rho'_k$ is the restriction of the braid $\rho'$ to the fibre $g^{-1}\{k\}$, that is, $\rho'_k$ is the braid obtained from $\rho'$ by ignoring all but the strings which finish in $g^{-1}\{k\}$. Equivalently, a square of the above form exists in $U^{\tnb{B}}\ca R_{\tnb{B}}1$ iff $\rho f = g \rho'$ in $\mathbb{V}$. It will chosen opcartesian iff the $\rho'_k$ are all identity braids. From this explicit description of $U^{\tnb{B}}\ca R_{\tnb{B}}1$, a 2-cell $(\rho_1,f_1) \to (\rho_2,f_2)$ of $\tn{Cnr}(U^{\tnb{B}}\ca R_{\tnb{B}}1)$ is invertible and unique if it exists, and will exist iff $f_1\rho_1 = f_2\rho_2$ in $\mathbb{V}$. Thus a morphism $m \to n$ of $\CoDesc(U^{\tnb{B}}\ca R_{\tnb{B}}1)$ can be identified with the vine $f\rho$ obtained from any element $(f,\rho)$ of the corresponding connected component in $\tn{Cnr}(U^{\tnb{B}}\ca R_{\tnb{B}}1)$. By Lemma \ref{lem:bij-mon-V} every vine $m \to n$ arises in this way, giving the equation
\begin{equation}\label{eq:vine-codesc}
\CoDesc(U^{\tnb{B}}\ca R_{\tnb{B}}1) = \mathbb{V}
\end{equation}
of categories. To reconcile the braided strict monoidal category structures and the internal commutative monoids, one proceeds as in the proof of Theorem \ref{thm:free-smc-containing-a-commutative-monoid}.
\end{proof}
\begin{rem}\label{rem:Lavers-strucure-Theorem}
Restricting to endomorphism monoids the equation (\ref{eq:vine-codesc}) is exactly Theorem 2 of \cite{Lavers-Vines}.
\end{rem}
\begin{rem}\label{rem:Cnr-B-Bar-biequiv-cat}
As in the situation of Theorem \ref{thm:free-smc-containing-a-commutative-monoid}, the identity on objects 2-functor $\tn{Cnr}(U^{\tnb{B}}\ca R_{\tnb{B}}) \to \mathbb{V}$ given on morphisms by $(\rho,f) \mapsto f\rho$ is a biequivalence.
\end{rem}

\subsection{PROP's from operads.}
\label{ssec:props}
Recall that in Section \ref{ssec:explicit-adj2mnd-op-morphism} we described how a morphism $F : S \to T$ of operads with underlying object function $f : I \to J$, gives rise to an adjunction of 2-monads, which we denote as $F : (\Cat/I,S) \to (\Cat/J,T)$, in which the adjunction $F_! \ladj F^*$ is $\Sigma_f \ladj \Delta_f$. We now give an explicit description of the associated internal algebra classifier $T^S$ relative to $F$.

In the general context of a morphism of operads $F : S \to T$ with underlying object function $f : I \to J$, we begin by describing explicitly the strict $T$-algebra $F \downarrow (-)$ and lax $S$-morphism $u_F : 1 \to \overline{F}(F \downarrow (-))$ in Definitions \ref{defn:F-over-j}, \ref{defn:F-over-blank} and \ref{defn:lax-nat-u-F} below. This uses the explicit description of the algebras and algebra morphisms of the polynomial 2-monads arising from operads given in \cite{Weber-OpPoly2Mnd} section 4. In Theorem \ref{thm:intalg-classifier-from-operad-morphism} we then prove that $u_F$ exhibits $F \downarrow (-) = T^S$ by computing the codescent object of $\ca R_F$ using our techniques. This computation generalises that of Theorem \ref{thm:free-smc-containing-a-commutative-monoid}, which is the case where $F$ is $1_{\Com}$.
\begin{defn}\label{defn:F-over-j}
Given a morphism of operads $F : S \to T$ with underlying object function $f : I \to J$ and $j \in J$, we shall now describe the category $F \downarrow j$. An object of $F \downarrow j$ is a pair $((i_k)_{1{\leq}k{\leq}n},\alpha)$, where $i_k \in I$ for $1 \leq k \leq n$ and $\alpha : (fi_k)_k \to j$ is in $T$. A general morphism of $F \downarrow j$ is of the form
\[ (h,(\beta_{k_2})_{k_2}) : ((i_{1,k_1})_{1{\leq}k_1{\leq}n_1},\alpha_1) \longrightarrow ((i_{2,k_2})_{1{\leq}k_2{\leq}n_2},\alpha_2) \]
where $h : \underline{n}_1 \to \underline{n}_2$ is a function, and for $1 \leq k_2 \leq n_2$, $\beta_{k_2} : (i_{1,k_1})_{hk_1=k_2} \to i_{k_2}$ is in $S$. We call $h$ the \emph{indexing function} and the $\beta_{k_2}$ the \emph{decorating operations} of the given morphism. This data is required to satisfy the \emph{commutativity condition} that $\alpha_2(F\beta_{k_2})_{k_2}\rho_h = \alpha_1$, where $h = \phi_h\rho_h$ is the bijective-monotone factorisation of $h$. When $h$ and the $\beta_{k_2}$ are all identities, $(h,(\beta_{k_2})_{k_2})$ is an identity in $F \downarrow j$. The composite of 
\[ \xygraph{!{0;(4,0):(0,1)::}
{((i_{1,k_1})_{1{\leq}k_1{\leq}n_1},\alpha_1)}="p0" [r] {((i_{2,k_2})_{1{\leq}k_2{\leq}n_2},\alpha_2)}="p1" [r] {((i_{3,k_3})_{1{\leq}k_3{\leq}n_3},\alpha_3)}="p2"
"p0":"p1"^-{(h_1,(\beta_{1,k_2})_{k_2})}:"p2"^-{(h_2,(\beta_{2,k_3})_{k_3})}} \]
in $F \downarrow j$ is $(h_2h_1, (\beta_{2,k_3}(\beta_{1,k_2})_{h_2k_2=k_3})_{k_3})$.
\end{defn}
The well-definedness of composition and its unit and associative laws are easily verified. To begin to obtain an intuition for the morphisms of $F \downarrow j$, consider first the extreme cases listed in
\begin{exams}\label{exams:F-over-j}
\begin{enumerate}
\item When $S$ and $T$ are categories so that $F$ is just a functor, $F \downarrow j$ is the usual comma category, hence the notation ``$F \downarrow j$''. In this case the data of an arrow $(i_1,\alpha_2) \to (i_2,\alpha_2)$ the indexing function $h$ is uniquely determined, and so such an arrow is just an arrow $\beta : i_1 \to i_2$ of $S$. The commutativity condition says that the triangle
\[ \xygraph{{fi_1}="p0" [r(2)] {fi_2}="p1" [dl] {j}="p2" "p0":"p1"^-{F\beta}:"p2"^-{\alpha_2}:@{<-}"p0"^-{\alpha_1}} \]
commutes.
\item When $S = T = \Com$ so that $F$ and $j$ are uniquely determined, $F \downarrow j = \mathbb{S}$. In this case the $\alpha$'s and $\beta$'s participating in the definition of an arrow of $F \downarrow j$ in Definition \ref{defn:F-over-j} are uniquely determined, and the commutativity condition is vacuous.
\end{enumerate}
\end{exams}
Using trees to denote operadic operations, an illustration of the data of a more general sort of arrow of $F \downarrow j$ is given in
\[ \xygraph{
{\xybox{\xygraph{!{0;(.5,0):(0,1.5)::}
{\scriptstyle{\alpha_1}} *\xycircle<6pt,5pt>{-} (-[d] {\scriptstyle{j}},-[l(2.5)u] {\scriptstyle{fi_{11}}},-[l(1.5)u] {\scriptstyle{fi_{12}}},-[l(.5)u] {\scriptstyle{fi_{13}}},-[r(.5)u] {\scriptstyle{fi_{14}}},-[r(1.5)u] {\scriptstyle{fi_{15}}},-[r(2.5)u] {\scriptstyle{fi_{16}}})}}}
:[r(7)]
{\xybox{\xygraph{!{0;(.5,0):(0,1.5)::}
{\scriptstyle{\alpha_2}} *\xycircle<6pt,5pt>{-} (-[d] {\scriptstyle{j}},-[l(1.5)u] {\scriptstyle{fi_{21}}},-[l(.5)u] {\scriptstyle{fi_{22}}},-[r(.5)u] {\scriptstyle{fi_{23}}},-[r(1.5)u] {\scriptstyle{fi_{24}}})}}}
[l(3.25)u(1)]
{\xybox{\xygraph{!{0;(.5,0):(0,1.5)::}
{\scriptstyle{\beta_1}} *\xycircle<6pt,5pt>{-}="p0" [r(1.5)]
{\scriptstyle{\beta_2}} *\xycircle<6pt,5pt>{-}="p1" [r(1.5)]
{\scriptstyle{\beta_3}} *\xycircle<6pt,5pt>{-}="p2" [r(1.5)]
{\scriptstyle{\beta_4}} *\xycircle<6pt,5pt>{-}="p3"
"p0" [l(.8)u] {\scriptstyle{i_{11}}}="q0" [r(1.5)] {\scriptstyle{i_{12}}}="q1" [r(1.5)] {\scriptstyle{i_{13}}}="q2" [r(1.5)] {\scriptstyle{i_{14}}}="q3" [r(1.5)] {\scriptstyle{i_{15}}}="q4" [r(1.5)] {\scriptstyle{i_{16}}}="q5"
"p0" -[d] {\scriptstyle{i_{21}}}
"p1" (-[d] {\scriptstyle{i_{22}}},-"q0",-"q3")
"p2" (-[d] {\scriptstyle{i_{23}}},-"q1")
"p3" (-[d] {\scriptstyle{i_{24}}},-"q2",-"q4",-"q5")}}}} \]
in which the underlying function $h : \underline{6} \to \underline{4}$ is given by $1,4 \mapsto 2$, $2 \mapsto 3$ and $3,5,6 \mapsto 4$; and the constituent morphisms of $S$ are $\beta_1 : () \to i_{21}$, $\beta_2 : (i_{11},i_{14}) \to i_{22}$, $\beta_3 : i_{12} \to i_{23}$ and $\beta_4 : (i_{13},i_{15},i_{16}) \to i_{24}$. In this way a morphism of $F \downarrow j$ is a function between finite sets labelled by the operations of $S$. The commutativity condition in our example says that
\[ \xygraph{
*!(0,1){\xybox{\xybox{\xygraph{!{0;(.5,0):(0,2)::}
{\scriptstyle{\alpha_1}} *\xycircle<6pt,5pt>{-} (-[d] {\scriptstyle{j}},-[l(2.5)u] {\scriptstyle{fi_{11}}},-[l(1.5)u] {\scriptstyle{fi_{12}}},-[l(.5)u] {\scriptstyle{fi_{13}}},-[r(.5)u] {\scriptstyle{fi_{14}}},-[r(1.5)u] {\scriptstyle{fi_{15}}},-[r(2.5)u] {\scriptstyle{fi_{16}}})}}}}
[r(3)] {=} [r(2.5)]
{\xybox{\xygraph{!{0;(.5,0):(0,1.5)::}
{\scriptstyle{F\beta_1}} *\xycircle<9pt,5pt>{-}="p0" [r(1.5)]
{\scriptstyle{F\beta_2}} *\xycircle<9pt,5pt>{-}="p1" [r(1.5)]
{\scriptstyle{F\beta_3}} *\xycircle<9pt,5pt>{-}="p2" [r(1.5)]
{\scriptstyle{F\beta_4}} *\xycircle<9pt,5pt>{-}="p3" [l(2.25)d]
{\scriptstyle{\alpha_2}} *\xycircle<6pt,5pt>{-}="p4"
"p0" [l(.8)u] {\scriptstyle{fi_{11}}}="q0" [r(1.5)] {\scriptstyle{fi_{12}}}="q1" [r(1.5)] {\scriptstyle{fi_{13}}}="q2" [r(1.5)] {\scriptstyle{fi_{14}}}="q3" [r(1.5)] {\scriptstyle{fi_{15}}}="q4" [r(1.5)] {\scriptstyle{fi_{16}}}="q5"
"p0" -"p4"
"p1" (-"q0",-"q3",-"p4")
"p2" (-"q1",-"p4")
"p3" (-"q2",-"q4",-"q5",-"p4")
"p4" -[d] {\scriptstyle{j}}}}}} \]
in $T$.

Recall from \cite{Weber-OpPoly2Mnd} that when regarding $T$ as a 2-monad on $\Cat/J$, a strict $T$-algebra is a strict morphism of operads $T \to \CatAsOp$ in the sense of \cite{Weber-OpPoly2Mnd} Definitions 4.1 and 4.2. We will now exhibit the extra structure which exhibits $j \mapsto F \downarrow j$ as a strict $T$-algebra.
\begin{defn}\label{defn:F-over-blank}
The strict morphism of operads $F \downarrow (-) : T \to \CatAsOp$ is given on objects by $j \mapsto F \downarrow j$. For $\gamma : (j_k)_{1{\leq}k{\leq}p} \to j$ in $T$ the associated product $F \downarrow \gamma$ has object map as on the right
\[ \begin{array}{lccr} {F \downarrow \gamma : \prod_{k=1}^p F \downarrow j_k \longrightarrow F \downarrow j} &&& {((i_{kl})_{1{\leq}l{\leq}n_k},\alpha_k)_k \mapsto ((i_{kl})_{kl},\gamma(\alpha_k)_k)} \end{array} \]
where for each $k$, $\alpha_k : (fi_{kl})_{l} \to j_k$ is in $T$. Given morphisms
\[ (h_k,(\beta_{k,l_2})_{l_2}) : ((i_{k,1,l_1})_{1{\leq}l_1{\leq}n_{k,1}},\alpha_{k,1}) \longrightarrow ((i_{k,2,l_2})_{1{\leq}l_2{\leq}n_{k,2}},\alpha_{k,2}) \]
of $F \downarrow j_k$ for $1 \leq k \leq p$, $F \downarrow \gamma$ sends these to
\[ (h_1 + ... + h_p, (\beta_{k,l_2})_{k,l_2}) : ((i_{k,1,l_1})_{k,l_1},\gamma(\alpha_{k,1})_k) \longrightarrow ((i_{k,2,l_2})_{k,l_2},\gamma(\alpha_{k,2})_k).  \]
Given $\gamma$ as above and a permutation $\rho \in \Sigma_p$ we must describe the corresponding symmetry
\[ \xygraph{!{0;(2,0):(0,.5)::} {\prod_{k=1}^p F \downarrow j_{\rho k}}="p0" [r(2)] {\prod_{k=1}^p F \downarrow j_k}="p1" [dl] {F \downarrow j}="p2" "p0":"p1"^-{\sigma_{\rho}}:"p2"^-{F \downarrow \gamma}:@{<-}"p0"^-{F \downarrow (\gamma\rho)} "p0" [d(.5)r(.9)] :@{=>}[r(.2)]^-{\xi_{\gamma,\rho}}} \]
and so we define the component at $\sigma_{\rho}^{-1}((i_{kl})_{l},\alpha_k)_k = ((i_{\rho k,l})_{kl},\alpha_{\rho k})_k$ by
\[ (\rho(1_{n_k})_k,(1_{i_{kl}})_{kl}) : ((i_{\rho k,l})_{l},\alpha_{\rho k})_k \longrightarrow ((i_{kl})_l,\alpha_k)_k. \]
\end{defn}
\begin{rem}\label{rem:F-over-blank-well-defined}
To see that $F \downarrow \gamma$ is well-defined note first that the bijective-monotone factorisation is compatible with ordinal sum in the sense that $\phi_{h_1 + ... + h_p} = \phi_{h_1} + ... + \phi_{h_p}$ and $\rho_{h_1 + ... + h_p} = \rho_{h_1} + ... + \rho_{h_p}$, and so the commutativity condition of $(F \downarrow \gamma)(h_k,(\beta_{k,l_2})_{l_2})_k$ is witnessed by the calculation
\[ (\gamma(\alpha_{k,1})_k(F\beta_{k,l_2})_{k,l_2}(\rho_{h_1 + ... + h_k}) = \gamma(\alpha_{k,1}(F\beta_{k,l_2})_{l_2}\rho_{h_k})_k = \gamma(\alpha_{k,2})_k. \]
The functoriality of $F \downarrow \gamma$ is easily verified. To see that the symmetry $\xi_{\gamma,\rho}$ is well-defined, we note that the commutativity condition of $(\rho(1_{n_k})_k,(1_{i_{kl}})_{kl})$ is witnessed by the equation
\[ (\gamma\rho)(\alpha_{\rho k})_k = (\gamma(\alpha_k)_k)(\rho(1_{n_k})_k). \]
The naturality of $\xi_{\gamma,\rho}$ is easily verified. Since the $F \downarrow \gamma$ are given by postcomposing with $\gamma$, the unit and substitution data for $F \downarrow (-)$ are identities by the unit and associativity laws for composition in $T$, and the axiom for the symmetries follows from the equivariance of composition in $T$.
\end{rem}
Since the 2-functor $\overline{F} : \Algs T \to \Algs S$ is the process of precomposing with $F$, we denote the strict $S$-algebra $\overline{F}(F \downarrow (-))$ as $F \downarrow F(-)$. From \cite{Weber-OpPoly2Mnd} Theorem 4.13, to give a lax morphism of $S$-algebras is to give a lax-natural transformation between the corresponding operad morphisms $S \to \CatAsOp$ in the sense of \cite{Weber-OpPoly2Mnd} Definition 4.10.
\begin{defn}\label{defn:lax-nat-u-F}
The lax morphism of $S$-algebras
\[ u_F : 1 \longrightarrow F \downarrow F(-) \]
has components $u_{F,i} : 1 \to F \downarrow fi$ which pick out $(i,1_{fi})$. Given $\alpha : (i_k)_{1{\leq}k{\leq}n} \to i$ in $S$, the unique component of $\overline{u}_{F,\alpha}$ is $(t_n,\alpha) : ((i_k)_k,F\alpha) \to (i,1_{fi})$ where $t_n$ is the unique function $n \to 1$.
\end{defn}
\begin{thm}\label{thm:intalg-classifier-from-operad-morphism}
Let $F : S \to T$ be a morphism of operads. Then the lax $S$-morphism $u_F$ exhibits $F \downarrow (-)$ as the internal $S$-algebra classifier relative to $F$.
\end{thm}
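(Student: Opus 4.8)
The plan is to apply the general machinery built up in Sections \ref{sec:IntAlgClass} and the codescent results of Section \ref{ssec:CodescComp} to the concrete adjunction of 2-monads $F : (\Cat/I,S) \to (\Cat/J,T)$. By Proposition \ref{prop:dagger-F}, Corollary \ref{cor:alg-class-explicit} and Examples \ref{exams:easy-actions-operadic-examples}, the underlying object of $T^S$ is the codescent object of $U^T\ca R_F 1$, and since $F^c$, $F_!$ and $T$ are opfamilial (Examples \ref{exams:opfam->cic-polycases}), Proposition \ref{prop:crossed-int-cats-from-classifiers} tells us that $U^T\ca R_F 1$ is a crossed internal category. Thus by Corollary \ref{cor:codesc-gen-cicat} its codescent object is $\pi_{0*}\Cnr(U^T\ca R_F 1)$. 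The entire proof reduces to two tasks: first, identifying the crossed internal category $U^T\ca R_F 1$ and its 2-category of corners $\Cnr(U^T\ca R_F 1)$ explicitly in terms of the operadic data; second, computing $\pi_{0*}$ of this 2-category and verifying it matches $F \downarrow (-)$ together with its $T$-algebra structure and the internal $S$-algebra $u_F$.

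First I would unwind $U^T\ca R_F 1$ explicitly. Using the formulae of Lemma \ref{lem:face-degen-R-F-X} together with Notation \ref{notn:associated-adjunction-of-2-monads} ($F_! = \Sigma_f$, $F^* = \Delta_f$) and the explicit descriptions of $F^c$ from Lemma \ref{lem:Fc-explicit}, I would read off that a horizontal arrow of this double category over $j \in J$ is an object of $(TF_!1)_j = (T\Sigma_f 1)_j$, which amounts to an operation $\alpha : (fi_k)_k \to j$ of $T$ together with a chosen sequence $(i_k)_k$ of colours of $S$; that is, exactly an object $((i_k)_k,\alpha)$ of $F \downarrow j$ as in Definition \ref{defn:F-over-j}. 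The vertical arrows come from $S$-operations via $F^c$, and a square is a morphism of $TF_!S1$; the chosen opcartesian squares, computed as in Example \ref{exam:bar-of-Sm-as-cics} via the way opfamilial 2-functors preserve split opfibrations, will be those whose decorating data is order-preserving on fibres. This is the direct generalisation of Examples \ref{exam:dblcat-for-cmon-classifier} and \ref{exam:bar-of-Sm-as-cics}, to which the case $T = \Com$ specialises.

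Next I would pass to $\Cnr(U^T\ca R_F 1)$ using the explicit description of $\Cnr$ given after Definition \ref{def:Cnr}: its objects are the horizontal objects, an arrow is a corner $(\text{vertical},\text{horizontal})$, and composition uses the chosen opcartesian squares $\kappa$, which here come from the bijective-monotone factorisation (Remark \ref{rem:bij-mon-factorisation}). Generalising Example \ref{exam:cnr-bar-of-Sm}, a corner is a pair whose vertical leg is an $S$-operation (with its symmetry) and whose horizontal leg is a morphism of $F \downarrow j$, and two corners lie in the same connected component precisely when they give equal data after composing the symmetry into the morphism, i.e.\ after absorbing $\rho_h$. Applying $\pi_{0*}$ then collapses each hom-category to its set of connected components, and I would check that a connected component is classified exactly by a morphism $(h,(\beta_{k})_k)$ of $F \downarrow j$ as in Definition \ref{defn:F-over-j} — the commutativity condition there being precisely the condition that survives the quotient. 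This identifies $\CoDesc(U^T\ca R_F 1) = F \downarrow (-)$ as categories over $J$.

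Finally I would reconcile the algebraic structure. Using Corollary \ref{cor:alg-class-explicit}, the $T$-algebra structure $a^S$ is obtained by applying $\CoDesc$ to the crossed internal functor $\sigma_T\ca R_F 1$, and I would verify that under the identification above this is exactly the substitution structure $F \downarrow (-)$ of Definition \ref{defn:F-over-blank}, with the symmetries $\xi_{\gamma,\rho}$ arising as in Remark \ref{rem:F-over-blank-well-defined}; this is a matter of tracing the explicit codescent cocone of Remark \ref{rem:codesc-cdc-explicit-cocone} through the $T$-action, much as in the displayed square at the end of the proof of Theorem \ref{thm:free-smc-containing-a-commutative-monoid}. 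By Remark \ref{rem:universal-int-alg-explicit} the universal internal $S$-algebra $g^S_T$ is computed from the cocone at $X = 1$; tracing through Lemma \ref{lem:unit-in-terms-of-codescent}, its component at $i$ picks out the connected component of the corner with trivial vertical leg over $(i,1_{fi})$, and its coherence cell at $\alpha : (i_k)_k \to i$ is the class of $(t_n,\alpha)$, so that $g^S_T = u_F$ of Definition \ref{defn:lax-nat-u-F}. The main obstacle I anticipate is the bookkeeping in this last reconciliation: one must carefully match the two descriptions of composition — substitution in $F \downarrow (-)$ versus composition of corners via opcartesian squares — and confirm that the commutativity condition, the indexing functions, and the symmetries all transport correctly, since a sign or fibre-ordering error here would be easy to make and would break the claimed isomorphism $\Algs{T}(F \downarrow (-),B) \iso \Algl{S}(1,\overline{F}B)$.
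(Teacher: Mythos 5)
Your proposal follows essentially the same six-step route as the paper's own proof: an explicit fibrewise description of the crossed double category $U^T\ca R_F1$ over $J$, its 2-category of corners, the identification of $\pi_{0*}\Cnr(U^T\ca R_F1)$ with $F \downarrow (-)$ via Corollary \ref{cor:codesc-gen-cicat}, and the reconciliation of the $T$-algebra structure and the universal internal $S$-algebra using Corollary \ref{cor:alg-class-explicit}, Remark \ref{rem:codesc-cdc-explicit-cocone} and Lemma \ref{lem:unit-in-terms-of-codescent}. The only slip is a transposition in the intermediate bookkeeping: the objects of the double category are the objects of $(TF_!1)_j$ and the vertical arrows are the permutations (morphisms of $(TF_!1)_j$), while it is the horizontal arrows (objects of $(TF_!S1)_j$) that carry the decorating $S$-operations, so a corner has a permutation as its vertical leg and an $S$-decorated order-preserving morphism as its horizontal leg --- the reverse of what you wrote, though this would be corrected automatically upon writing out the explicit descriptions.
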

\begin{proof}
This proof is an elaborate version of the proof of Theorem \ref{thm:free-smc-containing-a-commutative-monoid}. As such, we shall proceed via the following steps.
\begin{enumerate}
\item For each $j \in J$, describe the crossed double category $(U^T\ca R_F1)_j$ explicitly.\label{proof-step-cdc}
\item For each $j \in J$, give an explicit description of $\Cnr((U^T\ca R_F1)_j)$. \label{proof-step-corners}
\item Identify $\pi_{0*}\Cnr((U^T\ca R_F1)_j)$ with the category $F \downarrow j$ of Definition \ref{defn:F-over-j}. Thus by Corollary \ref{cor:codesc-gen-cicat} $\CoDesc(U^T\ca R_F1)$ in $\Cat/J$ underlies $F \downarrow (-)$. \label{proof-step-underlying-codesc}
\item Describe the codescent cocone for (\ref{proof-step-underlying-codesc}) explicitly using Remark \ref{rem:codesc-cdc-explicit-cocone}. \label{proof-step-codescent-cocone}
\item Since $T$ preserves all sifted colimits by Examples \ref{exams:easy-actions-operadic-examples}, we reconcile the strict $T$-algebra structure on $F \downarrow (-)$ induced from (\ref{proof-step-codescent-cocone}) and Corollary \ref{cor:alg-class-explicit}, with that of Definition \ref{defn:F-over-blank}. In other words, this step exhibits the codescent cocone of (\ref{proof-step-codescent-cocone}) as living in $\Algs T$. \label{proof-step-codesc-in-TAlgs}
\item Identify $u_F$ of Definition \ref{defn:lax-nat-u-F} with the lax $S$-morphism obtained by applying Lemma \ref{lem:unit-in-terms-of-codescent} to the codescent cocone in $\Algs T$ established in (\ref{proof-step-codesc-in-TAlgs}).\label{proof-step-universal-internal-data}
\end{enumerate}
The result then follows by Proposition \ref{prop:dagger-F}.

\underline{Step \ref{proof-step-cdc}}: An object of $(U^T\ca R_F1)_j$ is an object of $(TF_!1)_j$. By Lemma 3.10 of \cite{Weber-OpPoly2Mnd} and since $F_!1$ is just $f : I \to J$ regarded as an object of $\Cat/J$, such an object is a pair $((i_k)_{1{\leq}k{\leq}n},\alpha)$ where $i_k \in I$ and $\alpha : (fi_k)_k \to j$ is in $T$. Thus an object of $(U^T\ca R_F1)_j$ is an object of $F \downarrow j$. A vertical morphism of $(U^T\ca R_F1)_j$ is a morphism of $(TF_!1)_j$. Since $I$ is discrete vertical morphisms are thus of the form $\rho : ((i_{\rho k})_k,\alpha\rho) \longrightarrow ((i_k)_k,\alpha)$ where $(i_k)_k$ and $\alpha$ are as above, and $\rho \in \Sigma_n$. Thus one can identify the vertical arrows of $(U^T\ca R_F1)_j$ as morphisms of $F \downarrow j$ whose indexing functions are bijective and decorating operations are identities. Composition of vertical arrows is as in $F \downarrow j$.

The horizontal morphisms of $(U^T\ca R_F1)_j$ are the objects of $(TF_!S1)_j$. As a functor into $J$, $F_!S1$ is the composite $ft_S$, and so an object of $(TF_!S1)_j$ is a pair $(\alpha,(\beta_k)_{1{\leq}k{\leq}n})$, where for each $k$, $\beta_k : (i_{kl})_{1{\leq}l{\leq}m_k} \to i_k$ is an arrow of $S$, and $\alpha : (fi_k)_k \to j$ is an arrow of $T$. One computes the sources and targets as
\[ \begin{array}{rcl}
{\mu^T_{F_!1}T(F^c_1)(\alpha,(\beta_k)_{1{\leq}k{\leq}n})} & = & {((i_{kl})_{kl},\alpha(F\beta_k)_k)} \\
{TF_!t_{S1}(\alpha,(\beta_k)_{1{\leq}k{\leq}n})} & = & {((i_k)_k,\alpha).}
\end{array} \]
Putting $n_1 = m_1 + ... + m_k$, $n_2 = n$, $\alpha_1 = \alpha(F\beta_k)_k$ and $\alpha_2 = \alpha$, a typical horizontal morphism of $(U^T\ca R_F1)_j$ can be written as
\[ (\phi,(\beta_{k_2})_{1{\leq}k_2{\leq}n_2}) : ((i_{1,k_1})_{1{\leq}k_1{\leq}n_1},\alpha_1) \longrightarrow ((i_{2,k_2})_{1{\leq}k_2{\leq}n_2},\alpha_2) \]
where $\phi : \underline{n}_1 \to \underline{n}_2$ is in $\Delta_+$, and $\beta_{k_2} : (i_{1,k_1})_{\phi k_1=k_2} \to i_{k_2}$ is in $S$ for $1 \leq k_2 \leq n_2$; such that $\alpha_1 = \alpha_2(F\beta_{k_2})_{k_2}$. In other words, a horizontal arrow of $(U^T\ca R_F1)_j$ can be identified as a morphism of $F \downarrow j$ whose indexing function is order preserving. It is straight forward to verify that the composition of horizontal arrows is as in $F \downarrow j$.

The squares of $(U^T\ca R_F1)_j$ are the morphisms of $(TF_!S1)_j$, and so by definition are of the form
\[ (\rho,(\rho_k)_k) : (\alpha\rho,(\beta_{\rho k}\rho_k)_k) \longrightarrow (\alpha,(\beta_k)_k) \]
where $\alpha : (i_k)_{1{\leq}k{\leq}n} \to fj$ is in $T$, $\beta_k : (i_{kl})_{1{\leq}l{\leq}m_k} \to i_k$ is in $S$ for $1 \leq k \leq n$, $\rho \in \Sigma_n$ and $\rho_k \in \Sigma_{m_{\rho k}}$ for $1 \leq k \leq n$. The sources and targets of such a square are as in
\[ \xygraph{!{0;(4,0):(0,.25)::} {((i_{\rho k,\rho_k l})_{kl},(\alpha\rho)(F\beta_{\rho k}\rho_k)_k)}="p0" [r] {((i_{\rho k})_k,\alpha\rho)}="p1" [d] {((i_k)_k,\alpha)}="p2" [l] {((i_{kl})_{kl},\alpha(F\beta_k)_k)}="p3" "p0":"p1"^-{(\psi,(\beta_{\rho k})\rho_k)_k}:"p2"^-{\rho}:@{<-}"p3"^-{(\phi,(\beta_k)_k)}:@{<-}"p0"^-{\rho(\rho_k)_k}} \]
where $\phi : \underline{m} \to \underline{n}$ is the monotone map whose fibres have cardinalities $(m_1,...,m_n)$ and $\psi : \underline{m} \to \underline{n}$ is the monotone map whose fibres have cardinalities $(m_{\rho 1},...,m_{\rho n})$. Thus any such square is determined uniquely by its boundary, and renotating as above, the vertical and horizontal arrows as in
\begin{equation}\label{eq:square-for-general-classifier}
\xygraph{!{0;(4.5,0):(0,.2222)::}
{((i_{1,\rho_1k_1})_{1{\leq}k_1{\leq}n_1},\alpha_1\rho_1)}="p0" [r] {((i_{2,\rho_2k_2})_{1{\leq}k_2{\leq}n_2},\alpha_2\rho_2)}="p1" [d] {((i_{2,k_2})_{1{\leq}k_2{\leq}n_2},\alpha_2)}="p2" [l] {((i_{1,k_1})_{1{\leq}k_1{\leq}n_1},\alpha_1)}="p3" "p0":"p1"^-{(\phi_1,(\beta_{1,k_2})_{k_2})}:"p2"^-{(\rho_2,(1_{i_{2,\rho_2k_2}})_{k_2})}:@{<-}"p3"^-{(\phi_2,(\beta_{2,k_2})_{k_2})}:@{<-}"p0"^-{(\rho_1,(1_{i_{1,\rho_1k_1}})_{k_1})}}
\end{equation}
bound a square of $(U^T\ca R_F1)_j$ iff the square commutes in $F \downarrow j$. It is straight forward to verify that the horizontal and vertical composition of squares is just horizontal and vertical pasting of commutative squares in $F \downarrow j$.

By the way that $T$ preserves split opfibrations as explained in Lemma 6.3 of \cite{Weber-Fam2fun}, $(\rho,(\rho_k)_k)$ is chosen $TF_!t_{S1}$-opcartesian iff the $\rho_k$ are identities, and so a square (\ref{eq:square-for-general-classifier}) of $(U^T\ca R_F1)_j$ is chosen opcartesian iff $\rho_1$ is order preserving on the fibres of $\phi_2$. Given $(\phi,(\beta_{k_2})_{k_2})$ and $\rho$ in $(U^T\ca R_F1)_j$ as in the solid parts of
\[ \xygraph{!{0;(4.5,0):(0,.2222)::}
{((i_{1,k_1})_{1{\leq}k_1{\leq}n_1},\alpha_1)}="p0" [r] {((i_{2,\rho k_2})_{1{\leq}k_2{\leq}n_2},\alpha_2\rho)}="p1" [d] {((i_{2,k_2})_{1{\leq}k_2{\leq}n_2},\alpha_2)}="p2" [l] {((i_{1,\rho_2^{-1}k_1})_{1{\leq}k_1{\leq}n_1},\alpha_1\rho_2^{-1})}="p3" "p0":"p1"^-{(\phi,(\beta_{k_2})_{k_2})}:"p2"^-{(\rho,(1_{i_{2,\rho k_2}})_{k_2})}:@{<.}"p3"^-{(\phi_2,(\beta_{\rho^{-1}k_2})_{k_2})}:@{<.}"p0"^-{(\rho_2,(1_{i_{1,k_1}})_{k_1})}} \]
the corresponding chosen opcartesian square has boundary as indicated, in which  $\rho\phi = \phi_2\rho_2$ is the bijective-monotone factorisation of $\rho\phi$.

\underline{Step \ref{proof-step-corners}}: The 2-category $\tn{Cnr}(U^T\ca R_F1)_j$ has objects those of $F \downarrow j$, and an arrow $a \to b$ in $\tn{Cnr}(U^T\ca R_F1)_j$ is a composable pair
\[ \xygraph{{a}="p0" [r] {c}="p1" [r] {b}="p2" "p0":"p1"^-{g}:"p2"^-{h}} \]
of arrows of $F \downarrow j$, in which the indexing function of $g$ is bijective, the decorating operations of $g$ are identities, and the indexing function of $h$ is order preserving. Given $(g_1,h_1)$ and $(g_2,h_2) : a \to b$ in $\tn{Cnr}(U^T\ca R_F1)_j$, a 2-cell between them will be unique and invertible if it exists, and will exist iff $h_1g_1 = h_2g_2$ in $F \downarrow j$.

\underline{Step \ref{proof-step-underlying-codesc}}: Thus $(g_1,h_1)$ and $(g_2,h_2) : a \to b$ will be in the same connected component of $\tn{Cnr}(U^T\ca R_F1)_j(a,b)$ iff $h_1g_1 = h_2g_2$ in $F \downarrow j$. Thus an arrow of $\pi_{0*}\tn{Cnr}(U^T\ca R_F1)_j$ may be identified with a morphism of $F \downarrow j$. To see that an arbitrary morphism
\[ (\phi,(\beta_{k_2})_{1{\leq}k_2{\leq}n_2}) : ((i_{1,k_1})_{1{\leq}k_1{\leq}n_1},\alpha_1) \longrightarrow ((i_{2,k_2})_{1{\leq}k_2{\leq}n_2},\alpha_2) \]
of $F \downarrow j$ may be so regarded, take the bijective-monotone factorisation $\phi = \phi_2\rho$, and then $(\phi,(\beta_{k_2})_{k_2})$ factors as
\[ \xygraph{!{0;(4.5,0):(0,1)::} 
{((i_{1,k_1})_{1{\leq}k_1{\leq}n_1},\alpha_1)}="p0" [r] {((i_{1,\rho^{-1}k_1})_{1{\leq}k_1{\leq}n_1},\alpha_1\rho^{-1})}="p1" [r] {((i_{2,k_2})_{1{\leq}k_2{\leq}n_2},\alpha_2).}="p2"
"p0":"p1"^-{(\rho,(1_{i_{1,k_1}})_{k_1})}:"p2"^-{(\phi_2,(\beta_{k_2})_{k_2})}} \]

\underline{Step \ref{proof-step-codescent-cocone}}: The 1-cell part $q_{0,j} : (TF_!S1)_j \longrightarrow F \downarrow j$ of the codescent cocone underlying step \ref{proof-step-underlying-codesc}, is the inclusion of all the objects, and those morphisms whose indexing functions are bijective and decorating operations are identities. The component of the 2-cell part $q_{1,j}$ of this codescent cocone at a horizontal arrow is that arrow regarded as an arrow of $F \downarrow j$.

\underline{Step \ref{proof-step-codesc-in-TAlgs}}: Denoting by $a_F : T(F \downarrow (-)) \to F \downarrow (-)$ the action coming from Definiton \ref{defn:F-over-blank}, in order to reconcile this action with that induced by step \ref{proof-step-codescent-cocone} and Corollary \ref{cor:alg-class-explicit}, we must show that
\begin{equation}\label{eq:square-for-alg-structure-reconciliation}
\xygraph{!{0;(2.5,0):(0,.4)::} {(T^2F_!1)_j}="p0" [r] {T(F \downarrow (-))_j}="p1" [d] {F \downarrow j}="p2" [l] {(TF_!1)_j}="p3" "p0":"p1"^-{(Tq_0)_j}:"p2"^-{a_{F,j}}:@{<-}"p3"^-{q_{0,j}}:@{<-}"p0"^-{\mu^T_{F_!S1,j}}}
\end{equation}
commutes for all $j \in J$. An object of $(T^2F_!1)_j$ is a triple $(\alpha,(\alpha_k)_k,(i_{kl})_{kl})$ where $\alpha : (j_k)_k \to j$ and $\alpha_k : (j_{kl})_l \to j_k$ are in $T$, and $i_{kl} \in f^{-1}\{j_{kl}\}$. Since on objects the algebra structure of $F \downarrow (-)$ is given by substitution in $T$, one has
\[ q_{0,j}\mu^T_{F_!S1,j}(\alpha,(\alpha_k)_k,(i_{kl})_{kl}) = ((i_{kl})_{kl},\alpha(\alpha_k)_k) = a_{F,j}(Tq_0)_j(\alpha,(\alpha_k)_k,(i_{kl})_{kl}) \]
and so (\ref{eq:square-for-alg-structure-reconciliation}) commutes on objects. A morphism of $(T^2F_!1)_j$ is of the form
\begin{equation}\label{eq:morphism-T2F1j}
(\rho,(\rho_k)_k,(1_{i_{\rho k,\rho_{\rho k}l}})_{kl}) : (\alpha\rho,(\alpha_k\rho_k)_k,(i_{\rho k,\rho_{\rho k}l})_{kl}) \longrightarrow (\alpha,(\alpha_k)_k,(i_{kl})_{kl}).
\end{equation}
Applying $q_{0,j}\mu^T_{F_!S1,j}$ to (\ref{eq:morphism-T2F1j}) gives the unique morphism of $F \downarrow j$ whose codomain is $((i_{kl})_{kl},\alpha(\alpha_k)_k)$, indexing function is $\rho(\rho_k)_k$ and decorating operations are identities. Applying $(Tq_0)_j$ to (\ref{eq:morphism-T2F1j}) gives
\[ (\rho,(h_k)_k) : (\alpha\rho,((i_{\rho k,\rho_kl})_l)_k) \longrightarrow (\alpha,((i_{kl})_l,\alpha_k)_k) \]
in which $h_k$ has indexing function $\rho_k$ and decorating operations identities. To apply $a_{F,j}$ to this, is to apply the arrow map of the appropriate product functor given in Definition \ref{defn:F-over-blank}. This produces the unique morphism of $F \downarrow j$ whose codomain is $((i_{kl})_{kl},\alpha(\alpha_k)_k)$, indexing function is $\rho(\rho_k)_k$ and decorating operations are identities, and so (\ref{eq:square-for-alg-structure-reconciliation}) commutes on arrows.

\underline{Step \ref{proof-step-universal-internal-data}}: Our task is to verify
\begin{equation}\label{eq:internal-structure-reconciliation}
\begin{array}{lccr} {u_F = F^*(q_0)F^*(\eta^T_{F_!1})\eta^F_1} &&& {\overline{u}_F = F^*(q_1)F^*(\eta^T_{F_!S1})\eta^F_{S1}} \end{array}
\end{equation}
The first equation of (\ref{eq:internal-structure-reconciliation}) says that for all $i \in I$, the composite
\[ \xygraph{{1}="p0" [r(2)] {(F_!1)_{fi}=f^{-1}\{fi\}}="p1" [r(3)] {(TF_!1)_{fi}}="p2" [r(2)] {F \downarrow fi}="p3" "p0":"p1"^-{i}:"p2"^-{\eta^T_{F_!1,fi}}:"p3"^-{q_{0,Fi}}} \]
picks out $(i,1_{fi})$, and this is clear. Recall that an object of $S1$ over $i$ is an operation $(i_k)_k \to i$ in $S$. The second equation of (\ref{eq:internal-structure-reconciliation}) says that the component at $\alpha$ of the natural transformation
\[ \xygraph{!{0;(2.5,0):(0,.4)::} {(S1)_i}="p0" [r] {(F_!S1)_{fi}}="p1" [r] {(TF_!S1)_{fi}}="p2" [r] {F \downarrow fi}="p3"
"p0":"p1"^-{(\eta^F_{S1})_i}:"p2"^-{\eta^T_{F_!S1,fi}}:@/^{1pc}/"p3"|-{}="t"
"p2":@/_{1pc}/"p3"|-{}="b"
"t":@{}"b"|(.25){}="d"|(.75){}="c" "d":@{=>}"c"^-{q_1}} \]
is $(t_n,\alpha) : ((i_k)_k,F\alpha) \to (i,1_{fi})$, which is clear because of the explicit descriptions of $TF_!S1$ and $q_1$ given in steps \ref{proof-step-cdc} and \ref{proof-step-codescent-cocone} respectively.
\end{proof}
We consider now the case where $T$ is an operad with set of colours $I$, and $F$ is the unique operad morphism $T \to \Com$. In that case the symmetric strict monoidal category $\Com^T$ has the following explicit description from Definitions \ref{defn:F-over-j} and \ref{defn:F-over-blank}, and Theorem \ref{thm:intalg-classifier-from-operad-morphism}. An object is a finite sequence $(i_1,...,i_n)$ of elements of $I$. A morphism is of the form
\[ (f,(\alpha_k)_{1{\leq}k{\leq}n}) : (i_1,...,i_m) \longrightarrow (j_1,...,j_n) \]
in which $f : \underline{m} \to \underline{n}$ is the indexing function, and for each $k$, $\alpha_k : (i_l)_{fl=k} \to j_k$ is the corresponding decorating operation from $T$. Composition comes from operadic substitution for $T$ and the composition of functions. The tensor product on objects is given by concatenation of sequences. When $T$ has a single colour, $\Com^T$ is exactly the PROP associated to the operad $T$.

By Definition \ref{defn:lax-nat-u-F} and Theorem \ref{thm:intalg-classifier-from-operad-morphism} the universal $T$-algebra in $\Com^T$ has underlying $I$-indexed family of objects $(O_i \, : \, i \in I)$ given by $O_i = i$. For each operation $\alpha : (i_1,...,i_n) \to i$ of $T$, the corresponding operad structure map
\[ O_{i_1} \tensor ... \tensor O_{i_n} \longrightarrow O_i \]
is given by the morphism $(t_n,\alpha) : (i_1,...,i_n) \to i$ of $\Com^T$.

A symmetric monoidal category $\ca V$ can be regarded as an operad $\ca U\ca V$, with colours the objects of $\ca V$, and operations $(A_1,...,A_n) \to B$ given by morphisms $A_1 \tensor ... \tensor A_n \to B$. The endomorphism operad of $X \in \ca V$ is then just the full suboperad of $\ca U\ca V$ on the single object $X$, and an algebra of $T$ in $\ca V$ is by definition an operad morphism $T \to \ca U\ca V$. Restricting attention to strict monoidal categories $\ca V$, $\ca V \mapsto \ca U\ca V$ is the effect on objects of a 2-functor as on the left
\[ \begin{array}{lccr} {\ca U_{\tn{s}} : \Algs {\tnb{S}} \longrightarrow \tnb{Opd}} &&& {\ca U_{\tn{ps}} : \PsAlg {\tnb{S}} \longrightarrow \tnb{Opd}} \end{array} \]
and the general assignment $\ca V \mapsto \ca U\ca V$ is the effect on objects of a 2-functor as on the right in the previous display.
\begin{cor}\label{cor:smoncat-opd-adjunctions}
The assignment $T \mapsto \Com^T$ is the effect on objects of a left 2-adjoint to $\ca U_{\tn{s}}$, and a left biadjoint to  $\ca U_{\tn{ps}}$.
\end{cor}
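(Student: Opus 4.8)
The plan is to deduce the corollary by combining Theorem \ref{thm:intalg-classifier-from-operad-morphism} with the general internal-algebra-classifier machinery, specialised to the case $F : T \to \Com$. First I would recall the defining universal property of the internal $S$-algebra classifier (Definition \ref{def:int-alg-classifier}) and translate the statement into the assertion that $\Com^T$ is the value on objects of a left adjoint to $\ca U_{\tn s}$. Concretely, for a strict symmetric monoidal category $\ca V$ (a strict $\tnb S$-algebra), an operad morphism $T \to \ca U_{\tn s}\ca V$ is by definition precisely an algebra of $T$ in $\ca V$, which by Examples \ref{exams:algebras-of-operad-in-V} (Corollary 4.18 of \cite{Weber-OpPoly2Mnd}) is the same as a $T$-algebra internal to $\ca V$ in the sense of Definition \ref{defn:internal-algebra}, relative to the adjunction of 2-monads $\ca I_T : (\Cat/I,T) \to (\Cat,\tnb S)$. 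Thus I would set up the identification $\tnb{Opd}(T,\ca U_{\tn s}\ca V) \iso \PsAlgl{T}(1,\overline{\ca I_T}\,\ca V) = T\text{-algebras internal to }\ca V$, and check that this is the object level of an isomorphism of categories, i.e. that it is 2-natural in $\ca V$.

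The core step is then to invoke Definition \ref{def:int-alg-classifier} directly: Theorem \ref{thm:intalg-classifier-from-operad-morphism} (with $F$ taken to be $T \to \Com$, so that $S = T$ in the notation of that theorem and the ambient 2-monad is $\tnb S$) tells us that $u_F$ exhibits $\Com^T = F \downarrow (-)$ as the internal $T$-algebra classifier relative to $\ca I_T$. By Definition \ref{def:int-alg-classifier} this says exactly that precomposition with the universal internal algebra induces an isomorphism of categories
\[ \Algs{\tnb S}(\Com^T,\ca V) \iso \PsAlgl{T}(1,\overline{\ca I_T}\,\ca V) \]
2-naturally in $\ca V \in \Algs{\tnb S}$. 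Composing this with the 2-natural isomorphism $\PsAlgl{T}(1,\overline{\ca I_T}\,\ca V) \iso \tnb{Opd}(T,\ca U_{\tn s}\ca V)$ from the previous paragraph yields $\Algs{\tnb S}(\Com^T,\ca V) \iso \tnb{Opd}(T,\ca U_{\tn s}\ca V)$, 2-naturally in $\ca V$, which is precisely the statement that $T \mapsto \Com^T$ is left 2-adjoint to $\ca U_{\tn s}$. The 2-naturality, rather than mere naturality, is what upgrades an ordinary adjunction to a 2-adjunction, and this comes for free since all the isomorphisms above are isomorphisms \emph{of categories} natural in $\ca V$.

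For the biadjoint statement, I would appeal to Proposition \ref{prop:uprop-intalg-pseudo}. The hypotheses of that proposition are met here: $\tnb S$ and the operadic 2-monad $T$ both have rank (indeed they preserve sifted colimits by Examples \ref{exams:easy-actions-operadic-examples}), $\Cat$ and $\Cat/I$ are complete and cocomplete, $\ca K = \Cat = \Cat(\Set)$ is of the required form, and $\tnb S$ preserves internal functors with invertible object maps (this is the coherence situation recalled before Proposition \ref{prop:uprop-intalg-pseudo}, giving the coherence theorem for symmetric monoidal categories). Proposition \ref{prop:uprop-intalg-pseudo} then gives equivalences $\PsAlg{\tnb S}(T^T,\ca V) \catequiv \PsAlgl{T}(1,\overline{\ca I_T}\,\ca V)$ pseudonaturally in $\ca V \in \PsAlg{\tnb S}$, where $T^T = \Com^T$. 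Combining this with the (now bicategorical) identification $\PsAlgl{T}(1,\overline{\ca I_T}\,\ca V) \catequiv \tnb{Opd}(T,\ca U_{\tn{ps}}\ca V)$, valid for a general symmetric monoidal $\ca V$ by Examples \ref{exams:algebras-of-operad-in-V}, exhibits $T \mapsto \Com^T$ as left biadjoint to $\ca U_{\tn{ps}}$.

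The main obstacle I anticipate is the careful bookkeeping in the second paragraph's identification $\tnb{Opd}(T,\ca U_{\tn s}\ca V) \iso \PsAlgl{T}(1,\overline{\ca I_T}\,\ca V)$: one must confirm that the isomorphism of Corollary 4.18 of \cite{Weber-OpPoly2Mnd} between $T$-algebras in $\ca V$ and operad morphisms $T \to \ca U\ca V$ is genuinely 2-natural (not just natural on objects) in the monoidal category $\ca V$, and that it is compatible with the morphisms and 2-cells on both sides, so that it assembles into an isomorphism of hom-categories underlying a 2-functor. Once this compatibility is pinned down, the remainder is a formal composition of (2-natural, respectively pseudonatural) equivalences, and no further genuine work is required.
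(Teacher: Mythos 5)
Your proposal is correct and follows essentially the same route as the paper: the strict 2-adjunction is read off from the defining universal property of the internal algebra classifier (Definition \ref{def:int-alg-classifier}, via Theorem \ref{thm:intalg-classifier-from-operad-morphism}) combined with the identification of internal $T$-algebras with operad morphisms $T \to \ca U\ca V$ from Examples \ref{exams:algebras-of-operad-in-V}, and the biadjunction from Proposition \ref{prop:uprop-intalg-pseudo}. The paper's proof is just a terser statement of exactly this argument, leaving the 2-naturality bookkeeping you flag implicit.
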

\begin{proof}
The strict universal property $\Com^T$ has by Definition \ref{def:int-alg-classifier}, says that $T \mapsto \Com^T$ is the effect on objects of a left 2-adjoint to $\ca U_{\tn{s}}$. The bicategorical universal property $\Com^T$ enjoys by virtue of Proposition \ref{prop:uprop-intalg-pseudo}, says exactly that $T \mapsto \Com^T$ is the effect on objects of a left biadjoint to $\ca U_{\tn{ps}}$.
\end{proof}

\subsection{Non-symmetric and braided variants.}
\label{ssec:braided-props}
As explained in Section \ref{ssec:polynomials} there is a non-symmetric and a braided variant of the viewpoint of operads as polynomial 2-monads over $\tnb{S}$. The developments of the previous section also have non-symmetric and braided variants.

In the non-symmetric version, one views a morphism $F : S \to T$ of non-symmetric operads as a morphism of polynomial 2-monads over $\tnb{M}$. This can then be regarded as an adjunction
\[ F : (\Cat/I,S) \longrightarrow (\Cat/J,T) \]
of 2-monads, where $I$ and $J$ are the sets of colours of $S$ and $T$ respectively, and the adjunction $F_! \ladj F^*$ is $\Sigma_f \ladj \Delta_f$, where $f$ is the object function of $F$. An explicit description of this adjunctions of 2-monads is along the lines of that given in Section \ref{ssec:explicit-adj2mnd-op-morphism} in the symmetric case, except that one omits mention of any permutations.

The non-symmetric analogue of the definition of $F \downarrow (-)$ is the same as that given in Section \ref{ssec:props}, except that in the morphisms of $F \downarrow j$ described in Definition \ref{defn:F-over-j}, the indexing functions are only allowed to be order preserving. With no permutations to worry about, composition in $F \downarrow j$ is more easily described, and there is no need to use the bijective-monotone factorisation of indexing functions as one does in Definition \ref{defn:F-over-j}. One then has the analogue of Theorem \ref{thm:intalg-classifier-from-operad-morphism} that $F \downarrow (-)$ is the free strict $T$-algebra containing an internal $S$-algebra. The proof of this result is simpler than that of Theorem \ref{thm:intalg-classifier-from-operad-morphism} because the codecent objects involved are componentwise discrete category objects.

For a non-symmetric operad $T$ with set of colours $I$, applying the foregoing to the unique morphism $T \to \Ass$ into the terminal non-symmetric operad, one has an explicit description of the strict monoidal category $\Ass^T$. This is as with $\Com^T$ at the end of the previous section, but again with order preserving indexing functions instead of general ones. When $\ca V$ is a monoidal category, not necessarily symmetric, $\ca U\ca V$ is a non-symmetric operad, and $\ca V \mapsto \ca U\ca V$ is the effect on objects of
\[ \begin{array}{lccr} {\ca U_{\tn{s}}^{\tnb{M}} : \Algs {\tnb M} \longrightarrow \tnb{Non}{\tn{-}}\Sigma{\tn{-}}\tnb{Opd}} &&& {\ca U_{\tn{ps}}^{\tnb{M}} : \PsAlg {\tnb M} \longrightarrow \tnb{Non}{\tn{-}}\Sigma{\tn{-}}\tnb{Opd}.} \end{array} \]
The non-symmetric analogue of Corollary \ref{cor:smoncat-opd-adjunctions} is
\begin{cor}\label{cor:moncat-nonsigma-opd-adjunctions}
The assignment $T \mapsto \Ass^T$ is the effect on objects of a left 2-adjoint to $\ca U_{\tn{s}}^{\tnb{M}}$, and a left biadjoint to  $\ca U_{\tn{ps}}^{\tnb{M}}$.
\end{cor}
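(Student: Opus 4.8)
The plan is to mirror exactly the structure of the proof of Corollary \ref{cor:smoncat-opd-adjunctions}, transporting every ingredient to the non-symmetric setting. The key observation is that Corollary \ref{cor:smoncat-opd-adjunctions} deduced both universal properties from two general facts: the strict universal property of $\Com^T$ recorded in Definition \ref{def:int-alg-classifier} (via Theorem \ref{thm:intalg-classifier-from-operad-morphism}), and the bicategorical universal property furnished by Proposition \ref{prop:uprop-intalg-pseudo}. For the present statement, the analogue of Theorem \ref{thm:intalg-classifier-from-operad-morphism} in the non-symmetric case has already been asserted in the surrounding text of Section \ref{ssec:braided-props}: for a morphism $F : S \to T$ of non-symmetric operads, $F \downarrow (-)$ is the internal $S$-algebra classifier relative to the adjunction of $2$-monads over $\tnb{M}$. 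Taking $F$ to be the unique morphism $T \to \Ass$ into the terminal non-symmetric operad then identifies $\Ass^T$ as the internal $\Ass$-algebra classifier, and an $\Ass$-algebra internal to a monoidal category is precisely an algebra of the non-symmetric operad $T$ in that category.

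First I would invoke the non-symmetric analogue of Theorem \ref{thm:intalg-classifier-from-operad-morphism} applied to $F : T \to \Ass$, which gives that the lax $T$-morphism $u_F : 1 \to \overline{F}(\Ass^T)$ exhibits $\Ass^T$ as the internal $T$-algebra classifier. By Definition \ref{def:int-alg-classifier}, this means that for every strict $\tnb{M}$-algebra $\ca V$ the functor
\[ \Algs{\tnb{M}}(\Ass^T,\ca V) \longrightarrow \PsAlgl{T}(1,\overline{F}\ca V) \]
given by precomposition with $u_F$ is an isomorphism of categories. The right-hand side is by definition the category of $T$-algebras internal to $\ca V$, which by the analogue of Examples \ref{exams:algebras-of-operad-in-V} is the category of algebras of the non-symmetric operad $T$ in the monoidal category $\ca V$, that is $\tnb{Non}{\tn{-}}\Sigma{\tn{-}}\tnb{Opd}(T,\ca U\ca V) = \tnb{Non}{\tn{-}}\Sigma{\tn{-}}\tnb{Opd}(T,\ca U_{\tn{s}}^{\tnb{M}}\ca V)$. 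This natural-in-$\ca V$ isomorphism is exactly the statement that $T \mapsto \Ass^T$ is the object assignment of a left $2$-adjoint to $\ca U_{\tn{s}}^{\tnb{M}}$.

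For the biadjoint statement I would appeal to Proposition \ref{prop:uprop-intalg-pseudo}, whose hypotheses are met here: the $2$-monad $\tnb{M}$ has rank (its middle map is a discrete fibration with finite fibres, as in Examples \ref{exams:easy-actions-operadic-examples}), and $\tnb{M}$ preserves internal functors whose object maps are invertible, so the coherence machinery recalled before Proposition \ref{prop:uprop-intalg-pseudo} applies to monoidal categories. Hence Proposition \ref{prop:uprop-intalg-pseudo} yields equivalences
\[ \PsAlg{\tnb{M}}(\Ass^T,\ca V) \catequiv \PsAlgl{T}(1,\overline{F}\ca V) \]
pseudonaturally in $\ca V \in \PsAlg{\tnb{M}}$. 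Identifying the right-hand side with $\tnb{Non}{\tn{-}}\Sigma{\tn{-}}\tnb{Opd}(T,\ca U_{\tn{ps}}^{\tnb{M}}\ca V)$ as above gives precisely the left biadjointness of $T \mapsto \Ass^T$ to $\ca U_{\tn{ps}}^{\tnb{M}}$.

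The main obstacle, and the only place requiring genuine (if routine) verification, is the non-symmetric analogue of Theorem \ref{thm:intalg-classifier-from-operad-morphism} itself, which the text invokes but does not prove in full. Its proof runs along the same six steps as the symmetric case, with the crucial simplification noted in the surrounding prose: since there are no permutations, the bar-construction simplicial objects $\ca R_F 1$ are componentwise discrete category objects, so their codescent objects are computed directly as in Example \ref{ex:internal-nerve} rather than through the $\tn{Cnr}$ construction and $\pi_{0}$, and the bijective-monotone factorisation plays no role. Consequently the identification of $\CoDesc(U^T\ca R_F 1)$ with $F \downarrow (-)$ and the reconciliation of the strict $\tnb{M}$-algebra structure and the internal-algebra datum $u_F$ are strictly easier than their symmetric counterparts. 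Everything else in the corollary is then formal, following the template of Corollary \ref{cor:smoncat-opd-adjunctions} verbatim.
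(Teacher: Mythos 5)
Your proposal is correct and follows exactly the route the paper intends: the corollary is stated as the non-symmetric analogue of Corollary \ref{cor:smoncat-opd-adjunctions}, whose proof combines the strict universal property of the internal algebra classifier (via the non-symmetric analogue of Theorem \ref{thm:intalg-classifier-from-operad-morphism}, asserted in the surrounding text with the same simplification you note about componentwise discrete codescent) with the bicategorical universal property of Proposition \ref{prop:uprop-intalg-pseudo}. Your identification of $\PsAlgl{T}(1,\overline{F}\ca V)$ with algebras of the non-symmetric operad $T$ in $\ca V$ is the right non-symmetric counterpart of Examples \ref{exams:algebras-of-operad-in-V}, so nothing essential is missing.
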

\noindent The adjunction $\Ass^{(-)} \ladj \ca U_{\tn{s}}^{\tnb{M}}$ was studied in detail by Hermida in \cite{Hermida-RepresentableMulticategories}.

The braided variant of Section \ref{ssec:props} works in much the same way. A morphism $F : S \to T$ of braided operads with underlying object function $f : I \to J$, is regarded as an adjunction $F : (\Cat/I,S) \to (\Cat/J,T)$ of 2-monads, whose explicit description is as in Section \ref{ssec:explicit-adj2mnd-op-morphism}, but with braids instead of permutations. To define the analogue of $F \downarrow (-)$, the indexing functions of Definition \ref{defn:F-over-j} are replaced by \emph{indexing vines}, and the analogue of the bijective-monotone factorisation described in Lemma \ref{lem:bij-mon-V}, is used to give composition in $F \downarrow j$. The analogue of Theorem \ref{thm:intalg-classifier-from-operad-morphism} follows by the same proof, modulo the replacement of braids with permutations, indexing functions with indexing vines and the bijective-monotone factorisation by its vine analogue.

The algebras of the terminal braided operad $\BrCom$ are also commutative monoids. For a braided operad $T$ with set of colours $I$, the braided analogue of Theorem \ref{thm:intalg-classifier-from-operad-morphism} applied to $T \to \BrCom$ gives an explicit description of $\BrCom^T$, the free braided strict monoidal category containing a $T$-algebra. This is as for $\Com^T$ in Section \ref{ssec:props} except with indexing vines instead of indexing functions. For a braided monoidal category $\ca V$, $\ca U\ca V$ is a braided operad, and so $\ca V \mapsto \ca U\ca V$ provides the object maps of the 2-functors
\[ \begin{array}{lccr} {\ca U_{\tn{s}}^{\tnb{B}} : \Algs {\tnb{B}} \longrightarrow \tnb{BrOpd}} &&& {\ca U_{\tn{ps}}^{\tnb{B}} : \PsAlg {\tnb{B}} \longrightarrow \tnb{BrOpd}.} \end{array} \]
The braided analogue of Corollary \ref{cor:smoncat-opd-adjunctions} is then
\begin{cor}\label{cor:brmoncat-br-opd-adjunctions}
The assignment $T \mapsto \BrCom^T$ is the effect on objects of a left 2-adjoint to $\ca U_{\tn{s}}^{\tnb{B}}$, and a left biadjoint to  $\ca U_{\tn{ps}}^{\tnb{B}}$.
\end{cor}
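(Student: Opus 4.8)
The plan is to follow exactly the template established by Corollary \ref{cor:smoncat-opd-adjunctions}, whose proof is the symmetric prototype, and simply transport it to the braided setting. The two key inputs are already in place: first, the braided analogue of Theorem \ref{thm:intalg-classifier-from-operad-morphism}, asserted in this subsection, which says that for the adjunction of 2-monads $F : (\Cat/I,S) \to (\Cat/J,T)$ arising from a morphism of braided operads, the internal $S$-algebra classifier relative to $F$ is $F \downarrow (-)$; and second, taking $F$ to be the unique morphism $T \to \BrCom$, the resulting classifier $\BrCom^T$ in the notation of Remark \ref{rem:T-to-the-S}. Everything hinges on unwinding the two universal properties that $\BrCom^T$ possesses.

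First I would address the strict 2-adjunction. By Definition \ref{def:int-alg-classifier}, for every strict $\tnb{B}$-algebra $B$ the precomposition functor
\[ \Algs{\tnb{B}}(\BrCom^T, B) \longrightarrow \Algl{\BrCom}(1, \overline{F}B) \]
is an isomorphism of categories. Here $\overline{F}B$ is the braided operad $\ca U^{\tnb{B}}_{\tn{s}} B$ regarded via \cite{Weber-OpPoly2Mnd} as an operad morphism, so that by the braided analogue of \cite{Weber-OpPoly2Mnd} Corollary 4.18, $\Algl{\BrCom}(1, \overline{F}B)$ is precisely the category of $T$-algebras in the braided monoidal category $B$, which is to say the category $\tnb{BrOpd}(T, \ca U^{\tnb{B}}_{\tn{s}} B)$. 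Naturality of this isomorphism in $B$, which follows since both sides are defined representably and $\ca U^{\tnb{B}}_{\tn{s}}$ is a 2-functor, exhibits $\Algs{\tnb{B}}(\BrCom^T, B) \iso \tnb{BrOpd}(T, \ca U^{\tnb{B}}_{\tn{s}} B)$ 2-naturally, which is exactly the statement that $T \mapsto \BrCom^T$ is left 2-adjoint to $\ca U^{\tnb{B}}_{\tn{s}}$.

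For the biadjunction I would invoke Proposition \ref{prop:uprop-intalg-pseudo} directly. Its hypotheses hold here: $\tnb{S}$ and hence $\tnb{B}$ have rank by Examples \ref{exams:easy-actions-operadic-examples}, the 2-categories $\Cat/I$ and $\Cat/J$ have all limits and colimits, $\ca K = \Cat/J = \Cat(\Set/J)$ is of the required form $\Cat(\ca E)$, and $\tnb{B}$ (being polynomial with discrete-fibration middle map) preserves internal functors whose object maps are invertible. The proposition then yields equivalences
\[ \PsAlg{\tnb{B}}(\BrCom^T, A) \catequiv \PsAlgl{\BrCom}(1, \overline{F}A) \]
pseudonaturally in $A \in \PsAlg{\tnb{B}}$. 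Identifying the right-hand side, via the braided version of \cite{Weber-OpPoly2Mnd}, with $\tnb{BrOpd}(T, \ca U^{\tnb{B}}_{\tn{ps}} A)$ gives the left biadjointness of $T \mapsto \BrCom^T$ to $\ca U^{\tnb{B}}_{\tn{ps}}$.

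The only genuine obstacle is the identification of $\PsAlgl{\BrCom}(1, \overline{F}A)$ and $\Algl{\BrCom}(1,\overline{F}B)$ with the relevant hom-categories of $\tnb{BrOpd}$, that is, establishing the braided counterpart of \cite{Weber-OpPoly2Mnd} Corollary 4.18 identifying lax morphisms $1 \to \overline{F}A$ with operad morphisms $T \to \ca U\ca V$. This is where all the content about braids enters, but it is the same identification already used implicitly throughout Section \ref{ssec:braided-props} and is the direct analogue of the symmetric case underlying Corollary \ref{cor:smoncat-opd-adjunctions}; I would treat it as established by the parallel development. Granting it, the proof is a verbatim transcription of the proof of Corollary \ref{cor:smoncat-opd-adjunctions}.

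\begin{proof}
This is the braided analogue of Corollary \ref{cor:smoncat-opd-adjunctions}, and the proof is the same modulo the replacement of $\tnb{S}$ by $\tnb{B}$ and $\Com$ by $\BrCom$. By the braided analogue of Theorem \ref{thm:intalg-classifier-from-operad-morphism} applied to the unique morphism $T \to \BrCom$, the category $\BrCom^T$ is the internal $T$-algebra classifier relative to this morphism, in the sense of Definition \ref{def:int-alg-classifier}. The strict universal property that $\BrCom^T$ thereby enjoys says that for all strict $\tnb{B}$-algebras $B$, precomposition with the universal internal algebra induces an isomorphism
\[ \Algs{\tnb{B}}(\BrCom^T, B) \iso \Algl{\BrCom}(1, \overline{F}B), \]
2-naturally in $B$. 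Identifying the right-hand side with the category $\tnb{BrOpd}(T, \ca U_{\tn{s}}^{\tnb{B}}B)$ of braided operad morphisms, this exhibits $T \mapsto \BrCom^T$ as the effect on objects of a left 2-adjoint to $\ca U_{\tn{s}}^{\tnb{B}}$.

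The hypotheses of Proposition \ref{prop:uprop-intalg-pseudo} are satisfied: $\tnb{B}$ has rank by Examples \ref{exams:easy-actions-operadic-examples}, the 2-categories $\Cat/I$ and $\Cat/J$ have all limits and colimits, $\ca K = \Cat/J$ is of the form $\Cat(\ca E)$, and $\tnb{B}$ preserves internal functors whose object maps are invertible. Hence Proposition \ref{prop:uprop-intalg-pseudo} yields equivalences
\[ \PsAlg{\tnb{B}}(\BrCom^T, A) \catequiv \PsAlgl{\BrCom}(1, \overline{F}A) \]
pseudonaturally in $A \in \PsAlg{\tnb{B}}$. Since the right-hand side is the category $\tnb{BrOpd}(T, \ca U_{\tn{ps}}^{\tnb{B}}A)$, this bicategorical universal property says exactly that $T \mapsto \BrCom^T$ is the effect on objects of a left biadjoint to $\ca U_{\tn{ps}}^{\tnb{B}}$.
\end{proof}
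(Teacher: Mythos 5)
Your proof is correct and follows exactly the route the paper takes: the strict 2-adjunction is just the defining universal property of the classifier (Definition \ref{def:int-alg-classifier}), the biadjunction is Proposition \ref{prop:uprop-intalg-pseudo}, and the braided analogue of \cite{Weber-OpPoly2Mnd} Corollary 4.18 supplies the identification of the relevant hom-categories with homs of $\tnb{BrOpd}$. One notational slip worth fixing: the lax morphisms $1 \to \overline{F}B$ are lax morphisms of algebras for the 2-monad on $\Cat/I$ induced by the braided operad $T$ (the ``internal'' monad of the adjunction of 2-monads), so those hom-categories should read $\Algl{T}(1,\overline{F}B)$ and $\PsAlgl{T}(1,\overline{F}A)$ rather than $\Algl{\BrCom}(1,\overline{F}B)$ and $\PsAlgl{\BrCom}(1,\overline{F}A)$.
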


\subsection{$\Sigma$-free operads.}
\label{ssec:Sigma-free-operads}
An operad $T$ with set of objects $J$ has two associated 2-monads on $\Cat/J$. There is the polynomial 2-monad which we also denote as $T$ because of how operads are identified as polynomial monads in \cite{Weber-OpPoly2Mnd}; and there is the 2-monad which we denote as $T/\Sigma$, whose strict $T$-algebras are the $\Cat$-valued algebras of the operad $T$ in the conventional sense. In Examples \ref{exams:adj-monad-op-morphism} we saw that a morphism of operads $F : S \to T$ with object function $f : I \to J$, gives rise to an adjunction of 2-monads
\[ \begin{array}{lccr} {F : (\Cat/I,S) \longrightarrow (\Cat/J,T)} &&& {F/\Sigma : (\Cat/I,S/\Sigma) \longrightarrow (\Cat/J,T/\Sigma)} \end{array} \]
as on the left, and in this section we shall establish another adjunction of 2-monads as on the right in the previous display. This gives a few different internal algebra classifers which play the role of a ``universal $S$-algebra internal to a $T$-algebra''. In this section we understand how they are related. In particular this enables us to reconcile our $T^S$ as computed in Section \ref{ssec:props}, with the internal algebra classifiers computed by Batanin and Berger in \cite{BataninBerger-HtyThyOfAlgOfPolyMnd}, when $S$ and $T$ are $\Sigma$-free.

The polynomial 2-monad associated to an operad $T$ alluded to above was recalled in Examples \ref{exams:algebras-of-operad-in-V}. In Section 5 of \cite{Weber-OpPoly2Mnd}, the different types of algebras (lax, pseudo or strict) were characterised as certain types of weak operad morphisms $T \to \CatAsOp$, where $\CatAsOp$ is the operad whose objects are small categories. Even the strict algebras of $T$ are weaker than honest operad morphisms $T \to \CatAsOp$, because of the presence certain coherence isomorphisms called \emph{symmetries}. When the symmetries of an algebra of $T$ are identities, then the algebra is said to be \emph{commutative}. So in particular the commutative strict $T$-algebras correspond exactly to the $\Cat$-valued algebras of the operad $T$ in the conventional sense.

Also in \cite{Weber-OpPoly2Mnd}, the 2-monad $T/\Sigma$ on $\Cat/J$ whose strict $T$-algebras are commutative $T$-algebras were constructed from $T$ as a reflexive coidentifier of 2-monads as on the right
\[ \xygraph{*{\xybox{\xygraph{!{0;(1.5,0):(0,.6)::} {J}="p0" [ur] {E^{[1]}_T}="p1" [r] {B^{[1]}_T}="p2" [dr] {J}="p3" [dl] {B_T}="p4" [l] {E_T}="p5" "p0":@{<-}"p1"^-{s^{[1]}_T}:"p2"^-{p^{[1]}_T}:"p3"^-{t^{[1]}_T}:@{<-}"p4"^-{t_T}:@{<-}"p5"^-{p_T}:"p0"^-{s_T}
"p1":@/_{1.25pc}/"p5"_(.425){d_{E_T}}|(.425){}="dal" "p2":@/_{1.25pc}/"p4"_(.425){d_{B_T}}|(.425){}="dar"
"p1":@/^{1.25pc}/"p5"^(.575){c_{E_T}}|(.575){}="cal" "p2":@/^{1.25pc}/"p4"^(.575){c_{E_T}}|(.575){}="car"
"dal":@{}"cal"|(.35){}="dl"|(.65){}="cl" "dl":@{=>}"cl"^-{\alpha_{E_T}}
"dar":@{}"car"|(.35){}="dr"|(.65){}="cr" "dr":@{=>}"cr"^-{\alpha_{B_T}}}}}
[r(5.5)]
*!(0,.58){\xybox{\xygraph{!{0;(1.5,0):(0,1)::} {T^{[1]}_{\Sigma}}="p0" [r] {T}="p1" [r] {T/\Sigma}="p2" "p0":@<1.5ex>"p1"|(.45){}="t"^-{d_T} "p0":@<-1.5ex>"p1"|(.45){}="b"_-{c_T}:"p2"^-{q_T} "t":@{}"b"|(.15){}="d"|(.85){}="c" "d":@{=>}"c"^-{\alpha_T}}}}} \]
of $\alpha_T$, which itself is the result of applying $\PFun{\Cat}$ to the 2-cell in $\Polyc{\Cat}(I,I)$ on the left, in which the 2-cell data comes from taking the cotensor of $E_T$ and $B_T$ with the category $[1]$. The induced 2-functor $\overline{q}_T : \Algs {T/\Sigma} \to \Algs T$ is exactly the inclusion of the commutative $T$-algebras amongst the general ones. Its left adjoint is denoted as $C_T$, and by Proposition 7.6 of \cite{Weber-OpPoly2Mnd}, the component of the unit of $C_T \ladj \overline{q}_T$ at $X \in \Algs T$ is given by the reflexive coidentifier
\begin{equation}\label{eq:C_TX-as-coidentifier}
\xygraph{!{0;(2,0):(0,1)::} {T^{[1]}_{\Sigma}X}="p0" [r] {X}="p1" [r] {C_TX.}="p2" "p0":@<1.5ex>"p1"|(.45){}="t"^-{xd_{T,X}}:"p2"^-{r_X} "p0":@<-1.5ex>"p1"|(.45){}="b"_-{xc_{T,X}} "t":@{}"b"|(.15){}="d"|(.85){}="c" "d":@{=>}"c"^-{x\alpha_{T,X}}}
\end{equation}
In Theorem 7.7 of \cite{Weber-OpPoly2Mnd}, when $T$ is $\Sigma$-free, $C_T \ladj \overline{q}_T$ was exhibited as a Quillen equivalence with respect to the Lack model structures \cite{Lack-HomotopyAspects2Monads} on $\Algs T$ and $\Algs {T/\Sigma}$.
\begin{const}\label{const:F/Sigma}
Given a morphism of operads $F: S \to T$ with object function $f : I \to J$, we now construct the adjunction of 2-monads
\[ F/\Sigma : (\Cat/I,S/\Sigma) \longrightarrow (\Cat/J,T/\Sigma) \]
whose underlying adjunction is, as with $F$, $\Sigma_f \ladj \Delta_f$.

By Lemma 5.3 of \cite{Weber-OpPoly2Mnd} the 2-functor $(-)^{[1]}$ preserves enough distributivity pullbacks so that applying it componentwise sends the morphism of polynomial monads in $\Cat$ as on the left
\[ \xygraph{{\xybox{\xygraph{!{0;(1.5,0):(0,.6667)::} {I}="p0" [r] {E_S}="p1" [r] {B_S}="p2" [r] {I}="p3" [d] {J}="p4" [l] {B_T}="p5" [l] {E_T}="p6" [l] {J}="p7" "p0":@{<-}"p1"^-{s_S}:"p2"^-{p_S}:"p3"^-{t_S}:"p4"^-{f}:@{<-}"p5"^-{t_T}:@{<-}"p6"^-{p_T}:"p7"^-{s_T}:@{<-}"p0"^-{f} "p1":"p6"_-{f_2} "p2":"p5"^-{f_1} "p0":@{}"p6"|-{=} "p1":@{}"p5"|-{\tn{pb}} "p2":@{}"p4"|-{=}}}}
[r(6.5)]
{\xybox{\xygraph{!{0;(1.5,0):(0,.6667)::} {I}="p0" [r] {E^{[1]}_S}="p1" [r] {B^{[1]}_S}="p2" [r] {I}="p3" [d] {J}="p4" [l] {B^{[1]}_T}="p5" [l] {E^{[1]}_T}="p6" [l] {J}="p7" "p0":@{<-}"p1"^-{s^{[1]}_S}:"p2"^-{p^{[1]}_S}:"p3"^-{t^{[1]}_S}:"p4"^-{f}:@{<-}"p5"^-{t^{[1]}_T}:@{<-}"p6"^-{p^{[1]}_T}:"p7"^-{s^{[1]}_T}:@{<-}"p0"^-{f} "p1":"p6"_-{f^{[1]}_2} "p2":"p5"^-{f^{[1]}_1} "p0":@{}"p6"|-{=} "p1":@{}"p5"|-{\tn{pb}} "p2":@{}"p4"|-{=}}}}} \]
to a morphism of polynomial monads as on the right. Applying $\PFun{\Cat}$ to this gives an adjunction of 2-monads
\[ F^{[1]} : (\Cat/I,S^{[1]}_{\Sigma}) \longrightarrow (\Cat/J,T^{[1]}_{\Sigma}) \]
whose underlying adjunction is $\Sigma_f \ladj \Delta_f$. The diagrams
\[ \xygraph{{\xybox{\xygraph{!{0;(1.5,0):(0,.6667)::}
{I}="p0" [r] {E_S}="p1" [r] {B_S}="p2" [r] {I}="p3" [d] {J}="p4" [l] {B_T}="p5" [l] {E_T}="p6" [l] {J}="p7" "p0":@{<-}"p1"^-{s_S}:"p2"^-{p_S}:"p3"^-{t_S}:"p4"^-{f}:@{<-}"p5"^-{t_T}:@{<-}"p6"^-{p_T}:"p7"^-{s_T}:@{<-}"p0"^-{f} "p1":"p6"_-{f_2} "p2":"p5"^-{f_1} "p0":@{}"p6"|-{=} "p1":@{}"p5"|-{\tn{pb}} "p2":@{}"p4"|-{=}
"p1" [u(1.25)] {E^{[1]}_S}="q1" [r] {B^{[1]}_S}="q2"
"p0":@{<-}"q1"^-{s^{[1]}_S}:"q2"^-{p^{[1]}_S}:"p3"^-{t^{[1]}_S}
"q1":@<-1.75ex>"p1"|(.6){}="l1" "q1":@<1.75ex>"p1"|(.6){}="r1" "l1":@{}"r1"|(.25){}="d1"|(.75){}="c1" "d1":@{=>}"c1"^-{\alpha_{E_S}}
"q2":@<-1.75ex>"p2"|(.6){}="l2" "q2":@<1.75ex>"p2"|(.6){}="r2" "l2":@{}"r2"|(.25){}="d2"|(.75){}="c2" "d2":@{=>}"c2"^-{\alpha_{B_S}}}}}
[r(6.5)]
{\xybox{\xygraph{!{0;(1.5,0):(0,.6667)::}
{I}="p0" [r] {E^{[1]}_S}="p1" [r] {B^{[1]}_S}="p2" [r] {I}="p3" [d] {J}="p4" [l] {B^{[1]}_T}="p5" [l] {E^{[1]}_T}="p6" [l] {J}="p7" "p0":@{<-}"p1"^-{s^{[1]}_S}:"p2"^-{p^{[1]}_S}:"p3"^-{t^{[1]}_S}:"p4"^-{f}:@{<-}"p5"^-{}:@{<-}"p6"^-{p^{[1]}_T}:"p7"^-{}:@{<-}"p0"^-{f} "p1":"p6"_-{f^{[1]}_2} "p2":"p5"^-{f^{[1]}_1} "p0":@{}"p6"|-{=} "p1":@{}"p5"|-{\tn{pb}} "p2":@{}"p4"|-{=}
"p6" [d(1.25)] {E^{[1]}_T}="q1" [r] {B^{[1]}_T}="q2"
"p7":@{<-}"q1"_-{s_T}:"q2"_-{p_T}:"p4"_-{t_T}
"p6":@<-1.75ex>"q1"|(.6){}="l1" "p6":@<1.75ex>"q1"|(.6){}="r1"
"l1":@{}"r1"|(.25){}="d1"|(.75){}="c1" "d1":@{=>}"c1"^-{\alpha_{E_T}}
"p5":@<-1.75ex>"q2"|(.6){}="l2" "p5":@<1.75ex>"q2"|(.6){}="r2" "l2":@{}"r2"|(.25){}="d2"|(.75){}="c2" "d2":@{=>}"c2"^-{\alpha_{B_T}}}}}} \]
are equal when regarded as morphisms of $\Polyc{\Cat}(J,J)$, by the naturality of the data exhibiting cotensors with $[1]$ in $\Cat$. Applying $\PFun{\Cat}$ to this equation gives $F^m(F_!\alpha_SF^*) = \alpha_TF^{[1],m}$ in the diagram on the left
\[ \xygraph{*{\xybox{\xygraph{!{0;(2.5,0):(0,.5)::}
{F_!S^{[1]}_{\Sigma}F^*}="p0" [r] {F_!SF^*}="p1" [r(.8)] {F_!(S/\Sigma)F^*}="p2"
"p0":@<1.5ex>"p1"|(.35){}="t1"^-{}:"p2"^-{F_!q_SF^*} "p0":@<-1.5ex>"p1"|(.35){}="b1"_-{}
"t1":@{}"b1"|(.15){}="d1"|(.85){}="c1" "d1":@{=>}"c1"^-{F_!\alpha_SF^*}
"p0" [d] {T^{[1]}_{\Sigma}}="q0" [r] {T}="q1" [r(.8)] {T/\Sigma}="q2"
"q0":@<1.5ex>"q1"|(.45){}="t2"^-{d_T}:"q2"_-{q_T} "q0":@<-1.5ex>"q1"|(.45){}="b2"_-{c_T} "t2":@{}"b2"|(.15){}="d2"|(.85){}="c2" "d2":@{=>}"c2"^-{\alpha_T}
"p0":"q0"_-{F^{[1],m}} "p1":"q1"^-{F^m} "p2":"q2"^-{(F/\Sigma)^m} "p1":@{}"q2"|-{=}}}}
[r(6.5)]
*!(0,-.1){\xybox{\xygraph{!{0;(2,0):(0,.625)::}
{S^{[1]}_{\Sigma}F^*}="p0" [r] {SF^*}="p1" [r] {(S/\Sigma)F^*}="p2"
"p0":@<1.5ex>"p1"|(.45){}="t1"^-{}:"p2"^-{q_SF^*} "p0":@<-1.5ex>"p1"|(.45){}="b1"_-{}
"t1":@{}"b1"|(.15){}="d1"|(.85){}="c1" "d1":@{=>}"c1"^-{\alpha_SF^*}
"p0" [d] {F^*T^{[1]}_{\Sigma}}="q0" [r] {F^*T}="q1" [r] {F^*(T/\Sigma)}="q2"
"q0":@<1.5ex>"q1"|(.45){}="t2"^-{}:"q2"_-{F^*q_T} "q0":@<-1.5ex>"q1"|(.45){}="b2"_-{} "t2":@{}"b2"|(.15){}="d2"|(.85){}="c2" "d2":@{=>}"c2"^-{F^*\alpha_T}
"p0":"q0"_-{F^{[1],l}} "p1":"q1"^-{F^l} "p2":"q2"^-{(F/\Sigma)^l} "p1":@{}"q2"|-{=}}}}} \]
in which $F^m$ is the mate of the lax and the colax monad morphism coherence 2-cells $F^l$ and $F^c$, and $F^{[1],m}$ is defined similarly. Sifted colimits in the appropriate functor categories are preserved by precomposition with any 2-functor, and by post composition with any of the participating 2-functors $F_!$, $F^*$, $S$, $T$, $S/\Sigma$ and $T/\Sigma$, since these are all sifted colimit preserving. Thus the rows in the above diagrams are reflexive coidentifiers, and the diagrams are mates via $F_! \ladj F^*$. Thus one induces $(F/\Sigma)^m$ and $(F/\Sigma)^l$ uniquely as shown. The axioms exhibiting $(F/\Sigma)^l$ as a lax monad morphism coherence datum, follows from those for $F^l$, the monad morphism axioms for $q_S$ and $q_T$, and since $q_S$ is an epimorphism which is preserved by pre and post composition by all the participating 2-functors.
\end{const}
\begin{rem}\label{rem:F/Sigma-algebra-view}
From Construction \ref{const:F/Sigma} one has a commuting square of adjunctions of 2-monads as on the left
\[ \xygraph{*{\xybox{\xygraph{!{0;(3,0):(0,.3333)::} {(\Cat/I,S)}="p0" [r] {(\Cat/J,T)}="p1" [d] {(\Cat/J,T/\Sigma)}="p2" [l] {(\Cat/I,S/\Sigma)}="p3" "p0":"p1"^-{F}:"p2"^-{q_T}:@{<-}"p3"^-{F/\Sigma}:@{<-}"p0"^-{q_S}}}}
[r(6)]
*!(0,-.05){\xybox{\xygraph{!{0;(3,0):(0,.3333)::} {\Algs {T/\Sigma}}="p0" [r] {\Algs {S/\Sigma}}="p1" [d] {\Algs {S}}="p2" [l] {\Algs {T}}="p3" "p0":"p1"^-{\overline{F/\Sigma}}:"p2"^-{\overline{q}_S}:@{<-}"p3"^-{\overline{F}}:@{<-}"p0"^-{\overline{q}_T}}}}} \]
and thus at the level of strict algebras, a commutative square of 2-functors as on the right. Since $\overline{q}_S$ and $\overline{q}_T$ are the inclusions of commutative $S$ and $T$-algebras respectively, $\overline{F/\Sigma}$ is simply the restriction of $\overline{F}$ to the commutative algebras.
\end{rem}
\begin{prop}\label{prop:F/Sigma-polynomial}
If in the context of Construction \ref{const:F/Sigma}, $S$ and $T$ are $\Sigma$-free, then
\[ \xygraph{!{0;(2,0):(0,.75)::} {I}="p0" [r] {\pi_0E_S}="p1" [r] {\pi_0B_S}="p2" [r] {I}="p3" [d] {J}="p4" [l] {\pi_0B_T}="p5" [l] {\pi_0E_T}="p6" [l] {J}="p7" "p0":@{<-}"p1"^-{\pi_0s_S}:"p2"^-{\pi_0p_S}:"p3"^-{\pi_0t_S}:"p4"^-{f}:@{<-}"p5"^-{\pi_0t_T}:@{<-}"p6"^-{\pi_0p_T}:"p7"^-{\pi_0s_T}:@{<-}"p0"^-{f} "p1":"p6"_-{\pi_0f_2} "p2":"p5"^-{\pi_0f_1}} \]
is a morphism in $\PolyMnd{\Cat}$, which $\PFun{\Cat}$ sends to $F/\Sigma$.
\end{prop}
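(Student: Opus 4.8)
```latex
The plan is to establish Proposition \ref{prop:F/Sigma-polynomial} by recognising that the polynomial displayed in the statement is obtained from the polynomials underlying $S^{[1]}_{\Sigma}$ and $T^{[1]}_{\Sigma}$ by applying $\pi_0$ componentwise, and that this componentwise application is compatible with the reflexive coidentifier presentation of $T/\Sigma$ from (\ref{eq:C_TX-as-coidentifier}) and the corresponding presentation of $S/\Sigma$. The key input is that when $T$ is $\Sigma$-free, the $2$-monad $T/\Sigma$ is itself polynomial, with underlying polynomial obtained by applying $\pi_0$ to the categories $E_T$ and $B_T$ in the polynomial for $T$. This is essentially the content of the coidentifier in Construction \ref{const:F/Sigma}: the reflexive coidentifier of the polynomial $2$-cell $\alpha_T$ built from cotensoring with $[1]$ is, at the level of the underlying categories $E_T$ and $B_T$, precisely the coequaliser of source and target, namely $\pi_0 E_T$ and $\pi_0 B_T$.

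First I would verify that the displayed diagram is a well-defined polynomial monad morphism. For this I must check that the middle square is a pullback and that the middle maps $\pi_0 p_S$ and $\pi_0 p_T$ are the middle maps of the polynomial monads for $S/\Sigma$ and $T/\Sigma$ respectively. The pullback claim follows from Lemma \ref{lem:pi0-pres-enough-pbs}: since $B_T$ (hence $\pi_0 B_T$) is discrete when $T$ is $\Sigma$-free --- here $\Sigma$-freeness is exactly what guarantees the relevant objects become discrete under $\pi_0$, so that the fibres carry no residual symmetry --- the functor $\pi_0$ preserves the defining pullback square $(E_S, B_S, B_T, E_T)$. The monad-structure compatibility follows because $\pi_0$ is a left adjoint and hence preserves the colimits used to build the monad multiplication, and because the morphism-of-polynomial-monads structure is defined entirely in terms of the maps $s, p, t, f_1, f_2$, all of which descend to $\pi_0$.

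Next I would identify the $2$-monad that $\PFun{\Cat}$ assigns to this polynomial with $T/\Sigma$ (and similarly for $S/\Sigma$). Here I would invoke that $T/\Sigma$ was constructed in \cite{Weber-OpPoly2Mnd} as the reflexive coidentifier of $\alpha_T : T^{[1]}_{\Sigma} \to T$, and that $\PFun{\Cat}$ applied to the componentwise-$\pi_0$ polynomial computes exactly this coidentifier, because $\pi_0$ on $E_T$ and $B_T$ realises the coequaliser of $d_{E_T}, c_{E_T}$ and $d_{B_T}, c_{B_T}$. The $\Sigma$-freeness ensures these coequalisers behave well --- in particular that $\pi_0 E_T$ and $\pi_0 B_T$ are discrete, so the resulting polynomial is honestly a polynomial in $\Set$-like discreteness and the associated $2$-monad is $T/\Sigma$. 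Finally, the assertion that $\PFun{\Cat}$ sends the displayed morphism to $F/\Sigma$ follows from the uniqueness built into Construction \ref{const:F/Sigma}: there $(F/\Sigma)^m$ and $(F/\Sigma)^l$ were induced uniquely by the universal property of the reflexive coidentifiers, and the componentwise-$\pi_0$ morphism of polynomials provides, via $\PFun{\Cat}$, a morphism of $2$-monads whose coherence data satisfy the same defining equations; uniqueness then forces it to coincide with $F/\Sigma$.

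The main obstacle I expect is the verification that $\pi_0$ preserves the pullback and the distributivity pullbacks needed so that the $\pi_0$-image of the polynomial monad morphism really is a polynomial monad morphism with the correct composition and substitution data. This is where the hypothesis of $\Sigma$-freeness is essential and must be used carefully: it is precisely $\Sigma$-freeness that makes $B_T$ have discrete $\pi_0$, allowing the appeal to Lemma \ref{lem:pi0-pres-enough-pbs}. Once the relevant objects are seen to be discrete, the remaining verifications are formal consequences of $\pi_0$ being a left adjoint together with the uniqueness clauses already established in Construction \ref{const:F/Sigma}, so the bulk of the genuine mathematical content is concentrated in this single preservation step.
```
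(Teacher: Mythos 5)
Your overall strategy---apply $\pi_0$ componentwise, identify the resulting $2$-monads with $S/\Sigma$ and $T/\Sigma$ via the coidentifier description, and then use a uniqueness/faithfulness argument to conclude that the image is $F/\Sigma$---matches the shape of the paper's proof. But there is a genuine gap in the step you yourself identify as carrying the mathematical weight. You propose to apply Lemma \ref{lem:pi0-pres-enough-pbs} directly to the defining pullback square $(E_S,B_S,B_T,E_T)$, on the grounds that ``$B_T$ (hence $\pi_0 B_T$) is discrete when $T$ is $\Sigma$-free.'' This premise is false: $\Sigma$-freeness makes $B_T$ \emph{equivalent to} a discrete category, not discrete. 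For $T=\Ass$, for instance, $B_T$ in arity $n$ is the indiscrete groupoid on the $n!$ permutations, and $\pi_0 B_{\Ass}=\N$. Since the corner of that pullback square is $B_T$ and not a discrete object, Lemma \ref{lem:pi0-pres-enough-pbs} does not apply to it, and $\pi_0$ does not preserve pullbacks in general. The paper's route around this is essential: it first uses $\Sigma$-freeness (via Proposition 6.4 of \cite{Weber-OpPoly2Mnd}) to show that the naturality squares of the quotient maps $q_{E_T}$, $q_{B_T}$, $q_{E_S}$, $q_{B_S}$ are themselves pullbacks, then pastes to obtain a pullback with vertices $(E_S,\pi_0E_T,\pi_0B_T,B_S)$ whose corner $\pi_0B_T$ genuinely \emph{is} discrete, and only then invokes Lemma \ref{lem:pi0-pres-enough-pbs} to conclude that applying $\pi_0$ to this composite square---which yields the front square of the cube---produces a pullback.

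A second, smaller gap is your justification of the monad-morphism axioms. You argue that the monad structure descends ``because $\pi_0$ is a left adjoint and hence preserves the colimits used to build the monad multiplication,'' but the composition of polynomials is built from pullbacks and distributivity pullbacks (i.e.\ from $\Delta$ and $\Pi$, right-adjoint constructions), not colimits, so left-adjointness of $\pi_0$ buys you nothing here. The paper sidesteps any direct verification: it observes that the cube can be read as a commutative square $\ca S$ in $\Polyc{\Cat}(J,J)$, that $\PFun{\Cat}$ sends $\ca S$ to the square relating $F^m$ and $(F/\Sigma)^m$ from Construction \ref{const:F/Sigma} (using Lemma 6.5 of \cite{Weber-OpPoly2Mnd} to identify the $\pi_0$'d polynomial data with $q_S$, $q_T$ and the quotient monads), and then uses the \emph{local faithfulness} of $\PFun{\Cat}$ to transport the $2$-monad-morphism equations satisfied by $(F/\Sigma)^m$ back to equations between $2$-cells of polynomials. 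Your appeal to the uniqueness clause of Construction \ref{const:F/Sigma} points in the right direction for identifying the image with $F/\Sigma$, but without local faithfulness you have no mechanism for deducing that $(\pi_0 f_2,\pi_0 f_1)$ satisfies the polynomial-level axioms in the first place.
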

\begin{proof}
In the diagram
\begin{equation}\label{diag:F/Sigma-as-polynomial-monad-morphism} 
\xygraph{!{0;(2,0):(0,.75)::}
{I}="p0" [r] {E_S}="p1" [r] {B_S}="p2" [r] {I}="p3" [d] {J}="p4" [l] {B_T}="p5" [l] {E_T}="p6" [l] {J}="p7" "p0":@{<-}"p1"^-{s_S}:"p2"^-{p_S}:"p3"^-{t_S}:"p4"^-{f}:@{<-}"p5"^-{}|(.4)*=<4pt>{}:@{<-}"p6"|(.4)*=<4pt>{}|(.55){p_T}:"p7"^-{s_T}:@{<-}"p0"^-{f} "p1":"p6"^(.45){f_2}|(.61)*=<4pt>{} "p2":"p5"^(.45){f_1}|(.7)*=<4pt>{}
"p1" [d(.7)r(.6)] {\pi_0E_S}="q1" "p2" [d(.7)r(.6)] {\pi_0B_S}="q2" "p6" [d(.7)r(.6)] {\pi_0E_T}="q6" "p5" [d(.7)r(.6)] {\pi_0B_T}="q5"
"p0":@{<-}@/_{.75pc}/"q1"_-{\pi_0s_S}:"q2"^-{}:@/_{.375pc}/"p3"|(.45){\pi_0t_S}:"p4"^-{f}:@{<-}@/^{.375pc}/"q5"^-{\pi_0t_T}:@{<-}"q6"^-{\pi_0p_T}:@/^{.75pc}/"p7"^-{\pi_0s_T}:@{<-}"p0"^-{f} "q1":"q6"^-{\pi_0f_2} "q2":"q5"|-{\pi_0f_1}
"p1":"q1"^(.6){q_{E_S}} "p2":"q2"^(.6){q_{B_S}} "p6":"q6"_(.4){q_{E_T}} "p5":"q5"_(.4){q_{B_T}}}
\end{equation}
the back face is as in Examples \ref{exams:adj-monad-op-morphism}, and the front face is the result of applying $\pi_0$ to the back face. Since $S$ and $T$ are $\Sigma$-free the categories $E_S$, $B_S$, $E_T$ and $B_T$ are equivalent to discrete categories, and the top and bottom squares of the inner cube are pullbacks, by \cite{Weber-OpPoly2Mnd} Proposition 6.4. Since applying $\pi_0$ to the composite pullback square with vertices $(E_S,\pi_0E_T,\pi_0B_T,B_S)$ is the front square, that front square, which is the middle square of the diagram in the statement, is also a pullback by Lemma \ref{lem:pi0-pres-enough-pbs}.

Thus one may interpret (\ref{diag:F/Sigma-as-polynomial-monad-morphism}) as a commutative square $\ca S$ in $\Polyc{\Cat}(J,J)$. By the definitions of $q_{E_S}$, $q_{B_S}$, $q_{E_T}$ and $q_{B_T}$, and Lemma 6.5 of \cite{Weber-OpPoly2Mnd}, $\ca S$ is sent by $\PFun{\Cat}$ to the commutative square
\[ \xygraph{!{0;(2.5,0):(0,.4)::} {F_!SF^*}="p0" [r] {F_!(S/\Sigma)F^*}="p1" [d] {T/\Sigma}="p2" [l] {T}="p3" "p0":"p1"^-{F_!q_SF^*}:"p2"^-{(F/\Sigma)^m}:@{<-}"p3"^-{q_T}:@{<-}"p0"^-{F^m}} \]
of 2-natural transformations from Construction \ref{const:F/Sigma}. Since $\PFun{\Cat}$ is a locally faithful homomorphism of 2-bicategories by Proposition 3.2.9 and Theorem 4.1.4 of \cite{Weber-PolynomialFunctors}, $(\pi_0f_2,\pi_0f_1)$ satisfy the axioms of a morphism of $\PolyMnd{\Cat}$ since $(F/\Sigma)^m$ satisfies the axioms making $(F/\Sigma)$ an adjunction of 2-monads in Construction \ref{const:F/Sigma}.
\end{proof}
Let $F : S \to T$ be a morphism of operads. The morphisms $q_S$ and $q_T$ provide us with the full inclusions $\Algs {(S/\Sigma)} \subseteq \Algs S$, $\Algl {(S/\Sigma)} \subseteq \Algl S$, $\Algs {(T/\Sigma)} \subseteq \Algs T$ and $\Algl {(T/\Sigma)} \subseteq \Algl T$; of various of the 2-categories of commutative strict algebras amongst the corresponding 2-categories of strict algebras. Associated to the adjunctions of 2-monads $F$, $(F/\Sigma)$ and $q_TF$, one has the associated internal algebra classifiers $T^S$, $(T/\Sigma)^S$ and $(T/\Sigma)^{(S/\Sigma)}$ respectively, which are all strict $T$-algebras. The following result explains how they are related.
\begin{thm}\label{thm:T^S-vs-T/Sigma^S/Sigma}
Let $F : S \to T$ be a morphism of operads.
\begin{enumerate}
\item $(T/\Sigma)^S = (T/\Sigma)^{(S/\Sigma)} = C_T(T^S)$.
\label{thmcase:formula-T/Sigma^S/Sigma}
\item If $T$ is $\Sigma$-free, then $r_{T^S} : T^S \to (T/\Sigma)^{(S/\Sigma)}$ is an equivalence in $\Alg T$.
\label{thmcase:equivalence-of-T^S-and-T/Sigma^S/Sigma}
\end{enumerate}
\end{thm}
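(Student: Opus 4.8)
The plan is to establish the two statements of Theorem \ref{thm:T^S-vs-T/Sigma^S/Sigma} using the defining universal properties of the internal algebra classifiers together with the reflective adjunction $C_T \ladj \overline{q}_T$ of commutative $T$-algebras amongst all $T$-algebras.

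For part (\ref{thmcase:formula-T/Sigma^S/Sigma}), I would argue directly from the universal property in Definition \ref{def:int-alg-classifier}, exploiting that all three objects are strict $T/\Sigma$-algebras and hence commutative $T$-algebras. First I would show $(T/\Sigma)^S = (T/\Sigma)^{(S/\Sigma)}$. For any commutative $T$-algebra $B$, an $S$-algebra internal to $B$ is a lax $S$-morphism $1 \to \overline{F}B$; but since $B$ is commutative, $\overline{F}B = \overline{q}_S\overline{F/\Sigma}B$ lands in the commutative $S$-algebras by Remark \ref{rem:F/Sigma-algebra-view}, and the full inclusion $\Algl{(S/\Sigma)} \subseteq \Algl S$ means such a lax morphism is the same as a lax $S/\Sigma$-morphism $1 \to \overline{F/\Sigma}B$. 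Thus the two relevant hom-categories $\Algl S(1,\overline{F}B)$ and $\Algl{(S/\Sigma)}(1,\overline{F/\Sigma}B)$ coincide, and the representing objects among $T/\Sigma$-algebras agree. Next, to identify both with $C_T(T^S)$, I would use the composite $q_TF$ and the reflection: for a strict $T/\Sigma$-algebra $B$, since $\overline{q}_T$ is fully faithful,
\[
\Algs{(T/\Sigma)}(C_T(T^S),B) \iso \Algs T(T^S,\overline{q}_TB) \iso \Algl S(1,\overline{F}\,\overline{q}_TB),
\]
the first isomorphism from $C_T \ladj \overline{q}_T$ and the second from the defining property of $T^S$. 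Identifying $\overline{F}\,\overline{q}_TB$ with $\overline{q}_S\overline{F/\Sigma}B$ via Remark \ref{rem:F/Sigma-algebra-view} and using the full inclusion on lax morphisms as above, this last category is $\Algl{(S/\Sigma)}(1,\overline{F/\Sigma}B)$, which by definition is represented by $(T/\Sigma)^{(S/\Sigma)}$. By the Yoneda lemma $C_T(T^S) = (T/\Sigma)^{(S/\Sigma)}$, completing part (\ref{thmcase:formula-T/Sigma^S/Sigma}).

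For part (\ref{thmcase:equivalence-of-T^S-and-T/Sigma^S/Sigma}), the key input is Theorem 7.7 of \cite{Weber-OpPoly2Mnd}, which says that when $T$ is $\Sigma$-free, the adjunction $C_T \ladj \overline{q}_T$ is a Quillen equivalence for the Lack model structures on $\Algs T$ and $\Algs{(T/\Sigma)}$. The unit component $r_{T^S} : T^S \to \overline{q}_TC_T(T^S) = (T/\Sigma)^{(S/\Sigma)}$ of this adjunction, given by the reflexive coidentifier (\ref{eq:C_TX-as-coidentifier}), is the map in question. The plan is to show $T^S$ is cofibrant in the Lack model structure: this follows from its flexibility, which is guaranteed by Proposition \ref{prop:uprop-intalg-pseudo} (the hypotheses there are met since the 2-monads arising from operads have rank by Examples \ref{exams:easy-actions-operadic-examples} and $\Cat/I$, $\Cat/J$ are complete and cocomplete), as flexible algebras are precisely the cofibrant objects for Lack's model structure \cite{Lack-HomotopyAspects2Monads}. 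Since the right adjoint $\overline{q}_T$ of a Quillen equivalence, being a fully faithful inclusion, preserves all objects as fibrant and the derived unit at a cofibrant object is a weak equivalence, $r_{T^S}$ is a weak equivalence in $\Algs T$. Weak equivalences in the Lack model structure are exactly the equivalences in the 2-category $\Alg T$ of pseudomorphisms, which is the desired conclusion.

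The main obstacle I anticipate is the careful handling of the coherence between $\overline{F}$, $\overline{F/\Sigma}$, and the inclusions $\overline{q}_S$, $\overline{q}_T$ needed in part (\ref{thmcase:formula-T/Sigma^S/Sigma}): one must verify that the identifications of hom-categories are genuinely natural and respect the lax morphism structure, so that the Yoneda argument applies at the level of strict $T/\Sigma$-algebras rather than merely on underlying objects. This rests on Remark \ref{rem:F/Sigma-algebra-view}, that $\overline{F/\Sigma}$ is the literal restriction of $\overline{F}$ to commutative algebras, and on the fullness of the inclusions on both strict and lax morphisms; I would make these compatibilities explicit before invoking representability. In part (\ref{thmcase:equivalence-of-T^S-and-T/Sigma^S/Sigma}) the only delicate point is ensuring the cited Quillen equivalence yields an honest equivalence in $\Alg T$ at the specific cofibrant object $T^S$, but this is a standard consequence once flexibility of $T^S$ is in hand from Proposition \ref{prop:uprop-intalg-pseudo}.
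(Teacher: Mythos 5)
Your proposal is correct and follows essentially the same route as the paper: part (1) is the same Yoneda argument comparing the defining universal properties via the adjunction $C_T \ladj \overline{q}_T$ (the paper is merely terser about the identification $\Algl S(1,\overline{F}B) \iso \Algl{(S/\Sigma)}(1,\overline{F/\Sigma}B)$ for commutative $B$, which you rightly make explicit), and part (2) is the same deduction from flexibility of $T^S$ via Proposition \ref{prop:uprop-intalg-pseudo}, Theorem 7.7 of \cite{Weber-OpPoly2Mnd}, and Proposition 4.10 of \cite{Lack-HomotopyAspects2Monads}.
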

\begin{proof}
(\ref{thmcase:formula-T/Sigma^S/Sigma}): As above we regard $(S/\Sigma)$-algebras (resp. $(T/\Sigma)$-algebras) as commutative $S$-algebras (resp. $T$-algebras). The universal properties of $(T/\Sigma)^S$ and $(T/\Sigma)^{(S/\Sigma)}$ say that for all commutative strict $T$-algebras $Y$, one has
\[ \begin{array}{lccr} {\Algs T((T/\Sigma)^S,Y) \iso \Algl S(1,\overline{F}Y)} &&&
{\Algs T((T/\Sigma)^{(S/\Sigma)},Y) \iso \Algl S(1,\overline{F}Y)} \end{array} \]
naturally in $Y$. Thus $(T/\Sigma)^S = (T/\Sigma)^{(S/\Sigma)}$ by the Yoneda Lemma. The defining universal property of $T^S$ says that for all strict $T$-algebras $Y$,
\[ \Algs T(T^S,Y) \iso \Algl S(1,\overline{F}Y) \]
naturally in $Y$, and this applies in particular for $Y$ commutative. Thus the other equation follows by the Yoneda Lemma and the adjunction $C_T \ladj \overline{q}_T$.

(\ref{thmcase:equivalence-of-T^S-and-T/Sigma^S/Sigma}): By Proposition \ref{prop:uprop-intalg-pseudo} $T^S$ is flexible, and so by Theorem 7.7 of \cite{Weber-OpPoly2Mnd}, $r_{T^S}$ is a weak equivalence in $\Algs T$ for the Lack model structure, and thus an equivalence in $\Alg T$ by Proposition 4.10 of \cite{Lack-HomotopyAspects2Monads}.
\end{proof}
\begin{rem}\label{rem:restrictiveness-of-T/Sigma-as-ambient}
By the equation $(T/\Sigma)^S = (T/\Sigma)^{(S/\Sigma)}$ of Theorem \ref{thm:T^S-vs-T/Sigma^S/Sigma}(\ref{thmcase:formula-T/Sigma^S/Sigma}), an $S$-algebra internal to a $T/\Sigma$-algebra is the same thing as an $S/\Sigma$-algebra internal to a $T/\Sigma$-algebra. This formalises the idea that the ambient structure of a $T/\Sigma$-algebra is too restrictive to notice the difference between internal $S$ and internal $S/\Sigma$-algeba structures. We will encounter a similar phenomenon in Example \ref{exam:all-monoids-idempotent} below.
\end{rem}
\begin{exam}\label{exam:Ass}
Consider the case where $F = 1_{\Ass}$. As a symmetric operad $\Ass$ has one object, an $n$-ary operation for each permutation $\rho \in \Sigma_n$, and its $\Set$-valued algebras are exactly monoids. Recall from Example 4.9  of \cite{Weber-OpPoly2Mnd} that a strict algebra structure of the 2-monad $\Ass$ on $\ca V \in \Cat$, consists of an $n$-ary tensor product functor $\bigotimes_{\rho} : \ca V^n \to \ca V$ for each permutation $\rho \in \Sigma_n$, and for $\rho_1, \rho_2 \in \Sigma_n$, an isomorphism
\[ \xygraph{{\ca V^n}="p0" [r(2)] {\ca V^n}="p1" [dl] {\ca V}="p2" "p0":"p1"^-{c_{\rho_2}}:"p2"^-{\bigotimes_{\rho_1}}:@{<-}"p0"^-{\bigotimes_{\rho_1\rho_2}} "p0" [d(.5)r(.85)] :@{=>}[r(.3)]^-{\xi_{\rho_1,\rho_2}}} \]
in which $c_{\rho_2}$ permutes the factors according to $\rho_2$. This data must satisfy the axioms $\bigotimes_{1_1} = 1_{\ca V}$, $\bigotimes_{\rho}(\bigotimes_{\rho_j})_j = \bigotimes_{\rho(\rho_j)_j}$, $\xi_{\alpha,1} = \id$, $\xi_{\alpha,\rho_1\rho_2} = \xi_{\alpha,\rho_1}(\xi_{\alpha\rho_1,\rho_2}c_{\rho_1})$, and $\xi_{\rho}(\xi_{\rho_j})_j = \xi_{\rho(\rho_j)_j}$. When the $\xi$'s are identities, all the $n$-ary tensor products coincide giving a strict monoidal structure on $\ca V$. Thus $\Ass/\Sigma = \tnb{M}$ and so $(\Ass/\Sigma)^{(\Ass/\Sigma)} = \Delta_+$.

On the other hand applying Theorem \ref{thm:intalg-classifier-from-operad-morphism}, one obtains the following explicit description of $\Ass^{\Ass}$. The underlying category is obtained by applying Definition \ref{defn:F-over-j} in this case. An object is a pair $(n,\rho)$ where $n \in \N$ and $\rho \in \Sigma_n$. An arrow $(n_1,\rho_1) \to (n_2,\rho_2)$ is a pair $(h,(\beta_k)_{1{\leq}k{\leq}n_2})$ in which $h : \underline{n}_1 \to \underline{n}_2$ is a function, $\beta_k \in \Sigma_{|h^{-1}\{k\}|}$ for each $k$. The commutativity condition says that $\rho_2(\beta_k)_k\rho_h = \rho_1$ where $h = \phi_h\rho_h$ is the bijective monotone factorisation of $h$. By step (\ref{proof-step-underlying-codesc}) of the proof of Theorem \ref{thm:intalg-classifier-from-operad-morphism}, any such morphism factors as
\[ \xygraph{!{0;(3,0):(0,1)::}
{(n_1,\rho_1)}="p0" [r] {(n_1,\rho_1\rho_h^{-1})}="p1" [r] {(n_2,\rho_2)}="p2"
"p0":"p1"^-{(\rho_h,(1_1)_k)}:"p2"^-{(\phi_h,(\beta_k)_k)}} \]
one in which the indexing function is a permutation, followed by one in which the indexing function is order preserving. A morphism $(h,(\beta_k)_k) : (n_1,\rho_1) \to (n_2,\rho_2)$ in which $h$ is order preserving, is determined uniquely by $h$, and exists iff $\rho_2h = h\rho_1$. The $\beta_k$ are then recovered by restricting $\rho_1$ to the fibres of $h$. On the other hand a morphism $(n_1,\rho_1) \to (n_2,\rho_2)$ with bijective indexing function exists iff $n_1 = n_2$ and is then forced to have indexing function $\rho_1\rho_2^{-1}$. Thus for each $n$, the full subcategory of $\Ass^{\Ass}$ determined by objects of the form $(n,\rho)$ is the indiscrete category whose objects are the elements of $\Sigma_n$.

The underlying functor of $r_{\Ass^{\Ass}} : \Ass^{\Ass} \longrightarrow \Delta_+$ is the universal functor sending the morphisms with bijective indexing function to identities, by its definition as a reflexive coidentifier which is preserved by $U^{\Ass}$. Its object map is $(n,\rho) \mapsto n$, and by Theorem \ref{thm:T^S-vs-T/Sigma^S/Sigma}(\ref{thmcase:equivalence-of-T^S-and-T/Sigma^S/Sigma}) it is an equivalence. In other words $\Ass^{\Ass}$ is an equivalent fattened version of $\Delta_+$, in which for each $n \in \N$, one has $n!$ copies of $\underline{n} \in \Delta_+$.
\end{exam}
\begin{exams}\label{exams:Batanin-Berger-classifiers}
The examples of Batanin and Berger \cite{BataninBerger-HtyThyOfAlgOfPolyMnd} come from morphisms of polynomial monads in $\Set$ in which the middle maps of the participating polynomials have finite fibres. By \cite{KockJ-PolyFunTrees, SzawielZawadowski-TheoriesOfAnalyticMonads} such polynomial monads can be identified with $\Sigma$-free operads. In the context of Proposition \ref{prop:F/Sigma-polynomial}, Batanin and Berger computed $(T/\Sigma)^{(S/\Sigma)}$, and by Theorem \ref{thm:T^S-vs-T/Sigma^S/Sigma}(\ref{thmcase:equivalence-of-T^S-and-T/Sigma^S/Sigma}) one has $T^S \catequiv (T/\Sigma)^{(S/\Sigma)}$ in $\Alg T$. Note that $(T/\Sigma)^{(S/\Sigma)}$ is easier to compute, as one is then in the situation of Examples \ref{exams:easy-actions-internalised-examples}. The previous example gives a flavour of the sort of redundant information present in $T^S$ in such cases.
\end{exams}
Before our last example, some preliminary remarks are in order. The internal algebra classifier $\Com^{\Ass}$ relative to the adjunction of 2-monads corresponding to the unique operad morphism $\Ass \to \Com$, is the free symmetric strict monoidal category containing a monoid, and so coincides with $\tnb{S}^{\tnb{M}} = \Sigma\Delta$ discussed in Section \ref{ssec:SigmaDelta}. Thus it has natural numbers as objects, a morphism $m \to n$ is a pair $(\rho,f)$ where $\rho \in \Sigma_n$ and $f : m \to n$ is in $\Delta_+$, and the composite
\[ \xygraph{!{0;(1.5,0):(0,1)::} {l}="p0" [r] {m}="p1" [r] {n}="p2" "p0":"p1"^-{(\rho_1,f_1)}:"p2"^-{(\rho_2,f_2)}} \]
is $(\rho_3\rho_1,f_2f_3)$, where $\rho_2f_1 = f_3\rho_3$ is the bijective-monotone factorisation of $\rho_2f_1$ in $\S$. The invertible arrows of $\Sigma\Delta$ are exactly those of the form $(\rho,1)$, and so the maximal subgroupoid of $\Sigma\Delta$ may be identified with $\P$. The tensor product is given on objects by addition of natural numbers, and the underlying object of the universal monoid in $\Sigma\Delta$ is the terminal object $1$. Note moreover that $0$ is a strict initial object in $\Sigma\Delta$. For the purposes of our Example \ref{exam:all-monoids-idempotent}, we need
\begin{lem}\label{lem:SigmaDelta-identify-isos}
If $h : \Sigma\Delta \to X$ is a functor which sends all isomorphisms to identities, then it identifies all parallel pairs of morphisms.
\end{lem}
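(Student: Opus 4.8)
The plan is to show that any two parallel morphisms $(\rho_1,f_1),(\rho_2,f_2):m\to n$ of $\Sigma\Delta$ become equal after applying $h$, exploiting the concrete description of $\Sigma\Delta$ recalled just before the statement. The key observation is that a morphism $(\rho,f):m\to n$ factors as $(\rho,f)=(1,f)(\rho,1)$, where $(\rho,1)$ is an isomorphism (in fact the maximal subgroupoid of $\Sigma\Delta$ is exactly $\P$), so $h(\rho,f)=h(1,f)$. Thus $h$ is determined on all morphisms by its effect on the monotone morphisms $(1,f)$ with $f\in\Delta_+$, and it therefore suffices to prove that $h$ identifies any two parallel \emph{monotone} morphisms $(1,f_1),(1,f_2):m\to n$.

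First I would reduce to a manageable generating situation. Every monotone $f:\underline{m}\to\underline{n}$ in $\Delta_+$ is a composite of elementary codegeneracies and cofaces, but since $h$ need not identify distinct monotone maps outright, the aim instead is to show that for \emph{parallel} monotone maps the images agree. The crucial input is that $h$ sends isomorphisms to identities. I would use a permutation to interpolate between $f_1$ and $f_2$: given parallel monotone $f_1,f_2:m\to n$, I want to produce an isomorphism whose image under $h$ relates $h(1,f_1)$ and $h(1,f_2)$. Concretely, consider a permutation $\sigma\in\Sigma_m$ reordering the domain; then $(1,f_1)(\sigma,1)=(1,f_1)\circ(\sigma,1)$ computes, via the bijective-monotone factorisation of $f_1\sigma$, to a morphism of the form $(\rho',f_1')$. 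By choosing $\sigma$ so that $f_1\sigma$ has bijective-monotone factorisation with monotone part equal to $f_2$, one obtains $h(1,f_1)=h(1,f_1)h(\sigma,1)=h((1,f_1)(\sigma,1))=h(\rho',f_2)=h(1,f_2)$, the first and last equalities using that $h$ kills the isomorphisms $(\sigma,1)$ and $(\rho',1)$.

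The main obstacle, and the heart of the argument, will be verifying that such an interpolating permutation $\sigma$ always exists for any parallel pair of monotone maps $f_1,f_2:m\to n$; equivalently, that $f_2$ arises as the monotone part of the bijective-monotone factorisation of $f_1\sigma$ for some $\sigma\in\Sigma_m$. Since $f_1$ and $f_2$ are parallel monotone surjections/functions with the same domain and codomain, the fibres $f_1^{-1}\{k\}$ and $f_2^{-1}\{k\}$ need not have matching cardinalities in general, so I must be careful: this is precisely where the two maps must be compared. I would handle this by noting that $f_1$ and $f_2$ being parallel in $\Delta_+$ constrains them, and that post-composing with a suitable permutation of the codomain is \emph{not} available (it would leave $\Delta_+$), so the interpolation must occur purely on the domain side via $\sigma$. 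The clean route is to observe that the monotone part of $f_1\sigma$ has, for varying $\sigma\in\Sigma_m$, fibre-cardinality multiset equal to that of $f_1$; hence this route only reaches $f_2$ when $f_1$ and $f_2$ share fibre cardinalities. I therefore expect the correct formulation uses that any two parallel maps $m\to n$ in $\Delta_+$ have the same fibre cardinalities exactly when they coincide, so I would instead argue that parallel monotone maps with differing images are related through a zig-zag built from the strict initiality of $0$ and the tensor structure; the decisive technical lemma to pin down is the existence of the interpolating isomorphism, after which the displayed chain of equalities completes the proof.
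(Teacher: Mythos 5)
Your opening reduction is exactly the paper's: factor $(\rho,f)=(1,f)\comp(\rho,1)$, use that $h$ kills $(\rho,1)$, and reduce to showing $h(1,f)$ is independent of the monotone $f$. After that, however, there is a genuine gap. Your central mechanism --- finding $\sigma\in\Sigma_m$ so that the monotone part of the bijective--monotone factorisation of $f_1\sigma$ is $f_2$ --- provably cannot work: that monotone part always has the same fibre cardinalities as $f_1$, and a monotone map $\underline{m}\to\underline{n}$ is determined by its fibre cardinalities, so the interpolation only ever returns $f_1$ itself. You notice this yourself, but the fallback (``a zig-zag built from the strict initiality of $0$ and the tensor structure'') is not an argument; no interpolating isomorphism between distinct parallel monotone maps exists inside $\Sigma\Delta$, so no amount of conjugating by isomorphisms alone can finish the proof. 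Your remark that post-composition by a codomain permutation ``is not available (it would leave $\Delta_+$)'' is also a misconception: in $\Sigma\Delta$ one may freely post-compose with $(\sigma,1)$, and this is precisely what the actual proof exploits.

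The missing idea is a non-invertible collapsing map used in tandem with an isomorphism. Write $c_b,c_t:n\to n+n$ for the inclusions of the bottom and top $n$ elements, $\kappa_{m,n}:m\to n$ for the constant-at-top monotone map, and $d_t:n+n\to n$ for the unique monotone map with $d_tc_b=1_n$ and $d_tc_t=\kappa_{n,n}$; let $\sigma\in\Sigma_{2n}$ swap the two blocks. Then $(\rho,f)=(1,d_t)\comp(\rho,c_bf)$, while inserting the isomorphism $(\sigma,1)$ (killed by $h$) between these two factors yields, since $\sigma c_bf=c_tf$ is already monotone and $d_tc_tf=\kappa_{m,n}$, the morphism $(\rho,\kappa_{m,n})$. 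Hence $h$ identifies every $(\rho,f):m\to n$ with $(1,\kappa_{m,n})$, which is independent of both $\rho$ and $f$. The point is that $d_t$ treats the two copies of $n$ asymmetrically (identity on one, constant on the other), so the swap --- harmless to $h$ --- silently converts an arbitrary $f$ into the single constant map. Without some such non-isomorphism to break the fibre-cardinality invariant, the strategy of your proposal cannot be completed.
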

\begin{proof}
Since $(\rho,f) : m \to n$ in $\Sigma\Delta$ is the composite $(1,f) \comp (\rho,1)$ and $h(\rho,1)$ is an identity, $h(\rho,f)$ does not depend on $\rho$, and so it suffices to show that $h(\rho,f)$ does not depend on $f$. We will use the following morphisms of $\Delta_+$
\[ \begin{array}{lcccccr} {\kappa_{m,n} : m \to n} && {c_b : n \to n+n} && {c_t : n \to n+n} && {d_t : n+n \to n} \end{array} \]
where $\kappa_{m,n}$ sends every element of $\underline{m}$ to the top element $n \in \underline{n}$, $c_b$ is the inclusion of the bottom $n$ elements, $c_t$ is the inclusion of the top $n$ elements, and $d_t$ is unique such that $d_tc_b = 1_n$ and $d_tc_t = \kappa_{n,n}$. Moreover we denote by $\sigma \in \Sigma_{2n}$ the permutation $\tau(1_n,1_n)$ where $1 \neq \tau \in \Sigma_2$, so that $\sigma$ switches the top and bottom $n$ elements of $n+n$. Since $h(\sigma,1) = 1$ and $d_tc_bf = f$, $h$ identifies $(\rho,f)$ with the composite
\[ \xygraph{!{0;(1.5,0):(0,1)::} {m}="p0" [r] {n+n}="p1" [r] {n+n}="p2" [r] {n.}="p3" "p0":"p1"^-{(\rho,c_bf)}:"p2"^-{(\sigma,1)}:"p3"^-{(1,d_t)}} \]
Since $\sigma c_bf = c_tf$ in $\S$, and $d_tc_tf = \kappa_{m,n}$ in $\Delta_+$, this composite is $(1,\kappa_{m,n})$.
\end{proof}
We shall call the algebras of the 2-monad $\Com/\Sigma$ \emph{commutative monoidal categories}, these being exactly the symmetric monoidal categories whose symmetry coherences are identities. In particular a commutative strict monoidal category is exactly a commutative monoid in $\Cat$. We denote by $\C$ the following commutative strict monoidal category. Its objects are natural numbers, its homsets are given by
\[ \C(m,n) = \left\{
\begin{array}{lll} {\emptyset} && {m \neq 0, n = 0} \\ {1} && {\tn{otherwise}} \end{array} \right. \]
and the tensor product is given on objects by addition. The object $1 \in \C$ has a unique commutative monoid structure in $\C$, and we denote by $u : 1 \to \C$ the corresponding symmetric lax monoidal functor. The category $[1] = \{0 < 1\}$ is a commutative strict monoidal category in which the tensor product is given by supremum, and $1 \in [1]$ has a unique commutative monoid structure in $[1]$. The functor $\C \to [1]$, whose object map is given by $n \mapsto 0$ iff $n = 0$, is a symmetric monoidal equivalence, sending the monoid $1 \in \C$ to the monoid $1 \in [1]$.
\begin{exam}\label{exam:all-monoids-idempotent}
We will now show that $u : 1 \to \C$ exhibits $\C = (\Com/\Sigma)^{\Ass}$. Before doing so we consider some of the consequences of this.

Since the multiplication on the monoid $1 \in \C$ is invertible, this shows that any monoid in a commutative monoidal category is idempotent. Thus the existence of a non-idempotent monoid in a symmetric monoidal category $\ca V$, is an obstruction to exhibiting $\ca V$ as symmetric monoidally equivalent to a commutative monoidal category. Since $u :1 \to \C$ is a commutative monoid, it also exhibits $\C = (\Com/\Sigma)^{\Com}$. By the equivalence $\C \catequiv [1]$, for any commutative monoidal category $\ca V$, one has an equivalence
\[ \tn{Mon}(\ca V) \catequiv \PsAlg {\tnb{S}}([1],\ca V)\]
between (commutative) monoids in $\ca V$ and symmetric strong monoidal functors $[1] \to \ca V$.

Let us now establish $\C = (\Com/\Sigma)^{\Ass}$. Since $\Com^{\Ass} = \Sigma\Delta$, $(\Com/\Sigma)^{\Ass}$ is defined as a coidentifier
\[ \xygraph{!{0;(2.5,0):(0,1)::} {\Com^{[1]}_{\Sigma}(\Sigma\Delta)}="p0" [r] {\Sigma\Delta}="p1" [r] {(\Com/\Sigma)^{\Ass}}="p2" "p0":@<1.5ex>"p1"|(.55){}="t"^-{}:"p2"^-{r_{\Sigma\Delta}} "p0":@<-1.5ex>"p1"|(.55){}="b"_-{} "t":@{}"b"|(.15){}="d"|(.85){}="c" "d":@{=>}"c"^-{\beta}} \]
by Theorem \ref{thm:T^S-vs-T/Sigma^S/Sigma}(\ref{thmcase:formula-T/Sigma^S/Sigma}), which is (\ref{eq:C_TX-as-coidentifier}) in the case $T = \Com$ and $X = \Sigma\Delta$. An object of $\Com^{[1]}_{\Sigma}(\Sigma\Delta)$ is a morphism of $\tnb{S}(\Sigma\Delta)$ of the form
\[ (\rho,((1_{n_{\rho i}},1_{n_{\rho i}}))_i) : (n_{\rho 1},...,n_{\rho k}) \longrightarrow (n_1,...,n_k) \]
by \cite{Weber-OpPoly2Mnd} Lemma 5.7, and the corresponding component of $\beta$ is the isomorphism
\[ (\rho(1_{n_{\rho i}})_i,1) : n_{\rho 1} + ... + n_{\rho k} \longrightarrow n_1 + ... n_k \]
of $\Sigma\Delta$. Just considering the cases where the $n_i$'s are all $1$, all isomorphisms of $\Sigma\Delta$ arise as components of $\beta$. Thus $r_{\Sigma\Delta}$ could equally well be described as the universal functor out of $\Sigma\Delta$ which sends isomorphisms to identities. By Lemma \ref{lem:SigmaDelta-identify-isos}, its underlying category agrees with that of $\C$, and the underlying object of its universal monoid is $1$. The condition that $r_{\Sigma\Delta}$ be a symmetric strict monoidal functor, forces $(\Com/\Sigma)^{\Ass}$'s and $\C$'s commutative monoidal structures to agree also.
\end{exam}

\section*{Acknowledgements}
The author would like to acknowledge Michael Batanin, John Bourke, Richard Garner, Joachim Kock, Steve Lack and Ross Street for interesting discussions on the subject of this paper. The author would also like to acknowledge the financial support of the Australian Research Council grant No. DP130101172.


\end{document}